\theoremstyle{plain}%
    \newtheorem{thm}{Theorem}[section]%
    \newtheorem{prop}[thm]{Proposition}%
    \newtheorem{lem}[thm]{Lemma}%
	\newtheorem{cor}[thm]{Corollary}%
	\newtheorem{propdef}[thm]{Proposition-Definition}%
\theoremstyle{definition}%
    \newtheorem{defn}[thm]{Definition}%
    \newtheorem{ex}[thm]{Example}%
\theoremstyle{remark}%
    \newtheorem{rem}[thm]{Remark}%
\def\endpiece{xxx}%
\def\makeop[#1]{\xmakeop#1,xxx,}
\def\mkop#1{\expandafter\def\csname #1\endcsname{{\mathrm{#1}}}} %
\def\xmakeop#1,{\def\temp{#1}\ifx\temp\endpiece\else\mkop{#1}\expandafter\xmakeop\fi}%
\newcommand{\Hom}{\mathop{\mathrm {Hom}}\nolimits}
\newcommand{\rank}{\mathop{\mathrm {rank}}\nolimits}
\newcommand{\im}{\mathop{\mathrm {im}}\nolimits}
\newcommand{\Tr}{\mathop{\mathrm {Tr}}\nolimits}
\newcommand{\Gal}{\mathop{\mathrm {Gal}}\nolimits}
\newcommand{\Gr}{\mathop{\mathrm {Gr}}\nolimits}
\newcommand{\Sym}{\mathop{\mathrm {Sym}}\nolimits}
\def\A{{\mathbb{A}}}%
\def\Z{{\mathbb{Z}}}%
\def\Q{{\mathbb{Q}}}%
\def\Zp{{\mathbb{Z}_p}}%
\def\Qp{{\mathbb{Q}_p}}%
\def\C{{\mathbb{C}}}
\def\pa{{\mathrm{par}}}%
\def\Res{{\mathrm{Res}}}%
\def\res{{\mathrm{res}}}%
\def\ideal#1{{\mathfrak{#1}}}%
\def\integer#1{{\mathcal{O}_{#1}}}%
\def\line#1{{\overline{#1}}}%
\def\cal#1{{\mathcal{#1}}}%
\def\mod#1{{\mathrm{mod}\, #1}}%
\def\intii#1#2#3#4#5#6{{\int_{#1}^{#2}\int_{#3}^{#4}\cdots\int_{#5}^{#6}}}%
\def\inti#1#2#3#4#5#6{{\int_{#1}^{#2}\cdots\int_{#3}^{#4}\int_{#5}^{#6}}}%
\def\integral#1#2#3#4{{\int_{#1}^{#2}\cdots \int_{#3}^{#4}}}%
\def\G_m^#1{{\mathbb{G}_m^{#1}}}%
\begin{document}

\setcounter{tocdepth}{1} 
\numberwithin{equation}{section}


\title[congruences between Hilbert modular forms]
{congruences between Hilbert modular forms of weight $\mathbf{2}$, and special values of their $L$-functions}
\author{Yuichi Hirano}
\address{Graduate School of Mathematical Sciences, The University of
Tokyo, 8-1 Komaba 3-chome, Meguro-ku, Tokyo, 153-8914, Japan}
\email{yhirano@ms.u-tokyo.ac.jp}
\date{\today}

\begin{abstract}
The purpose of this paper is to show how a congruence between (the Fourier coefficients of) a Hilbert cusp form and a Hilbert Eisenstein series of parallel weight $2$ gives rise to congruences between algebraic parts of critical values of their $L$-functions. 
This is a generalization of a result of V. Vatsal.
\end{abstract}

\maketitle
\setcounter{section}{-1}

\section{Introduction}
%

%
\subsection{Introduction}
%

In this paper, we study a way to obtain congruences between special values of $L$-functions from a congruence between 
a Hilbert cusp form and a Hilbert Eisenstein series of parallel weight $2$. 
Our result is a generalization of the work of V. Vatsal \cite{Vat} for elliptic modular forms.

Let $F$ be a totally real number field of degree $n$ with narrow class number $h_F^+=1$. 
Let $\Delta_F$ denote the discriminant of $F$. 
Let $\mathfrak{n}$ be an integral ideal of $F$ such that 
$\mathfrak{n}$ is prime to $6\Delta_F$. 
Let $p$ be a prime number such that $p\ge n+2$ and $p$ is prime to $6\mathfrak{n}\Delta_F$. 
We fix algebraic closures $\overline{\Q}$ of $\Q$ and $\overline{\Q}_p$ of $\Qp$ and embeddings $\overline{\Q} \hookrightarrow \overline{\Q}_p \hookrightarrow \C$. 
Let $\integer{}$ be the ring of integers of a finite extension $K$ of $\Qp$ 
and $\varpi$ a uniformizer of $\integer{}$. 
Let $M_2(\ideal{n},\integer{})$ (resp. $S_2(\ideal{n},\integer{})$) denote the space of Hilbert modular (resp. cusp) forms of parallel weight $2$ and level $\ideal{n}$ with coefficients in $\integer{}$ (see (\ref{integral MF})).
Let $Y(\ideal{n})$ be the Shimura variety $\Gamma_{1}(\ideal{d}_F[t_1],\ideal{n})\backslash \mathfrak{H}^{\mathrm{Hom}(F, \mathbb{R})}$ defined by $($\ref{adele HMV}$)$, and let $Y(\ideal{n})^{\mathrm{BS}}$ be the Borel--Serre compactification of $Y(\ideal{n})$ $($cf. \S \ref{Borel--Serre}$)$. 
Let $C$ denote the set of all cusps of $Y(\ideal{n})$, and let $D_s$ denote the boundary of $Y(\ideal{n})^{\mathrm{BS}}$ at $s\in C$.
Let $C_{\infty}$ be the subset of $C$ consisting of cusps $\Gamma_0(\ideal{d}_F [t_1],\ideal{n})$\nobreakdash-equivalent to the cusp $\infty$ (where $\Gamma_0(\ideal{d}_F [t_1],\ideal{n})$ is the congruence subgroup defined in \S\ref{Analytic HMV}). 
Let $D_{C_{\infty}}(\ideal{n})$ denote the union of $D_s$ for all $s\in C_{\infty}$.

\begin{thm}[=Theorem \ref{thm:congruence}]\label{main theorem}
Let $\varphi$ and $\psi$ be totally even $($resp. totally odd$)$ $\integer{}$-valued narrow ray class characters of $F$ of conductor $\ideal{m}_{\varphi}$ and $\ideal{m}_{\psi}$ such that $\ideal{m}_{\varphi\psi}=\ideal{m}_{\varphi}\ideal{m}_{\psi}=\ideal{n}$ and $\epsilon$ the character $\sgn^{\Hom(F,\mathbb{R})}$ $($resp. $\textbf{1}$$)$ of the Weyl group $W_G$ $($for the definition, see before Proposition \ref{anti Mellin}$)$. 
Put $\chi=\varphi\psi$. 
Let $\mathbf{E}$ denote the Hilbert Eisenstein series $\mathbf{E}_2(\varphi,\psi)\in M_2(\ideal{n},\integer{})$ associated to the pair $(\varphi,\psi)$ with character $\chi$ $($see Proposition \ref{Hilbert Eisenstein}$)$.
Assume that $\varphi$ is non\nobreakdash-trivial and the algebraic Iwasawa $\mu$-invariants of $\overline{\Q}^{\ker (\varphi)}$ and $\overline{\Q}^{\ker (\psi)}$ $($for the definition, see \cite[\S 13.3]{Was}$)$ are $0$. 
Let $\mathbf{f}\in S_2(\ideal{n},\integer{})$ be a normalized Hecke eigenform for all Hecke operators with character $\chi$. 
We assume the following four conditions$:$
\begin{enumerate}[$(a)$] 
\item $\mathbf{f} \equiv \mathbf{E}\ (\bmod \ \varpi)$ $($for the definition, see before Theorem \ref{thm:congruence}$)$$;$

\item the local components $H^{n}(\partial \left(Y(\ideal{n})^{\mathrm{BS}}\right),\integer{})_{\ideal{m}}$ and $H_c^{n+1}(Y(\ideal{n}),\integer{})_{\ideal{m}}$ are torsion-free, where $\ideal{m}$ is the Eisenstein maximal ideal of $\cal{H}_2(\ideal{n},\integer{})$ defined before Theorem \ref{cong mod}$;$

\item the local component $H^{n}(D_{C_{\infty}}(\ideal{n}),\integer{})_{\ideal{m}_{\mathbf{E}}'}$ is torsion-free, where $\ideal{m}_{\mathbf{E}}'$ is the maximal ideal of $\mathbb{H}_2(\ideal{n},\integer{})'$ defined before Proposition \ref{H^{n-1} vanishing}$;$

\item there exists a prime ideal $\ideal{q}$ of $\ideal{o}_F$ dividing $\ideal{n}$ such that $C(\ideal{q},\mathbf{E})\not\equiv N(\ideal{q})\ (\bmod \varpi)$, 
where $C(\ideal{q},\mathbf{E})$ denotes the $U(\ideal{q})$-eigenvalue $\varphi(\ideal{q})+\psi(\ideal{q})N(\ideal{q})$ of $\mathbf{E}$. 
\end{enumerate}
Then there exist $\Omega_{\mathbf{f}}^{\epsilon}\in \C^{\times}$ and $u \in \integer{}^{\times}$ such that, 
for every narrow ray class character 
$\eta$ of $F$ of conductor $\ideal{m}_{\eta}$ such that $\ideal{n}|\ideal{m}_{\eta}$ and 
$\eta=\epsilon$ on $W_G\simeq \A_{F,\infty}^{\times}/\A_{F,\infty,+}^{\times}$, 
the both values $\tau({\eta}^{-1})D(1,\mathbf{f},\eta)/(2\pi \sqrt{-1})^n\Omega_{\mathbf{f}}^{\epsilon}$ and $\tau(\eta^{-1})D(1,\mathbf{E},\eta)/(2\pi \sqrt{-1})^n$ belong to $\integer{}(\eta)$ and the following congruence holds$:$ 
\begin{align*}
&\tau(\eta^{-1})\frac{D(1,\mathbf{f},\eta)}{(2\pi\sqrt{-1})^n\Omega_{\mathbf{f}}^{\epsilon}}
= u \tau(\eta^{-1})\frac{D(1,\mathbf{E},\eta)}{(2\pi\sqrt{-1})^n} 
\ \ \text{in}\ \ \integer{}(\eta)/\varpi.
\end{align*}
Here $\tau(\eta^{-1})$ denotes the Gauss sum attached to $\eta^{-1}$ $($see $($\ref{Gauss})$)$, $D(1,\ast,\eta)$ is given by the Dirichlet series in the sense of G. Shimura $($see $($\ref{L-function}$)$$)$, and 
$\integer{}(\eta)$ denotes the ring of integers of the field generated by $\im(\eta)$ over $K$. 
\end{thm}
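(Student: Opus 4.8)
The plan is to realize both critical values cohomologically and then transport the Fourier-coefficient congruence $(a)$ through a comparison of cuspidal and Eisenstein cohomology. First I would invoke the anti-Mellin transform of Proposition \ref{anti Mellin}, which for a character $\eta$ with $\eta=\epsilon$ on $W_G$ produces an explicit integral $n$-cycle (a ``twisted modular symbol'') $\xi_\eta$ in the relative homology of $Y(\ideal{n})^{\mathrm{BS}}$, supported near the cusps in $C_\infty$, and characterized by the property that pairing a parallel weight-$2$ class $\omega_{\mathbf{g}}$ against $\xi_\eta$ reproduces $\tau(\eta^{-1})D(1,\mathbf{g},\eta)/(2\pi\sqrt{-1})^n$ for $\mathbf{g}\in\{\mathbf{f},\mathbf{E}\}$. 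The Gauss sum $\tau(\eta^{-1})$ is the constant relating the additive Fourier expansion of $\mathbf{g}$ along the cusp to the multiplicative character $\eta$, and the sign $\epsilon$ forces the pairing to detect the correct $W_G$-eigencomponent. This reduces the whole statement to an assertion about two cohomology classes paired against the single cycle $\xi_\eta$.

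Second, for integrality I would fix $\Omega_{\mathbf{f}}^{\epsilon}\in\C^{\times}$ so that the normalized Betti class $\omega_{\mathbf{f}}/\Omega_{\mathbf{f}}^{\epsilon}$ is an $\integer{}$-generator of the rank-one $\epsilon$-eigenspace of the $\mathbf{f}$-isotypic summand of $H^{n}(Y(\ideal{n}),\integer{})$; its pairing with the integral cycle $\xi_\eta$ then lands in $\integer{}(\eta)$, giving integrality of the left-hand value. For $\mathbf{E}$, integrality of $\tau(\eta^{-1})D(1,\mathbf{E},\eta)/(2\pi\sqrt{-1})^n$ follows from $\mathbf{E}\in M_2(\ideal{n},\integer{})$ together with the explicit factorization of $D(1,\mathbf{E},\eta)$ into Hecke $L$-values (generalized Bernoulli numbers) underlying Proposition \ref{Hilbert Eisenstein}, i.e. from the Eisenstein boundary computation.

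Third, for the congruence I would pass to the localizations at the Eisenstein maximal ideal $\ideal{m}$ and work inside the long exact sequence relating $H_c^{\ast}(Y(\ideal{n}),\integer{})$, $H^{\ast}(Y(\ideal{n}),\integer{})$ and the boundary cohomology $H^{\ast}(\partial(Y(\ideal{n})^{\mathrm{BS}}),\integer{})$. Hypothesis $(b)$ guarantees that, after localizing at $\ideal{m}$, the groups $H^{n}(\partial(Y(\ideal{n})^{\mathrm{BS}}))_{\ideal{m}}$ and $H_c^{n+1}(Y(\ideal{n}))_{\ideal{m}}$ are torsion-free, so the interior class of $\mathbf{f}$ restricts faithfully to the boundary and the comparison with the Eisenstein class can be made integrally rather than merely up to torsion. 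Condition $(d)$, namely $C(\ideal{q},\mathbf{E})\not\equiv N(\ideal{q})\ (\bmod\ \varpi)$, renders the relevant $U(\ideal{q})$-isotypic projector a unit at $\ideal{m}$ and thereby isolates, among the boundary pieces $D_s$, exactly the $C_\infty$-part $D_{C_\infty}(\ideal{n})$ on which $\mathbf{E}$ is supported and against which $\xi_\eta$ pairs; hypothesis $(c)$ then supplies the torsion-freeness of $H^{n}(D_{C_\infty}(\ideal{n}),\integer{})_{\ideal{m}_{\mathbf{E}}'}$ needed to read off the Eisenstein class as an $\integer{}$-generator. Combining these, the congruence $(a)$ — equivalently, the congruence of Hecke eigenvalues at $\ideal{m}$ — forces the normalized integral class of $\mathbf{f}$ to agree, up to a single unit $u\in\integer{}^{\times}$ (the intrinsic comparison of the two rank-one integral generators, independent of $\eta$), with the normalized Eisenstein boundary class modulo $\varpi$. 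Pairing both sides against $\xi_\eta$ and applying the two formulae from the first step yields the displayed congruence.

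The main obstacle is this integral comparison of the cuspidal and Eisenstein classes along the boundary. The $\mu$-invariant hypotheses on $\line{\Q}^{\ker(\varphi)}$ and $\line{\Q}^{\ker(\psi)}$ enter precisely here: through Iwasawa theory they control the order of the relevant congruence module, ensuring that the boundary cohomology localized at $\ideal{m}$ is $\varpi$-torsion-free and that the Eisenstein class genuinely \emph{detects}, rather than absorbs into torsion, the congruence of $L$-values. Without $\mu=0$ the connecting maps in the localized exact sequence could acquire torsion, and the period comparison would fail to descend cleanly modulo $\varpi$; controlling this torsion, and matching the two period normalizations so that the discrepancy is exactly the single $\eta$-independent unit $u$, is the technical heart of the argument.
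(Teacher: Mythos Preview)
Your outline captures the broad architecture—realize both $L$-values as pairings of relative cohomology classes against a modular-symbol cycle, then reduce to a congruence of integral cohomology classes—but the central step is missing, and several details are inverted.

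The gap is in your third paragraph. You assert that the Hecke congruence $(a)$ ``forces the normalized integral class of $\mathbf{f}$ to agree, up to a unit $u$, with the normalized Eisenstein boundary class modulo $\varpi$.'' This is exactly Theorem~\ref{partial one}, and it is the heart of the matter; the paper devotes all of \S\ref{comparison} to it. A congruence of Hecke eigenvalues does \emph{not} formally imply a congruence of eigenclasses: one needs a mod-$\varpi$ multiplicity-one statement. In the irreducible residual case this is Dimitrov's theorem, but here $\overline{\rho}_{\mathbf{f}}$ is reducible, and the paper's argument is genuinely new: it passes through the comparison isomorphism with log-crystalline cohomology, uses Fontaine--Laffaille theory, and exploits that $\mathrm{Fil}^n$ of the relevant $\kappa$-pieces is one-dimensional (Propositions~\ref{Fil f mod pi} and~\ref{Eis HT}). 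Your sketch gives no mechanism for this step.

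Several points are also reversed. First, $\mathbf{f}$ is cuspidal, so its class does \emph{not} ``restrict faithfully to the boundary''; it restricts to zero. What is needed is the converse: that $[\omega_{\mathbf{E}}]\bmod\varpi$ lands in parabolic cohomology, which the paper obtains from $\mathrm{res}([\omega_{\mathbf{E}}])\equiv 0\bmod\varpi$ via $\mathbf{E}-\mathbf{f}\in\varpi M_2(\ideal{n},\integer{})$ (Proposition~\ref{prop:Eis mod pi}). Second, $\mathbf{E}$ \emph{vanishes} at the cusps in $C_{\infty}$ (since $\varphi$ is non-trivial); its non-zero constant terms lie on the complementary cusps, so the $C_{\infty}$-part is precisely where $\mathbf{E}$ is \emph{not} supported. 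Third, the $\mu=0$ hypothesis is not used to control boundary torsion; in the paper it enters in Theorem~\ref{cong mod} to bound the denominator of $[\omega_{\mathbf{E}}]$ via the Iwasawa main conjecture, ultimately proving the integrality and $\bmod\ \varpi$ non-vanishing of $[\omega_{\mathbf{E}}]$ (Corollary~\ref{thm:integral}). Finally, integrality of the Eisenstein $L$-value is \emph{not} obtained by factoring into Bernoulli numbers; it comes from evaluating the integral class $[\omega_{\mathbf{E}}]_{\mathrm{rel}}$ on integral cycles, and that integrality is a theorem, not an input.
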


This result is a generalization of the result of Vatsal \cite{Vat} in the case where $F=\Q$ and weight $k=2$. 
However, the methods to prove the main theorem have some limitations, such as the need for the torsion-freeness of the compact support cohomology and the boundary cohomology. 
In the case where $F$ is a real quadratic field, the torsion-freeness is satisfied under some conditions (see Proposition \ref{prop:ab torsion-free} and \ref{prop:bou torsion-free}).
We also give an example of a congruence between a Hilbert cusp form and a Hilbert Eisenstein series satisfying all the assumptions of the main theorem (Example \ref{Example cong}).

\begin{rem}\label{rem:Fe--Wa}
(1)
The assumption on the algebraic Iwasawa $\mu$-invariants of $\overline{\Q}^{\ker (\varphi)}$ and $\overline{\Q}^{\ker (\psi)}$ is satisfied 
if $\overline{\Q}^{\ker (\varphi)}$ and $\overline{\Q}^{\ker (\psi)}$ are abelian extensions over $\Q$ by Ferrero\nobreakdash-Washington theorem (see, for example, \cite[\S 7.5, Theorem 7.15]{Was}).

(2)
The assumption $\ideal{n}|\ideal{m}_{\eta}$ is used in a cohomological description of the special value $\tau(\eta^{-1})D(1,\mathbf{E},\eta)/(2\pi\sqrt{-1})^n$ as follows. 
Since $\varphi$ is non-trivial, $\mathbf{E}$ vanishes at cusps $\Gamma_0(\ideal{d}_F [t_1],\ideal{n})$ equivalent to the cusp $\infty$.
The assumption $\ideal{n}|\ideal{m}_{\eta}$ 
allows us to describe the special value in terms of Mellin transforms relevant to cusps $\Gamma_0(\ideal{d}_F [t_1],\ideal{n})$-equivalent to $\infty$ (Proposition \ref{Modular symbol} and \ref{Modular symbol anti-hol}).
\end{rem}

We give an outline of the proof of the main theorem (Theorem \ref{main theorem}=Theorem \ref{thm:congruence}) below in order to clarify its complicated structure, the methods used, and the places where the assumptions are necessary.
The proof consists of five steps.

\textbf{Step 1.} To prove Mellin transforms for the relative cohomology classes of $\mathbf{E}$ and ${\mathbf{f}}$.

Since $\varphi$ is non-trivial, $\mathbf{E}=\mathbf{E}_2(\varphi,\psi)$ vanishes at every $s\in C_{\infty}$.
Therefore we can define the relative cohomology class $[\omega_{\mathbf{E}}]_{\mathrm{rel}}$ (resp. $[\omega_{\mathbf{f}}]_{\mathrm{rel}}$) associated to $\mathbf{E}$ (resp. ${\mathbf{f}}$) in $H^n(Y(\ideal{n})^{\mathrm{BS}},D_{C_{\infty}}(\ideal{n});\C)$, whose image in $H^n(Y(\ideal{n}),\C)$ is the cohomology class $[\omega_{\mathbf{E}}]$ (resp. $[\omega_{\mathbf{f}}]$) associated to $\mathbf{E}$ (resp. $\mathbf{f}$). 
Then we prove that the value $\tau(\eta^{-1})D(1,\mathbf{E},\eta)/(2\pi\sqrt{-1})^n$ (resp. $\tau(\eta^{-1})D(1,\mathbf{f},\eta)/(2\pi\sqrt{-1})^n$) is expressed as a linear combination of the images of $[\omega_{\mathbf{E}}]_{\mathrm{rel}}$ (resp. $[\omega_{\mathbf{f}}]_{\mathrm{rel}}$) under the evaluation maps with integral coefficients (Proposition \ref{Modular symbol} and \ref{Modular symbol anti-hol}), where we use the assumption that weight $k=2$. The proof is based on the method of T. Oda \cite{Oda}, H. Hida \cite{Hida94}, and T. Ochiai \cite{Ochi} for a Hilbert cusp form.  
By the assumption $\ideal{n}|\ideal{m}_{\eta}$, we can generalize the Mellin transform to a Hilbert modular form vanishing at every $s\in C_{\infty}$ as mentioned in Remark \ref{rem:Fe--Wa} (2).

\textbf{Step 2.} To prove the integrality of the restriction of the cohomology class associated to a Hilbert modular form to the boundary.

For $\mathbf{h} \in M_2(\ideal{n},\integer{})$ and $s\in C$, we prove that the image of the cohomology class $[\omega_{\mathbf{h}}]$ of $\mathbf{h}$ belonging to $H^n(Y(\ideal{n}),\C)=H^n(Y(\ideal{n})^{\mathrm{BS}},\C)$ under the restriction map to $H^n(D_s,\C)$ is integral, that is, $\mathrm{res}([\omega_{\mathbf{h}}]) \in H^n(D_s,\integer{})/(\text{$\integer{}$-torsion})$ (see Proposition \ref{const}).
For the proof, we express the image of $[\omega_{\mathbf{h}}]$ in the group cohomology $H^n(\Gamma_{1}(\ideal{d}_F[t_1],\ideal{n}),\C)$ in terms of multiple integrals (see Proposition-Definition \ref{def:cocycle}) (following the method of H. Yoshida \cite{Yo} in the case where $F$ is a real quadratic field), and we explicitly compute its restriction to the boundary 
(generalizing the method of G. Stevens \cite{Ste1} in the case where $F=\Q$).
The author does not know any other means to prove the integrality, for instance, using de Rham cohomology.

\textbf{Step 3.} To prove the integrality of $[\omega_{\mathbf{E}}]_{\mathrm{rel}}$ and $[\omega_{\mathbf{f}}]_{\mathrm{rel}}/\Omega_{\mathbf{f}}$.

For the Eisenstein series $\mathbf{E}$, we first prove the rationality of $[\omega_{\mathbf{E}}]$ and $[\omega_{\mathbf{E}}]_{\mathrm{rel}}$, that is, $[\omega_{\mathbf{E}}]\in H^n(Y(\ideal{n}),K)$ (Proposition \ref{prop:rational'}) and $[\omega_{\mathbf{E}}]_{\mathrm{rel}}\in H^n(Y(\ideal{n})^{\mathrm{BS}},D_{C_{\infty}}(\ideal{n});K)$ (Proposition \ref{prop:rational});
the latter follows from the former and a vanishing result on $H^{n-1}(D_{C_{\infty}}(\ideal{n}),\C)$ (Proposition \ref{H^{n-1} vanishing}).
The proof of the vanishing result is based on an explicit computation of the action of Hecke operators at places dividing the level $\ideal{n}$ on $H^{n-1}(D_{C_{\infty}}(\ideal{n}),\C)$, where we use the assumptions that $h_F^+=1$ and weight $k=2$. 
Moreover, under the assumption on the algebraic Iwasawa $\mu$\nobreakdash-invariants and the assumptions (b), (c), and (d), we prove the integrality of $[\omega_{\mathbf{E}}]$ and $[\omega_{\mathbf{E}}]_{\mathrm{rel}}$, that is, $[\omega_{\mathbf{E}}]\in H^n(Y(\ideal{n}),\integer{})/(\text{$\integer{}$-torsion})$ and $[\omega_{\mathbf{E}}]_{\mathrm{rel}}\in H^n(Y(\ideal{n})^{\mathrm{BS}},D_{C_{\infty}}(\ideal{n});\integer{})/(\text{$\integer{}$-torsion})$, and further the mod $\varpi$ non-vanishing of $[\omega_{\mathbf{E}}]$ and $[\omega_{\mathbf{E}}]_{\mathrm{rel}}$ (Corollary \ref{thm:integral}).
The proof is based on the method of C. Skinner (in the case where $F=\Q$) and T. Berger \cite{Be} (in the case where $F$ is an imaginary quadratic field);
our result follows from the Mellin transform for the relative cohomology class $[\omega_{\mathbf{E}}]_{\mathrm{rel}}$ (mentioned in Step 1), the integrality $\res([\omega_{\mathbf{E}}])\in H^n(\partial\left(Y(\ideal{n})^{\text{BS}}\right),\integer{})/(\text{$\integer{}$-torsion})$ (mentioned in Step 2), and the Iwasawa main conjecture for totally real number fields (proved by A. Wiles \cite{Wil}). 

For the cusp form $\mathbf{f}$, we prove that there exists $\Omega_{\mathbf{f}}\in \C^{\times}$ such that the class $[\omega_{\mathbf{f}}]/\Omega_{\mathbf{f}}$ belongs to $H_{\pa}^n(Y(\ideal{n}),\integer{})/(\text{$\integer{}$-torsion})$ and its reduction modulo $\varpi$ does not vanish. 
The key ingredients of the proof are the Hecke-equivariance of the canonical homomorphism $H_{\pa}^n(Y(\ideal{n}),K) \hookrightarrow H_{\pa}^n(Y(\ideal{n}),\C)$ induced by the fixed embedding $K\hookrightarrow \C$ and the Eichler\nobreakdash-Shimura-Harder isomorphism $H_{\pa}^n(Y(\ideal{n}),\C)[\epsilon]\simeq S_2(\ideal{n},\C)$ as Hecke modules (see (\ref{+,+ decomp})), where the left-hand side is the $\epsilon$\nobreakdash-isotypic part.
The proof of the Eichler-Shimura-Harder isomorphism is based on an explicit computation of the action of $W_G$ on the space of invariant differential forms, where we use the assumption $h_F^+=1$. 

\textbf{Step 4.} To prove the main theorem.

We prove the congruence between the special values $\tau(\eta^{-1})D(1,\mathbf{E},\eta)/(2\pi\sqrt{-1})^n$ and $\tau(\eta^{-1})D(1,\mathbf{f},\eta)/(2\pi\sqrt{-1})^n$. 
It follows from the Mellin transforms for the relative cohomology classes $[\omega_{\mathbf{E}}]_{\mathrm{rel}}$ and $[\omega_{\mathbf{f}}]_{\mathrm{rel}}$ (mentioned in Step 1) and the congruence between $[\omega_{\mathbf{E}}]_{\mathrm{rel}}$ and $[\omega_{\mathbf{f}}]_{\mathrm{rel}}/\Omega_{\mathbf{f}}$ (mentioned in Step 5).

\textbf{Step 5.} To prove the congruence between $[\omega_{\mathbf{E}}]_{\mathrm{rel}}$ and $[\omega_{\mathbf{f}}]_{\mathrm{rel}}/\Omega_{\mathbf{f}}$. 

We prove the congruence between the integral cohomology classes $[\omega_{\mathbf{E}}]$ and $[\omega_{\mathbf{f}}]/\Omega_{\mathbf{f}}$ in the parabolic cohomology $\widetilde{H}_{\pa}^n(Y(\ideal{n}),\integer{})/\varpi$ (Theorem \ref{partial one}).
Moreover, we lift the congruence to the relative cohomology classes $[\omega_{\mathbf{E}}]_{\mathrm{rel}}$ and $[\omega_{\mathbf{f}}]_{\mathrm{rel}}/\Omega_{\mathbf{f}}$ in $\widetilde{H}^n(Y(\ideal{n})^{\text{BS}},D_{C_{\infty}}(\ideal{n});\integer{})/\varpi$ by using Proposition \ref{H^{n-1} vanishing} (mentioned in Step 3). 
The proof of the former is based on multiplicity one results for the $\mathbf{E}$-part $\widetilde{\overline{M}}_{\mathbf{E}}$ and $\mathrm{Fil}^n(\widetilde{\overline{M}}_{\mathbf{f}})$ of the $\mathbf{f}$-part $\widetilde{\overline{M}}_{\mathbf{f}}$, and integral $p$-adic Hodge theory. 
Here $\widetilde{\overline{M}}_{\mathbf{E}}$ (resp. $\widetilde{\overline{M}}_{\mathbf{f}}$) is the quotient $\widetilde{M}_{\mathbf{E}}/\varpi$ (resp. $\widetilde{M}_{\mathbf{f}}/\varpi$) of the torsion-free $\mathbf{E}$-part (resp. $\mathbf{f}$-part) $\widetilde{M}_{\mathbf{E}}$ (resp. $\widetilde{M}_{\mathbf{f}})$ of the integral log\nobreakdash-crystalline cohomology by $\varpi$ (for the definition, see \S \ref{Pf of main}, \S\ref{main result}). 
By the theory of Eisenstein cohomology and the $q$-expansion principle over $\C$, we prove that the dimension of $\widetilde{\overline{M}}_{\mathbf{E}}$ over $\integer{}/\varpi$ is $1$ and $\widetilde{\overline{M}}_{\mathbf{E}}=\mathrm{Fil}^n(\widetilde{\overline{M}}_{\mathbf{E}})$ (Proposition \ref{Eis HT}), where we use the assumption (d) on Hecke eigenvalues at places dividing the level.
Since the dimension of $\mathrm{Fil}^n(\widetilde{\overline{M}}_{\mathbf{f}})$ over $\integer{}/\varpi$ is also $1$, we obtain $\widetilde{\overline{M}}_{\mathbf{E}}=\mathrm{Fil}^n(\widetilde{\overline{M}}_{\mathbf{E}})\simeq \mathrm{Fil}^n(\widetilde{\overline{M}}_{\mathbf{f}})$ (Proposition \ref{Fil f mod pi} and \S \ref{Pf of main}), where the second isomorphism follows from the assumptions (b) and (c), and the congruence (a) between Hecke eigenvalues including at places dividing the level. Now the assertion follows from integral $p$-adic Hodge theory (\S \ref{Pf of main}).
The result of this step may be regarded as an analogue of the multiplicity one theorem for modulo $p$ parabolic cohomology in the case where the residual Galois representation $\overline{\rho}_\textbf{f}\ (=\rho_{\textbf{f}}\ (\bmod \varpi))$ associated to $\textbf{f}$ is reducible. 
When $\overline{\rho}_{\textbf{f}}$ is irreducible, under some assumptions, the multiplicity one theorem has been proved by M. Dimitrov \cite{Dim2} for a general totally real number field. \\

The organization of this paper is as follows. 

In \S \ref{Hilbert}, 
we summarize results on Hilbert modular varieties and Hilbert modular forms in the analytic and algebraic settings. 
Moreover, we state basic properties of Hilbert Eisenstein series (Proposition \ref{Hilbert Eisenstein} and \ref{Const of Eisenstein}), which are of great utility in the following sections. 

In \S \ref{subsection:Mellin}, we give a cohomological description of special values of $L$-functions associated to a Hilbert modular form vanishing at cusps $\Gamma_0(\ideal{d}_F [t_1],\ideal{n})$-equivalent to $\infty$ (Proposition \ref{Modular symbol} and \ref{Modular symbol anti-hol}). The evaluation of the associated cohomology class on Hilbert modular cycles produces special values of $L$-functions. 

In \S \ref{subsection:Rationality}, we prove the integrality of the restriction of the cohomology class associated to a Hilbert Eisenstein series to the boundary of the Borel--Serre compactification of the Hilbert modular variety (Proposition \ref{const}). 

In \S \ref{subsection:Eisenstein cohomology and ESH},
we recall the theory of Eisenstein cohomology and the Eichler\nobreakdash--Shimura--Harder homomorphism. 
We prove the Eichler\nobreakdash--Shimura--Harder isomorphism (\ref{+,+ decomp}) for the $\epsilon$-part.

In \S \ref{subsection:Rationality and Integrality}, we generalize Stevens's result \cite{Ste2} on the integrality of the cohomology class associated to an elliptic modular form. 
We prove the integrality of the cohomology class $[\omega_{\mathbf{E}}]$ associated to $\mathbf{E}$ (Corollary \ref{thm:integral}).

In \S \ref{congruence}, 
we prove the main theorem (Theorem \ref{main theorem}=Theorem \ref{thm:congruence}). 
The key ingredient of our proof is the congruence between the integral cohomology classes of $\mathbf{f}$ and $\mathbf{E}$ in the parabolic cohomology, whose proof is postponed to \S \ref{comparison} (Theorem \ref{partial one}).
Combining with Proposition \ref{Modular symbol} and \ref{Modular symbol anti-hol}, we obtain the main theorem.

In \S\ref{comparison}, we prove the congruence between the integral cohomology classes of $\mathbf{f}$ and $\mathbf{E}$ in the parabolic cohomology (Theorem \ref{partial one}) by combining the theory of Eisenstein cohomology and integral $p$\nobreakdash-adic Hodge theory.

\tableofcontents

%
\subsection{Notation}
%

Let $\widehat{\Z}$ denote $\prod_{l}\Z_l$, where $l$ runs over all rational primes. 
We abbreviate $\A_{\Q}$, the ring of adeles of $\Q$, to $\A$. 
We fix a rational prime number $p>3$.
We fix algebraic closures $\line{\Q}$ of $\Q$ and $\line{\Q}_p$ of the field of $p$-adic numbers $\Qp$, 
and embeddings $\iota_p:\line{\Q} \to \line{\Q}_p$ and $\line{\Q}_p \rightarrow \C$,
where $\mathbb{C}$ denotes the field of complex numbers.

Let $F$ be a totally real number field unramified at $p$, $n$ the degree $[F\colon\Q]$ of the extension $F/\Q$, and $\mathfrak{o}_F$ the ring of integers of $F$.
For a place $v$ of $F$ (resp. a non-zero prime ideal $\ideal{q}$ of $\ideal{o}_F$), let $F_v$ (resp. $F_{\ideal{q}}$) denote the completion at $v$ (resp. the $\ideal{q}$-adic completion) of $F$.
Let $\ideal{o}_{F_{\ideal{q}}}$ denote the ring of integers of $F_{\ideal{q}}$, and $\widehat{\ideal{o}}_F$ the product of $\ideal{o}_{F_{\ideal{q}}}$ over all non-zero prime ideals $\ideal{q}$ of $\ideal{o}_F$.
Let $J_F$ denote the set of embeddings of $F$ into $\mathbb{R}$.
For $a\in F$ and $\iota\in J_F$, let $a^{\iota}$ denote $\iota(a)$.
We have $F\otimes_{\Q}\mathbb{R} \simeq \mathbb{R}^{J_F}$, and write $(F\otimes_{\Q}\mathbb{R})_+^{\times}$ for the subgroup of $(F\otimes_{\Q}\mathbb{R})^{\times}$ corresponding to $(\mathbb{R}_+^{\times})^{J_F}$, where $\mathbb{R}_+^{\times}$ denotes the multiplicative group of positive real numbers.
As usual, $\A_F$ denotes the ring of adeles of $F$, which is the product of the finite part $\A_{F,f}(\simeq \widehat{\ideal{o}}_F \otimes_{\ideal{o}_F}F)$ and the infinite part $\A_{F,\infty}(\simeq F\otimes_{\Q}\mathbb{R})$.
For $x\in \A_F$ and a place $v$ of $F$, $x_0$, $x_{\infty}$, and $x_v$ denote the finite component $\in \A_{F,f}$, the infinite component $\in \A_{F,\infty}$, and the $v$-component $\in F_v$ of $x$, respectively. 
For $x \in \A_F$, a subset $X$ of $\A_F$, and a non-zero ideal $\ideal{n}$ of $\ideal{o}_F$, we write $x_{\ideal{n}}$ and $X_{\ideal{n}}$ for the images of $x$ and $X$ in $\prod_{\ideal{q}|\ideal{n}} F_{\ideal{q}}$, where
$\ideal{q}$ denotes a non-zero prime ideal of $\ideal{o}_F$. 
Let $N$ denote the norm map $\mathrm{Nr}_{F/\Q}$ of the extension $F/\Q$, 
$\ideal{d}_F \subset \ideal{o}_F$ the different of $F$, and $\Delta_F$ the discriminant $N(\ideal{d}_F)$ of $F$, which is prime to $p$ by assumption. 
Let $\mathrm{Cl}_F^+$ denote the narrow ideal class group of $F$. 
We have an isomorphism 
$F^{\times}\backslash \A_F^{\times}/\widehat{\ideal{o}}_F^{\times}(F\otimes_{\Q}\mathbb{R})_+^{\times} \xrightarrow{\simeq} \mathrm{Cl}_F^+$ sending the class of $x\in \A_F^{\times}$ to the class of the fractional ideal
$[x]:=\prod_{\ideal{q}} \ideal{q}^{\mathrm{ord}_{\ideal{q}}(x_{\ideal{q}})}$, where $\ideal{q}$ runs over the set of all non-zero prime ideals of $\ideal{o}_F$. Let $D$ be an element of $\A_F^{\times}$ such that $[D] = \ideal{d}_F$ and $D_{\infty}=1$.

For a non-zero ideal $\ideal{b}$ of $\ideal{o}_F$, 
let $\mathrm{Cl}_F^+(\ideal{b})$ denote the narrow ray class group of $F$ modulo $\ideal{b}$.
By a narrow ray class character of $F$ modulo $\ideal{b}$, we mean a homomorphism $\chi : \mathrm{Cl}_F^+(\ideal{b}) \to \C^{\times}$.
The conductor of $\chi$ is the smallest divisor $\ideal{m}_{\chi}$ of $\ideal{b}$ such that $\chi$ factors through $\mathrm{Cl}_F^+(\ideal{m}_{\chi})$.
For a narrow ray class character $\chi$ of $F$ modulo $\ideal{b}$, there exists $r = (r_{\iota})_{\iota\in J_F}\in (\Z/2\Z)^{J_F}$ such that
\[
\chi((\alpha))=\sgn(\alpha)^r \ \ \ \  \text{for all}\ \ \ \ \alpha\in F^{\times}\ \ \ \ \text{satisfying}\ \ \ \ \alpha\equiv 1\ (\bmod \ideal{b}).
\]
Here $\sgn(x)$ for $x \in \mathbb{R}^{\times}$ denotes the sign of $x$ and $\sgn(\alpha)^r=\prod_{\iota\in J_F}\sgn(\alpha^{\iota})^{r_{\iota}}$, where we identify $J_F$ with the set of infinite places of $F$. 
We call $r$ the sign of $\chi$. 
We say that $\chi$ is totally even (resp. totally odd) if $r_{\iota} = 0$ (resp. $r_{\iota} = 1$) for all $\iota \in J_F$.

For an algebraic group $H$ defined over $\Q$, 
$H(\mathbb{R})$ is abbreviated to $H_{\infty}$ and 
$H_{\infty,+}$ denotes the connected component of $H_{\infty}$ containing the unit. 
Let $G$ be the reductive algebraic group $\Res_{F/\Q}(\mathrm{GL}_{2/F})$ over $\Q$, where $\Res_{F/\Q}$ denotes the Weil restriction of scalars. 
We have $G_{\infty}= \GL_2(\mathbb{R})^{J_F}$, $G_{\infty,+}= \GL_2(\mathbb{R})_+^{J_F}$, and $G(\A)=\GL_2(\A_F)$. 
Let $B$ denote the Borel subgroup of $G$ consisting of upper triangular matrices, and let $U$ denote its unipotent radical.

%
\subsection{Acknowledgement}
%

I would like to express my gratitude to Professor Takeshi Tsuji 
for providing helpful comments and suggestions, and 
pointing out mathematical mistakes during the course of my study. 
In particular, the work in $\S$\ref{comparison} would have been impossible without his insight and guidance.

%
\section{Hilbert modular varieties and Hilbert modular forms}\label{Hilbert}
%

%
\subsection{Analytic Hilbert modular varieties}\label{Analytic HMV}
%

In this subsection, we recall the definition of analytic Hilbert modular varieties. 
For more detail, refer to \cite[\S 1.1]{Dim2}. 

Let $\mathfrak{H}$ be the upper half plane $\{ z \in \mathbb{C} \mid \mathrm{Im}(z) > 0 \}$. 
The group $\GL_2(\mathbb{R})_+$ acts on $\mathfrak{H}$ by linear fractional transformations. 
We can extend the action to $\GL_2(\mathbb{R})$ by defining the action of $\begin{pmatrix}-1&0\\ 0&1\\\end{pmatrix}$ on $\mathfrak{H}$ by $z\mapsto -\bar{z}$. 
We define the action of $G_{\infty}=\GL_2(\mathbb{R})^{J_F}$ on $\mathfrak{H}^{J_F}$ by $(g_{\iota})_{\iota\in J_F}\cdot(z_{\iota})_{\iota\in J_F}=(g_{\iota}z_{\iota})_{\iota\in J_F}$.
Let $\underline{\textbf{i}}=(\sqrt{-1},\cdots,\sqrt{-1}) \in \mathfrak{H}^{J_F}$. 
Let $K_{\infty}$ and $K_{\infty,+}$ be the stabilizers of $\underline{\textbf{i}}$ in $G_{\infty}$ and $G_{\infty,+}$, respectively. 

For a non-zero ideal $\mathfrak{n}$ of $\mathfrak{o}_F$, we define the open compact subgroup $K_1(\ideal{n})$ of $G(\mathbb{A}_f)$ by 
\begin{align*}
&K_1(\mathfrak{n})=\left\{\begin{pmatrix}a&b\\c&d\\\end{pmatrix}\in G(\widehat{\Z})\bigg\vert c\in \mathfrak{n},d-1\in \mathfrak{n}\right\}. 
\end{align*}
The adelic Hilbert modular variety of level $K_1(\ideal{n})$ is defined by
\begin{align}\label{adele HMV}
Y(\ideal{n})&=G(\Q)\backslash G(\mathbb{A})/K_1(\ideal{n}) K_{\infty,+}\\
&\nonumber=G(\Q)_+\backslash G(\mathbb{A})_+/K_1(\ideal{n}) K_{\infty,+},
\end{align}
where $G(\A)_+=G(\A_f) G_{\infty,+}$ and $G(\Q)_+=G(\Q)\cap G_{\infty,+}$.

Then $Y(\ideal{n})$ is a disjoint union of finitely many arithmetic quotients of $\mathfrak{H}^{J_F}$ as follows. 
Let $\ideal{a}$ be a fractional ideal of $F$.
We consider the following congruence subgroups of $G(\Q)_+$:
\begin{align}\label{cong subgroup}
\Gamma_0(\ideal{a},\mathfrak{n})
&= \left\{ \begin{pmatrix}
a&b\\
c&d
\end{pmatrix} \in \begin{pmatrix}\mathfrak{o}_F&\ideal{a}^{-1}\\ \ideal{a}\mathfrak{n}&\mathfrak{o}_F\end{pmatrix} \bigg| ad-bc \in \mathfrak{o}_{F,+}^{\times} \right\},\\
\nonumber \Gamma_1(\ideal{a},\mathfrak{n})
&=\left\{ \begin{pmatrix}
a&b\\
c&d
\end{pmatrix} \in \Gamma_0(\ideal{a},\mathfrak{n}) \bigg| d\equiv  1 \bmod \mathfrak{n} \right\},\\
\nonumber \Gamma_1^1(\ideal{a},\mathfrak{n})
&=\Gamma_1(\ideal{a},\mathfrak{n})\cap \SL_2(F), 
\end{align}
where $\mathfrak{o}_{F,+}^{\times}$ denotes the subgroup of $\mathfrak{o}_F^{\times}$ consisting of totally positive units. 

Let $\mathrm{Cl}_F^+$ be the narrow ideal class group of $F$ and 
$h_F^+$ the narrow class number $\sharp\mathrm{Cl}_F^+$ of $F$. 
Choose and fix $t_1,\cdots,t_{h_F^+} \in \A_F^{\times}$ such that $t_{i,\infty}=1$ and the corresponding fractional ideals $[t_1],\cdots,[t_{h_F^+}]$ form a complete set of representatives of $\mathrm{Cl}_F^+$. 
We put 
\[
x_i=\begin{pmatrix}D^{-1}t_i^{-1}&0\\0&1\\\end{pmatrix}\in G(\A)_+. 
\]

We define the analytic Hilbert modular varieties $Y_i$ by 
\begin{align}\label{analytic HMV}
Y_i=\overline{\Gamma_1(\ideal{d}_F[t_i],\mathfrak{n})}\backslash \mathfrak{H}^{J_F}, 
\end{align}
where  $\overline{\Gamma}$ denotes $\Gamma/(\Gamma\cap F^{\times})$ for a congruence subgroup $\Gamma$ of $G(\Q)_+$. 
Then, by the strong approximation theorem, we have the following description of $Y(\ideal{n})$:
\begin{align}\label{adele var}
Y(\ideal{n})\simeq \coprod_{1\le i \le h_F^+} Y_i
\end{align}
given by sending the class of $x_i g\in Y(\ideal{n})$ to the class of $g \underline{\mathbf{i}}\in Y_i$ for $g\in G_{\infty,+}$.

We also need the following varieties: 
\begin{align}\label{adele var^1}
Y^1(\ideal{n})=\coprod_{1\le i \le h_F^+} Y_i^1, \ \ 
Y_i^1=\overline{\Gamma_1^1(\ideal{d}_F[t_i],\mathfrak{n})}\backslash \mathfrak{H}^{J_F}.
\end{align}

%
\subsection{Analytic Hilbert modular forms}\label{Analytic HMF}
%

In this subsection, we fix notation concerning the spaces of Hilbert modular forms, following \cite[\S 1.2]{Dim2}. 

Let $k$ be an integer $\ge 2$ and $\ideal{n}$ a non-zero ideal of $\ideal{o}_F$. 
Let $t=\sum_{\iota\in J_F}\iota \in \Z[J_F]$.

Let $M_k(\ideal{n},\C)$ (resp. $S_k(\ideal{n},\C)$) denote the $\C$-vector space $G_{kt,J_F}(K_1(\ideal{n}))$ (resp. $S_{kt,J_F}(K_1(\ideal{n}))$) of holomorphic Hilbert modular (resp. cusp) forms of weight $kt$ and of level $K_1(\ideal{n})$ defined in \cite[Definition 1.2]{Dim2}. 
Let $\chi$ be a Hecke character of $F$ of type $-(k-2)t$ whose conductor divides $\mathfrak{n}$. 
Let $M_k(\ideal{n},\chi,\C)$ (resp. $S_k(\ideal{n},\chi,\C)$) denote the subspace $G_{kt,J_F}(K_1(\ideal{n}),\chi)$ (resp. $S_{kt,J_F}(K_1(\ideal{n}),\chi)$) of $G_{kt,J_F}(K_1(\ideal{n}))$ (resp. $S_{kt,J_F}(K_1(\ideal{n}))$) of elements with character $\chi$ defined in \cite[Definition 1.3]{Dim2}.

For a fractional ideal $\ideal{a}$ of $F$, 
let $M_{k}(\Gamma_{1}(\ideal{a},\mathfrak{n}),\C)$ (resp. $S_{k}(\Gamma_{1}(\ideal{a},\mathfrak{n}),\C)$) denote the space $G_{kt,J_F}(\Gamma_{1}(\ideal{a},\mathfrak{n});\C)$ (resp. $S_{kt,J_F}(\Gamma_{1}(\ideal{a},\mathfrak{n});\C)$) of holomorphic Hilbert modular (resp. cusp) forms of weight $kt$ and of level $\Gamma_{1}(\ideal{a},\mathfrak{n})$ defined in \cite[Definition 1.4]{Dim2}.

Then we have canonical isomorphisms (cf. \cite[p.323]{Hida91} and \cite[(2.6a)]{Hida88}):
\begin{align}\label{isomorphism between spaces of HMF}
M_{k}(\mathfrak{n},\C) \simeq \bigoplus_{1\le i \le h_F^+}M_{k}(\Gamma_{1}(\ideal{d}_F [t_i],\mathfrak{n}),\C),\ \ \ \ \ \ 
S_{k}(\mathfrak{n},\C) \simeq \bigoplus_{1\le i \le h_F^+}S_{k}(\Gamma_{1}(\ideal{d}_F [t_i],\mathfrak{n}),\C).
\end{align}

%
\subsection{Hecke operators on analytic Hilbert modular forms}\label{Hecke}
%

Let $\ideal{n}$ be a non-zero ideal of $\ideal{o}_F$. 
In this subsection, we recall the definition of the Hecke operators acting on $M_k(\mathfrak{n},\C)$ and $S_k(\mathfrak{n},\C)$, following \cite[\S 1.10]{Dim2}. 

Let $\Delta(\mathfrak{n})$ be the following semigroup: 
\begin{align*}
\Delta(\mathfrak{n})&=
G(\A_f)\cap 
\left\{\begin{pmatrix}a&b\\c&d\\\end{pmatrix} \in M_2(\widehat{\mathfrak{o}}_F) \bigg\vert c \in \mathfrak{n}\widehat{\mathfrak{o}}_F,\  d \in \ideal{o}_{F_{\ideal{q}}}^{\times}\ \text{whenever}\ \ideal{q} \vert \mathfrak{n}  \right\},
\end{align*}
where $\ideal{q}$ is a non-zero prime ideal of $\ideal{o}_F$.
For $y \in \Delta(\mathfrak{n})$, we define the action of the double coset $K_1(\mathfrak{n}) y K_1(\mathfrak{n})$ on $M_k(\ideal{n},\C)$ (resp. $S_k(\ideal{n},\C)$) by 
\begin{align}\label{Hecke op analytic}
\textbf{f}\vert [K_1(\mathfrak{n}) y K_1(\mathfrak{n})](x)=\sum_{i} \textbf{f}(xy_i^{-1}),
\end{align}
where $K_1(\mathfrak{n}) y K_1(\mathfrak{n})=\coprod_i K_1(\mathfrak{n}) y_i$. 
By the definition of $M_k(\ideal{n},\C)$ and $S_k(\ideal{n},\C)$, the right\nobreakdash-hand side is independent of  the choice of the representative set $\{y_i\}_i$. 

We define the Hecke operator $T(\varpi_{\ideal{q}}^e)$ (resp. $S(\varpi_{\ideal{q}}^e)$) for a non-negative integer $e$, a non-zero prime ideal $\ideal{q}$ of $\mathfrak{o}_F$ (resp. prime ideal $\ideal{q}$ of $\mathfrak{o}_F$ prime to $\ideal{n}$), and a uniformizer $\varpi_{\ideal{q}}$ of $\mathfrak{o}_{F_{{}_{\ideal{q}}}}$ by the action of the double coset
$K_1(\ideal{n})\begin{pmatrix}\varpi_{\ideal{q}}^e&0\\0&1\\\end{pmatrix} K_1(\ideal{n})$ (resp.
$K_1(\ideal{n})\begin{pmatrix}\varpi_{\ideal{q}}^e&0\\0&\varpi_{\ideal{q}}^e\end{pmatrix} K_1(\ideal{n})$). 
We note that these operators are independent of the choice of $\varpi_{\ideal{q}}$.
We put $T(\ideal{q}^e)=T(\varpi_{\ideal{q}}^e)$ and $S(\ideal{q}^e)=S(\varpi_{\ideal{q}}^e)$ (resp. $U(\ideal{q}^e)=T(\varpi_{\ideal{q}}^e)$) for a non-negative integer $e$ and a non-zero prime ideal $\ideal{q}$ prime to $\ideal{n}$ 
(resp. prime ideal $\ideal{q}$ dividing $\ideal{n}$). 
We define 
$T(\ideal{m})=\prod_{\ideal{q}\nmid\ideal{n}}T(\ideal{q}^{e(\ideal{q})})$ and 
$S(\ideal{m})=\prod_{\ideal{q}\nmid\ideal{n}}S(\ideal{q}^{e(\ideal{q})})$
for all non-zero ideal 
$\ideal{m}=\prod_{\ideal{q}\nmid\ideal{n}}\ideal{q}^{e(\ideal{q})}$ of $\ideal{o}_F$ prime to $\ideal{n}$ and 
$U(\ideal{m})=\prod_{\ideal{q}|\ideal{n}}U(\ideal{q}^{e(\ideal{q})})$
for all non-zero ideal $\ideal{m}=\prod_{\ideal{q}|\ideal{n}}\ideal{q}^{e(\ideal{q})}$ of $\ideal{o}_F$, where $\ideal{q}$ is a non-zero prime ideal. 

The definition of the Hecke operators acting on $M_k(\Gamma_1(\ideal{a},\ideal{n}),\C)$ and $S_k(\Gamma_1(\ideal{a},\ideal{n}),\C)$ and their relation (via (\ref{isomorphism between spaces of HMF})) to the adelic ones recalled above are explicitly given in \cite[\S 2]{Shi}. 

For a subalgebra $A$ of $\C$, let 
$\mathbb{H}_k(\mathfrak{n},A)$ 
(resp. $\mathscr{H}_k(\mathfrak{n},A)$) be the commutative $A$-subalgebra of $\End_{\C}(M_k(\ideal{n},\C))$ (resp. $\End_{\C}(S_k(\ideal{n},\C))$) generated by $T(\ideal{m})$, $S(\ideal{m})$ 
for all non-zero ideal 
$\ideal{m}=\prod_{\ideal{q}\nmid\ideal{n}}\ideal{q}^{e(\ideal{q})}$ of $\ideal{o}_F$ prime to $\ideal{n}$ and 
$U(\ideal{m})$
for all non-zero ideal $\ideal{m}=\prod_{\ideal{q}|\ideal{n}}\ideal{q}^{e(\ideal{q})}$ of $\ideal{o}_F$. 

%
\subsection{Dirichlet series associated to Hilbert modular forms}\label{Diri}
%

In this subsection, we recall the definition and properties of the Dirichlet series associated to a Hilbert modular form, following \cite[\S 2]{Shi}.

Let $\textbf{h} \in M_k(\ideal{n},\C)$ and $h_i\in M_k(\Gamma_1(\ideal{d}_F[t_i],\ideal{n}),\C)$ such that $\textbf{h}=(h_i)_{1\le i \le h_F^+}$ under the isomorphism (\ref{isomorphism between spaces of HMF}). 
Then $h_i$ has the Fourier expansion of the form 
\begin{align}\label{i-th Fourier exp}
h_i(z)
=c_{\infty}([t_i]^{-1},\textbf{h})N([t_i])^{k/2}
+\sum_{0\ll \xi \in [t_i]}c(\xi[t_i]^{-1},\bold{h})N(\xi)^{k/2}e_F(\xi z)
\end{align}
given by \cite[(2.18)]{Shi} and \cite[Proposition 4.1]{Hida88}. 
Here the notion $\gg 0$ means totally positive, $\ideal{m} \mapsto c(\ideal{m},\textbf{h})$ is a function on the set of all fractional ideals of $F$ vanishing outside the set of integral ideals, and 
$e_F$ denotes the additive character of $F\backslash \A_F$ characterized by $e_F(x_{\infty})=\exp(2\pi \sqrt{-1}x_{\infty})$ for $x_{\infty}\in \A_{F,\infty}$.
We put 
\[
\text{$a_{\infty}(0,h_i)=c_{\infty}([t_i]^{-1},\textbf{h})N([t_i])^{k/2}$ and $a_{\infty}(\xi,h_i)=c(\xi[t_i]^{-1},\bold{h})N(\xi)^{k/2}$}
\]
for any $0\ll \xi\in [t_i]$. 
We also put 
\begin{align}
\label{Hecke constant}
C_{\infty,i}(0,\textbf{h})&=N([t_i])^{-k/2}a_{\infty}(0,h_i),\\
\label{Hecke eigenvalue}
C(\mathfrak{m},\textbf{h})&=N(\ideal{m})^{k/2}c(\mathfrak{m},\textbf{h})
\end{align}
for all non-zero ideals $\ideal{m}$ of $\ideal{o}_F$. 
Let $\eta$ be a finite Hecke character of $F$.
The Dirichlet series in the sense of Shimura \cite[(2.25)]{Shi} is defined by 
\begin{align}\label{L-function}
\sum_{\mathfrak{m}}C(\mathfrak{m},\textbf{h})
\eta(\mathfrak{m})\mathrm{N}(\mathfrak{m})^{-s} \ \ \text{for}\ \ s\in \C,
\end{align}
where $\mathfrak{m}$ runs over the set of all non-zero ideals of $\ideal{o}_F$. 
It converges absolutely if $\mathrm{Re}(s)$ is sufficiently large and extends to a meromorphic function on the complex plane (see, for example, \S \ref{twist Mellintrans} in this paper). 
For each $\textbf{h}\in M_{k}(\ideal{n},\C)$, let $D(s,\textbf{h},\eta)$ denote this meromorphic function. 
If $\eta$ is the trivial character, we simply write $D(s,\textbf{h})$ for $D(s,\textbf{h},\eta)$.

%
\subsection{Duality theorem between Hecke algebras and Hilbert modular forms}\label{Duality theorem}
%

Recall that, for $\textbf{h}=(h_i)_{1 \le i \le h_F^+} \in M_k(\ideal{n},\C)$, $h_i \in M_k(\Gamma_1(\ideal{d}_F[t_i],\ideal{n}),\C)$ has the Fourier expansion of the form (\ref{i-th Fourier exp}). 
For a subring $A$ of $\C$, we put
\begin{align*}
M_k(\Gamma_{1}(\ideal{d}_F [t_i], \mathfrak{n}),A)
&=M_k(\Gamma_{1}(\ideal{d}_F [t_i], \mathfrak{n}),\C) \cap A[[e_F(\xi z):\text{$\xi=0$ or $0\ll \xi \in F$}]],\\
S_k(\Gamma_{1}(\ideal{d}_F [t_i], \mathfrak{n}),A)
&=S_k(\Gamma_{1}(\ideal{d}_F [t_i], \mathfrak{n}),\C) \cap A[[e_F(\xi z):\text{$\xi=0$ or $0\ll \xi \in F$}]],
\end{align*}
\begin{align}\label{integral MF}
&M_k(\ideal{n},A)=\bigoplus_{1\le i \le h_F^+} M_k(\Gamma_{1}(\ideal{d}_F [t_i], \mathfrak{n}),A),&
&S_k(\ideal{n},A)=\bigoplus_{1 \le i\le h_F^+} S_k(\Gamma_{1}(\ideal{d}_F [t_i], \mathfrak{n}),A).&
\end{align}

By \cite[Theorem 4.11]{Hida88} and \cite[Theorem 2.2 (ii)]{Hida91}, 
the space $S_k(\ideal{n},A)$ (resp. $M_k(\ideal{n},A)$) is stable under the action of $\mathscr{H}_k(\ideal{n},A)$ (resp. $\mathbb{H}_k(\ideal{n},A)$). 
\begin{thm}[Duality theorem]\label{Duality}
Assume that $p$ is prime to the discriminant $\Delta_F$ of $F$. 
Let $K$ be a finite extension of the field $\Phi_p$ defined in Proposition \ref{const}, $\integer{}$ its ring of integers, and $\kappa$ the residue field. 
Assume that $\kappa$ contains the residue fields for all primes $\ideal{p}$ of $\ideal{o}_F$ over $p$. 
Then, for $A=K$ or $\integer{}$, the following $A$-bilinear map is a perfect pairing:
\[
\langle\ ,\ \rangle : \mathbb{H}_2(\ideal{n},A) \times M_2(\ideal{n},A) \to A ; (t,\mathbf{f}) \mapsto C(\mathfrak{o}_F,\mathbf{f} \vert t).
\]
\end{thm}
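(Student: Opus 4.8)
The plan is to show that the pairing induces isomorphisms between each factor and the $A$-linear dual of the other; since the statement is symmetric in the two factors once both are known to be finite free $A$-modules, it suffices to treat one induced map. Write $\lambda\colon \mathbb{H}_2(\ideal{n},A)\to \Hom_A(M_2(\ideal{n},A),A)$, $t\mapsto \langle t,-\rangle$, and $\mu\colon M_2(\ideal{n},A)\to \Hom_A(\mathbb{H}_2(\ideal{n},A),A)$, $\mathbf{f}\mapsto \langle -,\mathbf{f}\rangle$. Both are well defined because $M_2(\ideal{n},A)$ is stable under $\mathbb{H}_2(\ideal{n},A)$ (by the cited results of Hida \cite{Hida88,Hida91}) and the $\ideal{o}_F$-coefficient of an $A$-integral form lies in $A$. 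I would first settle the case $A=K$, where non-degeneracy in both variables is enough, and then bootstrap to $A=\integer{}$ by producing enough integral forms.

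The engine in both cases is the fundamental identity
\begin{align*}
\langle T(\ideal{m}_0)U(\ideal{m}_1),\mathbf{f}\rangle = C(\ideal{m},\mathbf{f}),\qquad \ideal{m}=\ideal{m}_0\ideal{m}_1,\ (\ideal{m}_0,\ideal{n})=1,\ \ideal{m}_1\mid \ideal{n}^{\infty},
\end{align*}
which expresses the first Fourier coefficient of $\mathbf{f}\vert T(\ideal{m}_0)U(\ideal{m}_1)$ as the $\ideal{m}$-th Fourier coefficient of $\mathbf{f}$; this is exactly Shimura's explicit description of the Hecke action on Fourier coefficients \cite{Shi} (using parallel weight $2$). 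Over $K$ this gives non-degeneracy directly. If $\langle t,\mathbf{f}\rangle=0$ for all $t$, then $C(\ideal{m},\mathbf{f})=0$ for every nonzero $\ideal{m}$, so every component of $\mathbf{f}$ is a constant holomorphic Hilbert modular form of parallel weight $2$, hence $0$; thus $\mu_K$ is injective. If $\langle t,\mathbf{f}\rangle=0$ for all $\mathbf{f}$, then for every $\mathbf{f}$ and every $\ideal{m}$ we get $C(\ideal{m},\mathbf{f}\vert t)=\langle tT(\ideal{m}_0)U(\ideal{m}_1),\mathbf{f}\rangle=\langle t,\mathbf{f}\vert T(\ideal{m}_0)U(\ideal{m}_1)\rangle=0$ by commutativity of $\mathbb{H}_2(\ideal{n},K)$ and stability, so $\mathbf{f}\vert t=0$ for all $\mathbf{f}$ and $t=0$; thus $\lambda_K$ is injective. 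Non-degeneracy on both sides forces $\dim_K\mathbb{H}_2(\ideal{n},K)=\dim_K M_2(\ideal{n},K)$ and makes the pairing perfect over $K$.

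Over $\integer{}$, the space $M_2(\ideal{n},\integer{})$ is a finite free $\integer{}$-module spanning $M_2(\ideal{n},K)$ (by the $q$-expansion principle, using $p\nmid\Delta_F$ and that $\kappa$ contains the residue fields of the primes of $\ideal{o}_F$ above $p$), and $\lambda,\mu$ are injective by the field case; since $\lambda$ embeds $\mathbb{H}_2(\ideal{n},\integer{})$ into the finite free module $\Hom_{\integer{}}(M_2(\ideal{n},\integer{}),\integer{})$, the Hecke algebra is finite free of the same rank as well. By double duality of finite free $\integer{}$-modules it then suffices to prove that $\mu$ is surjective. Given $\phi\in\Hom_{\integer{}}(\mathbb{H}_2(\ideal{n},\integer{}),\integer{})$, perfectness over $K$ produces $\mathbf{f}\in M_2(\ideal{n},K)$ with $\langle t,\mathbf{f}\rangle=\phi(t)$ for all $t$; the fundamental identity then gives $C(\ideal{m},\mathbf{f})=\phi(T(\ideal{m}_0)U(\ideal{m}_1))\in\integer{}$ for every nonzero $\ideal{m}$, so all positive Fourier coefficients of $\mathbf{f}$ are integral.

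The main obstacle is the final step: upgrading integrality of the positive Fourier coefficients to integrality of the whole form, i.e. of its constant terms, so as to conclude $\mathbf{f}\in M_2(\ideal{n},\integer{})$ and hence $\mu(\mathbf{f})=\phi$. This is genuinely necessary---an Eisenstein series with integral positive coefficients but a non-integral constant term would give an $\integer{}$-valued functional not in the image of $\mu$---and it amounts to $p$-integrality of the special $L$-values occurring as the constant terms. I would handle it by combining the explicit formula for the constant terms of Hilbert Eisenstein series (Proposition \ref{Const of Eisenstein}) with the integral $q$-expansion principle for Hilbert modular forms over $\integer{}$, both available because $p$ is unramified in $F$ (as $p\nmid\Delta_F$) and $\kappa$ is large enough; these force the constant terms of a parallel weight $2$ form with integral positive $q$-expansion to be integral. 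Once $\mu$ is surjective, double duality of finite free $\integer{}$-modules yields that $\lambda$ is an isomorphism as well, and the pairing is perfect over $\integer{}$.
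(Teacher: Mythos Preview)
Your argument matches the paper's proof almost verbatim up to and including the reduction to surjectivity of $\mu$ over $\integer{}$ and the observation that all positive Fourier coefficients of the resulting $\mathbf{f}\in M_2(\ideal{n},K)$ lie in $\integer{}$. The only substantive divergence is the final step---integrality of the constant terms---and there your proposal does not yet constitute a proof.

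Invoking Proposition~\ref{Const of Eisenstein} does not help: that proposition computes the constant terms of the specific series $\mathbf{E}_k(\varphi,\psi)$ in terms of special $L$-values, and those values have no a priori reason to be $p$-integral (their occasional failure to be so is precisely the source of Eisenstein congruences). Your $\mathbf{f}$ is not one of these series, and even decomposing it in an Eisenstein basis would not convert knowledge of those individual constant terms into the required integrality statement. Likewise, the ordinary $q$-expansion principle is an injectivity statement; what you actually need is that no weight-$2t$ form over $\kappa$ has $q$-expansion equal to a nonzero constant, and that does not follow formally from injectivity.

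The paper closes this gap with a sharp mod-$p$ argument. If some $a_{\infty}(0,f_i)\notin\integer{}$, choose $r>0$ with $\varpi^r a_{\infty}(0,f_i)\in\integer{}^{\times}$ and reduce: then $\varpi^r f_i\pmod{\varpi}$ has $q$-expansion equal to the nonzero constant $\varpi^r a_{\infty}(0,f_i)$. Viewing $\varpi^r f_i-\varpi^r a_{\infty}(0,f_i)$ (components in weights $2t$ and $0$) in the graded ring $\bigoplus_k M_k(\Gamma,\kappa)$, its $q$-expansion vanishes, so by Andreatta--Goren \cite{An--Go} it lies in the principal ideal generated by $H_{p-1}-1$, where $H_{p-1}$ is the Hasse invariant of weight $(p-1)t$. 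A weight comparison (using $(p-1)t>2t$, i.e.\ $p>3$) forces this to be impossible with a nonzero constant, giving the desired contradiction. This Hasse-invariant step is the missing idea in your outline; once you insert it, the rest of your write-up agrees with the paper.
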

\begin{proof}
The duality theorem between $\mathscr{H}_2(\ideal{n},A)$ and $S_2(\ideal{n},A)$ is well-known (\cite[Theorem 5.1]{Hida88}).
We follow the arguments in the proof of \cite[Theorem 5.1]{Hida88} and \cite[Theorem 2.2 (iii)]{Hida91}. 
In the case $A=K$, the proof is the same as that of \cite[Theorem 2.2 (iii)]{Hida91}.

Suppose that $A=\integer{}$. 
It suffices to prove that the $\integer{}$-linear homomorphism 
$M_2(\ideal{n},\integer{})
\to
\Hom_{\integer{}}\left(\mathbb{H}_2(\ideal{n},\integer{}),\integer{} \right)$ 
induced by the pairing is an isomorphism.
If $\phi : \mathbb{H}_2(\ideal{n},\integer{}) \to \integer{}$ is an $\integer{}$\nobreakdash-linear map, then we can extend it to a $K$-linear map 
$\phi : \mathbb{H}_2(\ideal{n},K) \to K$. 
Thus, by the duality theorem for a field $K$, 
we get $\textbf{f}\in M_2(\ideal{n},K)$ such that 
$\langle t,\textbf{f}\rangle =\phi(t)$ for all $t\in \mathbb{H}_2(\ideal{n},\integer{})$. 
For a non-zero ideal $\ideal{m}=\prod_{\ideal{q}\nmid \ideal{n}}\ideal{q}^{e(\ideal{q})}\prod_{\ideal{q}\mid \ideal{n}}\ideal{q}^{e(\ideal{q})}$ of $\ideal{o}_F$, we put $V(\ideal{m})=\prod_{\ideal{q}\nmid \ideal{n}}T(\ideal{q})^{e(\ideal{q})}\prod_{\ideal{q}\mid \ideal{n}}U(\ideal{q})^{e(\ideal{q})}$.
Then we have 
$C(\ideal{m},\textbf{f})
=C(\mathfrak{o}_F,\textbf{f}\vert  V(\ideal{m}))
=\langle V(\ideal{m}),\textbf{f} \rangle
=\phi(V(\ideal{m})) \in \integer{}$. 
Here the first equality follows from \cite[(2.20)]{Shi}.
Suppose that the constant term of $\textbf{f}$ does not belong to $\integer{}$, that is, $a_{\infty}(0,f_i) \notin \integer{}$ for some $i$. 
Let $r \in \Z$ be the positive integer such that $\varpi^r a_{\infty}(0,f_i) \in \integer{}^{\times}$. 
Then the $q$-expansion of $\varpi^r f_i$ is equal to $\varpi^r a_{\infty}(0,f_i)$ modulo $\varpi$. 
By \cite{An--Go}, 
the kernel of the $q$-expansion map 
on the sum of the spaces of Hilbert modular forms with coefficients in $\overline{\mathbb{F}}_p$ of parallel weight $k$ $(k\in \Z,k\ge 0)$ defined in Definition \ref{definition:GHMF} is generated by $H_{p-1}-1$, 
where $H_{p-1}$ is the Hasse invariant, which is a Hilbert modular form of level $1$ and of parallel weight $p-1$. 
Therefore we have 
$\varpi^r f_i-\varpi^r a_{\infty}(0,f_i)\ (\bmod \varpi)=\alpha (H_{p-1}-1)$ for some $\alpha \in \integer{}/\varpi$. 
This is a contradiction
because the weight of $H_{p-1}$ is $(p-1)t>2t$. 
\end{proof}

%
\subsection{Hilbert Eisenstein series}
%

In this subsection, we recall the definition and properties of the Hilbert Eisenstein series, following \cite[$\S$3]{Shi}. 

Let $\varphi$ (resp. $\psi$) be a narrow ray class character of $F$ (cf. Notation), whose conductor is denoted by $\ideal{m}_{\varphi}$ (resp. $\ideal{m}_{\psi}$). 
Let $q$ (resp. $r$) $\in (\Z/2\Z)^{J_F}$ be the sign of $\varphi$ (resp. $\psi$). 
We may regard $\varphi$ (resp. $\psi$) as a function on the set of all non-zero ideals of $\ideal{o}_F$ by defining $\varphi(\ideal{m})=0$ (resp. $\psi(\ideal{m})=0$) if $\ideal{m}$ is not prime to $\ideal{m}_{\varphi}$ (resp. $\ideal{m}_{\psi}$). 
Then a function $\sgn(x)^r \psi(x\mathfrak{h}^{-1})$ of $x\in \mathfrak{h}$ depends only on $x$ modulo $\ideal{m}_{\psi}\ideal{h}$ for a fractional ideal $\mathfrak{h}$ of $F$. 

Let $\tau(\psi)$ be the Gauss sum attached to $\psi$ defined by 
\begin{align}\label{Gauss}
\tau(\psi)=\sum_{x\in \mathfrak{m}_{\psi}^{-1}\mathfrak{d}_F^{-1}/\mathfrak{d}_F^{-1}}
\sgn(x)^r \psi(x\mathfrak{m}_{\psi}\mathfrak{d}_F)e_F(x). 
\end{align}

The following is obtained by \cite[Proposition 3.4]{Shi} and \cite[Proposition 2.1]{Da--Da--Po}:
\begin{prop}\label{Hilbert Eisenstein}
Let $k$ be an integer $\ge 2$ such that $(k,\cdots,k)\equiv q+r \ (\mod (2\Z)^{J_F})$. 
Then there exists 
$\mathbf{E}_k(\varphi,\psi)=(E_k(\varphi,\psi)_i)_{1\le i\le h_F^+}\in M_{k}(\ideal{m}_{\varphi}\ideal{m}_{\psi},\varphi\psi,\C)$, called a Hilbert Eisenstein series, satisfying the following properties. 

$(1)$ $D(s,\mathbf{E}_k(\varphi,\psi))=L(s,\varphi)L(s-k+1,\psi).$

$(2)$ $C(\ideal{m},\mathbf{E}_k(\varphi,\psi))=\sum_{\ideal{c}\mid\ideal{m}}\varphi\left(\frac{\ideal{m}}{\ideal{c}}\right)\psi(\ideal{c})N(\ideal{c})^{k-1}$ 
for each integral ideal $\ideal{m}$ of $F$. 

$(3)$ If $\ideal{m}_{\varphi}\neq \ideal{o}_F$, then 
$C_{\infty,i}(0,\mathbf{E}_k(\varphi,\psi))=0$. 
If $\ideal{m}_{\varphi}=\ideal{o}_F$, then 
\[
C_{\infty,i}(0,\mathbf{E}_k(\varphi,\psi))
=2^{-n}\varphi^{-1}([t_{i}])
L(1-k,\varphi^{-1}\psi).
\]
\end{prop}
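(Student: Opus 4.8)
The plan is to construct $\mathbf{E}_k(\varphi,\psi)$ as the holomorphic specialization of a family of Eisenstein series, following Shimura \cite{Shi}, and then to read off the three properties from its explicit Fourier expansion. I would work componentwise under (\ref{isomorphism between spaces of HMF}): for each $i$ I form, on $\Gamma_1(\ideal{d}_F[t_i],\ideal{m}_\varphi\ideal{m}_\psi)$, the Eisenstein series attached to $(\varphi,\psi)$ as a sum over pairs $(c,d)$ in the appropriate lattice (modulo the action of $\ideal{o}_F^\times$) of terms $\varphi(\cdots)\psi(\cdots)/\prod_{\iota\in J_F}(c^\iota z_\iota+d^\iota)^k$. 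For $k\ge 3$ this converges absolutely; for $k=2$ --- the case needed in the application --- I would insert Hecke's convergence factor $\prod_{\iota}|c^\iota z_\iota+d^\iota|^{-2s}$, establish the meromorphic continuation in $s$ and holomorphy at the relevant point, and define $E_k(\varphi,\psi)_i$ by specialization there. The parity hypothesis $(k,\ldots,k)\equiv q+r$ in $(\Z/2\Z)^{J_F}$ is precisely what makes the infinity type of $\varphi\psi$ compatible with weight $kt$, so that the specialized series is a holomorphic form lying in $M_k(\ideal{m}_\varphi\ideal{m}_\psi,\varphi\psi,\C)$.

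The heart of the matter is the Fourier expansion (\ref{i-th Fourier exp}) of $E_k(\varphi,\psi)_i$, from which all three assertions follow. I would compute it by the standard unfolding argument: the coefficient of $e_F(\xi z)$ factors into local integrals, the finite ones producing Gauss sums and the divisor-sum structure in $\varphi,\psi$, and the archimedean ones producing (after the specialization above) the expected normalizing constants. Matching this against the definition (\ref{Hecke eigenvalue}) of $C(\ideal{m},\cdot)$ yields property (2), namely $C(\ideal{m},\mathbf{E}_k(\varphi,\psi))=\sum_{\ideal{c}\mid\ideal{m}}\varphi(\ideal{m}/\ideal{c})\psi(\ideal{c})N(\ideal{c})^{k-1}$. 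Property (1) is then purely formal: substituting (2) into (\ref{L-function}) with trivial $\eta$ and writing $\ideal{m}=\ideal{a}\ideal{c}$ with $\ideal{a}=\ideal{m}/\ideal{c}$ separates the double sum as
\[
\sum_{\ideal{a}}\varphi(\ideal{a})N(\ideal{a})^{-s}\cdot\sum_{\ideal{c}}\psi(\ideal{c})N(\ideal{c})^{k-1-s}=L(s,\varphi)L(s-k+1,\psi),
\]
where I use that $\varphi$ and $\psi$ are multiplicative on ideals prime to their conductors.

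For property (3) I would compute the constant term, i.e.\ the $\xi=0$ contribution, which comes from the terms with $c=0$ together with the value at the specialization point of the completed series. This is where the case distinction enters. When $\ideal{m}_\varphi=\ideal{o}_F$, this contribution is, up to the normalization $N([t_i])^{k/2}$ in (\ref{Hecke constant}) and the class-group factor $\varphi^{-1}([t_i])$ arising from the choice of representative $t_i$, a special value of $L(\cdot,\varphi^{-1}\psi)$; the functional equation identifies it with $L(1-k,\varphi^{-1}\psi)$, and tracking the archimedean constant gives the factor $2^{-n}$, yielding the stated formula. When $\ideal{m}_\varphi\neq\ideal{o}_F$, the local Gauss-sum factor at a prime dividing $\ideal{m}_\varphi$ vanishes (equivalently, the ramification of $\varphi$ kills the $c=0$ contribution), so the constant term is $0$.

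The main obstacle is not any single deep analytic input --- continuation, holomorphy, and the Fourier expansion are all supplied by \cite{Shi} and \cite{Da--Da--Po} --- but rather the careful bookkeeping needed to reconcile Shimura's classical upper-half-plane normalization of the Fourier coefficients with the adelic normalizations $C(\ideal{m},\cdot)$ and $C_{\infty,i}(0,\cdot)$ fixed in (\ref{Hecke constant})--(\ref{Hecke eigenvalue}). In particular, I expect the delicate points to be pinning down the exact class-group factor $\varphi^{-1}([t_i])$ and the case distinction in (3); I would therefore spend most of the effort matching the two normalizations prime-by-prime and tracking the archimedean normalizing constants through the specialization, citing \cite[Proposition 2.1]{Da--Da--Po} for the precise constant-term evaluation.
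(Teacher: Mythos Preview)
Your proposal is correct and follows the same approach as the paper: the paper does not give its own proof of this proposition but simply records that it is obtained from \cite[Proposition~3.4]{Shi} and \cite[Proposition~2.1]{Da--Da--Po}, and your sketch is an accurate outline of what those references contain (Shimura's construction via Hecke's convergence factor, the Fourier expansion yielding (2) and hence (1), and the constant-term computation for (3)).
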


\begin{prop}\label{Const of Eisenstein}
Assume that $[F:\Q]>1$, $h_F^+=1$, and $\ideal{d}_F[t_1]=\mathfrak{o}_F$. 
Under the same notation and assumptions as Proposition \ref{Hilbert Eisenstein}, the constant term $a_c(0,E_k(\varphi,\psi)_1)$ of $\mathbf{E}_k(\varphi,\psi)=E_k(\varphi,\psi)_1$ at a cusp $c\in \mathbb{P}^1(F)$ is given by the following$:$ 
fix $\alpha=\begin{pmatrix}x&\beta\\y&\delta\end{pmatrix}\in \SL_2(\mathfrak{o}_F)$ such that $c=\alpha(\infty)$. 
If $y\notin \ideal{m}_{\psi}$ and $\psi\neq \textbf{1}$, then 
$a_c(0,E_k(\varphi,\psi)_1)=0$.  
If $y\in \ideal{m}_{\psi}$ or $\psi=\textbf{1}$, then
\begin{align*}
a_c(0,E_k(\varphi,\psi)_1)
&=\frac{N(\ideal{d}_F)^{-k/2}}{2^n}\frac{\tau(\varphi\psi^{-1})}{\tau(\psi^{-1})}
\left(\frac{N(\ideal{m}_{\psi})}{N(\ideal{m}_{\varphi\psi^{-1}})}\right)^{k}
\sgn(-y)^q\varphi(-y\ideal{m}_{\psi}^{-1})\sgn(-x)^r \psi^{-1}(-x)\\
&\ \ \ \ \times 
\left(\prod_{\ideal{q}|\ideal{m}_{\varphi}\ideal{m}_{\psi},\ideal{q}\nmid \ideal{m}_{\varphi\psi^{-1}}}
(1-\varphi\psi^{-1}(\ideal{q})N(\ideal{q})^{-k})\right)
L(1-k,\varphi^{-1}\psi). 
\end{align*}
\end{prop}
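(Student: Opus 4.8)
The plan is to compute the Fourier expansion of $\mathbf{E}_k(\varphi,\psi)=E_k(\varphi,\psi)_1$ at the cusp $c=\alpha(\infty)$ directly from Shimura's explicit realization of the series and to read off the $\xi=0$ coefficient. Since $h_F^+=1$ and $\ideal{d}_F[t_1]=\ideal{o}_F$, the form $E_k(\varphi,\psi)_1$ lives on (a congruence subgroup of) $\SL_2(\ideal{o}_F)$ and every cusp is of the form $c=\alpha(\infty)$ with $\alpha\in\SL_2(\ideal{o}_F)$; the constant term $a_c(0,E_k(\varphi,\psi)_1)$ is by definition the $\xi=0$ Fourier coefficient of the weight-$k$ slash $E_k(\varphi,\psi)_1|_k\alpha$ at $\infty$. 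I would start from the character-twisted lattice expression for $E_k(\varphi,\psi)_1$ given in \cite[\S3]{Shi} (together with \cite[Proposition 2.1]{Da--Da--Po}), in which the nebentype data $\varphi,\psi$ enter through a congruence-restricted sum of the shape $\sum_{(c,d)}(\ast)\prod_{\iota\in J_F}(c^{\iota}z_{\iota}+d^{\iota})^{-k}$ carrying Shimura's Gauss-sum normalization.

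\emph{Reduction to a one-dimensional sum.} Writing $\alpha=\left(\begin{smallmatrix}x&\beta\\y&\delta\end{smallmatrix}\right)$ and using $(c\,\alpha z+d)(yz+\delta)=(cx+dy)z+(c\beta+d\delta)$, the substitution $(c,d)\mapsto(c,d)\alpha$ — a bijection of the lattice since $\det\alpha=1$ — turns $E_k(\varphi,\psi)_1|_k\alpha$ back into a lattice sum with the nebentype datum now evaluated at $(c,d)=(c',d')\alpha^{-1}$. The constant term collects exactly the terms with $c'=0$, for which $(0,d')\alpha^{-1}=(-d'y,\,d'x)$, so that
\[
a_c(0,E_k(\varphi,\psi)_1)=(\text{normalization})\sum_{d'}(\ast)\big|_{(c,d)=(-d'y,\,d'x)}\prod_{\iota\in J_F}(d'^{\iota})^{-k},
\]
the sum running over nonzero $d'\in\ideal{o}_F$ modulo totally positive units. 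The congruence conditions in $(\ast)$ couple the lower-left datum $-d'y$ to the modulus $\ideal{m}_{\psi}$; when $y\notin\ideal{m}_{\psi}$ and $\psi\neq\textbf{1}$ the inner sum over residue classes of $d'$ becomes a complete sum of a non-trivial character and vanishes, which is the first case. This is the totally real analogue of the classical vanishing of an Eisenstein series at cusps incompatible with its nebentype.

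\emph{Evaluation of the remaining case.} When $y\in\ideal{m}_{\psi}$ or $\psi=\textbf{1}$, I would recognise the one-dimensional sum above as (a scalar multiple of) the $\xi=0$ coefficient at $\infty$ of a companion Hilbert Eisenstein series of the type produced in Proposition \ref{Hilbert Eisenstein}, whose first character has become trivial; part $(3)$ of that proposition then supplies the factor $2^{-n}L(1-k,\varphi^{-1}\psi)$. The transition scalar decomposes into the archimedean sign parts $\sgn(-y)^{q}$ and $\sgn(-x)^{r}$ of $\varphi$ and $\psi^{-1}$ (using that these characters have signs $q$ and $r$ and the parity relation $(k,\dots,k)\equiv q+r$), the finite parts $\varphi(-y\ideal{m}_{\psi}^{-1})$ and $\psi^{-1}(-x)$, the Gauss-sum ratio $\tau(\varphi\psi^{-1})/\tau(\psi^{-1})$ coming from re-expressing the additive character sum at the cusp $c$ through the definition (\ref{Gauss}), and the norm factors $N(\ideal{d}_F)^{-k/2}$ and $(N(\ideal{m}_{\psi})/N(\ideal{m}_{\varphi\psi^{-1}}))^{k}$. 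Passing from the imprimitive modulus $\ideal{m}_{\varphi}\ideal{m}_{\psi}$ to the primitive conductor $\ideal{m}_{\varphi\psi^{-1}}$ of $\varphi\psi^{-1}$ produces the Euler correction $\prod_{\ideal{q}\mid\ideal{m}_{\varphi}\ideal{m}_{\psi},\,\ideal{q}\nmid\ideal{m}_{\varphi\psi^{-1}}}(1-\varphi\psi^{-1}(\ideal{q})N(\ideal{q})^{-k})$; assembling all of these gives the stated expression.

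\emph{Main obstacle.} The genuine difficulty is the precise bookkeeping of the Gauss sums and normalizing constants through the slash operation. Pinning down the exact ratio $\tau(\varphi\psi^{-1})/\tau(\psi^{-1})$, rather than some other product of Gauss sums, requires a careful Fourier-inversion step relating the additive characters $e_F$ on $\ideal{m}_{\psi}^{-1}\ideal{d}_F^{-1}/\ideal{d}_F^{-1}$ to the multiplicative twists and correctly flipping $\psi$ to $\psi^{-1}$, while tracking the archimedean signs $\sgn(-y)^{q}$ and $\sgn(-x)^{r}$ demands consistent conventions for the action of $\left(\begin{smallmatrix}-1&0\\0&1\end{smallmatrix}\right)$ on $\mathfrak{H}^{J_F}$. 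This is exactly the place where the totally real hypotheses — $h_F^+=1$ and the parity condition $(k,\dots,k)\equiv q+r$ — enter in an essential way; everything else is the standard, if lengthy, manipulation of the defining lattice sum.
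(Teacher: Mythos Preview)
Your overall strategy—slash by $\alpha$, pick off the $c'=0$ terms via the bijection $d\mapsto(-dy,dx)$, and separate into cases according to whether $y\in\ideal{m}_{\psi}$—is exactly the paper's, and your reduction to a one-dimensional sum over $d'\in\ideal{o}_F\setminus\{0\}$ modulo units is correct. Two points, however, diverge from the paper and the second of them is a genuine gap.

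\emph{Vanishing case.} You say the $y\notin\ideal{m}_{\psi}$ case dies because an inner sum over residue classes is a complete character sum. In the paper the mechanism is simpler and term-by-term: from $a'x+b'y=0$ and $b'\in\ideal{m}_{\psi}^{-1}\ideal{h}$ one deduces that $b'\ideal{m}_{\psi}\ideal{h}^{-1}$ is not coprime to $\ideal{m}_{\psi}$, so $\psi^{-1}(b'\ideal{m}_{\psi}\ideal{h}^{-1})=0$ for every surviving term. Your version may be salvageable, but it is not the argument that actually works here.

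\emph{Evaluation in the nonvanishing case.} Here is the real gap. You propose to recognise the one-dimensional sum as the constant term at $\infty$ of a companion Eisenstein series with trivial first character and then invoke Proposition~\ref{Hilbert Eisenstein}(3). But that sum, as it stands, is (up to the explicit prefactors carried by Shimura's definition, including the built-in $\tau(\psi)$) equal to $L(k,\varphi\psi^{-1})$ times the Euler correction at primes dividing $\ideal{m}_{\varphi}\ideal{m}_{\psi}$ but not $\ideal{m}_{\varphi\psi^{-1}}$—an $L$-value at $s=k$, not at $s=1-k$. The paper obtains the stated formula by applying the \emph{functional equation} for Hecke $L$-functions to pass from $L(k,\varphi\psi^{-1})$ to $L(1-k,\varphi^{-1}\psi)$; it is precisely this functional equation that produces the Gauss sum $\tau(\varphi\psi^{-1})$, the norm factor $N(\ideal{m}_{\varphi\psi^{-1}})^{-k}$, and the power of $N(\ideal{d}_F)$. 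Your account attributes the ratio $\tau(\varphi\psi^{-1})/\tau(\psi^{-1})$ to ``re-expressing the additive character sum at the cusp'', which is not what happens: one $\tau(\psi)$ is already present in Shimura's normalisation, and the other Gauss sum enters only through the functional equation. Without that step your ``companion series'' identification is circular—you are equating the unknown constant term to another constant term without an independent computation of the scalar—and the bookkeeping you flag as the ``main obstacle'' cannot be completed. Insert the functional equation (as in \cite[Theorem~3.3.1]{Mi}) after identifying the sum with $L(k,\varphi\psi^{-1})$, and the displayed formula drops out directly.
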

\begin{rem}
The assumption $h_F^+=1$ allows us to simplify some computations.
In the general case, the constant terms at all cusps are computed by T. Ozawa \cite{Oza}.
\end{rem}
\begin{proof}
We follow the arguments in the proof of \cite[Proposition 2.1]{Da--Da--Po} and \cite[Chapter III, Theorem 4.9]{Fre}. 
We put $\ideal{a}=\ideal{m}_{\varphi}$ and $\ideal{b}=\ideal{m}_{\psi}$. 
First, we recall the construction of the Eisenstein series $\textbf{E}_k(\varphi,\psi)$ given in \cite[$\S$3]{Shi} and \cite[Proposition 2.1]{Da--Da--Po}. 
Let $U$ be the subgroup $\{u\in \mathfrak{o}_F^{\times} \mid N(u)^k=1, u\equiv 1 \ \mod{\mathfrak{ab}}\}$ of $\mathfrak{o}_F^{\times}$, which is of finite index. 
For $z \in \mathfrak{H}^{J_F}$ and $s\in \C$ with $\mathrm{Re}(2s+k)>2$, we define
\begin{align*}
E_k(\varphi,\psi)_1(z,s)
=&N([t_1])^{1-k/2}[\mathfrak{o}_F^{\times}:U]^{-1}\Gamma(k)^nN(\mathfrak{b})^{-1}\tau(\psi)
\sum_{\mathfrak{h}\in \mathrm{Cl}_F}
\sum_{a \in \mathfrak{h}/\mathfrak{ah}}
\sum_{t\in \mathfrak{b}^{-1}\mathfrak{d}_F^{-1}[t_1]^{-1}\mathfrak{h}/\mathfrak{d}_F^{-1}[t_1]^{-1}\mathfrak{h}}\\
\nonumber&\times 
\sgn(a)^q \varphi(a\mathfrak{h}^{-1})\sgn(-t)^r\psi(-t\ideal{b}\ideal{d}_F[t_1]\mathfrak{h}^{-1})N(\mathfrak{h})^{k-1}\\
\nonumber&\times 
E_{k,U}(z,s;a,t;\mathfrak{ah},\mathfrak{d}_F^{-1}[t_1]^{-1}\mathfrak{h}),
\end{align*}
where $\mathrm{Cl}_F$ is the ideal class group of $F$ and 
\begin{align*}
&E_{k,U}(z,s;a,t;\mathfrak{ah},\mathfrak{d}_F^{-1}[t_1]^{-1}\mathfrak{h})\\
&=\Delta_F^{1/2}N(\mathfrak{d}_F^{-1}[t_1]^{-1}\mathfrak{h})(-1)^{kn}(2\pi \sqrt{-1})^{-kn}\sum_{(a',b')\in R,(a',b')\neq (0,0)}(a'z+b')^{-k}|a'z+b'|^{-2s}. 
\end{align*}
Here the last sum runs over a complete set $R$ of representatives for the quotient of $(a+ \mathfrak{ah}) \times (t+\mathfrak{d}_F^{-1}[t_1]^{-1}\mathfrak{h})$ by the diagonal multiplication of $U$. 
This series converges if $\mathrm{Re}(2s+k)>2$ and is analytically continued to a holomorphic function on the whole complex plane (\cite[p.656]{Shi}). 
The Eisenstein series $E_k(\varphi,\psi)_1(z)$ is defined to be $E_k(\varphi,\psi)_1(z,0)$, which is a holomorphic function of  $z\in \mathfrak{H}^{J_F}$ (\cite[p.656]{Shi}).
For $z \in \mathfrak{H}^{J_F}$, we have
\begin{align*}
E&_{k,U}(z,s;a,t;\mathfrak{ah},\mathfrak{d}_F^{-1}[t_1]^{-1}\mathfrak{h})|\alpha=E_{k,U}(\alpha z,s;a,t;\mathfrak{ah},\mathfrak{d}_F^{-1}[t_1]^{-1}\mathfrak{h})(yz+\delta)^{-k}\\
&=\Delta_F^{1/2}N(\ideal{h})(-2\pi \sqrt{-1})^{-kn}
\sum_{(a',b')\in R,(a',b')\neq (0,0)}(a'\alpha z+b')^{-k}(yz+\delta)^{-k}|a'\alpha z+b'|^{-2s}\\
&=\Delta_F^{1/2}N(\ideal{h})(-2\pi \sqrt{-1})^{-kn}
\sum_{(a',b')\in R,(a',b')\neq (0,0)}((a'x+b'y)z+(a'\beta+b'\delta))^{-k}|a'\alpha z+b'|^{-2s}.
\end{align*}
Note that only the terms for $(a',b')$ with $a'x+b'y=0$ in the series contribute to the constant term of $E_k(\varphi,\psi)_1|\alpha$ at $\infty$. 
We put $C=\Delta_F^{1/2}\Gamma(k)^n[\mathfrak{o}_F^{\times}:U]^{-1}N(\ideal{d}_F)^{-1}(-2\pi\sqrt{-1})^{-kn}$. 

(1) First suppose that $y\notin \ideal{b}$. 
Since $\ideal{d}_F[t_1]=\mathfrak{o}_F$ and $b'y=-a'x \in (y)\ideal{b}^{-1}\ideal{h}\cap \ideal{h}$, we see that $b'\ideal{b}\mathfrak{h}^{-1}$ is not prime to $\ideal{b}$ and hence $\sgn(-b')^r\psi^{-1}(-b'\ideal{b}\mathfrak{h}^{-1})=0$ if $\ideal{b}\neq 1$. 
Thus the constant term $a_c(0,E_k(\varphi,\psi)_1)$ is equal to $0$ if $\ideal{b}\neq 1$. 

Consider the case $\ideal{b}=1$. 
The constant term of $E_k(\varphi,\psi)_1|\alpha$ at $\infty$ is equal to the value of
\begin{align}\label{const b=1}
C\cdot N([t_1])^{-k/2}
\sum_{\ideal{h}\in \text{Cl}_F}
\sum_{\substack{(a',b')\in R,(a',b')\neq (0,0)\\ a'x+b'y=0}}
\sgn(a')^q \varphi(a'\ideal{h}^{-1})N(\ideal{h})^k
(a'\beta+b'\delta)^{-k-2s}
\end{align}
at $s=0$. 
We note that the map $(a',b') \mapsto a'\beta+b'\delta$ from the set of pairs $(a', b')$ in (\ref{const b=1}) to $\ideal{h}-\{0\}$ is bijective. 
Indeed, the inverse map is given by $d\mapsto (-dy,dx)$. 
Thus the value of (\ref{const b=1}) st $s=0$ is equal to the value of
\begin{align}\label{const x nonzero}
C\cdot N([t_1])^{-k/2}
\sum_{\ideal{h}\in \text{Cl}_F}
\sum_{\substack{d\in R',d\neq 0}}
\sgn(-dy)^q \varphi(-dy\ideal{h}^{-1})N(\ideal{h})^kd^{-k-2s}
\end{align}
at $s=0$. 
Here the last sum runs over a complete set $R'$ of representatives for the quotient of $\ideal{h}$ by the multiplication of $U$.
Since the map $(\ideal{h},d)\mapsto d\ideal{h}^{-1}$ from the set of pairs $(\ideal{h},d)$ in (\ref{const x nonzero}) to the set of all non-zero ideals of $\ideal{o}_F$ is a surjective $[\mathfrak{o}_F^{\times}:U]$-to-$1$ map, the value of 
(\ref{const x nonzero}) at $s=0$ is equal to 
\begin{align*}
C\cdot N([t_1])^{-k/2}
\sgn(-y)^q\varphi(-y)
[\mathfrak{o}_F^{\times}:U]L(k,\varphi). 
\end{align*}
Therefore, the functional equation for the Hecke $L$-functions (see, for example, \cite[Theorem 3.3.1]{Mi}) implies that the constant term $a_{c}(0,E_k(\varphi,\psi)_1)$ is equal to
\begin{align*}
a_{x/y}(0,E_k(\varphi,\psi)_1)
&=\frac{N(\ideal{d}_F)^{-k/2}}{2^n}\tau(\varphi)
N(\ideal{m}_{\varphi})^{-k}
\sgn(-y)^q\varphi(-y)
L(1-k,\varphi^{-1}).
\end{align*}

(2) Next suppose that $y\in \ideal{b}$. 
The constant term of $E_k(\varphi,\psi)_1|\alpha$ is equal to the value of 
\begin{align}\label{const y in b'}
C&\cdot N([t_1])^{-k/2}N(\mathfrak{b})^{-1}\tau(\psi)
\sum_{\mathfrak{h}\in \mathrm{Cl}_F}N(\ideal{h})^k
\sum_{\substack{(a',b')\in R,(a',b')\neq (0,0) \\ a'x+b'y=0}}\\
\nonumber&\times \sgn(a')^q \varphi(a'\mathfrak{h}^{-1})
\sgn(-b')^r\psi^{-1}(-b'\ideal{b}\mathfrak{h}^{-1})
(a'\beta+b'\delta)^{-k-2s}
\end{align}
at $s=0$. 
We note that the map $(a',b')\mapsto a'\beta+b'\delta$ from the set of pairs $(a',b')$ in (\ref{const y in b'}) to $\ideal{b}^{-1}\ideal{h}-\{0\}$ is bijective. 
Indeed, the inverse map is given by $d \mapsto (-dy,dx)$. 
Thus the value of (\ref{const y in b'}) at $s=0$ is equal to the value of
\begin{align}\label{const y in b}
C&\cdot N([t_1])^{-k/2}N(\ideal{b})^{-1}\tau(\psi)
\sum_{\ideal{h}\in \text{Cl}_F}
\sum_{\substack{d\in R'',d\neq 0 }}\\
\nonumber&\times \sgn(-dy)^q \varphi(-dy\ideal{h}^{-1})
\sgn(-dx)^r \psi^{-1}(-dx\ideal{b}\ideal{h}^{-1})
N(\ideal{h})^k
N(d)^{-k-2s}
\end{align}
at $s=0$. 
Since the map $(\ideal{h},d)\mapsto d\ideal{b}\ideal{h}^{-1}$ from the set of pairs $(\ideal{h},d)$ in (\ref{const y in b}) to the set of all non-zero ideals of $\ideal{o}_F$ is a surjective $[\mathfrak{o}_F^{\times}:U]$-to-$1$ map, 
the value of (\ref{const y in b}) at $s=0$ is equal to 
\begin{align*}
C&\cdot N([t_1])^{-k/2}N(\ideal{b})^{-1}\tau(\psi)
\sgn(-y)^q\varphi(-y)\sgn(-x)^r\psi^{-1}(-x)\\
&\times\varphi(\ideal{b}^{-1})N(\ideal{b})^k
[\mathfrak{o}_F^{\times}:U]L(k,\varphi\psi^{-1})
\prod_{\ideal{q}|\ideal{m}_{\varphi}\ideal{m}_{\psi},\ideal{q}\nmid \ideal{m}_{\varphi\psi^{-1}}}
(1-\varphi\psi^{-1}(\ideal{q})N(\ideal{q})^{-k}). 
\end{align*}
Therefore, in the same way as above, our assertion follows from the functional equation for the Hecke $L$-functions. 
\end{proof}


%
\subsection{Geometric Hilbert modular varieties}\label{subsection:HMV}
%

In this subsection,
we fix notation concerning the integral models of Hilbert modular varieties and their compactifications, following \cite{Dim2} and \cite{Dim--Ti}. 

We fix a non-zero ideal $\mathfrak{n}$ of $\mathfrak{o}_F$. 
We put $\Delta=\mathrm{N}(\mathfrak{nd}_F)$. 
Let $\mu_{\mathfrak{n}}$ be the closed subscheme of $\mathbb{G}_m \otimes_{\Z} \mathfrak{d}_F^{-1}$ given by the $\ideal{n}$-torsion points of $\mathbb{G}_m \otimes_{\Z} \mathfrak{d}_F^{-1}$.
Let $\mathfrak{c}$ be a fractional ideal of $F$.
We consider the contravariant functor $\cal{F}_{\ideal{c}}^1$ (resp. $\cal{F}_{\ideal{c}}$) from the category of $\Z[1/\Delta]$\nobreakdash-schemes to the category of sets sending a scheme $S$ to the set of isomorphism classes of triples $((A,\iota),\lambda,\alpha)$ (resp. $((A,\iota),[\lambda],\alpha)$),
where $(A,\iota)$ is a Hilbert-Blumenthal abelian variety (HBAV for short) over $S$ endowed with a $\mathfrak{c}$-polarization $\lambda$ (resp. the $\mathfrak{o}_{F,+}^{\times}$-orbit $[\lambda]$ of a $\mathfrak{c}$-polarization $\lambda$) and a $\mu_{\mathfrak{n}}$\nobreakdash-level structure $\alpha$ 
(for the definitions, see, for example, \cite[\S 1.3]{Dim2}). 

Throughout the paper, we assume that 
\begin{align}\label{tor-free}
\text{$(\ideal{n},6\Delta_F)=1$ and the quotient of the group $\Gamma_1(\mathfrak{c},\mathfrak{n})$ by its center is torsion--free.}
\end{align} 

Then the functor $\cal{F}_{\ideal{c}}^1$ is representable by a quasi-projective, smooth, geometrically connected $\Z[1/\Delta]$-scheme $M_{\mathfrak{c}}^1=M(\Gamma_1^1(\mathfrak{c},\mathfrak{n}))$ of relative dimension $n=[F:\Q]$ (\cite[Theorem 4.1]{Dim--Ti} and \cite[Lemma 2.1]{Dim3}). 

The functor $\cal{F}_{\ideal{c}}$ has a coarse moduli scheme $M_{\mathfrak{c}}=M(\Gamma_1(\mathfrak{c},\mathfrak{n}))$, which is the quotient of $M_{\ideal{c}}^1$ by $\mathfrak{o}_{F,+}^{\times}/\mathfrak{o}_{F,\mathfrak{n}}^{\times 2}$ (\cite[Corollary 4.2]{Dim--Ti}).
Here $\mathfrak{o}_{F,+}^{\times}$ denotes the subgroup of $ \mathfrak{o}_F^{\times}$ consisting of totally positive units, $\mathfrak{o}_{F,\mathfrak{n}}^{\times}$ denotes the subgroup of $\mathfrak{o}_F^{\times}$ consisting of elements congruent to $1$ modulo $\mathfrak{n}$, and 
the finite group $\mathfrak{o}_{F,+}^{\times}/\mathfrak{o}_{F,\mathfrak{n}}^{\times 2}$ acts on $M_{1,\mathfrak{c}}$ by $[\varepsilon] \cdot ((A,\iota),\lambda,\alpha)=((A,\iota),\iota(\varepsilon)\circ\lambda,\alpha)$ for $[\varepsilon] \in \mathfrak{o}_{F,+}^{\times}/\mathfrak{o}_{F,\mathfrak{n}}^{\times 2}$. 
Also, $M_{\mathfrak{c}}$ is a quasi-projective, smooth, geometrically connected $\Z[1/\Delta]$\nobreakdash-scheme of relative dimension $n=[F:\Q]$. 
We put 
\begin{align*}
M^1=\coprod_{1\le i \le h_F^+} M_{[t_i]}^1, \ \ \ \ 
M=\coprod_{1 \le i \le h_F^+} M_{[t_i]}, 
\end{align*}
where $\{[t_i]\}_{1\le i\le h_F^+}$ is the complete set of representatives of $\mathrm{Cl}_F^+$ fixed in \S \ref{Analytic HMV}.  

Let $M_{\mathfrak{c}}^{1,\mathrm{tor}}$ (resp. $M_{\mathfrak{c}}^{\mathrm{tor}}$) denote the toroidai compactification of $M_{\mathfrak{c}}^1$ (resp. $M_{\mathfrak{c}}$) constructed in \cite{Dim}. 
The scheme $M_{\mathfrak{c}}^{1,\mathrm{tor}}$ (resp. $M_{\mathfrak{c}}^{\mathrm{tor}}$) is smooth and proper over $\Z[1/\Delta]$ and 
the boundary $M_{\mathfrak{c}}^{1,\mathrm{tor}}-M_{\mathfrak{c}}^1$ (resp. $M_{\mathfrak{c}}^{\mathrm{tor}}-M_{\mathfrak{c}}$) is a relative simple normal crossing divisor of $M_{\mathfrak{c}}^{1,\mathrm{tor}}$ (resp. $M_{\mathfrak{c}}^{\mathrm{tor}}$) (\cite[Theorem 7.2]{Dim}). 
We put 
\begin{align*}
M^{1,\mathrm{tor}}=\coprod_{1 \le i \le h_F^+} M_{[t_i]}^{1,\mathrm{tor}}, \ \ \ \ 
M^{\mathrm{tor}}=\coprod_{1 \le i \le h_F^+} M_{[t_i]}^{\mathrm{tor}}. 
\end{align*}

%
\subsection{Geometric Hilbert modular forms}\label{subsection:Geometric HMF}
%

In this subsection, we recall the definition of the geometric Hilbert modular form, following \cite[\S 1]{Dim2} and \cite[\S 2]{Ti--Xi}.

We keep the notation in \S \ref{subsection:HMV}.
Throughout the paper, we assume that 
\begin{align}\label{prime to p}
\text{for each $i$, $[t_i]$ is prime to $p$. }
\end{align}
Let $\pi:\mathcal{A} \to M_{[t_i]}^1$ denote the universal HBAV. 
The morphism $\pi$ extends to a morphism of semi\nobreakdash-abelian schemes 
$\overline{\pi}:\mathcal{G} \to M^{1,\mathrm{tor}}_{[t_i]}$ 
(\cite[Theorem 6.4]{Dim--Ti}). 
Let $\cal{A}^{\text{tor}}$ denote the toroidal compactification of $\cal{A}$ constructed in \cite{Dim--Ti}.
Let $\underline{\omega}_{1,[t_i]}$ (resp. $\cal{H}_{1,[t_i]}^1$) denote the sheaf $\underline{\omega}_{\cal{G}/M^{1,\mathrm{tor}}_{[t_i]}}$ (resp. $\cal{H}_{\text{log-dR}}^1(\cal{A}^{\mathrm{tor}}/M^{1,\mathrm{tor}}_{[t_i]})$) defined in \cite[\S 1.9]{Dim2}, which is a locally free $\integer{}_{M^{1,\mathrm{tor}}_{[t_i]}}\otimes_{\Z}\mathfrak{o}_F$-module of rank $1$ (resp. rank $2$). 
We put $\underline{\nu}_{1,[t_i]}=\wedge^2_{\integer{}_{M^{1,\mathrm{tor}}_{[t_i]}}\otimes_{\Z}\mathfrak{o}_F}\cal{H}_{1,[t_i]}^1$.

Let $\widetilde{F}$ be the Galois closure of $F$ in $\overline{\Q}$, $F'$ the field generated by elements $\varepsilon^{t/2}$ for all $\varepsilon \in \mathfrak{o}_{F,+}^{\times}$ over $\widetilde{F}$, and $\mathfrak{o}_{F'}$ the ring of integers of $F'$. 
For a $\Z[1/\Delta]$-scheme $S$, 
we write $S_{\mathfrak{o}_{F'}}$ for the base change of $S$ to $\mathrm{Spec}(\mathfrak{o}_{F'}[1/\Delta])$. 
As explained in \cite[\S 1.6, \S 1.9]{Dim2}, 
$\underline{\omega}_{1,[t_i]}$ (resp. $\cal{H}_{1,[t_i]}^1$) descends to a locally free $\integer{}_{M_{{}_{[t_i],\mathfrak{o}_{F'}}}^{\mathrm{tor}}}\otimes_{\Z} \mathfrak{o}_F$\nobreakdash-module of rank $1$ (resp. rank $2$) over $M_{\ideal{c},\ideal{o}_{F'}}^{\mathrm{tor}}$, which is denoted by $\underline{\omega}_{[t_i]}$ (resp. $\cal{H}_{[t_i]}^1$). 
We put $\underline{\nu}_{[t_i]}=\wedge^2_{\integer{}_{M^{\mathrm{tor}}_{[t_i],\ideal{o}_{F'}}}\otimes_{\Z}\mathfrak{o}_F}\cal{H}_{[t_i]}^1$.

Let $D_{[t_i]}^1$ (resp $D_{[t_i]}$) denote the boundary $M_{[t_i]}^{1,\mathrm{tor}}-M_{[t_i]}^1$ (resp. $M_{[t_i]}^{\mathrm{tor}}-M_{[t_i]}$) of $M_{[t_i]}^{1,\mathrm{tor}}$ (resp. $M_{[t_i]}^{\mathrm{tor}}$). 
For $X=M_{[t_i]}^{1,\mathrm{tor}}$ (resp. $M_{[t_i]}^{\mathrm{tor}}$) and $D=D_{[t_i]}^1$ (resp $D_{[t_i]}$), let $(X,L)$ denote the log scheme 
in the sense of \cite[(1.5)(1)]{Kato}. 
For a $\Z[1/\Delta]$-algebra $R$ and a $\Z[1/\Delta]$-log scheme $(S,L)$, 
let $(\text{Spec}(R),\text{triv})$ denote the scheme $\text{Spec}(R)$ endowed with the trivial log structure and 
let $(S,L)_R$ denote the base change of $(S,L)$ to $(\text{Spec}(R),\text{triv})$. 
Let $\Omega_{X_R/R}^j(\mathrm{log}(D))$ denote the logarithmic differential module obtained by the log smooth morphism $(X,L)_R \rightarrow (\text{Spec}(\Z[1/\Delta]),\text{triv})_R$ in the sense of \cite[(1.7)]{Kato}. 

Let $k$ be an integer $\ge 2$. 
For $\underline{\omega}=\underline{\omega}_{1,[t_i]}$ (resp. $\underline{\omega}_{[t_i]}$) and $\underline{\nu}=\underline{\nu}_{1,[t_i]}$ (resp. $\underline{\nu}_{[t_i]}$), we put 
\[
\underline{\omega}^{(k-2)}=\underline{\omega}^{k-2}\otimes \underline{\nu}^{-(k-2)t/2}. 
\]
\begin{defn}\label{definition:GHMF}(\cite[\S1.5]{Dim2} and \cite[\S2.12]{Ti--Xi}).
Let $R$ be an $\mathfrak{o}_{F'}[1/\Delta]$-algebra and $k$ an integer $\ge 2$.
Let $D_{[t_i]}^1$ (resp $D_{[t_i]}$) denote the boundary $M_{[t_i]}^{1,\mathrm{tor}}-M_{[t_i]}^1$ (resp. $M_{[t_i]}^{\mathrm{tor}}-M_{[t_i]}$) of $M_{[t_i]}^{1,\mathrm{tor}}$ (resp. $M_{[t_i]}^{\mathrm{tor}}$). 
We define the spaces of Hilbert modular forms of weight $kt$ and of level 
$\Gamma_1^1(\ideal{d}_F[t_i],\mathfrak{n})$ and 
$\Gamma_1(\ideal{d}_F[t_i],\mathfrak{n})$ with coefficients in $R$ to be 
\begin{align*}
M_{k}(\Gamma_1^1(\ideal{d}_F[t_i],\mathfrak{n}),R)
&=H^0(M_{[t_i],R}^1, \underline{\omega}_{1,[t_i]}^{(k-2)}\otimes \Omega_{M^1_{[t_i],R}}^n),\\
M_{k}(\Gamma_1(\ideal{d}_F[t_i],\mathfrak{n}),R)
&=H^0(M_{[t_i],R}, \underline{\omega}_{[t_i]}^{(k-2)}\otimes \Omega_{M_{[t_i],R}}^n),
\end{align*}
respectively. 
If $F\neq\Q$, then, by the Koecher's principle, we have 
$M_{k}(\Gamma_1^1(\ideal{d}_F[t_i],\mathfrak{n}),R)
=H^0(M_{[t_i],R}^{1,\mathrm{tor}}, \underline{\omega}_{1,[t_i]}^{(k-2)}\otimes \Omega_{M_{[t_i],R}^{1,\mathrm{tor}}}^n(\mathrm{log}(D^1_{[t_i]})))$ \ and $M_{k}(\Gamma_1(\ideal{d}_F[t_i],\mathfrak{n}),R)=H^0(M_{[t_i],R}^{\mathrm{tor}},\underline{\omega}_{[t_i]}^{(k-2)}\otimes \Omega_{M_{[t_i],R}^{\mathrm{tor}}}^n(\mathrm{log}(D_{[t_i]})))$ (\cite[\S 2.12]{Ti--Xi}). 
We define the space of Hilbert cusp forms of weight $kt$ and of level 
$\Gamma_1^1(\ideal{d}_F[t_i],\mathfrak{n})$ and 
$\Gamma_1(\ideal{d}_F[t_i],\mathfrak{n})$ with coefficients in $R$ to be 
\begin{align*}
S_{k}(\Gamma_1^1(\ideal{d}_F[t_i],\mathfrak{n}),R)
&=\im \left(H^0(M_{[t_i],R}^{1,\mathrm{tor}}, \underline{\omega}_{1,[t_i]}^{(k-2)}\otimes \Omega_{M_{[t_i],R}^{1,\mathrm{tor}}}^n)\to H^0(M_{[t_i],R}^1, \underline{\omega}_{1,[t_i]}^{(k-2)}\otimes \Omega_{M_{[t_i],R}^1}^n)\right),\\
S_{k}(\Gamma_1(\ideal{d}_F[t_i],\mathfrak{n}),R)
&=\im \left(H^0(M_{[t_i],R}^{\mathrm{tor}}, \underline{\omega}_{[t_i]}^{(k-2)}\otimes \Omega_{M_{[t_i],R}^{\mathrm{tor}}}^n) \to H^0(M_{[t_i],R}, \underline{\omega}_{[t_i]}^{(k-2)}\otimes \Omega_{M_{[t_i],R}}^n)\right),
\end{align*}
respectively. 
We put 
\begin{align*}
M_{k}(M^1,R)&=\bigoplus_{1\le i \le h_F^+} M_{k}(\Gamma_1^1(\ideal{d}_F[t_i],\mathfrak{n}),R),\ \ 
M_{k}(M,R)=\bigoplus_{1\le i \le h_F^+} M_{k}(\Gamma_1(\ideal{d}_F[t_i],\mathfrak{n}),R),\\
S_{k}(M^1,R)&=\bigoplus_{1\le i \le h_F^+} S_{k}(\Gamma_1^1(\ideal{d}_F[t_i],\mathfrak{n}),R),\ \ \ \ 
S_{k}(M,R)=\bigoplus_{1\le i \le h_F^+} S_{k}(\Gamma_1(\ideal{d}_F[t_i],\mathfrak{n}),R).
\end{align*}
\end{defn}

%
\subsection{Hecke correspondences}\label{subsection:Hecke correspondence}
%
%

In this subsection, we fix notation concerning the Hecke correspondence $T(\ideal{a})$ and $U({\ideal{a}})$, following \cite[\S 2.4]{Dim2} and \cite[\S 1.11]{Ki--La}.

Let $\ideal{a}$ be a non-zero ideal of $\ideal{o}_F$. 
We fix a pair $(i,j)$ such that $[t_i]\ideal{a}=[t_j]$ in $\text{Cl}_F^+$. 
We consider the functor $\cal{F}_{\ideal{a},i,j}^1$ from the category of $\Z[1/\Delta]$-schemes to the category of sets sending a scheme $S$ to the set of isomorphism classes of quintuples $((A,\iota),\lambda,\alpha,C,\beta)$.
Here $(A,\iota)$ is a HBAV over $S$, endowed with a $[t_i]$-polarization $\lambda$ and a $\mu_{\ideal{n}}$-level structure $\alpha$, $C$ is an $\mathfrak{o}_{F}$-stable closed subscheme of $A[\ideal{a}]$, which is disjoint from $\alpha(\mu_{\ideal{n}})$ and \'etale locally isomorphic to the constant group scheme $\mathfrak{o}_F/\ideal{a}$ over $\mathfrak{o}_F$, and $\beta$ is the $\mathfrak{o}_{F,\ideal{n}}^{\times,2}$-orbit of isomorphisms $([t_i]\ideal{a},([t_i]\ideal{a})_+)\simeq ([t_j],[t_j]_+)$, 
where $\ideal{c}_+=\ideal{c}\cap (F\otimes\mathbb{R})_+^{\times}$ is the totally positive cone for a fractional ideal $\ideal{c}$ of $F$.

The functor $\cal{F}_{\ideal{a},i,j}^1$ is representable by $M_{\ideal{a},i,j}^1$ constructed in \cite[\S1.9]{Ki--La}. 
Put $M_{\ideal{a}}^1=\prod_{1 \le i \le h_F^+}M_{\ideal{a},i,j}^1$. 
Then the two projections 
$\pi_1,\pi_2:M_{\ideal{a}}^1 \to M^1$ given in \cite[\S1.9]{Ki--La}
induce algebraic correspondences $T(\ideal{a})$ and $U(\ideal{a})$ on $M^1$. 
We define the Hecke correspondence $T(\ideal{a})$ and $U(\ideal{a})$ on $M^{1,\text{tor}}$ by taking a toroidal compactification of $M_{\ideal{a}}^1$ (see the proof of \cite[Corollary 2.7]{Dim2}).

Thus, we get an action of $T(\ideal{a})$ and $U(\ideal{a})$ on the spaces $M_{k}(M^1,R)$ and $S_{k}(M^1,R)$ (\cite[\S2.4]{Dim2} and \cite[\S1.11]{Ki--La}) and hence 
we obtain an action of $T(\ideal{a})$ and $U(\ideal{a})$ on $M_{k}(M,R)$ and $S_{k}(M,R)$ by using the projection 
$\frac{1}{[\mathfrak{o}_{F,+}^{\times}:\mathfrak{o}_{F,\ideal{n}}^{\times 2}]}\sum_{[\varepsilon]\in \mathfrak{o}_{F,+}^{\times}/\mathfrak{o}_{F,\ideal{n}}^{\times 2}}[\varepsilon] :
M_{k}(M^1,R) \to M_{k}(M,R)$. 
According to \cite[\S2.4]{Dim2} and \cite[\S1.11.8]{Ki--La}, 
this action over $\C$ coincides with the usual Hecke operator as (\ref{Hecke op analytic}).

\section{Mellin transform}\label{subsection:Mellin}
%
The purpose of this section is to give a cohomological description of special values of the $L$-functions defined in (\ref{L-function}) associated to a Hilbert modular form vanishing at cusps $\Gamma_0(\ideal{d}_F[t_1],\ideal{n})$-equivalent to the cusp $\infty$ (Proposition \ref{Modular symbol} and \ref{Modular symbol anti-hol}), where we use the assumption $h_F^+=1$.

%
\subsection{Borel--Serre compactification}\label{Borel--Serre}
%

In this subsection, we recall the Borel--Serre compactification of $Y_i$ defined by (\ref{analytic HMV}). 
For more detail, refer to \cite[\S 2.1]{Ha} and \cite[\S 1.8]{Hida93}. 

We fix an integer $i$ with $1\le i\le h_F^+$ and abbreviate $\Gamma_{1}(\ideal{d}_F [t_i], \mathfrak{n})$ to $\Gamma$. 

Let $(\mathfrak{H}^{J_F})^{\text{BS}}$ denote the Borel--Serre compactification of $\mathfrak{H}^{J_F}$, which is a locally compact manifold on which $\GL_2(F)$ acts. 
We can describe the boundary of $(\mathfrak{H}^{J_F})^{\text{BS}}$ at the cusp $\infty$ as follows. 
Put $X=\{(y,x)\in (F\otimes \mathbb{R})_+^{\times} \times (F\otimes \mathbb{R})
\mid \prod_{\iota\in J_F}y_{\iota}=1\}$. 
Then we have 
\begin{align}\label{H^{J_F}}
\mathfrak{H}^{J_F} \xrightarrow{\simeq} X \times \mathbb{R}_+^{\times}; (x_{\iota}+\sqrt{-1}y_{\iota})_{\iota\in J_F}
\mapsto \left(
\left(\left(\prod_{\iota\in J_F}y_{\iota}\right)^{-\frac{1}{n}}y_{\iota},x_{\iota}\right)_{\iota\in J_F},
\prod_{\iota\in J_F} y_{\iota}\right),
\end{align}
which is compatible with the action of $\Gamma_{\infty}$. 
Here $\Gamma_{\infty}$ denotes the stabilizer of $\infty$ in $\Gamma$, which acts trivially on the second factor of the right-hand side. 
The compactification of $\mathfrak{H}^{J_F}$ at the cusp $\infty$ is given by $X \times (\mathbb{R}_+^{\times}\cup \{\infty\})$. 

Let $Y_i^{\text{BS}}$ denote the Borel--Serre compactification $\overline{\Gamma}\backslash(\mathfrak{H}^{J_F})^{\text{BS}}$ of $Y_i$. 
Then $Y_i^{\text{BS}}$ is a compact manifold and its  boundary at a cusp $s$, which is denoted by $D_{s}$, is given by $\overline{\Gamma_{s}}\backslash \alpha(X \times \{\infty\})$, where $\Gamma_s$ denotes the stabilizer of $s$ in $\Gamma$ and $\alpha\in\SL_2(F)$ such that $s=\alpha(\infty)$.

%
\subsection{Fundamental domain}\label{Fundamental domain}
%

In this subsection, we construct a relative homology class, which is related to special values of the $L$-functions attached to a Hilbert modular form. 

We keep the notation in \S \ref{Borel--Serre}. 
Let $E$ be a subgroup of $\mathfrak{o}_{F,+}^{\times}$ of finite index and $\varepsilon_1,\cdots,\varepsilon_{n-1}$ a $\Z$-basis of $E$. 
We note that a fundamental domain of $(\mathbb{R}_{+}^{\times})^{J_F}/E$ is given by 
\begin{align*}
\Omega_E=\prod_{1 \le j \le n-1} \{\varepsilon_j^{r_j}\mid r_j\in [0,1)\}\times \mathbb{R}_+^{\times} 
\hookrightarrow X\times \mathbb{R}_+^{\times} 
\xrightarrow{\simeq} \mathfrak{H}^{J_F};\\
(\varepsilon_1^{r_1},\cdots,\varepsilon_{n-1}^{r_{n-1}},-\mathrm{log}(r_n))
\mapsto 
((\varepsilon^{r},0),-\mathrm{log}(r_n))
\mapsto 
\sqrt{-1}y^{\frac{1}{n}}
\varepsilon^r,
\end{align*}
where 
$r=(r_1,\cdots,r_{n-1})\in [0,1)^{n-1}$, 
$(\varepsilon^{\iota})^r=\prod_{1 \le j \le n-1}(\varepsilon_j^{\iota})^{r_j}$ for $\iota\in J_F$, 
$\varepsilon^r=((\varepsilon^{\iota})^r)_{\iota\in J_F}$, and $y=-\mathrm{log}(r_n)$. 
We put 
\[
\overline{\Omega}_E=\prod_{1 \le j \le n-1}\{\varepsilon_j^{r_j}\mid r_j\in [0,1]\}\times (\mathbb{R}_{\ge 0}\cup \{\infty\}). 
\]

We define a singular $n$-cube $\ell_i: [0,1]^n \to \overline{\Omega}_E\to (\mathfrak{H}^{J_F})^{\text{BS}}$ by 
\[
(r_1,\cdots,r_n)
\mapsto 
(\varepsilon_1^{r_1},\cdots,\varepsilon_{n-1}^{r_{n-1}},-\mathrm{log}(r_n))
\mapsto 
\sqrt{-1}y^{\frac{1}{n}}
\varepsilon^r.
\]
Let $c_{E,i}$ denote the composition of $\ell_i$ and the canonical map $(\mathfrak{H}^{J_F})^{\text{BS}} \twoheadrightarrow Y_i^{\text{BS}}$.

%
\subsection{Mellin transform}\label{twist Mellintrans}
%

In this subsection, we give a Mellin transform for a Hilbert modular form $\in M_k(\Gamma_{1}(\ideal{d}_F[t_i],\ideal{n}),\C)$ vanishing at cusps $\Gamma_0(\ideal{d}_F[t_1],\ideal{n})$-equivalent to the cusp $\infty$. 
For the proof, we must need to prove analytic properties of the $L$\nobreakdash-functions, 
which are obtained by P-B. Garrett for a Hilbert cusp form $\in S_k(\Gamma(\ideal{n}),\C)$ of level $\Gamma(\ideal{n})$ (\cite[\S 1.9, p.37, Theorem]{Ga}). 
Here $\Gamma(\ideal{n})$ is the principal congruence subgroup of level $\ideal{n}$.
In order to do it, we strictly follow the argument in the method of Garrett. 
We keep the notation in \S\ref{Diri}, \S \ref{Borel--Serre}, and \S\ref{Fundamental domain}. 
\begin{prop}\label{abs conv}
Let $h\in M_k(\Gamma_{1}(\ideal{d}_F[t_i],\ideal{n}),\C)$. 
For $s\in \C$ such that $\mathrm{Re}(s)\gg 0$, the integral 
\[
\int_{\text{$\mathrm{image}$ $\mathrm{of}$ $c_{{}_{E,i}}$}} y^{(s-1)n} w(\widetilde{h})
\]
converges absolutely and extends to a meromorphic function on the complex plane, which is holomorphic at $s=1$. 
Here $w(h)$ is defined by $(\ref{n-form})$ and $\widetilde{h}(z)=h(z)-a_{\infty}(0,h)$. 
\end{prop}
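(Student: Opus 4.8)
The plan is to adapt Hecke's classical argument, in the form worked out for Hilbert modular forms by Garrett \cite{Ga}: integrate out the compact unit directions, reduce to a single Mellin-type integral in the ``norm'' variable $y$, and then split that integral and treat its two ends separately. First I would make the integrand explicit. Using the parametrization $c_{E,i}$ of \S\ref{Fundamental domain} and the definition (\ref{n-form}) of $w$, the integration over the $n-1$ compact unit directions $r=(r_1,\dots,r_{n-1})\in[0,1)^{n-1}$ is harmless, and the whole expression becomes a single integral $\int_0^\infty \Phi(y)\,y^{(s-1)n}\,\tfrac{dy}{y}$, where $\Phi(y)$ is the result of integrating $\widetilde{h}(\sqrt{-1}\,y^{1/n}\varepsilon^{r})$ against the unit directions. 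I would then split $\int_0^\infty=\int_1^\infty+\int_0^1$.

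For the tail $\int_1^\infty$ I would insert the Fourier expansion $\widetilde{h}(z)=\sum_{0\ll\xi}a_\infty(\xi,h)\,e_F(\xi z)$; since the constant term $a_\infty(0,h)$ at $\infty$ has been removed, each term contributes a factor $e^{-2\pi\sum_\iota \xi^\iota y_\iota}$, so $\Phi(y)$ decays exponentially as $y\to\infty$. Using the polynomial bound on $a_\infty(\xi,h)$ to justify term-by-term integration, this piece converges absolutely for every $s$ and defines an entire function of $s$. For the near-cusp piece $\int_0^1$ the point $\sqrt{-1}\,y^{1/n}\varepsilon^{r}$ tends to the cusp $0\in\mathbb{P}^1(F)$ as $y\to0$, so I would apply the cusp-exchanging element $\left(\begin{smallmatrix}0&-1\\1&0\end{smallmatrix}\right)$ together with the $\GL_2(F)$-transformation law of $w$ to perform the substitution $y\mapsto 1/y$. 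This converts $\int_0^1$ into an integral over $[1,\infty)$ for $h$ slashed by this element, whose constant term at $\infty$ is the constant term $a_0'$ of $h$ at the cusp $0$. Separating off $a_0'$ leaves a rapidly decaying integrand, again entire by Garrett's estimates, while the two removed constant terms $a_\infty(0,h)$ and $a_0'$ contribute two elementary integrals $\int_1^\infty y^{a}\,\tfrac{dy}{y}$ with $a$ affine in $s$; these converge for $\mathrm{Re}(s)\gg0$ and continue to rational functions of $s$ with simple poles exactly at $s=0$ and $s=k$.

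Combining the three contributions yields absolute convergence for $\mathrm{Re}(s)$ large together with the meromorphic continuation to the whole plane, and since the only poles sit at $s=0$ and $s=k$ while $k\ge2$ forces $1\notin\{0,k\}$, holomorphy at $s=1$ follows. The main obstacle I anticipate is the rigorous treatment of the $y\to0$ end: it requires the precise automorphy of the $n$-form $w$ under the cusp-exchanging element (where the weight-$2$ case, making $h(z)\prod_\iota dz_\iota$ invariant, is particularly convenient) and, in the Hilbert setting, the careful unfolding that interchanges the integral over the unit directions with the sum over totally positive $\xi$ taken modulo $E$, turning the combination into the Dirichlet series of (\ref{L-function}). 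This is exactly the analytic input supplied by Garrett's argument \cite{Ga}, which I would invoke for the cusp-form-like remainders rather than reprove.
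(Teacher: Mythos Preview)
Your proposal is correct and follows essentially the same route as the paper: split the integral at $y=1$, handle the large-$y$ piece by the exponential decay of $\widetilde{h}$, and for the small-$y$ piece apply the inversion $\sigma=\left(\begin{smallmatrix}0&1\\-1&0\end{smallmatrix}\right)$ to convert it into a large-$y$ integral for $h|\sigma$ plus two elementary constant-term contributions that are visibly holomorphic at $s=1$. The only cosmetic difference is that the paper keeps the integral over the unit directions $[0,1]^{n-1}$ explicit throughout (so that after applying $\sigma$ the domain becomes $[-1,0]^{n-1}$), rather than first packaging it into a single function $\Phi(y)$; this matters slightly because $\sigma$ also acts on the unit directions via $r\mapsto -r$, but your description already accounts for this implicitly.
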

\begin{proof}
For $s\in \C$ such that $\mathrm{Re}(s)\gg 0$, the integral above is just equal to the following:
\begin{align}\label{int 1}
\int_{[0,1]^{n-1}} \int_{\sqrt{-1} \mathbb{R}_{+}^{\times}} y^{(s-1)} w(\widetilde{h})
=\int_{[0,1]^{n-1}} \left(\int_{\sqrt{-1}}^{\sqrt{-1} \infty}+\int_{0}^{\sqrt{-1}}\right)y^{(s-1)} w(\widetilde{h}).
\end{align}
We calculate the second term. 
Put $\sigma=\left(\begin{pmatrix}0&1\\-1&0\\\end{pmatrix}\right)_{\iota\in J_F}\in G_{\infty,+}$. 
By the pull-back formula, we have 
\begin{align}\label{int 2}
&\int_{[0,1]^{n-1}}\int_{0}^{\sqrt{-1}} y^{(s-1)} w(\widetilde{h})
=-\int_{[-1,0]^{n-1}} \int_{\sqrt{-1}}^{\sqrt{-1} \infty} y^{(1-s)} 
\sigma \bullet w(\widetilde{h \vert \sigma})\\
\nonumber&\ \ \ \ \ \ -\int_{[-1,0]^{n-1}} \int_{\sqrt{-1}}^{\sqrt{-1} \infty} y^{(1-s)} \sigma \bullet w(a_{\infty}(0,h \vert \sigma))
-\int_{[0,1]^{n-1}} \int_{0}^{\sqrt{-1}} y^{(s-1)} w(a_{\infty}(0,h)).
\end{align}
An elementary calculation shows that the second (resp. third) term of (\ref{int 2}) is absolutely convergent for 
$\text{Re}(s) > k$ (resp. $\text{Re}(s)\ge 1$) and holomorphic at $s=1$. 
Thus, for the proof, it suffices to show that the first terms of (\ref{int 1}) and (\ref{int 2}) are absolutely convergent and holomorphic at $s=1$. 
Hence we reduce it to showing that the integral
\begin{align}\label{int}
\int_{[a,b]^{n-1}} \int_{1}^{\infty}
y^{(s-1)} \widetilde{h}(\sqrt{-1}y^{\frac{1}{n}}\varepsilon^{r}) y^m drdy
\end{align}
is absolutely convergent and holomorphic at $s=1$ for any $a,b\in\mathbb{R}$ with $a\le b$ and non\nobreakdash-negative integer $m$. 
There is a constant $M>0$ such that $N(\xi)> M$ 
for any $0\ll\xi\in [t_i]$. 
Then there is a constant $\varepsilon>0$ such that $N(\xi)>M+\varepsilon$ 
for any such $\xi$. 
Thus, by the same argument as in \cite[p.29]{Ga}, we have an estimate 
\begin{align*}
\exp\left(\pi n M^{\frac{1}{n}}y^{\frac{1}{n}}\right)\bigg|\widetilde{h}(\sqrt{-1}y^{\frac{1}{n}}\varepsilon^{r})\bigg|\le \sum_{0\ll\xi\in [t_i]}|a_{\infty}(\xi,h)|\exp\left(-\pi\left(2-\left(\frac{M}{M+\varepsilon}\right)^{\frac{1}{n}}\right)\Tr(\xi y^{\frac{1}{n}}\varepsilon^r)\right). 
\end{align*}
Since $\widetilde{h}(z)$ is absolutely convergent, so is the latter series. 
Hence there are constants $C,C'>0$ such that 
\begin{align*}
\bigg|\widetilde{h}(\sqrt{-1}y^{\frac{1}{n}}\varepsilon^{r})\bigg|
\le C\exp\left(-C'y^{\frac{1}{n}}\right)
\end{align*}
for any $y\ge 1$ and $r\in[a,b]^{n-1}$. 
Therefore the integral (\ref{int}) is dominated by 
\begin{align*}
\int_{[a,b]^{n-1}} \int_{1}^{\infty}
\exp(-C'y^{\frac{1}{n}}) y^{\text{Re}(s)-1+m} drdy
\end{align*}
and hence is absolutely convergent and a holomorphic function of $s\in\C$. 
\end{proof}

We assume that $h_F^+=1$. 
We fix a Hilbert cusp form $\textbf{f}$ and the Hilbert Eisenstein series $\textbf{E}_2(\varphi,\psi)$ given in Proposition \ref{Hilbert Eisenstein} satisfying the following conditions:
\begin{align}\label{zero point}
&\textbf{f}\in S_{2}(\ideal{n},\chi,\C)\text{ and }\\
\nonumber&\textbf{E}_2(\varphi,\psi)\in M_{2}(\ideal{n},\chi,\C)\text{ vanishes at cusps $\Gamma_{0}(\ideal{d}_F[t_1],\ideal{n})$-equivalent to the cusp $\infty$.} 
\end{align}

We simply write $\textbf{h}=\textbf{f}$ or $\textbf{E}_2(\varphi,\psi)$. 
We express the special values of the Dirichlet series $D(1,\textbf{h},\eta)$ as a Mellin transform for $\textbf{h}$ (cf. \cite[\S16]{Oda}, \cite[\S7, \S8]{Hida94}, and \cite[\S3]{Ochi}). 

Let $\eta$ be a $\overline{\Q}$-valued narrow ray class character of $F$ whose conductor is denoted by $\ideal{m}_{\eta}$ such that $\ideal{m}_{\eta}$ is prime to $\ideal{d}_F[t_1]$ and $\ideal{n}|\ideal{m}_{\eta}$. 
Let $(\ideal{m}_{\eta}^{-1}\ideal{d}_F^{-1}[t_1]^{-1}/\ideal{d}_F^{-1}[t_1]^{-1})^{\times}$ (resp. $(\ideal{m}_{\eta}^{-1}/\mathfrak{o}_F)^{\times}$) be the subset of $\ideal{m}_{\eta}^{-1}\ideal{d}_F^{-1}[t_1]^{-1}/\ideal{d}_F^{-1}[t_1]^{-1}$ (resp. $\ideal{m}_{\eta}^{-1}/\mathfrak{o}_F$) consisting of elements whose annihilator is $\ideal{m}_{\eta}$. 
We fix a non-canonical isomorphism of $\mathfrak{o}_F$-modules 
$\ideal{m}_{\eta}^{-1}\ideal{d}_F^{-1}[t_1]^{-1}/\ideal{d}_F^{-1}[t_1]^{-1} \simeq \ideal{m}_{\eta}^{-1}/\mathfrak{o}_F \simeq \mathfrak{o}_F/\ideal{m}_{\eta}$ 
and a non-canonical bijection induced from it 
$(\ideal{m}_{\eta}^{-1}\ideal{d}_F^{-1}[t_1]^{-1}/\ideal{d}_F^{-1}[t_1]^{-1})^{\times} \simeq (\ideal{m}_{\eta}^{-1}/\mathfrak{o}_F)^{\times}\simeq (\mathfrak{o}_F/\ideal{m}_{\eta})^{\times}$. 
Hence we may canonically identify $(\ideal{m}_{\eta}^{-1}\ideal{d}_F^{-1}[t_1]^{-1}/\ideal{d}_F^{-1}[t_1]^{-1})^{\times}/\mathfrak{o}_{F,+}^{\times}$ with a subgroup of $\text{Cl}_F^+(\ideal{m}_{\eta})$ under the canonical extension 
\begin{align*}
1\to (\mathfrak{o}_F/\ideal{m}_{\eta})^{\times}/\mathfrak{o}_{F,+}^{\times} \to \text{Cl}_F^+(\ideal{m}_{\eta}) \to \text{Cl}_F^+\to 1. 
\end{align*}

Let $\eta_1$ be the function on 
$(\ideal{m}_{\eta}^{-1}\ideal{d}_F^{-1}[t_1]^{-1}/\ideal{d}_F^{-1}[t_1]^{-1})^{\times}/\mathfrak{o}_{F,+}^{\times}$ defined by $\eta_1(\bar{b})=\eta(\bar{b}\ideal{m}_{\eta}\ideal{d}_F[t_1])$. 
We note that $\eta_1(\xi \bar{b})=\eta(\xi)\eta_1(\bar{b})$ for any $\bar{b}\in (\ideal{m}_{\eta}^{-1}\ideal{d}_F^{-1}[t_1]^{-1}/\ideal{d}_F^{-1}[t_1]^{-1})^{\times}/\mathfrak{o}_{F,+}^{\times}$ and $0\ll\xi\in [t_1]$ prime to $\ideal{m}_{\eta}$. 
Let $E$ denote $\mathfrak{o}_{F,\ideal{m}_{\eta},+}^{\times}:=\{e\in \mathfrak{o}_{F,+}^{\times}\mid e\equiv 1\ (\bmod\ \ideal{m}_{\eta})\}$. 

We fix a complete set $S$ (resp. $T$) of representatives of $(\ideal{m}_{\eta}^{-1}\ideal{d}_F^{-1}[t_1]^{-1}/\ideal{d}_F^{-1}[t_1]^{-1})^{\times}/\mathfrak{o}_{F,+}^{\times}$ in $\ideal{m}_{\eta}^{-1}\ideal{d}_F^{-1}[t_1]^{-1}$
(resp. $\mathfrak{o}_{F,+}^{\times}/E$ in $\mathfrak{o}_{F,+}^{\times}$) 
satisfying the condition that 
\begin{align}\label{b equiv infty}
\text{every cusp $b\in S$ is $\Gamma_{0}(\ideal{d}_F [t_1],\ideal{n})$-equivalent to the cusp $\infty$.}
\end{align} 
Here we note that the existence of such set follows from the assumption $\ideal{n}|\ideal{m}_{\eta}$. 
Indeed, fix a generator $m$ (resp. $c$) of $\ideal{m}_{\eta}$ (resp. $\ideal{d}_F[t_1]$) and a set $S'$ of representatives of $(\mathfrak{o}_F/\ideal{m}_{\eta})^{\times}$ satisfying the condition that each $x\in S'$ is prime to $mc$. 
Then $\{x/mc\mid x\in S'\}$ is a complete set of representatives of $(\ideal{m}_{\eta}^{-1}\ideal{d}_F^{-1}[t_1]^{-1}/\ideal{d}_F^{-1}[t_1]^{-1})^{\times}/\mathfrak{o}_{F,+}^{\times}$. 
The assumption $\ideal{n}|\ideal{m}_{\eta}$ implies $mc\in \ideal{n}\ideal{d}_F [t_1]$ and hence there is 
$\begin{pmatrix}x&\ast\\ mc&\ast\end{pmatrix}\in\Gamma_{0}^1(\ideal{d}_F [t_1],\ideal{n})$.

Let $\bar{b}$ denote the image of $b\in S$ in $(\ideal{m}_{\eta}^{-1}\ideal{d}_F^{-1}[t_1]^{-1}/\ideal{d}_F^{-1}[t_1]^{-1})^{\times}/\mathfrak{o}_{F,+}^{\times}$ under the canonical map. 
We have
\begin{align}\label{twist MF}
&N([t_1])^{s-k/2}
\sum_{b\in S} \sum_{u \in T}
\eta_1(\bar{b})^{-1}h_1(z+b u)\\
\nonumber
&=N([t_1])^{s-k/2}
\sum_{0\ll\xi \in [t_1]}a_{\infty}(\xi,h_1)
\sum_{b\in S} \sum_{u\in T}
\eta_1(\bar{b})^{-1}e_F(\xi b u)e_F(\xi z)\\
\nonumber
&=\tau(\eta^{-1})
N([t_1])^{s-k/2}
\sum_{0\ll\xi \in [t_1]}a_{\infty}(\xi,h_1)\eta(\xi[t_1]^{-1})e_F(\xi z). 
\end{align}
Here the last equality follows from \cite[(3.11)]{Shi}.
By taking $\Omega_E=\coprod_{u\in T}u^{-1}\Omega_{\mathfrak{o}_{F,+}^{\times}}$, we have
\begin{align*}
&N([t_1])^{s-k/2}
\sum_{b\in S}
\eta_1(\bar{b})^{-1}
\int_{\Omega_E}h_1(z+b)y^{(s-1)t}dz_{{}_{J_F}}\\
\nonumber&= N([t_1])^{s-k/2}
\sum_{b\in S}
\eta_1(\bar{b})^{-1}
\sum_{u\in T}
\int_{u^{-1}\Omega_{\mathfrak{o}_{F,+}^{\times}}}h_1(z+b)y^{(s-1)t}dz_{{}_{J_F}}\\
\nonumber&= N([t_1])^{s-k/2}
\sum_{b\in S}
\eta_1(\bar{b})^{-1}
\sum_{u\in T}
\int_{\Omega_{\mathfrak{o}_{F,+}^{\times}}}h_1(z+b u)y^{(s-1)t}dz_{{}_{J_F}}\\
\nonumber&=\int_{\Omega_{\mathfrak{o}_{F,+}^{\times}}}
N([t_1])^{s-k/2}
\sum_{b\in S}\sum_{u\in T}
\eta_1(\bar{b})^{-1}
h_1(z+b u)y^{(s-1)t}dz_{{}_{J_F}}.
\end{align*}
Here we note that 
each integral is well-defined by using Proposition \ref{abs conv}, our assumption (\ref{zero point}), and the condition (\ref{b equiv infty}). 
By using the expansion of (\ref{twist MF}), 
for $\mathrm{Re}(s)\gg 0$, we have
\begin{align*}
&N([t_1])^{s-k/2}
\sum_{b\in S}
\eta_1(\bar{b})^{-1}
\int_{\sqrt{-1}(F\otimes \mathbb{R})_+^{\times}/E}h_1(z+b)y^{(s-1)t}dz_{{}_{J_F}}\\
\nonumber&=\tau(\eta^{-1})
N([t_1])^{s-k/2}
\sum_{0\ll\xi\in [t_1]}a_{\infty}(\xi,h_1)\eta(\xi[t_1]^{-1})
\int_{\Omega_{\mathfrak{o}_{F,+}^{\times}}}e_F(\xi z)y^{(s-1)t}dz_{{}_{J_F}}\\
\nonumber&=\tau(\eta^{-1})
\sum_{0\ll\xi\in [t_1]}
\frac{a_{\infty}(\xi,h_1)\eta(\xi[t_1]^{-1})N([t_1])^{-k/2}}{N(\xi[t_1]^{-1})^s}
\int_{\Omega_{\mathfrak{o}_{F,+}^{\times}}}e_F(\xi z)(\xi y)^{(s-1)t}\bigwedge_{\iota\in J_F} d\xi^{\iota}z_{\iota}\\
\nonumber&=\tau(\eta^{-1})
\sum_{\xi\mathfrak{o}_{F,+}^{\times}}
\frac{a_{\infty}(\xi,h_1)\eta(\xi[t_1]^{-1})N([t_1])^{-k/2}}{N(\xi[t_1]^{-1})^s}
\int_{\sqrt{-1}(F\otimes \mathbb{R})_+^{\times}}e_F(\xi z)(\xi y)^{(s-1)t}\bigwedge_{\iota\in J_F} d\xi^{\iota}z_{\iota}\\
&\nonumber=
\tau(\eta^{-1})D(s,\bold{h}, \eta)
(2\pi)^{-sn} \sqrt{-1}^{n}\Gamma(s)^n. 
\end{align*}
Here we note that 
each integral is well-defined by using Proposition \ref{abs conv}, 
and we may regard $h_1(z+b)$ as a function on $\sqrt{-1}(F\otimes \mathbb{R})_+^{\times}/E$ since $h_1(uz+b)=h_1(z+b)$ for any $u\in E$. 
Furthermore, the integrals in the first line of this equation are independent of the choice of a lift $b$ of $\bar{b}$. 
Hence the integral depends only on the image $\bar{b}$ of $b$ in $(\ideal{m}_{\eta}^{-1}\ideal{d}_F^{-1}[t_1]^{-1}/\ideal{d}_F^{-1}[t_1]^{-1})^{\times}/\mathfrak{o}_{F,+}^{\times}$ and it shall be denoted by 
\[
\int_{\sqrt{-1}(F\otimes \mathbb{R})_+^{\times}/E}h_1(z+\bar{b})y^{(s-1)t}dz_{{}_{J_F}}. 
\]
Therefore we obtain the following Mellin transform:
\begin{prop}\label{Mellin}
Assume that $h_F^+=1$. 
Let $\mathbf{h}=\mathbf{f}$ or $\mathbf{E}_2(\varphi,\psi)$ as $(\ref{zero point})$ and $\eta$ a $\overline{\Q}$-valued narrow ray class character of $F$ whose conductor is denoted by $\ideal{m}_{\eta}$ such that $\ideal{m}_{\eta}$ is prime to $\ideal{d}_F[t_1]$ and $\ideal{n}|\ideal{m}_{\eta}$. 
Then we have
\begin{align*}
\sum_{b\in S}
\eta_1(\bar{b})^{-1}
\int_{\sqrt{-1}(F\otimes \mathbb{R})_+^{\times}/\mathfrak{o}_{F,\ideal{m}_{\eta},+}^{\times}}h_1(z+\bar{b})dz_{{}_{J_F}}
=\tau(\eta^{-1})D(1,\bold{h}, \eta)
(-2\pi \sqrt{-1})^{-n}. 
\end{align*}
\end{prop}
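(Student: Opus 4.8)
The plan is to derive Proposition \ref{Mellin} as the value at $s=1$ of an $s$-indexed Mellin identity, the bulk of which is established in the displayed computation preceding the statement. First I would produce, for $\mathrm{Re}(s)\gg 0$, the identity
\begin{align*}
N([t_1])^{s-k/2}\sum_{b\in S}\eta_1(\bar{b})^{-1}\int_{\sqrt{-1}(F\otimes \mathbb{R})_+^{\times}/E}h_1(z+b)\,y^{(s-1)t}\,dz_{{}_{J_F}}
=\tau(\eta^{-1})D(s,\mathbf{h},\eta)\,(2\pi)^{-sn}\sqrt{-1}^{\,n}\Gamma(s)^n,
\end{align*}
where $E=\mathfrak{o}_{F,\ideal{m}_{\eta},+}^{\times}$. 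The derivation folds the finite sums over $S$ and $T$ into the Gauss sum $\tau(\eta^{-1})$ via (\ref{twist MF}) and Shimura's identity \cite[(3.11)]{Shi}, reducing the twisted sum to the $\eta$-twisted Fourier expansion of $h_1$; integrating the resulting exponentials term by term over $\Omega_{\mathfrak{o}_{F,+}^{\times}}$ then produces the Euler factor $(2\pi)^{-sn}\sqrt{-1}^{\,n}\Gamma(s)^n$ together with the Dirichlet series $D(s,\mathbf{h},\eta)$. With this identity in hand, the whole remaining task is to pass to $s=1$ and simplify the constants.

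Next I would justify that both sides extend holomorphically across $s=1$ and that one may specialize the integral there directly. On the right this is the known meromorphic continuation of $D(s,\mathbf{h},\eta)$ (cf. \S\ref{twist Mellintrans}) times the entire factor $(2\pi)^{-sn}\sqrt{-1}^{\,n}\Gamma(s)^nN([t_1])^{s-k/2}$. On the left I would invoke Proposition \ref{abs conv}: by the vanishing assumption (\ref{zero point}) together with the choice (\ref{b equiv infty}) that every $b\in S$ represents a cusp $\Gamma_{0}(\ideal{d}_F[t_1],\ideal{n})$-equivalent to $\infty$, the integrand $z\mapsto h_1(z+b)$ has vanishing constant term, so Proposition \ref{abs conv} applies and shows the integral is absolutely convergent and holomorphic at $s=1$. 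This is the step I expect to be the main obstacle, since it is precisely where the decay estimates of Proposition \ref{abs conv} and the vanishing at the cusps above $\infty$ are needed in order to evaluate inside the integral at $s=1$, rather than only along the line $\mathrm{Re}(s)\gg 0$ where absolute convergence was automatic.

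Finally I would specialize and reconcile the constant. Because the weight is $k=2$, the prefactor $N([t_1])^{s-k/2}=N([t_1])^{s-1}$ equals $1$ at $s=1$, and likewise $y^{(s-1)t}=1$ and $\Gamma(1)^n=1$ there; hence the left-hand integral collapses to $\sum_{b\in S}\eta_1(\bar{b})^{-1}\int_{\sqrt{-1}(F\otimes \mathbb{R})_+^{\times}/\mathfrak{o}_{F,\ideal{m}_{\eta},+}^{\times}}h_1(z+\bar{b})\,dz_{{}_{J_F}}$ and the right-hand side becomes $\tau(\eta^{-1})D(1,\mathbf{h},\eta)(2\pi)^{-n}\sqrt{-1}^{\,n}$. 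It remains only to rewrite the constant: from $\sqrt{-1}\cdot(-\sqrt{-1})=1$ we get $-\sqrt{-1}=(\sqrt{-1})^{-1}$, so $-2\pi\sqrt{-1}=2\pi(\sqrt{-1})^{-1}$ and therefore $(2\pi)^{-n}\sqrt{-1}^{\,n}=(-2\pi\sqrt{-1})^{-n}$. Substituting yields exactly the asserted equality, which completes the proof.
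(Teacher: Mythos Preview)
Your proposal is correct and follows essentially the same route as the paper: the paper carries out exactly this $s$-indexed computation in the paragraphs immediately preceding Proposition \ref{Mellin}, invoking (\ref{twist MF}), \cite[(3.11)]{Shi}, and Proposition \ref{abs conv} together with (\ref{zero point}) and (\ref{b equiv infty}) to justify well-definedness, and then states the proposition as the specialization at $s=1$. Your explicit verification of the constant $(2\pi)^{-n}\sqrt{-1}^{\,n}=(-2\pi\sqrt{-1})^{-n}$ and of the collapse of the prefactors at $s=1$ for $k=2$ is a helpful addition that the paper leaves implicit.
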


\begin{rem}
As mentioned above, 
the assumption $\ideal{n}|\ideal{m}_{\eta}$ and the conditions (\ref{zero point}) and (\ref{b equiv infty}) imply that the integrals are well-defined. 
If $\textbf{h}$ is a Hilbert cusp form, then the Mellin transform as Proposition \ref{Mellin} is satisfied without the assumption $\ideal{n}|\ideal{m}_{\eta}$.
\end{rem}

We consider a Mellin transform in the anti-holomorphic case. 
Let $W_G$ denote the Weyl group $K_{\infty}/K_{\infty,+}$, which is identified with $\{w_J\mid J\subset J_F\}$, 
where $w_J \in K_{\infty}$ such that 
$w_{J,\iota}=\begin{pmatrix}1&0\\0&1\end{pmatrix}$ if $\iota\in J$ and 
$w_{J,\iota}=\begin{pmatrix}-1&0\\0&1\end{pmatrix}$ if $\iota\in J_F \backslash J$. 
Then $W_G$ acts on the space of Hilbert modular forms via $\textbf{h}\mapsto \textbf{h}_J:=\textbf{h}|[K_{\infty}w_J K_{\infty}]$ for each subset $J$ of $J_F$. 

\begin{prop}\label{anti Mellin}
Under the same notation and assumptions as Proposition \ref{Mellin}, we have
\begin{align*}
&\sum_{b\in S}
\eta_1(\bar{b})^{-1}
\int_{\sqrt{-1}(F\otimes \mathbb{R})_+^{\times}/\mathfrak{o}_{F,\ideal{m}_{\eta},+}^{\times}}
h_{J,1}(z+\bar{b})dz_{{}_{J}}
=\tau(\eta^{-1})D(1,\mathbf{h},\eta)\eta_{\infty}(\nu_J)
(-2\pi \sqrt{-1})^{-n},  
\end{align*}
where $dz_{{}_{J}}$ is defined by $(\ref{dz_J})$ and $\nu_J\in \mathbb{A}_{F,\infty}$ such that $\nu_{J,\iota}=1$ if $\iota\in J$ and $\nu_{J,\iota}=-1$ if $\iota\in J_F\backslash J$. 
\end{prop}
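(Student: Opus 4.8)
The plan is to run the Mellin-transform computation that precedes Proposition \ref{Mellin} verbatim, now with the anti-holomorphic form $h_{J,1}$ in place of $h_1$, and to isolate the single place where the character $\eta$ detects the archimedean sign. First I would make the $W_G$-action explicit on the classical model. By the definition recalled before the proposition, the component $w_{J,\iota}=\begin{pmatrix}-1&0\\0&1\end{pmatrix}$ acts on $\mathfrak{H}$ at each $\iota\in J_F\setminus J$ by $z_\iota\mapsto-\bar z_\iota$, and acts trivially at $\iota\in J$; in weight $2$ this reflection carries no nontrivial holomorphic automorphy factor, only a possible real scalar from $\det$ that I defer to the sign bookkeeping below. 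On the purely imaginary locus $z=\sqrt{-1}v$ shifted by the \emph{real} cusp representative $b\in S$, one computes, for $\iota\in J_F\setminus J$, $-\overline{(\sqrt{-1}v_\iota+b^\iota)}=\sqrt{-1}v_\iota-b^\iota$, so that (up to that real normalization)
\[
h_{J,1}(z+\bar b)=h_1(\sqrt{-1}v+\nu_J b),
\]
where $\nu_J b$ is the shift whose sign is flipped at every place outside $J$. In particular the integrand remains rapidly decreasing, so Proposition \ref{abs conv} applies unchanged and every integral below converges, continues meromorphically, and is holomorphic at $s=1$. The decisive point of this rewriting is that the coefficients appearing are those of $\mathbf{h}$ itself (the $a_\infty(\xi,h_1)$), so the Dirichlet series that emerges is again $D(s,\mathbf{h},\eta)$, not $D(s,\mathbf{h}_J,\eta)$.

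Next I would insert the Fourier expansion of $h_1$ and repeat the unfolding. The only change relative to the derivation of (\ref{twist MF}) and of Proposition \ref{Mellin} is that the $\xi$-term now carries $\exp\!\big(2\pi\sqrt{-1}\sum_{\iota}\nu_{J,\iota}\xi^\iota b^\iota\big)=e_F(\nu_J\,\xi b)$ in place of $e_F(\xi b)$; the form $dz_J$ of (\ref{dz_J}) is obtained from $dz_{J_F}=\bigwedge_\iota dz_\iota$ by replacing $dz_\iota$ with $d\bar z_\iota$ for $\iota\in J_F\setminus J$, and is normalized precisely so that the orientation signs at those places align with the holomorphic case. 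After summing over $u\in T$ and integrating over $\Omega_{\mathfrak{o}_{F,+}^{\times}}$ exactly as before, the radial part reduces to the same factor $(2\pi)^{-sn}\sqrt{-1}^{\,n}\Gamma(s)^n$, and the whole expression is $\tau$-twisted by the sign-flipped Gauss sum
\[
\sum_{b\in S}\sum_{u\in T}\eta_1(\bar b)^{-1}\,e_F(\nu_J\,\xi u b).
\]

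The crux is to evaluate this sign-flipped sum. Here I would use the identity \cite[(3.11)]{Shi} underlying (\ref{twist MF}), namely $\sum_{b,u}\eta_1(\bar b)^{-1}e_F(\xi u b)=\tau(\eta^{-1})\eta(\xi[t_1]^{-1})$ for totally positive $\xi\in[t_1]$ prime to $\ideal{m}_{\eta}$. Reading $\eta$ as an idele class character, this value depends on $\xi$ only through its associated idele, and the additive twist $e_F(\xi ub)\mapsto e_F(\nu_J\,\xi ub)$ amounts to multiplying the archimedean component $\xi_\infty$ by $\nu_J$. Since $\xi$ is totally positive, $\eta_\infty(\xi_\infty)=\sgn(\xi)^{r}=1$ (with $r$ the sign of $\eta$), so replacing $\xi_\infty$ by $\nu_J\xi_\infty$ multiplies the right-hand side exactly by $\eta_\infty(\nu_J)=\prod_{\iota\in J_F\setminus J}(-1)^{r_\iota}$; elementarily, this is the sign contributed by the factor $\sgn(x)^{r}$ in the Gauss sum (\ref{Gauss}) once the archimedean signs are flipped off $J$. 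Hence the sign-flipped Gauss sum equals $\eta_\infty(\nu_J)\,\tau(\eta^{-1})\eta(\xi[t_1]^{-1})$. Feeding this back and setting $s=1$ gives the stated formula.

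The main obstacle is precisely this last sign bookkeeping. One must verify rigorously that the partial archimedean sign flip $\nu_J$, which takes $\xi$ out of $F$ and therefore cannot be absorbed by a naive reindexing of the summation, affects \cite[(3.11)]{Shi} through $\eta_\infty$ alone; and one must confirm that the real orientation and $\det$-signs arising from $dz_J$ versus $dz_{J_F}$ and from the $w_J$-action on the invariant $n$-form (\ref{n-form}) cancel, so that precisely the character value $\eta_\infty(\nu_J)$ survives with no spurious factor $(-1)^{|J_F\setminus J|}$. Everything else is a transcription of the proof of Proposition \ref{Mellin}.
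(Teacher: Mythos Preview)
Your overall strategy—rerun the Mellin computation with $h_{J,1}$ in place of $h_1$ and isolate where the archimedean sign $\eta_\infty(\nu_J)$ enters—is the paper's strategy as well. The divergence, and the gap, is precisely at the point you flag as ``the main obstacle.''

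You compute $h_{J,1}(z+\bar b)$ by applying $w_J$ directly to $\mathfrak{H}^{J_F}$, obtaining the shift $\nu_J b$ with $\nu_J\notin F$. This forces you to evaluate the sign-flipped character sum $\sum_{b,u}\eta_1(\bar b)^{-1}e_F(\nu_J\xi ub)$, and your idelic heuristic for it is not a proof: the identity \cite[(3.11)]{Shi} used in (\ref{twist MF}) is an equality between a sum over $F$-points and a value depending on the ideal $(\xi)$, and there is no a priori reason why replacing only the archimedean component of $\xi$ should multiply both sides by $\eta_\infty(\nu_J)$. The left side genuinely changes while the right side (an ideal-theoretic quantity) does not, so you are asserting a new identity, not reinterpreting the old one.

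The paper resolves this with a single move that you are missing: since $h_F^+=1$, one may choose $a\in\mathfrak{o}_F^\times$ with $\sgn(a^\iota)=\nu_{J,\iota}$ and put $\gamma=\begin{pmatrix}a&0\\0&1\end{pmatrix}\in G(\Q)$. The adelic operator $[K_\infty w_J K_\infty]$ is then realized on $Y_1$ by $z\mapsto\gamma^{-1}z$, giving $h_{J,1}(z)=(-1)^{\sharp(J_F\setminus J)}h_1(\gamma^{-1}z)$. Substituting into the Fourier expansion of $h_1$ and absorbing the unit $a$ into the summation index produces an expansion over $\mu\in[t_1]$ with $\{\mu\}=J$, i.e.\ over elements of $F$ with the prescribed sign pattern. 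Now everything stays inside $F$: the Gauss-sum step is the literal analogue of (\ref{twist MF}) with $\mu$ in place of $\xi$, and the factor $\eta_\infty(\nu_J)=\sgn(\mu)^r$ appears automatically from the $\sgn^r$ built into the narrow ray class character. In effect, the unit $a$ \emph{rationalizes} your $\nu_J$, converting the partial archimedean sign flip into an honest $F$-linear reindexing—which is exactly what your argument needs but cannot achieve while $\nu_J$ remains outside $F$.
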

\begin{proof}
Since $h_F^+=1$, we can take $a\in \mathfrak{o}_F^{\times}$ such that $\iota(a)>0$ if $\iota \in J$ and $\iota(a)<0$ if $\iota\in J_F\backslash J$. 
By putting $\gamma=\begin{pmatrix}a&0\\0&1\end{pmatrix}$, 
the action of $[K_{\infty}w_J K_{\infty}]$ on $Y(\ideal{n})=Y_1$ is given by $z\mapsto \gamma^{-1}z$. 
Then, by definition, we have 
$h_{J,1}(z) =h_1(\gamma^{-1}z)(-1)^{\sharp (J_F\backslash J)}$.
Hence we obtain
\begin{align*}
h_{1}(\gamma^{-1}z)
&=\sum_{\mu\in [t_1], \{\mu\}=J} c(\mu[t_1]^{-1},\textbf{h})|N(\mu)| 
e_F(\sqrt{-1}\mu y_{\infty}^{w_J})
e_F(\mu x_{\infty}).
\end{align*}
Here $\{\mu\}=\{\iota\in J_F |\mu^{\iota}>0\}$ and 
$y_{\infty,\iota}^{w_J}=y_{\infty,\iota}$ (resp. $-y_{\infty,\iota}$) if $\iota\in J$ (resp. $\iota\in J_F\backslash J$). 
Now our assertion follows from the same argument 
as in the proof of Proposition \ref{Mellin}. 
\end{proof}

%
\subsection{Relation between cohomology class and Dirichlet series}\label{subsection:modular symbol}
%

In this subsection, we give a cohomological description of special values of the $L$-functions. 

We keep the notation in \S \ref{twist Mellintrans}.
As the previous subsection, we assume that $h_F^+=1$. 
We fix a lift $b \in S$ of $\bar{b}\in (\ideal{m}_{\eta}^{-1}\ideal{d}_F^{-1}[t_1]^{-1}/\ideal{d}_F^{-1}[t_1]^{-1})^{\times}/\mathfrak{o}_{F,+}^{\times}$. 
We consider the Hilbert modular variety $Y(\ideal{n})$ defined by (\ref{adele HMV}). 
Let $C(\Gamma_{1}(\ideal{d}_F [t_1],\ideal{n}))$ denote the set of all cusps of $Y(\ideal{n})$. 
Let $C_{\infty}$ be the subset of $C(\Gamma_{1}(\ideal{d}_F [t_1],\ideal{n}))$ consisting of cusps $\Gamma_{0}(\ideal{d}_F [t_1],\ideal{n})$\nobreakdash-equivalent to the cusp $\infty$. 

We consider the following subset $H_{b}$ of $\mathfrak{H}^{J_F}$: 
\[
H_{b}:=b+\sqrt{-1}(F\otimes\mathbb{R})_+^{\times}\hookrightarrow \mathfrak{H}^{J_F}. 
\]

We define an action of $\mathfrak{o}_{F,\ideal{m}_{\eta},+}^{\times}$ on $H_{b}$ by 
$\varepsilon\ast(z_{\iota})_{\iota\in J_F}=(\varepsilon^{\iota}z_{\iota}-(\varepsilon^{\iota}-1)b)_{\iota\in J_F}$.
Since $(\varepsilon-1) b\in \ideal{d}_F^{-1}[t_1]^{-1}$ for any $\varepsilon\in \mathfrak{o}_{F,\ideal{m}_{\eta},+}^{\times}$, we see that 
$\varepsilon\ast(z_{\iota})_{\iota\in J_F}$ is $\Gamma_{1}(\ideal{d}_F [t_1],\ideal{n})$-equivalent to $(z_{\iota})_{\iota\in J_F}$. 
Therefore we have
$H_{b}/\mathfrak{o}_{F,\ideal{m}_{\eta},+}^{\times} \to Y(\ideal{n})$ and 
it induces
$H_c^n(Y(\ideal{n}),A)\to
H_c^n(H_{b}/\mathfrak{o}_{F,\ideal{m}_{\eta},+}^{\times},A)$
for $A=\integer{}$, $K$, or $\C$. 

We define subsets $H_b^{\mathrm{BS}}$, $\partial_{\infty}$, and $\partial_{b}$ of $X \times (\mathbb{R}_{\ge 0}\cup \{\infty\})$ as follows.
Let $X_b$ denote the image of $H_b$ in $X$ under the composition of the isomorphism (\ref{H^{J_F}}) and the projection to $X$. 
We have $H_b \simeq X_b \times \mathbb{R}_+^{\times}$. 
We define $H_b^{\mathrm{BS}}$, $\partial_{\infty}$, and $\partial_{b}$ by 
\begin{align*}
&H_b^{\mathrm{BS}}=X_b \times (\mathbb{R}_{\ge 0}\cup \{\infty\}),& 
&\partial_{\infty}=X_b \times \{\infty\}, &
&\partial_{b}=X_b \times \{0\}.&
\end{align*}
The action of $\mathfrak{o}_{F,\ideal{m}_{\eta},+}^{\times}$ on $H_{b}$ extends canonically to an action on $H_b^{\mathrm{BS}}$.
We put $\alpha_b=\small\begin{pmatrix}-b&1+b^2\\-1&b \end{pmatrix}$.
Note that $\alpha_b(\infty)=b$.
The embedding $H_b \hookrightarrow \mathfrak{H}^{J_F}$
(resp. the composition $H_b \xrightarrow{\alpha_b} H_b \hookrightarrow \mathfrak{H}^{J_F}\xrightarrow{\alpha_b}\mathfrak{H}^{J_F}$) induces an $\mathfrak{o}_{F,\ideal{m}_{\eta},+}^{\times}$-equivariant map
\begin{align}\label{extend to H_b^{BS}}
&H_b \cup \partial_{\infty} \to \mathfrak{H}^{J_F}\cup (X\times\{\infty\})&
&\text{(resp. $H_b \cup \partial_{b} \to \mathfrak{H}^{J_F}\cup \alpha_b(X\times\{\infty\})$)}&
\end{align}
because $\alpha_b(b+\sqrt{-1}y)=b+\sqrt{-1}/y$.
Therefore we have $H_{b}^{\mathrm{BS}}/\mathfrak{o}_{F,\ideal{m}_{\eta},+}^{\times} \to Y(\ideal{n})^{\mathrm{BS}}$ and it induces
\begin{align}\label{Gysin}
H^n(Y(\ideal{n})^{\text{BS}},&D_{C_{\infty}}(\ideal{n});A)
\to H^n(Y(\ideal{n})^{\text{BS}},D_{b,\infty}(\ideal{n});A)\\
\nonumber&\ \ \to H^n(H_{b}^{\mathrm{BS}}/\mathfrak{o}_{F,\ideal{m}_{\eta},+}^{\times},\partial_{b}/\mathfrak{o}_{F,\ideal{m}_{\eta},+}^{\times}\cup\partial_{\infty}/\mathfrak{o}_{F,\ideal{m}_{\eta},+}^{\times};A)
\simeq
H_c^n(H_{b}/\mathfrak{o}_{F,\ideal{m}_{\eta},+}^{\times},A)
\end{align}
for $A=\integer{}$, $K$, or $\C$. 
Here $D_{s}$ is the boundary of $Y(\ideal{n})^{\text{BS}}$ at a cusp $s$ as explained in \S \ref{Borel--Serre}, 
$D_{C_{\infty}}(\ideal{n})=\coprod_{s\in C_{\infty}}D_{s}$, 
and $D_{b,\infty}(\ideal{n})=D_{b}\cup D_{\infty}$.

We define the evaluation map
\begin{align}\label{ev map}
\mathrm{ev}_{b,1,A} :
\widetilde{H}^n(Y(\ideal{n})^{\text{BS}},D_{C_{\infty}}(\ideal{n});A)\to A
\end{align}
by the composition of (\ref{Gysin}) and the trace map 
$H_c^n(H_{b}/\mathfrak{o}_{F,\ideal{m}_{\eta},+}^{\times},A)\to A$, where 
\[
\widetilde{H}^n(Y(\ideal{n})^{\text{BS}},D_{C_{\infty}}(\ideal{n});A)
:=H^n(Y(\ideal{n})^{\text{BS}},D_{C_{\infty}}(\ideal{n});A)/\text{$A$-torsion}.
\]
Note that the definition of $\mathrm{ev}_{b,1,A}$ depends only on $\bar{b}$ (because if $\bar{b}=\bar{b}'$, then there is $\gamma \in \Gamma_1(\ideal{d}_F[t_1],\ideal{n})$ such that $b=\gamma (b')$ and $\gamma(\infty)=\infty$) and hence it shall be denoted by $\mathrm{ev}_{\bar{b},1,A}$.

In order to give a cohomological description of the $L$-functions, 
we recall the relative de Rham theory, which is proved by A. Borel \cite[Theorem 5.2]{Bo} for general locally symmetric spaces. 
Let $\Omega^{\bullet}(Y(\ideal{n}),\C)$ denote the complex of $\C$-valued $C^{\infty}$-differential $\Gamma_{1}(\ideal{d}_F[t_1],\ideal{n})$\nobreakdash-invariant forms in $\mathfrak{H}^{J_F}$. 
Let $\Omega_{\text{fd}}^{\bullet}(Y(\ideal{n}),D_{C_{\infty}}(\ideal{n});\C)$ denote the complex of forms in $\Omega^{\bullet}(Y(\ideal{n}),\C)$ which, together with their exterior differentials, are fast decreasing at every $s\in C_{\infty}$. 
By the proof of \cite[Theorem 5.2]{Bo} on the stalks at the boundary, we have 
\begin{align}\label{relative de Rham theory}
H_{\text{dR}}^n(Y(\ideal{n}),\Omega_{\text{fd}}^{\bullet}(Y(\ideal{n}),D_{C_{\infty}}(\ideal{n});\C))\simeq H^n (Y(\ideal{n})^{\text{BS}},D_{C_{\infty}}(\ideal{n});\C).
\end{align}

Let $\textbf{h}=\mathbf{f}$ or $\mathbf{E}_2(\varphi,\psi)$ as (\ref{zero point}). 
Let $[\omega_{\textbf{h}}]$ denote the Betti cohomology class in 
$H^n(Y(\ideal{n}),\C)$ attached to the de Rham cohomology class of $\omega(\mathbf{h})$.
By the isomorphism (\ref{relative de Rham theory}) and the condition (\ref{zero point}), we can define the relative cohomology class $[\omega_{\textbf{h}}]_{\text{rel}}$ in $H^n(Y(\ideal{n})^{\text{BS}},D_{C_{\infty}}(\ideal{n});\C)$ attached to the relative de Rham cohomology class of $\omega(\mathbf{h})$ whose image in $H^n(Y(\ideal{n}),\C)$ is $[\omega_{\textbf{h}}]$. 
Now, by combining these observations and Proposition \ref{Mellin}, we obtain the following: 
\begin{prop}\label{Modular symbol}
Let $\mathbf{h}=\mathbf{f}$ or $\mathbf{E}_2(\varphi,\psi)$ as $(\ref{zero point})$. 
Let $A=\integer{}$, $K$, or $\C$.
Assume that $h_F^+=1$ and
$a [\omega_{\mathbf{h}}]_{\mathrm{rel}}\in \widetilde{H}^n(Y(\ideal{n})^{\mathrm{BS}},D_{C_{\infty}}(\ideal{n});A)$ for some $a\in A$. 
Let $\eta$ be a $\overline{\Q}$-valued narrow ray class character of $F$ whose conductor is denoted by $\ideal{m}_{\eta}$ such that $\ideal{m}_{\eta}$ is prime to $\ideal{d}_F[t_1]$ and $\ideal{n}|\ideal{m}_{\eta}$. 
Then we have
\begin{align*}
A(\eta)\ni\sum_{b\in S}\eta_1(\bar{b})^{-1} \mathrm{ev}_{\bar{b},1,A}(a[\omega_{\mathbf{h}}]_{\mathrm{rel}})
=a\tau(\eta^{-1})D(1,\mathbf{h}, \eta)
(-2\pi \sqrt{-1})^{-n}. 
\end{align*}
\end{prop}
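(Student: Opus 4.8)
The plan is to reduce the statement to Proposition \ref{Mellin} by identifying, for each $b\in S$, the evaluation $\mathrm{ev}_{\bar{b},1,A}([\omega_{\mathbf{h}}]_{\mathrm{rel}})$ with the analytic integral $\int_{\sqrt{-1}(F\otimes\mathbb{R})_+^{\times}/\mathfrak{o}_{F,\ideal{m}_{\eta},+}^{\times}}h_1(z+\bar{b})dz_{{}_{J_F}}$ appearing there. Since the right-hand side of the proposition is exactly $a$ times the right-hand side of Proposition \ref{Mellin}, and since $\mathrm{ev}_{\bar{b},1,A}$ is $A$-linear, once this identification is established the equality follows by summing over $b\in S$ against the weights $\eta_1(\bar{b})^{-1}$ and invoking Proposition \ref{Mellin}.

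First I would represent the class. By the relative de Rham isomorphism (\ref{relative de Rham theory}) together with the vanishing condition (\ref{zero point}), the relative cohomology class $[\omega_{\mathbf{h}}]_{\mathrm{rel}}$ is represented by the holomorphic $n$-form $\omega(\mathbf{h})$, which, along with its exterior derivative, is fast decreasing at every cusp in $C_{\infty}$. Next I would unwind the definition of the evaluation map over $\C$. The map (\ref{Gysin}) is realized on forms by restriction along the $\mathfrak{o}_{F,\ideal{m}_{\eta},+}^{\times}$-equivariant map $H_b^{\mathrm{BS}}/\mathfrak{o}_{F,\ideal{m}_{\eta},+}^{\times}\to Y(\ideal{n})^{\mathrm{BS}}$ induced by (\ref{extend to H_b^{BS}}), while the trace map on the top compact-support cohomology $H_c^n(H_b/\mathfrak{o}_{F,\ideal{m}_{\eta},+}^{\times},\C)\to\C$ of the oriented $n$-manifold $H_b/\mathfrak{o}_{F,\ideal{m}_{\eta},+}^{\times}$ is integration. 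This yields
\begin{align*}
\mathrm{ev}_{\bar{b},1,\C}([\omega_{\mathbf{h}}]_{\mathrm{rel}})
=\int_{H_b/\mathfrak{o}_{F,\ideal{m}_{\eta},+}^{\times}} \omega(\mathbf{h}).
\end{align*}
Here both boundary components $\partial_{\infty}$ and $\partial_{b}$ of $H_b^{\mathrm{BS}}$ lie over cusps in $C_{\infty}$ --- the latter by the choice (\ref{b equiv infty}) --- so the fast decrease of $\omega(\mathbf{h})$ there, together with the convergence statement of Proposition \ref{abs conv}, ensures that the relative class restricts correctly and that the integral converges. Parametrizing $H_b$ by $z=b+\sqrt{-1}y$ with $y\in(F\otimes\mathbb{R})_+^{\times}$ and translating by $z\mapsto z+b$ (legitimate because $(\varepsilon-1)b\in\ideal{d}_F^{-1}[t_1]^{-1}$, so the $\ast$-action matches the $\mathfrak{o}_{F,\ideal{m}_{\eta},+}^{\times}$-action) turns this integral into $\int_{\sqrt{-1}(F\otimes\mathbb{R})_+^{\times}/\mathfrak{o}_{F,\ideal{m}_{\eta},+}^{\times}}h_1(z+\bar{b})dz_{{}_{J_F}}$, as desired.

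For the membership in $A(\eta)$, I would use that the evaluation map is defined over $A$ and is compatible with the inclusion $A\hookrightarrow\C$ under which the value is computed. Under the hypothesis $a[\omega_{\mathbf{h}}]_{\mathrm{rel}}\in\widetilde{H}^n(Y(\ideal{n})^{\mathrm{BS}},D_{C_{\infty}}(\ideal{n});A)$, each term $\mathrm{ev}_{\bar{b},1,A}(a[\omega_{\mathbf{h}}]_{\mathrm{rel}})$ lies in $A$; since the character values $\eta_1(\bar{b})^{-1}$ lie in $\integer{}(\eta)$, the weighted sum lies in $A(\eta)$. Computing the same sum over $\C$ via the identification above and Proposition \ref{Mellin} then gives the stated value $a\tau(\eta^{-1})D(1,\mathbf{h},\eta)(-2\pi\sqrt{-1})^{-n}$.

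The main obstacle is the second step: showing that the purely topological evaluation map --- relative cohomology followed by the Gysin-type restriction (\ref{Gysin}) and the trace map --- computes the analytic integral of $\omega(\mathbf{h})$ over the cycle $H_b/\mathfrak{o}_{F,\ideal{m}_{\eta},+}^{\times}$. The delicate points are verifying that under (\ref{relative de Rham theory}) the trace map corresponds to integration with the correct orientation, and that the fast decrease of $\omega(\mathbf{h})$ at both boundary pieces (which is precisely where one must use that $b$ is $\Gamma_0(\ideal{d}_F[t_1],\ideal{n})$-equivalent to $\infty$) genuinely renders the relative de Rham pairing well defined and convergent.
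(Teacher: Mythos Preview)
Your proposal is correct and follows essentially the same route as the paper: the paper does not give a separate proof for Proposition~\ref{Modular symbol} but simply states that it follows ``by combining these observations and Proposition~\ref{Mellin},'' where the observations are exactly the relative de Rham isomorphism~(\ref{relative de Rham theory}), the definition of $[\omega_{\mathbf{h}}]_{\mathrm{rel}}$, and the construction of $\mathrm{ev}_{\bar b,1,A}$ via~(\ref{Gysin}). Your write-up spells out these ingredients and their compatibilities (including the role of (\ref{b equiv infty}) and Proposition~\ref{abs conv}) more explicitly than the paper itself does.
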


We treat the anti-holomorphic case. 
By the description of the action of $[K_{\infty}w_J K_{\infty}]$ on $Y(\ideal{n})$ mentioned in the proof of Proposition \ref{anti Mellin}, 
we see that the action of $[K_{\infty}w_J K_{\infty}]$ preserves the component $D_{C_{\infty}}(\ideal{n})$ and hence $[K_{\infty}w_J K_{\infty}]$ acts on $\widetilde{H}^n(Y(\ideal{n})^{\text{BS}},D_{b,\infty}(\ideal{n});A)$. 
We obtain the following proposition by the same argument as in the proof of Proposition \ref{Modular symbol}, using Proposition \ref{anti Mellin} instead of Proposition \ref{Mellin}: 
\begin{prop}\label{Modular symbol anti-hol}
Under the same notation and assumptions as Proposition \ref{anti Mellin} and Proposition \ref{Modular symbol}, we have 
\begin{align*}
A(\eta) \ni \sum_{b\in S}
\eta_1(\bar{b})^{-1} 
\mathrm{ev}_{\bar{b},1,A}\left(a[\omega_{\mathbf{h}}]_{\mathrm{rel}}|[K_{\infty}w_J K_{\infty}]\right)
=a\tau(\eta^{-1})D(1,\mathbf{h},\eta)\eta_{\infty}(\nu_J)
(-2\pi \sqrt{-1})^{-n}. 
\end{align*}
\end{prop}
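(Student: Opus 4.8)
The plan is to run the argument establishing Proposition \ref{Modular symbol} verbatim, feeding in the anti-holomorphic Mellin transform of Proposition \ref{anti Mellin} in place of the holomorphic one (Proposition \ref{Mellin}). First I would record that the archimedean double coset $[K_{\infty}w_J K_{\infty}]$ preserves the boundary component $D_{C_{\infty}}(\ideal{n})$: by the computation in the proof of Proposition \ref{anti Mellin}, its action on $Y(\ideal{n})=Y_1$ is induced by $z\mapsto \gamma^{-1}z$ with $\gamma=\begin{pmatrix}a&0\\0&1\end{pmatrix}$ for a unit $a\in\mathfrak{o}_F^{\times}$ of prescribed signs (here $h_F^+=1$ is used), and such a diagonal transformation fixes the cusp $\infty$, hence permutes the set $C_{\infty}$ of cusps $\Gamma_0(\ideal{d}_F[t_1],\ideal{n})$-equivalent to $\infty$. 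Consequently $[K_{\infty}w_J K_{\infty}]$ acts on the relative cohomology $\widetilde{H}^n(Y(\ideal{n})^{\mathrm{BS}},D_{C_{\infty}}(\ideal{n});A)$, so the twisted class $a[\omega_{\mathbf{h}}]_{\mathrm{rel}}|[K_{\infty}w_J K_{\infty}]$ lies in the $A$-integral relative cohomology whenever $a[\omega_{\mathbf{h}}]_{\mathrm{rel}}$ does.

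Next I would identify the relative de Rham representative of the twisted class. Under the isomorphism (\ref{relative de Rham theory}) together with the rule $\mathbf{h}\mapsto \mathbf{h}_J=\mathbf{h}|[K_{\infty}w_J K_{\infty}]$, the class $[\omega_{\mathbf{h}}]_{\mathrm{rel}}|[K_{\infty}w_J K_{\infty}]$ is represented by the fast-decreasing form $\omega(\mathbf{h}_J)$, whose pullback to $H_b$ carries the anti-holomorphic components indexed by $J_F\backslash J$. Evaluating this class through the map (\ref{Gysin}) followed by the trace map—that is, integrating $\omega(\mathbf{h}_J)$ over the cycle $H_b/\mathfrak{o}_{F,\ideal{m}_{\eta},+}^{\times}$—produces, up to the normalizing powers of $N([t_1])$, exactly the integral $\int_{\sqrt{-1}(F\otimes\mathbb{R})_+^{\times}/\mathfrak{o}_{F,\ideal{m}_{\eta},+}^{\times}} h_{J,1}(z+\bar{b})\,dz_{{}_J}$ appearing in Proposition \ref{anti Mellin}, with the measure $dz_{{}_J}$ of (\ref{dz_J}) matching the mixed holomorphic/anti-holomorphic form $\omega(\mathbf{h}_J)$.

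Summing the resulting identity $\mathrm{ev}_{\bar{b},1,A}(a[\omega_{\mathbf{h}}]_{\mathrm{rel}}|[K_{\infty}w_J K_{\infty}])=a\int h_{J,1}(z+\bar{b})\,dz_{{}_J}$ over $b\in S$ weighted by $\eta_1(\bar{b})^{-1}$ and invoking Proposition \ref{anti Mellin} then yields the stated value $a\tau(\eta^{-1})D(1,\mathbf{h},\eta)\eta_{\infty}(\nu_J)(-2\pi\sqrt{-1})^{-n}$. For the integrality clause, the evaluation map (\ref{ev map}) sends $A$-integral relative classes to $A$, so $\mathrm{ev}_{\bar{b},1,A}(a[\omega_{\mathbf{h}}]_{\mathrm{rel}}|[K_{\infty}w_J K_{\infty}])\in A$; since each $\eta_1(\bar{b})^{-1}\in A(\eta)$, the weighted sum belongs to $A(\eta)$.

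The step I expect to be the genuine obstacle is the compatibility asserted in the second paragraph: that applying the archimedean double coset $[K_{\infty}w_J K_{\infty}]$ to the cohomology class corresponds, under relative de Rham theory, to replacing $\mathbf{h}$ by $\mathbf{h}_J$ in the differential form while simultaneously switching the integration measure from $dz_{{}_{J_F}}$ to $dz_{{}_J}$, with no spurious sign or volume factor. Pinning down this dictionary—in particular verifying that the factor $(-1)^{\sharp(J_F\backslash J)}$ in $h_{J,1}(z)=h_1(\gamma^{-1}z)(-1)^{\sharp(J_F\backslash J)}$ from the proof of Proposition \ref{anti Mellin} is exactly absorbed by the orientation change of the cycle under $\gamma$—is where the real bookkeeping lies; once it is in place, the remainder is identical to the holomorphic case treated in Proposition \ref{Modular symbol}.
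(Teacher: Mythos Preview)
Your proposal is correct and follows essentially the same approach as the paper: the paper's own proof consists of the single remark that one runs the argument of Proposition \ref{Modular symbol} with Proposition \ref{anti Mellin} in place of Proposition \ref{Mellin}, together with the observation (stated just before the proposition) that $[K_{\infty}w_J K_{\infty}]$ preserves $D_{C_{\infty}}(\ideal{n})$ and hence acts on the relative cohomology. Your write-up simply unpacks this in more detail, and the sign/orientation bookkeeping you flag is exactly the content of the computation already carried out in the proof of Proposition \ref{anti Mellin}.
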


\section{Integrality of cohomology classes at boundary}\label{subsection:Rationality}
%
The purpose of this section is to prove the integrality of the restriction of the Betti cohomology class associated to a Hilbert Eisenstein series to the boundary of the Borel\nobreakdash--Serre compactification of $Y(\ideal{n})$ (Proposition \ref{const}). 
The proof is based on the comparison theorem between Betti cohomology and group cohomology, and an explicit computation of the restriction of a group cocycle associated to a Hilbert Eisenstein series to the boundary.

\subsection{Cocycles associated to Hilbert modular forms}\label{Cocycle ass to HMF}
%
In this subsection, we construct a group cocycle associated to a Hilbert modular form, which is a generalization of the Eichler\nobreakdash--Shimura cocycle in the case where $F=\Q$. 
We strictly follow the argument in the method of H. Yoshida \cite{Yo}. 
We fix $i$ with $1\le i\le h_F^+$ and abbreviate $\Gamma_{1}(\ideal{d}_F [t_i],\ideal{n})$ to $\Gamma$. 

First we recall the definition of group cohomology. 
Let $R$ be a commutative ring and $M$ a left $R[\overline{\Gamma}]$-module. 
For a non-negative integer $q$, 
let $C^q$ denote the space of functions on $\overline{\Gamma}^q$ with values in $M$.
The differential map $d^q \colon C^q \rightarrow C^{q+1}$ 
is given by 
\begin{align*}
d^q u(\overline{\gamma_1},\cdots,\overline{\gamma_{q+1}})
&=\overline{\gamma_1} u(\overline{\gamma_2},\cdots,\overline{\gamma_{q+1}})\\
&\ \ \ +\sum_{1 \le j \le q}(-1)^j u(\overline{\gamma_1},\cdots,\overline{\gamma_j \gamma_{j+1}},\cdots,\overline{\gamma_q})
+(-1)^{q+1} u(\overline{\gamma_1},\cdots,\overline{\gamma_q}).
\end{align*}
The associated $q$-th cohomology group of $\overline{\Gamma}$ with coefficients in $M$ is given by
\[
H^q(\overline{\Gamma},M)=\ker(d^q \colon C^q \rightarrow C^{q+1})/\im(d^{q-1} \colon C^{q-1} \rightarrow C^{q}).
\]

For a subset $J$ of $J_F$, we put 
$z_{\iota}^J=z_{\iota}$ (resp. $\overline{z}_{\iota}$) if $\iota\in J$ (resp. $\iota \in J_F \backslash J$) and 
\begin{align}\label{dz_J}
dz_{{}_J}=\bigwedge_{\iota\in J_F} dz_{\iota}^J.
\end{align}

For a $\Z$-algebra $A$, $u,v\in A$, and a non-negative integer $\ell$, we put 
\[
\begin{bmatrix}
u\\
v
\end{bmatrix}^{\ell}
={}^{t}(u^{\ell},u^{\ell -1}v,\cdots,uv^{\ell -1},v^{\ell}).
\]
Let $L_{\ell}(A)$ denote the space of column vectors $A^{\ell+1}\simeq \Sym^{\ell}(A^2)$. 
We define the $\ell$-th symmetric tensor representation $\rho_{\ell}$ of 
$\GL_2(A)$ on $L_{\ell}(A)$ by 
\[
\rho_{\ell}(g)\begin{bmatrix}u\\v\end{bmatrix}^{\ell}
=\left[
g\begin{pmatrix}
u\\
v
\end{pmatrix}
\right]^{\ell}. 
\]

Let $k$ be an integer $\ge 2$.
Recall that $\widetilde{F}$ is the Galois closure of $F$ in $\overline{\Q}$ and 
$F'$ is the field generated by elements $\varepsilon^{t/2}$ for all $\varepsilon\in \mathfrak{o}_{F,+}^{\times}$ over $\widetilde{F}$ as defined in \S \ref{subsection:Geometric HMF}. 
For an $\mathfrak{o}_{F'}$\nobreakdash-algebra $A$, we define an 
$A[(\M_2(\mathfrak{o}_{F'})\cap \GL_2(F'))^{J_F}]$\nobreakdash-module $L_{kt}(A)$ as follows: let $L_{kt}(A)$ be the $A$-module $\otimes_{\iota \in J_F}L_{k-2}(A)$ with a left action by 
\begin{align*}
g\bullet P=\det(g)^{(2-k)t}\rho(g)P,
\end{align*}
where $\rho=\otimes_{\iota\in J_F}\rho_{k-2}$. 
Note that $G(\A_f)$ naturally acts on $L_{kt}(A\otimes_{\mathfrak{o}_{F'}}\A_{F',f})$. 
We consider the $i$-part $L_{kt,i}(A)$ of $L_{kt}(A)$ defined by 
\[
L_{kt,i}(A)=L_{kt}(A\otimes_{\mathfrak{o}_{F'}}F')
\cap x_i\bullet L_{kt}(A\otimes_{\mathfrak{o}_{F'}}\widehat{\mathfrak{o}}_{F'}).
\]

Hereafter, in this subsection, we abbreviate $L_{kt,i}(A)$ to $L_{kt}(A)$. 
For a holomorphic function $h$ on $\mathfrak{H}^{J_F}$, 
we define an $L_{kt}(\C)$-valued holomorphic differential $n$-form $\omega(h)$ on $\mathfrak{H}^{J_F}$ by 
\begin{align}\label{n-form}
\omega(h)=h(z)\otimes
\bigotimes_{\iota\in J_F}
\begin{bmatrix}
z_{\iota}\\
1
\end{bmatrix}^{k-2}
dz_{{}_{J_F}}.
\end{align}
If $h\in M_k(\Gamma,\C)$, then we have 
\[
g^{\ast}\omega(h)
=\det(g)^{(2-k)t}\rho(g)(h\vert g)(z)\otimes
\bigotimes_{\iota\in J_F}
\begin{bmatrix}
z_{\iota}\\
1
\end{bmatrix}^{k-2}
dz_{{}_{J_F}}
\]
for $g \in \GL_2(\mathbb{R})^{J_F}_+$.
Here $(h\vert g)(z):=\det(g)^{(k-1)t}j(g,z)^{-kt}h(gz)$.
Since $(h\vert \gamma) (z) =h(z)$ for $\gamma\in\Gamma$ and the center $\Gamma\cap F^{\times}$ of $\Gamma$ acts trivially on $L_{kt}(\C)$, we obtain the pull-back formula 
\begin{align}\label{pull-back formula}
\gamma^{\ast}\omega(h)=\gamma\bullet \omega(h)
\end{align}
for any $\overline{\gamma} \in \overline{\Gamma}$ and a lift $\gamma\in\Gamma$ of $\overline{\gamma}$. 

Hereafter, in this subsection, we put $J_F=\{\iota_1,\cdots,\iota_n\}$ and 
$z_i=z_{\iota_i}$ for $z\in \mathfrak{H}^{J_F}$.
We fix a base point $w=(w_i)_{1 \le i \le n}\in \mathfrak{H}^{J_F}$. 
We define an $L_{kt}(\C)$-valued holomorphic function by 
\begin{align}\label{hol function}
F(z)=\integral{w_1}{z_1}{w_n}{z_n}\omega(h). 
\end{align}
For $\overline{\gamma} \in \overline{\Gamma}$, we define a function $\overline{\gamma}\ast F$ on $\mathfrak{H}^{J_F}$ by $\overline{\gamma}\ast F(z)=\gamma \bullet F(\gamma^{-1}z)$. We note that 
\begin{align*}
&\frac{\partial}{\partial z_1}\cdots \frac{\partial}{\partial z_n} 
(\overline{\gamma} \ast F-F)(z)=0. 
\end{align*}
\begin{lem}$($\cite[Chapter V, Lemma 5.1]{Yo}$)$. \label{Yoshida lemma}
Let $D$ be an open contractible domain in $\C^n$. 
Let $f$ be a holomorphic function on $D$. 

$(1)$
Assume that 
\[
\frac{\partial}{\partial z_1}\cdots \frac{\partial}{\partial z_n} f(z)=0
\ \text{ for }\ z=(z_1,\cdots,z_n)\in D. 
\]
Then there exist holomorphic functions $g_i$ on $D$ such that $g_i(z)$ is independent of $z_i$ and $f$ is decomposed into 
$f(z)=\sum_{1 \le i \le n} g_i(z)$. 

$(2)$
Moreover, assume that $f$ is decomposed into $f(z)=\sum_{1 \le i \le n} g_i(z)$ as $(1)$ and $n\ge 2$. If $f(z)$ is independent of $z_1$, then there exist holomorphic functions $h_i$ on $D$ such that $h_i(z)$ is independent of $z_1$ and $z_i$ and $f$ is decomposed into 
$f(z)=\sum_{2 \le i \le n} h_i(z)$. 
\end{lem}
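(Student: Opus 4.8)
The plan is to fix a base point $w=(w_1,\dots,w_n)\in D$ and to work with two families of operators on holomorphic functions: the slice integrations $A_i$, given by $(A_ig)(z)=\int_{w_i}^{z_i}g(z_1,\dots,\zeta,\dots,z_n)\,d\zeta$, and the restrictions $R_i$, given by $(R_ig)(z)=g(z_1,\dots,w_i,\dots,z_n)$. Since in the situation of interest $D$ is a product of convex plane domains (a polydisc-type region inside $\mathfrak{H}^{J_F}$), each $A_i$ is a well-defined holomorphic function; in general one integrates along a path and uses contractibility to see the result is path-independent. The single identity that drives everything is the fundamental theorem of calculus $A_i\partial_i=\mathrm{id}-R_i$, together with the bookkeeping facts that operators with distinct indices commute, that $R_i$ produces a function independent of $z_i$, and that $A_i,\partial_i,R_i$ all preserve independence of any $z_j$ with $j\neq i$.

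For part $(1)$ I would expand the product identity $\mathrm{id}=\prod_{i=1}^n(R_i+A_i\partial_i)$, which is legitimate because the factors for distinct indices commute, into $\mathrm{id}=\sum_{S\subseteq\{1,\dots,n\}}(\prod_{i\in S}A_i\partial_i)(\prod_{i\notin S}R_i)$, and apply both sides to $f$. The top term $S=\{1,\dots,n\}$ rearranges to $A_1\cdots A_n\,\partial_1\cdots\partial_n f$, which vanishes by hypothesis. Every other term $T_S$ has some index $j\notin S$, hence carries a factor $R_j$ and is therefore independent of $z_j$, the remaining operators all having indices $\neq j$. Assigning each proper subset $S$ to $j(S)=\min(\{1,\dots,n\}\setminus S)$ and setting $g_j=\sum_{j(S)=j}T_S$ then produces holomorphic functions with $g_j$ independent of $z_j$ and $f=\sum_{j=1}^n g_j$, as asserted. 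Equivalently one can run this as an induction on $n$: split off $g_n=R_nf$, note $\partial_1\cdots\partial_{n-1}(f-g_n)=0$, and apply the inductive hypothesis to $f-g_n$ with $z_n$ carried along as a holomorphic parameter.

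For part $(2)$ the natural route is to use the extra hypothesis $\partial_1 f=0$ to descend to $n-1$ variables. Since $f$ is independent of $z_1$ we have $R_1f=f$, so $f=g_1+\sum_{i=2}^n R_1g_i$, where each $R_1g_i$ with $i\ge 2$ is independent of both $z_1$ and $z_i$. Regarding $f$ as a holomorphic function of $(z_2,\dots,z_n)$ alone, part $(1)$ in $n-1$ variables would deliver the required decomposition $f=\sum_{i=2}^n h_i$ with $h_i$ independent of $z_1$ and $z_i$, provided the hypothesis of part $(1)$ holds in these variables, namely $\partial_2\cdots\partial_n f=0$. Because $f=\sum_i g_i$ with $\partial_ig_i=0$, one computes $\partial_2\cdots\partial_n f=\partial_2\cdots\partial_n g_1$, so the whole of part $(2)$ reduces to showing that this mixed derivative of the single summand $g_1$ vanishes. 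This is the step I expect to be the main obstacle: it is exactly where the independence of $z_1$ must be combined with the \emph{finer} structure of the part-$(1)$ decomposition built from the operators above, since for an arbitrary decomposition of the same $f$ the derivative $\partial_2\cdots\partial_n g_1$ need not vanish. Once that vanishing is secured, invoking part $(1)$ in the variables $z_2,\dots,z_n$ completes the argument.
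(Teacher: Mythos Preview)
The paper does not prove this lemma; it is quoted from \cite[Chapter~V, Lemma~5.1]{Yo} and then used as a black box, so there is no proof in the paper to compare against.

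Your argument for part~(1) is correct and clean.

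For part~(2), the obstacle you flag is real and in fact cannot be removed: as literally transcribed here, part~(2) is false. Take $n=3$, $D=\C^3$, and $f(z)=z_2z_3$. Then $\partial_1\partial_2\partial_3 f=0$, and $f=g_1+g_2+g_3$ with $g_1=z_2z_3$, $g_2=g_3=0$ is a decomposition as in~(1), while $f$ is independent of $z_1$. But any function of the form $h_2(z_3)+h_3(z_2)$ is annihilated by $\partial_2\partial_3$, whereas $\partial_2\partial_3 f=1$; hence no decomposition $f=h_2+h_3$ with $h_i$ independent of $z_1$ and $z_i$ exists. The quantity $\partial_2\cdots\partial_n g_1=\partial_2\cdots\partial_n f$ that you isolated is precisely this obstruction, and it does not vanish in general --- no choice of the part-(1) decomposition will make it vanish for this $f$.

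What the paper actually needs (and presumably what Yoshida's lemma really says) is the variant in which the summand with the distinguished index is absent. In the application just before the decomposition of $dg_n^{(1)}$, one has $f=dg_n^{(1)}$ independent of $z_n$ and already written as $-\sum_{i=1}^{n-1} dg_i^{(1)}$, a sum \emph{missing} the $n$-th term; after relabelling this is $f$ independent of $z_1$ with $f=\sum_{i=2}^n g_i$ and $\partial_i g_i=0$. In that situation your restriction operator finishes the job in one line: $f=R_1 f=\sum_{i=2}^n R_1 g_i$, and $h_i:=R_1 g_i$ is independent of both $z_1$ and $z_i$. So the usable form of~(2) is an immediate consequence of your setup; it is only the stated form, with an unrestricted $g_1$ present, that fails.
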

By applying Lemma \ref{Yoshida lemma} $(1)$ to $-(\overline{\gamma}\ast F-F)$, 
we obtain a decomposition 
\begin{align}\label{gamma^1}
-(\overline{\gamma}\ast F-F)(z)=\sum_{1 \le i \le n} g_i^{(1)}(\overline{\gamma})(z), 
\end{align}
where $g_i^{(1)}(\overline{\gamma})$ is a holomorphic function on $\mathfrak{H}^{J_F}$ and $g_i^{(1)}(\overline{\gamma})(z)$ is independent of $z_i$. 

We explicitly describe $g_n^{(1)}(\overline{\gamma})(z)$ as follows. We have 
\begin{align}\label{cal^1}
(\overline{\gamma} \ast F-&F)(z)
=\integral{\gamma w_1}{z_1}{\gamma w_{n-1}}{z_{n-1}}\left(\int_{w_n}^{z_n}+\int_{\gamma w_n}^{w_n}\right)\omega(h)-\integral{w_1}{z_1}{w_n}{z_n}\omega(h)\\
&\nonumber=\inti{\gamma w_1}{z_1}{\gamma w_{n-1}}{z_{n-1}}{\gamma w_n}{w_n}\omega(h)
+\left(\integral{\gamma w_1}{z_1}{\gamma w_{n-1}}{z_{n-1}}-\integral{w_1}{z_1}{w_{n-1}}{z_{n-1}}\right)\int_{w_n}^{z_n}\omega(h).
\end{align}
By applying Lemma \ref{Yoshida lemma} (1) to the second term in the second line of (\ref{cal^1}), 
we can choose $-g_n^{(1)}(\overline{\gamma})(z)$ as the first term in the second line of (\ref{cal^1}):
\begin{align}\label{g_n^1}
g_n^{(1)}(\overline{\gamma})(z)=\inti{\gamma w_1}{z_1}{\gamma w_{n-1}}{z_{n-1}}{w_n}{\gamma w_n}\omega(h). 
\end{align}
By regarding (\ref{gamma^1}) as a $1$-cochain, we obtain 
\begin{align*}
dg_n^{(1)}(\overline{\gamma_1},\overline{\gamma_2})(z)
=-\sum_{1 \le i \le n-1}dg_i^{(1)}(\overline{\gamma_1},\overline{\gamma_2})(z)
\end{align*}
for $\overline{\gamma_1},\overline{\gamma_2} \in \overline{\Gamma}$, 
where $d$ is the boundary map in group cohomology. 
Since the left-hand side is independent of $z_n$ and $dg_i^{(1)}(\overline{\gamma_1},\overline{\gamma_2})(z)$ is independent of $z_i$, by Lemma \ref{Yoshida lemma} (2), we obtain a decomposition 
\begin{align}\label{gamma^2}
dg_n^{(1)}(\overline{\gamma_1},\overline{\gamma_2})(z)
=\sum_{1 \le i \le n-1} g_i^{(2)}(\overline{\gamma_1},\overline{\gamma_2})(z), 
\end{align}
where $g_i^{(2)}(\overline{\gamma_1},\overline{\gamma_2})$ is a holomorphic function and $g_i^{(2)}(\overline{\gamma_1},\overline{\gamma_2})(z)$ is independent of $z_i$. 

We explicitly describe $g_{n-1}^{(2)}(\overline{\gamma_1},\overline{\gamma_2})(z)$ as follows. 
A direct calculation shows that 
\begin{align}\label{cal^2}
\overline{\gamma_1}\ast &g_n^{(1)}(\overline{\gamma_2})(z)-g_n^{(1)}(\overline{\gamma_1\gamma_2})(z)+g_n^{(1)}(\overline{\gamma_1})(z)\\
&\nonumber=\inti{\gamma_1\gamma_2 w_1}{z_1}{\gamma_1 \gamma_2 w_{n-1}}{z_{n-1}}{\gamma_1 w_n}{\gamma_1\gamma_2 w_n}\omega(h)\\
&\nonumber\hphantom{=}\ \ -\inti{\gamma_1\gamma_2 w_1}{z_1}{\gamma_1 \gamma_2 w_{n-1}}{z_{n-1}}{w_n}{\gamma_1\gamma_2 w_n}\omega(h)
+\inti{\gamma_1 w_1}{z_1}{\gamma_1 w_{n-1}}{z_{n-1}}{w_n}{\gamma_1 w_n}\omega(h)\\
&\nonumber=\inti{\gamma_1\gamma_2 w_1}{z_1}{\gamma_1 \gamma_2 w_{n-1}}{z_{n-1}}{\gamma_1 w_n}{w_n}\omega(h)
+\inti{\gamma_1 w_1}{z_1}{\gamma_1 w_{n-1}}{z_{n-1}}{w_n}{\gamma_1 w_n}\omega(h)\\
&\nonumber=\inti{\gamma_1 \gamma_2 w_1}{z_1}{\gamma_1 \gamma_2 w_{n-2}}{z_{n-2}}{\gamma_1 \gamma_2 w_{n-1}}{\gamma_1 w_{n-1}}\int_{\gamma_1 w_n}^{w_n}\omega(h)\\
&\nonumber\hphantom{=}\ \ 
+\left(\integral{\gamma_1\gamma_2 w_1}{z_1}{\gamma_1\gamma_2 w_{n-2}}{z_{n-2}}-\integral{\gamma_1 w_1}{z_1}{\gamma_1 w_{n-2}}{z_{n-2}}\right)
\int_{\gamma_1 w_{n-1}}^{z_{n-1}}\int_{\gamma_1 w_n}^{w_n}
\omega(h). 
\end{align}
By the same argument as above, we can choose $g_{n-1}^{(2)}(\overline{\gamma_1},\overline{\gamma_2})(z)$ as the first term in the last equation of (\ref{cal^2}):
\begin{align}\label{g_{n-1}^2}
g_{n-1}^{(2)}(\overline{\gamma_1},\overline{\gamma_2})(z)=\inti{\gamma_1\gamma_2w_1}{z_1}{\gamma_1\gamma_2 w_{n-2}}{z_{n-2}}{\gamma_1 w_{n-1}}{\gamma_1\gamma_2 w_{n-1}}\int_{w_n}^{\gamma_1 w_n}\omega(h). 
\end{align}
By repeating this argument, we obtain a decomposition 
\begin{align}
(-1)^{m}dg_{n-m+2}^{(m-1)}(\overline{\gamma_1},\cdots,\overline{\gamma_{m}})(z)=\sum_{1 \le i \le n-m+1} g_i^{(m)}(\overline{\gamma_1},\cdots,\overline{\gamma_m})(z)
\end{align}
for each $m$ with $2\le m\le n-1$ and $\overline{\gamma_1},\cdots \overline{\gamma_m} \in \overline{\Gamma}$, where 
\begin{align*}
g_{n-m+1}^{(m)}(\overline{\gamma_1},\cdots,\overline{\gamma_m})(z)
=\integral{\gamma_1\cdots\gamma_m w_1}{z_1}{\gamma_1\cdots\gamma_m w_{n-m}}{z_{n-m}}\inti{\gamma_1\cdots\gamma_{m-1} w_{n-m+1}}{\gamma_1\cdots\gamma_m w_{n-m+1}}{\gamma_1 w_{n-1}}{\gamma_1\gamma_2 w_{n-1}}{w_n}{\gamma_1 w_n}\omega(h).
\end{align*}
Hence we have an explicit formula
\begin{align*}
dg_2^{(n-1)}(\overline{\gamma_1},\cdots,\overline{\gamma_n})(z)=\inti{\gamma_1\cdots \gamma_{n-1} w_1}{\gamma_1\cdots \gamma_{n} w_1}{\gamma_1 w_{n-1}}{\gamma_1\gamma_2 w_{n-1}}{w_n}{\gamma_1 w_n}\omega(h).
\end{align*}
Therefore we obtain the following theorem (1) because it is a constant function: 
\begin{propdef}\label{def:cocycle}
Let $h\in M_{k}(\Gamma,\C)$ and $w=(w_i)_{1\le i\le n}\in \mathfrak{H}^{J_F}$ a base point. 

$(1)$
For $\overline{\gamma_i}\in \overline{\Gamma}$ and a lift $\gamma_i\in\Gamma$ of $\overline{\gamma_i}$, the following map 
$\pi_{h,w} \colon \overline{\Gamma}^n \to L_{kt}(\C)$ is an $n$-cocycle$:$
\begin{align*}
\pi_{h,w}(\overline{\gamma_1},\cdots,\overline{\gamma_n})=\inti{\gamma_1\cdots \gamma_{n-1} w_1}{\gamma_1\cdots \gamma_{n} w_1}{\gamma_1 w_{n-1}}{\gamma_1\gamma_2 w_{n-1}}{w_n}{\gamma_1 w_n}\omega(h).
\end{align*}

$(2)$ 
The cohomology class $[\pi_h]\in H^n(\overline{\Gamma},L_{kt}(\C))$ defined by $\pi_{h,w}$ does not depend on the choice of the base point $w\in \mathfrak{H}^{J_F}$.
\end{propdef}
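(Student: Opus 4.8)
\emph{Part $(1)$.} The construction carried out before the statement has, in effect, already produced the cocycle: I would simply package it. Let $V$ denote the $\overline{\Gamma}$-module of $L_{kt}(\C)$-valued holomorphic functions on $\mathfrak{H}^{J_F}$ under the twisted action $(\overline{\gamma}\cdot\phi)(z)=\gamma\bullet\phi(\gamma^{-1}z)=(\overline{\gamma}\ast\phi)(z)$. With respect to this action the $0$-cochain $F$ of $(\ref{hol function})$ satisfies $dF(\overline{\gamma})=\overline{\gamma}\ast F-F$, so that $(\ref{gamma^1})$ reads $-dF=\sum_{i}g_i^{(1)}$ and the successive identities $(\ref{gamma^2})$ and their higher analogues are honest group-cochain identities in $C^{\bullet}(\overline{\Gamma},V)$. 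The last stage of the cascade exhibits $\pi_{h,w}=dg_2^{(n-1)}$ as a coboundary in $C^{\bullet}(\overline{\Gamma},V)$, whence $d\pi_{h,w}=d(dg_2^{(n-1)})=0$ by $d^2=0$. It remains to see that this takes place already in $C^{\bullet}(\overline{\Gamma},L_{kt}(\C))$. The constant functions $L_{kt}(\C)\subset V$ form a $\overline{\Gamma}$-submodule, since on constants the twisted action reduces to $\overline{\gamma}\cdot P=\gamma\bullet P$, which is precisely the action defining $L_{kt}(\C)$; hence the differential preserves constants and commutes with the inclusion. The explicit formula for $dg_2^{(n-1)}$ obtained before the statement has all integration limits equal to fixed points of $\mathfrak{H}^{J_F}$, so $\pi_{h,w}$ is constant and therefore lands in $L_{kt}(\C)$. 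Combining, $\pi_{h,w}\in C^n(\overline{\Gamma},L_{kt}(\C))$ with $d\pi_{h,w}=0$ there, which is $(1)$.

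\emph{Part $(2)$.} Write $\pi_{h,w}$ for the cocycle attached to a base point $w=(w_i)$. I would first reduce to base points differing in a single coordinate: any $w'$ is joined to $w$ by replacing one $w_i$ at a time, so it suffices to show $\pi_{h,w}$ and $\pi_{h,w'}$ are cohomologous when $w_i=w_i'$ for all $i\neq j$. For such a pair, applying the additivity $\int_{a}^{c}=\int_{a}^{b}+\int_{b}^{c}$ to the $j$-th level of the explicit formula (the outer and inner levels being unchanged) writes the difference $\pi_{h,w}-\pi_{h,w'}$ as a sum of iterated integrals in which the $j$-th integration has become a \emph{definite} integral whose limits no longer involve the variable $z_j$.

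Because $\omega(h)$ is closed (it is a holomorphic $n$-form whose coefficients are holomorphic in $z$, so $\overline{\partial}\omega(h)=0=\partial\omega(h)$ and hence $d\omega(h)=0$), each of these ``rank-lowered'' iterated integrals is independent of $z_j$ and can be fed into the same Yoshida cascade of Lemma $\ref{Yoshida lemma}$, now one variable shorter, so as to be realized as $d\Phi$ for an explicit constant-valued $(n-1)$-cochain $\Phi$. This would give $\pi_{h,w}-\pi_{h,w'}=d\Phi$ in $C^{\bullet}(\overline{\Gamma},L_{kt}(\C))$, hence $[\pi_{h,w}]=[\pi_{h,w'}]$ and the asserted base-point independence.

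The substantive, and most delicate, part of $(2)$ is the bookkeeping in this rank-lowered cascade: one must track the integration limits and the signs through each application of Lemma $\ref{Yoshida lemma}$ to confirm that the auxiliary cochain $\Phi$ is well defined (i.e. the relevant mixed-partial vanishing, in all variables except $z_j$, holds at every stage), that it is constant-valued so that it lies in $C^{\bullet}(\overline{\Gamma},L_{kt}(\C))$, and that its coboundary is exactly $\pi_{h,w}-\pi_{h,w'}$. Conceptually this is just the statement that the iterated-integral cochain represents the intrinsic image of the de Rham class of $\omega(h)$ under the comparison with $H^n(\overline{\Gamma},L_{kt}(\C))$, the base point $w$ affecting only the representative and not the class; the combinatorial argument above is the explicit, choice-free incarnation of that principle, following Yoshida.
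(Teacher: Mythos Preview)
Your treatment of part (1) is correct and matches the paper: the cascade preceding the statement already realizes $\pi_{h,w}=dg_2^{(n-1)}$ as a coboundary in the function-valued complex $C^\bullet(\overline{\Gamma},V)$, hence closed there, and since its value is constant it descends to a cocycle in $C^\bullet(\overline{\Gamma},L_{kt}(\C))$. This is exactly what the paper means by ``Therefore we obtain the following theorem (1) because it is a constant function.''

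For part (2) the paper takes a different route: it simply invokes \cite[Theorem 5.2]{Yo} and gives no further argument. Your sketch---reduce to a single-coordinate change, split the $j$-th integral by additivity, and rerun a one-variable-shorter cascade to exhibit the difference as an explicit coboundary---is in the spirit of Yoshida's method and would work, but as you acknowledge, the combinatorics (tracking which $\gamma_1\cdots\gamma_m$ appears at each integration limit after the split, and verifying that the auxiliary $(n-1)$-cochain is constant-valued) is the entire content and is not actually carried out here. So your proposal for (2) is more of an outline than a proof; the paper's approach trades that bookkeeping for a citation. Either is acceptable, but if you keep your version you should either flesh out the cascade for the difference explicitly (at least in one representative case, say $j=n$ or $j=1$, where the pattern is clearest) or, like the paper, defer to \cite[Theorem 5.2]{Yo}.
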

\begin{proof}
The assertion (2) follows from \cite[Theorem 5.2]{Yo}. 
\end{proof}

%
\subsection{Integrality of $n$-cocycles at boundary}\label{constant term of E}
%

We keep the notation in \S \ref{Cocycle ass to HMF}.
In this subsection, for $E \in M_k(\Gamma,\integer{})$, 
we prove the integrality of the image of $[\pi_E]$ under the restriction map.

We define two maps $b_1$ and $b_2$ from $\overline{G(\Q)_+}^n$ to $L_{kt}(\C)$ by
\begin{align*}
&b_1(\overline{\gamma_1},\cdots,\overline{\gamma_n})\\
&=\gamma_1\bullet\inti{\gamma_2 \cdots \gamma_{n-1} w_1}{\gamma_2\cdots \gamma_{n} w_1}{w_{n-1}}{\gamma_2 w_{n-1}}{w_n}{\sqrt{-1}\infty}\omega(\widetilde{E})
-\gamma_1\bullet\inti{\gamma_2 \cdots \gamma_{n-1} w_1}{\gamma_2\cdots \gamma_{n} w_1}{w_{n-1}}{\gamma_2 w_{n-1}}{0}{w_n}
\omega(a_{\infty}(0,E)),\\
&b_2(\overline{\gamma_1},\cdots,\overline{\gamma_n})\\
&=\inti{\gamma_1 \cdots \gamma_{n-1} w_1}{\gamma_1\cdots \gamma_{n} w_1}{\gamma_1 w_{n-1}}{\gamma_1\gamma_2 w_{n-1}}{w_n}{\sqrt{-1}\infty}\omega(\widetilde{E})
-\inti{\gamma_1 \cdots \gamma_{n-1} w_1}{\gamma_1\cdots \gamma_{n} w_1}{\gamma_1 w_{n-1}}{\gamma_1\gamma_2 w_{n-1}}{0}{w_n}\omega(a_{\infty}(0,E)),
\end{align*}
where $\widetilde{E}(z)=E(z)-a_{\infty}(0,E)$. 
We note that the same argument as in the proof of Proposition \ref{abs conv} shows that $b_1(\overline{\gamma_1},\cdots,\overline{\gamma_n})$ and $b_2(\overline{\gamma_1},\cdots,\overline{\gamma_n})$ converge absolutely. 

We define a new map $\pi_{E,w}^{\mathrm{b}}:\overline{G(\Q)_+}^n \to L_{kt}(\C)$ by
\begin{align}\label{new n-cocycle}
\pi_{E,w}^{\mathrm{b}}(\overline{\gamma_1},\cdots,\overline{\gamma_n})
=\pi_{E,w}(\overline{\gamma_1},\cdots,\overline{\gamma_n})
+b_1(\overline{\gamma_1},\cdots,\overline{\gamma_n})-b_2(\overline{\gamma_1},\cdots,\overline{\gamma_n}).
\end{align}
\begin{prop}\label{prop:cohomologous to E}
For $E \in M_k(\Gamma,\C)$, 
the map $\pi_{E,w}^{\mathrm{b}}$ satisfies the following properties. 
\begin{enumerate}[$(1)$]
\item The value $\pi_{E,w}^{\mathrm{b}}(\overline{\gamma_1},\cdots,\overline{\gamma_n})$ is independent on $w_n$. 
\item $\pi_{E,w}^{\mathrm{b}}$ is cohomologous to $\pi_{E,w}$. 
\end{enumerate}
\end{prop}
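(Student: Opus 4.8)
The plan is to deduce both assertions from three ingredients: the decomposition $\omega(E)=\omega(\widetilde{E})+\omega(a_{\infty}(0,E))$ of the $n$-form, the pull-back formula $\gamma^{\ast}\omega(E)=\gamma\bullet\omega(E)$ from (\ref{pull-back formula}) (valid because $E$, unlike $\widetilde{E}$, is modular), and the fundamental theorem of calculus together with the change of variables for the iterated integrals, the convergence of the boundary integrals being guaranteed by Proposition \ref{abs conv}. Since $\pi_{E,w}$ is an $n$-cocycle by Proposition-Definition \ref{def:cocycle}, it suffices for $(2)$ to produce an $(n-1)$-cochain $c$ with $dc=b_1-b_2$; then $\pi_{E,w}^{\mathrm{b}}=\pi_{E,w}+dc$ is simultaneously a cocycle and cohomologous to $\pi_{E,w}$.

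For $(1)$, observe that in each of $\pi_{E,w}$, $b_1$, and $b_2$ the coordinate $w_n$ occurs only in the innermost $z_n$-integral, so $\pi_{E,w}^{\mathrm{b}}$ is a holomorphic function of $w_n\in\mathfrak{H}$ and it is enough to show $\frac{\partial}{\partial w_n}\pi_{E,w}^{\mathrm{b}}=0$. Writing the common outer $(n-1)$-fold operator (over $z_1,\dots,z_{n-1}$) of $\pi_{E,w}$ and $b_2$ as $\mathcal{I}$, the identity $\int_{w_n}^{\gamma_1 w_n}\omega(E)-\int_{w_n}^{\sqrt{-1}\infty}\omega(\widetilde{E})+\int_0^{w_n}\omega(a_{\infty}(0,E))=-\int_{\gamma_1 w_n}^{\sqrt{-1}\infty}\omega(\widetilde{E})+\int_0^{\gamma_1 w_n}\omega(a_{\infty}(0,E))$ (using the decomposition of $\omega(E)$) gives
\[
\pi_{E,w}^{\mathrm{b}}=b_1+\mathcal{I}\Bigl[-\int_{\gamma_1 w_n}^{\sqrt{-1}\infty}\omega(\widetilde{E})+\int_0^{\gamma_1 w_n}\omega(a_{\infty}(0,E))\Bigr].
\]
Differentiating in $w_n$ and applying the fundamental theorem of calculus, the bracket contributes $\mathcal{I}\bigl[\omega(E)|_{z_n=\gamma_1 w_n}\cdot\frac{d(\gamma_1 w_n)}{dw_n}\bigr]$ (the $z_n$-integrand evaluated at $z_n=\gamma_1 w_n$), while $b_1$ contributes $\gamma_1\bullet\mathcal{I}'\bigl[-\omega(E)|_{z_n=w_n}\bigr]$, where $\mathcal{I}'$ is the outer operator of $b_1$. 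By the pull-back formula for the modular form $E$, the change of variables $z\mapsto\gamma_1 z$ turns $\gamma_1\bullet\mathcal{I}'[\,\cdot\,]$ into $\mathcal{I}[\,\cdot\,]$ and sends $\omega(E)|_{z_n=w_n}$ to $\omega(E)|_{z_n=\gamma_1 w_n}\cdot\frac{d(\gamma_1 w_n)}{dw_n}$, so the two contributions cancel and $\pi_{E,w}^{\mathrm{b}}$ is constant in $w_n$. The essential point is that after recombination it is the full modular form $E$, not $\widetilde{E}$, that appears, which is exactly why the construction couples the cuspidal part with the constant-term correction.

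For $(2)$, I would take $c$ to be the cusp-regularized $(n-1)$-cochain having the same nested shape as $\pi_{E,w}$ but with one fewer group element and with its innermost integral replaced by the regularized combination $\int_{w_n}^{\sqrt{-1}\infty}\omega(\widetilde{E})-\int_0^{w_n}\omega(a_{\infty}(0,E))$. A direct index shift shows $\gamma_1\bullet c(\overline{\gamma_2},\dots,\overline{\gamma_n})=b_1$, matching both the prefactor $\gamma_1\bullet$ and the $\gamma_2\cdots$-prefixed outer limits of $b_1$ (including the constant-term term), while the remaining part $\sum_{1\le j\le n-1}(-1)^{j}c(\overline{\gamma_1},\dots,\overline{\gamma_j\gamma_{j+1}},\dots,\overline{\gamma_n})+(-1)^{n}c(\overline{\gamma_1},\dots,\overline{\gamma_{n-1}})$ of $dc$ telescopes to $-b_2$ by the same manipulations of multiple integrals as in (\ref{cal^1}) and (\ref{cal^2}), the failure of modularity of $\widetilde{E}$ being absorbed by the separate constant-term integrals $\int_0^{w_n}\omega(a_{\infty}(0,E))$. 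This yields $dc=b_1-b_2$ and hence $(2)$.

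The main obstacle is the coboundary identity in $(2)$: carrying out the telescoping of the iterated integrals while correctly bookkeeping orientations and signs, and in particular controlling the two phenomena that the pull-back formula applies only to $E$ and not to $\widetilde{E}$ (so the constant-term terms must compensate the non-modular discrepancy) and that the integrals reaching the cusp $\sqrt{-1}\infty$ must be shown to converge, for which Proposition \ref{abs conv} applied to $\widetilde{E}$ is used, the constant-term contributions being integrated instead from $0$, where they are explicit. Part $(1)$, by contrast, is a short differentiation once the innermost integrals have been recombined.
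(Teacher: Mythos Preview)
Your approach is correct and essentially the same as the paper's: your cochain $c$ is precisely the paper's $v$, and the two pieces you isolate---$\overline{\gamma_1}\bullet c(\overline{\gamma_2},\dots,\overline{\gamma_n})=b_1$ and the alternating sum telescoping to $-b_2$---are exactly the claims (i) and (ii) the paper establishes (the latter via an explicit induction $(\ast)_k$ on the outer limits rather than by invoking the pattern of (\ref{cal^1})--(\ref{cal^2}), and in fact that telescoping is a pure manipulation of integration limits which treats the $\widetilde{E}$- and constant-term pieces in parallel, so the non-modularity of $\widetilde{E}$ plays no role there). For (1) the paper simply says ``direct calculation''; your recombination of the innermost integral into $\omega(E)$ followed by differentiation and the pull-back formula is a correct instance of such a calculation.
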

\begin{proof}
The assertion (1) follows from a direct calculation.
For the proof of (2), we put 
\begin{align*}
v&(\overline{\gamma_1},\cdots,\overline{\gamma_{n-1}})\\
&=\inti{\gamma_{1}\cdots \gamma_{n-2} w_1}{\gamma_{1}\cdots \gamma_{n-1} w_1}{w_{n-1}}{\gamma_{1} w_{n-1}}{w_n}{\sqrt{-1}\infty}
\omega(\widetilde{E})
-\inti{\gamma_{1}\cdots \gamma_{n-2} w_1}{\gamma_{1}\cdots \gamma_{n-1} w_1}{w_{n-1}}{\gamma_{1} w_{n-1}}{0}{w_n}\omega(a_{\infty}(0,E)).
\end{align*}

Now the assertion follows from the following claim:
\begin{align}\label{claim at boundary}
dv(\overline{\gamma_1},\cdots,\overline{\gamma_n})
&=\pi_{E,w}^{\mathrm{b}}(\overline{\gamma_1},\cdots,\overline{\gamma_n})-\pi_{E,w}(\overline{\gamma_1},\cdots,\overline{\gamma_n}). 
\end{align}

For the proof of the claim (\ref{claim at boundary}), it suffices to prove the following:
\begin{align*}
&(\mathrm{i})\ \ \ \ \overline{\gamma_1}\bullet v(\overline{\gamma_2},\cdots,\overline{\gamma_{n}})
=b_1(\overline{\gamma_1}\cdots \overline{\gamma_{n}});\\
&(\mathrm{ii})\sum_{1\le n-j\le n-1}(-1)^{n-j}v(\overline{\gamma_1},\cdots,\overline{\gamma_{n-j}\gamma_{n-j+1}},\cdots,\overline{\gamma_n})+(-1)^{n} v(\overline{\gamma_1},\cdots,\overline{\gamma_{n-1}})
=-b_2(\overline{\gamma_1}\cdots \overline{\gamma_{n}}). 
\end{align*}

The assertion (i) follows from a direct calculation.
For the proof of (ii), it suffices to show the following ${(\ast)}_k$ by induction on $1\le n-k\le n-1$:
\begin{align*}
{(\ast)}_k\ \ \ \ 
&\sum_{n-k\le n-j\le n-1}(-1)^{n-j}v(\overline{\gamma_1},\cdots,\overline{\gamma_{n-j}\gamma_{n-j+1}},\cdots,\overline{\gamma_n})+(-1)^{n} v(\overline{\gamma_1},\cdots,\overline{\gamma_{n-1}})\\
&\ \ =(-1)^{n-k}\bigg\{
\int_{\gamma_1\cdots \gamma_{n-1} w_1}^{\gamma_1\cdots \gamma_n w_1}\cdots
\intii
{\gamma_{1}\cdots \gamma_{n-k} w_k}{\gamma_{1}\cdots \gamma_{n-k+1} w_k}
{\gamma_{1}\cdots \gamma_{n-k-2} w_{k+1}}{\gamma_{1}\cdots \gamma_{n-k-1} w_{k+1}}
{w_n}{\sqrt{-1}\infty}
\omega(\widetilde{E})\\
&\ \ \ \ \ \ \ \ \ \ \ \ \ \ \ \ \ \ \ \ 
-\int_{\gamma_1\cdots\gamma_{n-1} w_1}^{\gamma_1\cdots\gamma_n w_1}\cdots
\intii
{\gamma_{1}\cdots \gamma_{n-k} w_k}{\gamma_{1}\cdots \gamma_{n-k+1} w_k}
{\gamma_{1}\cdots \gamma_{n-k-2} w_{k+1}}{\gamma_{1}\cdots \gamma_{n-k-1}w_{k+1}}
{0}{w_n}
\omega(a_{\infty}(0,E))\bigg\}. 
\end{align*}

Suppose that $k=1$. A direct calculation shows $(\ast)_1$:
\begin{align*}
&(-1)^{n-1}v(\overline{\gamma_1},\cdots,\overline{\gamma_{n-1}\gamma_n})
+(-1)^{n}v(\overline{\gamma_1},\cdots,\overline{\gamma_{n-1}})\\
&=(-1)^{n-1}\bigg\{
\left(
\int_{\gamma_{1}\cdots \gamma_{n-2} w_{1}}^{\gamma_{1}\cdots \gamma_{n-1}\gamma_n w_{1}}
+\int_{\gamma_{1}\cdots \gamma_{n-1} w_{1}}^{\gamma_{1}\cdots \gamma_{n-2} w_{1}}
\right)
\int_{\gamma_1\cdots \gamma_{n-3} w_2}^{\gamma_1\cdots\gamma_{n-2} w_2}
\cdots\int_{w_n}^{\sqrt{-1}\infty}
\omega(\widetilde{E}) \\
&\ \ \ \ \ \ \ \ \ \ \ \ \ \ \ \ \ \ \ \  -\left(
\int_{\gamma_{1}\cdots \gamma_{n-2} w_{1}}^{\gamma_{1}\cdots \gamma_{n-1}\gamma_n w_{1}}
+\int_{\gamma_{1}\cdots \gamma_{n-1} w_{1}}^{\gamma_{1}\cdots \gamma_{n-2}w_{1}}
\right)
\int_{\gamma_1\cdots \gamma_{n-3} w_2}^{\gamma_1\cdots\gamma_{n-2} w_2}
\cdots\int_{0}^{w_n}
\omega(a_{\infty}(0,E))\bigg\} \\
&=(-1)^{n-1}\bigg\{
\intii
{\gamma_{1}\cdots \gamma_{n-1}w_1}{\gamma_{1}\cdots \gamma_{n}w_1}
{\gamma_{1}\cdots \gamma_{n-3}w_2}{\gamma_{1}\cdots \gamma_{n-2}w_2}
{w_n}{\sqrt{-1}\infty} \omega(\widetilde{E})\\
&\ \ \ \ \ \ \ \ \ \ \ \ \ \ \ \ \ \ 
-\intii
{\gamma_{1}\cdots \gamma_{n-1}w_1}{\gamma_{1}\cdots \gamma_{n}w_1}
{\gamma_{1}\cdots \gamma_{n-3}w_2}{\gamma_{1}\cdots \gamma_{n-2}w_2}
{0}{w_n}\omega(a_{\infty}(0,E))\bigg\}. 
\end{align*}

Suppose ${(\ast)}_k$. 
By adding $(-1)^{n-k-1}v(\overline{\gamma_1},\cdots,\overline{\gamma_{n-k-1}\gamma_{n-k}},\cdots,\overline{\gamma_n})$ to ${(\ast)}_k$, we get ${(\ast)}_{k+1}$:
\begin{align*}
&\sum_{n-k-1\le n-j\le n-1}(-1)^{n-j}v(\overline{\gamma_1},\cdots,\overline{\gamma_{n-j}\gamma_{n-j+1}},\cdots,\overline{\gamma_n})+(-1)^{n} v(\overline{\gamma_1},\cdots,\overline{\gamma_{n-1}})\\
&=(-1)^{n-k-1}\bigg\{
\int_{\gamma_{1}\cdots \gamma_{n-1}w_1}^{\gamma_{1}\cdots \gamma_{n}w_1}
\cdots
\left(
\int_{\gamma_{1}\cdots \gamma_{n-k-2}w_{k+1}}^{\gamma_{1}\cdots \gamma_{n-k-1}\gamma_{n-k}w_{k+1}}
+\int_{\gamma_{1}\cdots \gamma_{n-k-1}w_{k+1}}^{\gamma_{1}\cdots \gamma_{n-k-2}w_{k+1}}
\right)
\cdots\int_{w_n}^{\sqrt{-1}\infty}
\omega(\widetilde{E})\\
&\ \ \ \ \ \ \ \ \ \ \ \ \ \ \ 
-\int_{\gamma_{1}\cdots \gamma_{n-1}w_1}^{\gamma_{1}\cdots \gamma_{n}w_1}
\cdots
\left(
\int_{\gamma_{1}\cdots \gamma_{n-k-2}w_{k+1}}^{\gamma_{1}\cdots \gamma_{n-k-1}\gamma_{n-k}w_{k+1}}
+\int_{\gamma_{1}\cdots \gamma_{n-k-1}w_{k+1}}^{\gamma_{1}\cdots \gamma_{n-k-2}w_{k+1}}
\right)
\cdots\int_{0}^{w_n}
\omega(a_{\infty}(0,E))\bigg\} \\
&=(-1)^{n-k-1}\bigg\{
\int_{\gamma_{1}\cdots \gamma_{n-1}w_1}^{\gamma_{1}\cdots \gamma_{n}w_1}
\cdots
\int_{\gamma_{1}\cdots \gamma_{n-k-1}w_{k+1}}^{\gamma_{1}\cdots \gamma_{n-k}w_{k+1}}
\int_{\gamma_{1}\cdots \gamma_{n-k-3}w_{k+2}}^{\gamma_{1}\cdots \gamma_{n-k-2}w_{k+2}}
\cdots\int_{w_n}^{\sqrt{-1}\infty}
\omega(\widetilde{E})\\
&\ \ \ \ \ \ \ \ \ \ \ \ \ \ \ \ \ \ \ \ \ 
-\int_{\gamma_{1}\cdots \gamma_{n-1}w_1}^{\gamma_{1}\cdots \gamma_{n}w_1}
\cdots
\int_{\gamma_{1}\cdots \gamma_{n-k-1}w_{k+1}}^{\gamma_{1}\cdots \gamma_{n-k}w_{k+1}}
\int_{\gamma_{1}\cdots \gamma_{n-k-3}w_{k+2}}^{\gamma_{1}\cdots \gamma_{n-k-2}w_{k+2}}
\cdots\int_{0}^{\omega_n}
\omega(a_{\infty}(0,E))\bigg\}. 
\end{align*}
\end{proof}

By using Proposition \ref{prop:cohomologous to E}, we prove the main result of this section:
\begin{prop}\label{const}
Assume that $h_F^+=1$ and $\ideal{d}_F [t_1]=\ideal{o}_F$. 
Let $\Phi_p$ be the composite field of $\iota_p((F')^{\iota}(\sqrt{-1}))$ in $\overline{\Q}_p$ for all $\iota\in J_F$ and $\integer{}$ the ring of integers of a finite extension $K$ of $\Phi_p$. 
Here $\iota_p: \overline{\Q} \to \overline{\Q}_p$ is the fixed embedding and $F'$ is the field defined in \S \ref{subsection:Geometric HMF}. 
Let $k$ be an even integer such that $2\le k\le p$. 
Let $E\in M_k(\Gamma,\integer{})$.
Then we have the following properties:

$(1)$
The cohomology class $\res([\pi_E])$ is integral, that is, 
$\res([\pi_E]) \in \bigoplus_{s\in C(\Gamma)}\widetilde{H}^n(\overline{\Gamma_s},L_{kt,1}(\integer{}))$.
Here $\widetilde{H}^n(\overline{\Gamma_s},L_{kt,1}(\integer{}))$ denotes the image of $H^n(\overline{\Gamma_s},L_{kt,1}(\integer{}))\to H^n(\overline{\Gamma_s},L_{kt,1}(K))$. 

$(2)$
Suppose that $E$ vanishes at a cusp $s\in C(\Gamma)$. Then we have 
$\res([\pi_E])=0$ in $\widetilde{H}^n(\overline{\Gamma_s},L_{kt,1}(\integer{}))$.
\end{prop}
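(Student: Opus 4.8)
The plan is to reduce the assertion to an explicit computation in the group cohomology of each cusp stabilizer, and to exploit the normalized cocycle $\pi_{E,w}^{\mathrm{b}}$ of Proposition \ref{prop:cohomologous to E} in order to isolate the contribution of the constant term. First I would use the identification $H^n(D_s,M)\simeq H^n(\overline{\Gamma_s},M)$ coming from the fact that $D_s$ is a $K(\overline{\Gamma_s},1)$, so that $\res$ is the family of restriction maps $H^n(\overline{\Gamma},L_{kt,1}(\cdot))\to H^n(\overline{\Gamma_s},L_{kt,1}(\cdot))$ along the inclusions $\overline{\Gamma_s}\hookrightarrow\overline{\Gamma}$. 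Fixing $\alpha\in\SL_2(F)$ with $s=\alpha(\infty)$ and conjugating by $\alpha$, I may replace $E$ by $E|\alpha$ and $\Gamma_s$ by the stabilizer of $\infty$ in $\alpha^{-1}\Gamma\alpha$, so that it suffices to treat the restriction to the stabilizer of the single cusp $\infty$. Since $h_F^+=1$ one may take $\alpha\in\SL_2(\mathfrak{o}_F)$, which preserves the lattice $L_{kt,1}(\integer{})$, so this reduction is compatible with the integral structure.

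By Proposition \ref{prop:cohomologous to E} we have $[\pi_E]=[\pi_E^{\mathrm{b}}]$, and $\pi_{E,w}^{\mathrm{b}}$ is independent of $w_n$; this is precisely what lets the base point in the last variable be pushed to the cusp. The definition of $\pi_{E,w}^{\mathrm{b}}$ splits each value into two pieces: one built from $\omega(\widetilde{E})$ with the last integral taken along $\int_{w_n}^{\sqrt{-1}\infty}$, and one built from $\omega(a_{\infty}(0,E))$ with the last integral taken along $\int_{0}^{w_n}$, where $\widetilde{E}=E-a_{\infty}(0,E)$. Restricting this representative to the stabilizer of $\infty$, the first piece involves only integrals of the fast-decreasing form $\omega(\widetilde{E})$ whose last endpoint is the fixed cusp $\infty$; the same estimate as in the proof of Proposition \ref{abs conv} shows these converge, and because the stabilizer fixes $\infty$ they assemble, on that subgroup, into the coboundary of a convergent $(n-1)$-cochain. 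Hence the cuspidal piece contributes nothing to the restriction.

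What remains is the constant-term piece, whose value is $a_{\infty}(0,E)$ times an iterated integral of the polynomial $n$-form $\bigotimes_{\iota\in J_F}\begin{bmatrix} z_\iota \\ 1\end{bmatrix}^{k-2}dz_{J_F}$ over a product of geodesic paths joining translates of the base point. By direct evaluation each such integral is a polynomial in the entries of the $\gamma_i$ whose coefficients are integral up to denominators that are products of integers $\le k-1$, arising from integrating the entries of $\begin{bmatrix} z_\iota \\ 1\end{bmatrix}^{k-2}$ and from the binomial coefficients occurring in $\rho_{k-2}$. The hypothesis $2\le k\le p$ makes every such denominator prime to $p$, so these coefficients lie in $\integer{}$. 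Since $E\in M_k(\Gamma,\integer{})$ its constant term at $\infty$ lies in $\integer{}$ by definition, and the $q$-expansion principle applied to the integral geometric modular form $E$ propagates this to give $a_s(0,E)\in\integer{}$ at every cusp $s$. Therefore the restricted cocycle takes values in $L_{kt,1}(\integer{})$, which proves $(1)$.

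For $(2)$, if $E$ vanishes at $s$ then $a_s(0,E)=0$, so after the reduction above the constant-term piece disappears entirely and only the cuspidal piece survives; by the second paragraph that piece is a coboundary on the stabilizer, whence $\res([\pi_E])=0$ in $\widetilde{H}^n(\overline{\Gamma_s},L_{kt,1}(\integer{}))$. The main obstacle is the explicit restriction computation of the second and third paragraphs: generalizing Stevens's one-variable argument to the $n$-fold iterated integrals of Proposition-Definition \ref{def:cocycle}, one must verify that the fast-decreasing piece is genuinely a coboundary (not merely convergent) on the full stabilizer of $\infty$, which mixes the unipotent translations with the unit action, and simultaneously keep precise track of the denominators in the polynomial period integrals so that the bound $k\le p$ can be invoked.
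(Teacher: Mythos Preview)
Your strategy coincides with the paper's: reduce to the cusp $\infty$ via $\alpha\in\GL_2(\ideal{o}_F)$ (using $h_F^+=1$), represent the class by the cocycle $\pi^{\mathrm{b}}_{E,w}$ of Proposition~\ref{prop:cohomologous to E}, and exploit independence of $w_n$ to isolate the constant-term contribution. The paper's execution differs from yours in one substantive way and one detail.

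For the fast-decreasing piece you argue that on $\overline{\Gamma_\infty}$ the $\widetilde{E}$-contributions ``assemble into the coboundary of a convergent $(n-1)$-cochain''. The paper avoids this entirely: since $\pi^{\mathrm{b}}_{E,w}$ is literally independent of $w_n$, one simply \emph{evaluates} it by letting $w_n\to\sqrt{-1}\infty$. Because each $\gamma_i\in\overline{\Gamma_\infty}$ fixes $\infty$, both $w_n$ and $\gamma_1 w_n$ tend to $\sqrt{-1}\infty$, so the $\omega(\widetilde{E})$-terms in $\pi_{E,w}$, $b_1$, and $b_2$ all tend to $0$ by the same estimate as in Proposition~\ref{abs conv}. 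No coboundary argument is needed; the cuspidal piece just vanishes in the limit. This is both shorter and sidesteps the obstacle you flag at the end.

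The detail you omit is the role of ``$k$ even''. After the limit one must combine the remaining three $a_\infty(0,E)$-integrals (from $\pi_{E,w}$, $b_1$, $b_2$) into a single integral $\int_0^{\gamma_1 0}$. The paper does this by rewriting the $b_1$-term via $\gamma_1\bullet\omega(a_\infty(0,E))=\gamma_1^*\omega(a_\infty(0,E))$, which holds because the constant function is trivially invariant under $|\gamma_1$ \emph{and} the determinant twist in $\bullet$ is $\det(\gamma_1)^{(2-k)t}$; evenness of $k$ makes this a genuine pull-back identity. The three last-variable integrals then telescope to $\int_0^{\gamma_1 0}$, giving the closed formula
\[
\pi^{\mathrm{b}}_{E,(\sqrt{-1},\ldots,\sqrt{-1},w_n)}(\overline{\gamma_1},\ldots,\overline{\gamma_n})
=\int_{\gamma_1\cdots\gamma_{n-1}\sqrt{-1}}^{\gamma_1\cdots\gamma_n\sqrt{-1}}\cdots\int_{\gamma_1\sqrt{-1}}^{\gamma_1\gamma_2\sqrt{-1}}\int_{0}^{\gamma_1 0}\omega(a_\infty(0,E)),
\]
whose endpoints lie in $\integer{}$ (since $\gamma_i\in\GL_2(\ideal{o}_F)\cap B_{\infty,+}$ and $\sqrt{-1}\in\integer{}$ by the choice of $\Phi_p$), and whose integrand produces only denominators $\le k-1<p$; this is where $k\le p$ enters, exactly as you say. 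Your invocation of the $q$-expansion principle for $a_s(0,E)\in\integer{}$ at general cusps is correct and is implicitly needed in the paper's reduction step as well.
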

\begin{proof}
We treat the case $s=\infty$. 
Let $\overline{\gamma_1},\cdots,\overline{\gamma_n}\in \overline{\Gamma_{\infty}}$.
By Proposition \ref{prop:cohomologous to E} (1), the value
$\pi_{E,w}^{\mathrm{b}}(\overline{\gamma_1},\cdots,\overline{\gamma_n})$ defined by (\ref{new n-cocycle}) is independent on $w_n$. 
The first terms of $b_1(\overline{\gamma_1},\cdots,\overline{\gamma_n})$ and $b_2(\overline{\gamma_1},\cdots,\overline{\gamma_n})$ converge to $0$ when $w_n$ tends to $\sqrt{-1} \infty$. 
Hence we obtain 
\begin{align*}
&\pi_{E,(\sqrt{-1},\cdots,\sqrt{-1},w_n)}^{\mathrm{b}}(\overline{\gamma}_1,\cdots,\overline{\gamma}_n)\\
&=\lim_{w_n \to \sqrt{-1}\infty}\int_{\gamma_1\cdots\gamma_{n-1}\sqrt{-1}}^{\gamma_1\cdots\gamma_n\sqrt{-1}}\cdots \int_{\gamma_1\sqrt{-1}}^{\gamma_1\gamma_2\sqrt{-1}}
\left(\int_{w_n}^{\gamma_1 w_n}-\int_{\gamma_1 0}^{\gamma_1 w_n}+\int_0^{w_n}\right)\omega(a_{\infty}(0,E))\\
&=\inti{\gamma_1\cdots \gamma_{n-1}\sqrt{-1}}{\gamma_1\cdots \gamma_n\sqrt{-1}}{\gamma_1\sqrt{-1}}{\gamma_1\gamma_2\sqrt{-1}}{0}{\gamma_1 0}\omega(a_{\infty}(0,E)).
\end{align*}
Here the first equality follows from that 
$\gamma_1 \bullet \omega(a_{\infty}(0,E))=\gamma_1^{\ast} \omega(a_{\infty}(0,E))$, where we use the assumption that $k$ is even. 
Since $\gamma_i\in \GL_2(\ideal{o}_F)\cap B_{\infty,+}$ for each $i$, $\gamma_1\cdots \gamma_j c$ belongs to $\integer{}$ for each $j$ and $c\in \{\sqrt{-1},0\}$. 
Thus $\pi_{E,(\sqrt{-1},\cdots,\sqrt{-1},w_n)}^{\mathrm{b}}(\overline{\gamma}_1,\cdots,\overline{\gamma}_n)$ belongs to $L_{kt,1}(\integer{})$, where we use the assumption that $k\le p$. 
Hence the image of $\res([\pi_E])$ in the $\infty$-part is integral.

We treat the general case $s\in C(\Gamma)$.
By the assumption $h_F^+=1$, we can take $\alpha \in \GL_2(\ideal{o}_F)$ such that $s=\alpha(\infty)$.
Let $\overline{\gamma_1},\cdots,\overline{\gamma_n}\in \overline{\Gamma_{s}}$.
Let $B_1,\cdots,B_n\in \GL_2(\ideal{o}_F) \cap B_{\infty,+}$ such that $\gamma_i=\alpha B_i \alpha^{-1}$.
By the pull-back formula $\alpha^{\ast} \omega(E)=\alpha\bullet\omega(E|\alpha)$, we have 
\[
\pi_{E,(\alpha \sqrt{-1},\cdots,\alpha \sqrt{-1},\alpha w_n )}(\overline{\gamma_1},\cdots,\overline{\gamma_n})
=\alpha\bullet\pi_{E|\alpha,(\sqrt{-1},\cdots,\sqrt{-1},w_n )}(B_1,\cdots,B_n).
\]
Now the same argument as in the case $s=\infty$ replacing $E$ by $E|\alpha$ shows that the image of $\res([\pi_E])$ in the $s$-part is integral.
\end{proof}

\section{Eisenstein cohomology and Eichler--Shimura--Harder isomorphism}\label{subsection:Eisenstein cohomology and ESH}
%
The purpose of this section is to recall theory of Eisenstein cohomology and the Eichler\nobreakdash--Shimura--Harder isomorphism. 
We use the assumption $h_F^+=1$ to prove the Eichler\nobreakdash--Shimura--Harder isomorphism (\ref{+,+ decomp}) by using an explicit computation of the action of the Weyl group.

%
\subsection{Eisenstein cohomology}
%

The purpose of this subsection is to prove the following proposition, where $Y=Y(\ideal{n})$ and $H_{\mathrm{Eis}}^n(Y,\C)$ is the Eisenstein cohomology defined by (\ref{definition of Eisenstein cohomology}).

\begin{prop}\label{Eis Hodge number}
$(1)$ The Hodge number of $H_{\mathrm{Eis}}^n(Y,\C)$ is equal to $n$, that is, $H_{\mathrm{Eis}}^n(Y,\C)= F^n H_{\mathrm{Eis}}^n(Y,\C)$.

$(2)$ $H_{\mathrm{Eis}}^n(Y,\C)$ is stable under the Hecke correspondences.
\end{prop}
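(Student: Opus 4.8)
The plan is to realize $H^{n}(Y,\C)$ through the logarithmic de Rham cohomology of the toroidal compactification $M^{\mathrm{tor}}$ and to identify the top piece of its Hodge filtration with weight-two modular forms. Concretely, by Deligne's mixed Hodge theory one has $H^{n}(Y,\C)\simeq H^{n}(M^{\mathrm{tor}},\Omega^{\bullet}_{M^{\mathrm{tor}}}(\mathrm{log}(D)))$ (hypercohomology of the logarithmic de Rham complex), with $F^{p}H^{n}$ the image of the hypercohomology of the subcomplex $\Omega^{\geq p}_{M^{\mathrm{tor}}}(\mathrm{log}(D))$; in particular $F^{n}H^{n}(Y,\C)=H^{0}(M^{\mathrm{tor}},\Omega^{n}_{M^{\mathrm{tor}}}(\mathrm{log}(D)))$, which for $k=2$ (so that $\underline{\omega}^{(0)}$ is the structure sheaf) is exactly the space $M_{2}(\ideal{n},\C)$ of Definition \ref{definition:GHMF}. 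Under this identification the class $[\omega_{\mathbf{h}}]$ attached to a holomorphic Hilbert modular form $\mathbf{h}$ of weight two is the image of $\mathbf{h}\in M_{2}(\ideal{n},\C)=F^{n}H^{n}(Y,\C)$ under $F^{n}\hookrightarrow H^{n}(Y,\C)$, since $\omega(\mathbf{h})$ is a holomorphic $n$-form with at worst logarithmic poles along the boundary.

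To prove $(1)$, I would show that every Eisenstein cohomology class admits such a holomorphic representative. By the definition $(\ref{definition of Eisenstein cohomology})$ and the theory of Eisenstein cohomology for $G=\Res_{F/\Q}\GL_{2/F}$, the middle-degree Eisenstein cohomology is spanned by the classes attached to Eisenstein series induced from the Borel $B$; for $k=2$ each Eisenstein Hecke eigensystem occurring there is the eigensystem of a holomorphic Hilbert Eisenstein series $\mathbf{E}_{2}(\varphi,\psi)$ of Proposition \ref{Hilbert Eisenstein}, whose class $[\omega_{\mathbf{E}}]$ lies in $F^{n}$ by the previous paragraph. The key point is to rule out contributions of Hodge type $(p,q)$ with $p<n$, that is, in automorphic terms, the anti-holomorphic $W_{G}$-translates $\mathbf{E}_{J}$ with $J\neq J_{F}$. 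This is where I would use the hypotheses $h_{F}^{+}=1$ and $k=2$: an archimedean $(\mathfrak{g},K_{\infty})$-cohomology computation, equivalently the analysis of invariant forms under $W_{G}=K_{\infty}/K_{\infty,+}$, shows that in degree exactly $n$ the principal-series constituents relevant to the middle-degree Eisenstein cohomology contribute one-dimensionally and only in Hodge bidegree with $p=n$, so that $H^{n}_{\mathrm{Eis}}(Y,\C)=F^{n}H^{n}_{\mathrm{Eis}}(Y,\C)$.

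For $(2)$, I would argue that the Hecke correspondences preserve $H^{n}_{\mathrm{Eis}}$. By \S\ref{subsection:Hecke correspondence} the operators $T(\ideal{a})$ and $U(\ideal{a})$ are induced by algebraic correspondences on $M^{\mathrm{tor}}$, hence act on $H^{n}(M^{\mathrm{tor}},\Omega^{\bullet}_{M^{\mathrm{tor}}}(\mathrm{log}(D)))$ compatibly with the Hodge filtration and, on $F^{n}=M_{2}(\ideal{n},\C)$, by the usual Hecke operators on modular forms. The assignment $\mathbf{h}\mapsto[\omega_{\mathbf{h}}]$ is therefore Hecke-equivariant. Since the subspace of holomorphic Eisenstein series inside $M_{2}(\ideal{n},\C)$ is stable under all Hecke operators, being cut out by the Eisenstein eigensystems (the orthogonal complement of the cusp forms), its image, which by $(1)$ is all of $H^{n}_{\mathrm{Eis}}(Y,\C)$, is a Hecke-stable subspace of $H^{n}(Y,\C)$.

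The main obstacle is the completeness and purity statement inside part $(1)$: proving that the middle-degree Eisenstein cohomology is exactly the span of the holomorphic weight-two Eisenstein classes and contains no class of strictly smaller holomorphic degree. I expect this to require the full Eisenstein cohomology package, namely the long exact sequence for the pair $(Y^{\mathrm{BS}},\partial Y^{\mathrm{BS}})$ together with Harder's description of the image of the restriction map to the boundary, combined with the explicit archimedean computation above. The assumption $h_{F}^{+}=1$ enters to trivialize the relevant component group and to make the $W_{G}$-action fully explicit, while $k=2$ forces the coefficient system to be trivial so that, in degree $n$, only the bidegrees with $p=n$ survive.
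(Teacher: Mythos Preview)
Your approach differs substantially from the paper's. The paper follows Freitag's explicit construction: after computing bases of the boundary cohomologies $H^{n}(\overline{\Gamma_t},\C)$ (the key observation being that the basis vector $dx_1\wedge\cdots\wedge dx_n$ at the cusp $\infty$ is cohomologous to the \emph{holomorphic} form $\omega_\infty = dz_1\wedge\cdots\wedge dz_n$), it builds the Eisenstein operator $E$ by symmetrization,
\[
E(\omega_\infty) \;=\; \lim_{s\to 0}\sum_{M\in\Gamma_\infty\backslash\Gamma}|N(j(M,z))|^{-2s}M^*\omega_\infty \;=\; E_{2,0}^\Gamma(z,0)\,\omega_\infty,
\]
and shows $E$ is a section of the restriction to the boundary. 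The Eisenstein cohomology is then \emph{defined} as $\mathrm{im}(E)$, and part (1) is immediate: each $E(\omega_t)$ is a holomorphic $n$-form because $E_{2,0}^\Gamma(z,0)$ is holomorphic. The ``main obstacle'' you identify---ruling out contributions of Hodge type $(p,q)$ with $p<n$ via archimedean $(\mathfrak{g},K_\infty)$-cohomology---simply does not arise here, since nothing non-holomorphic is ever produced by the operator. Your Hodge-theoretic route is not wrong, but it imports machinery (Harder's package, $(\mathfrak{g},K_\infty)$-cohomology) that the explicit definition renders unnecessary.

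For (2), the paper proves the commutation relation $E(\omega_t)|[\Gamma\alpha\Gamma'] = E(\omega_t|[\Gamma\alpha\Gamma'])$ by a direct double-coset manipulation: one partitions $\Gamma_\infty\backslash\Gamma\alpha\Gamma'$ into the sets $\mathscr{S}_s=\{\gamma:\gamma(s)=\infty\}$, rewrites this as a disjoint union over cusps $t\in C(\Gamma')$ and cosets $M\in\Gamma_t'\backslash\Gamma'$, and matches against the boundary action. Your argument---that Hecke operators preserve the Eisenstein subspace of $M_2(\ideal{n},\C)$, hence its image under $\mathbf{h}\mapsto[\omega_{\mathbf{h}}]$---is cleaner in principle, but it presupposes that $H^n_{\mathrm{Eis}}$ coincides with this image, which is exactly what the explicit construction in (1) provides and which you have not independently established. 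So your (2) is valid only once one has already carried out the paper's construction; the paper's direct computation instead shows Hecke-equivariance of the section $E$ itself, which is the sharper statement.
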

\begin{proof}
(1) The assertion for $H_{\mathrm{Eis}}^n(Y^1(\ideal{n}),\C)$ is obtained by E. Freitag (\cite[Chapter III, Proposition 3.5 and Theorem 4.9]{Fre}). 
We follow the arguments in the method of Freitag. 
We fix $i$ with $1\le i \le h_F^+$ and abbreviate $\Gamma_{1}(\ideal{d}_F [t_i],\ideal{n})$ to $\Gamma$.
We put $J_F=\{\iota_1,\cdots,\iota_n\}$ and $z_i=z_{\iota_i}$ for $z\in \mathfrak{H}^{J_F}$.
For $z\in \mathfrak{H}^{J_F}$, we put $N(z)=\prod_{1\le i\le n}z_i$.

The proof consists of three steps. 

\textbf{Step1}: To give a basis of $H^{n-1}(\overline{\Gamma_{t}},\C)$ and $H^{n}(\overline{\Gamma_{t}},\C)$ over $\C$ for each cusp $t\in C(\Gamma)$.
(Here we use a basis of $H^{n-1}(\overline{\Gamma_{t}},\C)$ to prove Proposition \ref{H^{n-1} vanishing}). 

We treat the case $t=\infty$. 
We prove that a basis of $H^{n-1}(\overline{\Gamma_{\infty}},\C)$ (resp. $H^{n}(\overline{\Gamma_{\infty}},\C)$) over $\C$ is given by 
\[
\omega_{\infty}^{n-1}=\frac{dy_1}{y_1}\wedge\cdots\wedge\frac{dy_{n-1}}{y_{n-1}} \ (\text{resp.}\ \omega_{\infty}^n=dx_1\wedge\cdots\wedge dx_{n}). 
\]

Put $D=\{z\in \mathfrak{H}^{J_F} \mid N(y)=1\}$.
The group $\Gamma_{\infty}$ which consists of transformations of the form 
$z \mapsto \varepsilon z+b$ with $N(\varepsilon)=1$ acts on $D$. 
We identify $D$ with $\mathbb{R}^{2n-1}$ by 
$z \mapsto (x_1,\cdots,x_n, u_1,\cdots,u_{n-1})$
with coordinates $\{x_i\}_{1 \le i \le n}$ and $\{u_i:=\text{log}(y_i)\}_{1 \le i \le n-1}$. 
Since 
$\overline{\Gamma_{\infty}}\backslash \mathfrak{H}^{J_F}$ is homeomorphic to $\mathbb{R}\times (\overline{\Gamma_{\infty}}\backslash D)$ by $z \mapsto (\text{log}(N(y)),N(y)^{-1/n}z)$, 
the canonical embedding $\overline{\Gamma_{\infty}}\backslash D  \hookrightarrow \overline{\Gamma_{\infty}}\backslash \mathfrak{H}^{J_F}$ is a homotopy equivalence. 
Hence it induces 
$H^{\ast}(\overline{\Gamma_{\infty}},\C) \simeq H^{\ast}(\overline{\Gamma_{\infty}}\backslash \mathfrak{H}^{J_F},\C) \simeq H^{\ast}(\overline{\Gamma_{\infty}}\backslash D,\C)$. 

We consider 
a $\overline{\Gamma_{\infty}}$-invariant harmonic differential $m$-form $\omega=\sum_{b,c} f_{b,c}(x,u)dx_b \wedge du_c$ on $D$.
By the same argument as in \cite[p.145, 146]{Fre}, 
the functions $f_{b,c}(x,u)$ are independent of $x$, and if $f_{b,c}(x,u)\neq 0$, then $b=\phi$ or $\{1,\cdots,n\}$. 
We treat the case $b=\phi$ (the case $b=\{1,\cdots,n\}$ is similar). 
Since $H^{n-1}(\overline{\Gamma_{\infty}}\backslash D,\C)$ is isomorphic to the de Rham cohomology of a lattice $\text{log}(\mathfrak{o}_{F,+}^{\times})$ of $\mathbb{R}^{n-1}$, 
the same argument as in \cite[p.146]{Fre} shows that $\omega_{\infty}^{n-1}$ is a basis of $H^{n-1}(\overline{\Gamma_{\infty}},\C)$. 

We treat the general case $t\in C(\Gamma)$.
Let $\alpha\in G(\Q)$ be such that $t=\alpha(\infty)$. 
The canonical map $\alpha:D_{\infty} \xrightarrow{\simeq} D_{t}$ induces 
$\overline{(\alpha^{-1}\Gamma \alpha)_{\infty}}\backslash D_{\infty} \xrightarrow{\simeq} \overline{\Gamma_t}\backslash D_t$.
Now our assertion follows from the same argument as in the case $t=\infty$ by replacing $\Gamma$ by $\alpha^{-1}\Gamma \alpha$ (cf. \cite[p.154]{Fre}).

\textbf{Step2}: To construct the Eisenstein operator 
$E: \bigoplus_{t\in C(\Gamma)}H^{n}(\overline{\Gamma_{t}},\C) \to H^{n}(\overline{\Gamma},\C)$.

We may assume $t=\infty$ by the same argument as in Step 1. 
As mentioned in the proof of \cite[Chapter III, Remark 3.1]{Fre}, 
$\omega_{\infty}^n$ is cohomologous to $dz_1\wedge\cdots\wedge dz_n$ up to a constant factor. 
We put $\omega_{\infty}=dz_1\wedge\cdots\wedge dz_n$. 
The Eisenstein operator $E$ is defined by symmetrization 
\[
E(\omega_{\infty})
=\sum_{M\in  \Gamma_{\infty}\backslash \Gamma}M^{\ast}\omega_{\infty}
:=\lim_{s\to 0}\sum_{M\in  \Gamma_{\infty}\backslash \Gamma}|N(j(M,z))|^{-2s}M^{\ast}\omega_{\infty}
\]
(cf. \cite[Chapter III, Proposition 3.3]{Fre}). 
Here, for $M=\begin{pmatrix}a&b\\c&d\end{pmatrix}\in \Gamma$, $j(M,z)=(c_iz_i+d_i)_{1\le i\le n}$ and $M^{\ast}\omega_{\infty}=N(j(M,z))^{-2}\omega_{\infty}$.
We note that, by using analytic continuation (\cite[Proposition 3.2]{Shi}) 
of Eisenstein series of the type 
\[
E_{2,0}^{\Gamma}(z,s)=\sum_{M\in \Gamma_{\infty}\backslash \Gamma}N(j(M,z))^{-2}|N(j(M,z))|^{-2s},
\]
$E(\omega_{\infty})$ is expressed by $E(\omega_{\infty})=E_{2,0}^{\Gamma}(z,0)\omega_{\infty}$.
Hence $E$ is well\nobreakdash-defined.

\textbf{Step3}: To show that $E$ is a section of the restriction map $H^{n}(\overline{\Gamma},\C) \to H^{n}(\overline{\Gamma_{t}},\C)$ for every $t\in C(\Gamma)$. 

As discussed in the proof of \cite[Chapter III, Proposition 3.3]{Fre},
our assertion follows from that the constant term of $E_{2,0}^{\Gamma}(z,0)$ at $t$ is equal to $1$ (resp. $0$) if $t$ is $\Gamma$-equivalent to $\infty$ (resp. $t$ is not $\Gamma$-equivalent to $\infty$). 

We define the Eisenstein cohomology $H_{\text{Eis}}^n(Y,\C)$ to be the image of $E$:
\begin{align}\label{definition of Eisenstein cohomology}
H_{\text{Eis}}^n(Y,\C)=\im (E). 
\end{align}
Hence the Hodge number of $H_{\text{Eis}}^n(Y,\C)$ is equal to $n$ because $E_{2,0}^{\Gamma}(z,0)$ is holomorphic.

(2) 
Let us fix $\Gamma'=\Gamma_1(\ideal{d}_F[t_j],\ideal{n})$ and $\alpha\in G(\Q)$ such that $\Gamma\alpha\Gamma'$ is expressed as a finite disjoint union $\Gamma\alpha\Gamma'=\coprod_{i\in I}\Gamma \alpha_i$. 
For the proof, it suffices to show that
\begin{align}\label{Hecke on Eis}
E(\omega_t)|[\Gamma \alpha \Gamma']=E(\omega_t|[\Gamma \alpha \Gamma']). 
\end{align}

We may assume $t=\infty$ by the same argument as in Step 1 and 2. 
By the definition of $E$, the left-hand side of (\ref{Hecke on Eis}) is equal to 
\begin{align}\label{claim of LHS}
E(\omega_{\infty})|[\Gamma \alpha \Gamma']
&=\lim_{s\to 0}
\sum_{\gamma\in \Gamma_{\infty}\backslash \Gamma\alpha \Gamma'} 
|N(j(\gamma,z))|^{-2s}
\gamma^{\ast} \omega_{\infty}.
\end{align}

We consider the right-hand side of (\ref{Hecke on Eis}). 
For each $s\in \mathbb{P}^1(F)$, 
we put $\mathscr{S}_s=\{\gamma\in \Gamma_{\infty}\backslash \Gamma \alpha \Gamma' \mid \gamma(s)=\infty \}$.
Note that $\Gamma_{\infty}\backslash \Gamma\alpha\Gamma'
=\coprod_{s\in \mathbb{P}^1(F)} \mathscr{S}_s$.
Since, for each $s\in \mathbb{P}^1(F)$, 
there exist a unique $t\in C(\Gamma')$ and a unique $M\in \Gamma_t'\backslash \Gamma'$ such that $M(s)=t$, we have 
\begin{align}\label{bijective}
\Gamma_{\infty}\backslash \Gamma\alpha\Gamma'=
\coprod_{t\in C(\Gamma')}\coprod_{M\in \Gamma_t'\backslash \Gamma'}
\mathscr{S}_{M^{-1}(t)}.
\end{align}
We put $(\omega_t')_{t\in C(\Gamma')}=\omega_{\infty}|[\Gamma\alpha\Gamma']$. 
We claim that 
\begin{align}\label{w_t'}
\omega_t'=\sum_{\gamma\in \mathscr{S}_t} \gamma^{\ast}\omega_{\infty}.
\end{align}
For the moment, we admit the claim (\ref{w_t'}). 
Hence we obtain 
\begin{align*}
E(\omega_{\infty}|[\Gamma \alpha \Gamma'])
&=\sum_{t\in C(\Gamma')}\sum_{\gamma\in \mathscr{S}_t}
E(\gamma^{\ast}\omega_{\infty})\\
\nonumber&=\lim_{s \to 0}\sum_{t\in C(\Gamma')}
\sum_{\gamma\in \mathscr{S}_t}
\sum_{M\in  \Gamma_t'\backslash \Gamma'} 
|N(j(\gamma M,z))|^{-2s}(\gamma M)^{\ast}\omega_{\infty}.
\end{align*}
Here the first equality follows from (\ref{w_t'}) and 
the second equality follows from the definition of $E$. 
Therefore our assertion (\ref{Hecke on Eis}) follows from $\mathscr{S}_t\cdot M=\mathscr{S}_{M^{-1}(t)}$, (\ref{bijective}) and (\ref{claim of LHS}). 

Thus it remains to prove the claim (\ref{w_t'}).
We decompose $\Gamma\alpha\Gamma'$ into a disjoint union:
$\Gamma \alpha \Gamma'
=\coprod_{i\in I^{t}} \Gamma\beta_i \Gamma_t'$ and 
$\Gamma \beta_i \Gamma_t'=\coprod_{j\in J_i}\Gamma\beta_i\delta_{i,j}$ with $\delta_{i,j}\in \Gamma_t'$.
By the definition of the Hecke operator acting on the boundary cohomology \cite[(3.1c)]{Hida93}, we have 
$\omega_t'
=\sum_{i\in I_{\infty}^t}\sum_{j\in J_i} (\beta_i\delta_{i,j})^{\ast}\omega_{\beta_i(t)}$.
Here $I_{\infty}^t=\{i\in I^t\mid \text{$\beta_i(t)$ is $\Gamma$-equivalent to $\infty$}\}$. 
For each $i\in I_{\infty}^t$, 
we may assume that $\beta_i(t)=\infty$.
Now, for the proof of claim, it suffices to show that 
$\mathscr{S}_t
=\coprod_{i\in I_{\infty}^t}\coprod_{j\in J_i}\Gamma_{\infty}\beta_i\delta_{i,j}$.
The inclusion $\supset$ (resp. $\subset$) follows from $\beta_i\delta_{i,j}(t)=\infty$ (resp. the decomposition
$\Gamma_{\infty}\backslash\Gamma \alpha \Gamma'
=\coprod_{i\in I^t}\coprod_{j\in J_i} 
\Gamma_{\infty}\backslash \Gamma\beta_i\delta_{i,j}$).
\end{proof}

%
\subsection{Partial Eichler--Shimura--Harder isomorphism}\label{subsection:Eichler--Shimura--Harder}
%

In this subsection, we prove the Eichler--Shimura--Harder isomorphism (\ref{+,+ decomp}), where we use the assumption $h_F^+=1$. 

Let $K$ be a finite extension of $\Qp$ and $\integer{}$ the ring of integers of $K$. 
For $A=\integer{}$, $K$, or $\C$, let $H_c^{\ast}(Y(\ideal{n}),A)$ denote the compact support cohomology of $Y(\ideal{n})$ with coefficients in $A$, and let $H_{\pa}^{\ast}(Y(\ideal{n}),A)$ denote the parabolic cohomology of $Y(\ideal{n})$ with coefficients in $A$, that is, $H_{\pa}^{\ast}(Y(\ideal{n}),A)=\im\left(H_c^{\ast}(Y(\ideal{n}),A)\to H^{\ast}(Y(\ideal{n}),A)\right)$. 
We have the decomposition 
\begin{align}\label{split}
H^n(Y(\ideal{n}),\C)\simeq 
H_{\pa}^n (Y(\ideal{n}),\C)
\oplus H_{\mathrm{Eis}}^n (Y(\ideal{n}),\C)
\end{align}
(see Step 3 and (\ref{definition of Eisenstein cohomology}) in the proof of Proposition \ref{Eis Hodge number}).
Let $\widetilde{H}_{\pa}^n(Y(\ideal{n}),\integer{})$ denote the image of 
$H_{\pa}^n(Y(\ideal{n}),\integer{})\to H_{\pa}^n(Y(\ideal{n}),K)$.

By \cite[Theorem 1.1]{Hida93}, if the degree $n=[F:\Q]$ is even, 
then the $\C$-vector space 
$H_{\pa}^n (Y(\ideal{n}),\C)/H_{\mathrm{cusp}}^n (Y(\ideal{n}),\C)$ is spanned by the cohomology classes of the invariant forms $\omega_{J'}=\bigwedge_{\iota \in J'}y_{\iota}^{-2}dx_{\iota}\wedge dy_{\iota}$ for all subsets $J'$ of $J_F$ such that $\sharp J'=n/2$, where $H_{\mathrm{cusp}}^n (Y(\ideal{n}),\C)$ denotes the cuspidal cohomology of $Y(\ideal{n})$. 
Both $H_{\pa}^n (Y(\ideal{n}),\C)$ and $H_{\mathrm{cusp}}^n (Y(\ideal{n}),\C)$ are $W_G$\nobreakdash-modules (\cite[$\S$7]{Hida88}). 
We assume that $h_F^+=1$.
As mentioned after Proposition \ref{Modular symbol}, 
for each subset $J$ of $J_F$, the action of $((1_\iota)_{\iota\in J},(-1_{\iota})_{\iota\in J_F\backslash J})\in W_G$ on $Y(\ideal{n})$ is given by 
\begin{align*}
((x_\iota+\sqrt{-1}y_\iota)_{\iota\in J},
(x_\iota+\sqrt{-1}y_\iota)_{\iota\in J_F \backslash J})
\mapsto
\left(\xi^{\iota}(x_\iota+\sqrt{-1}y_\iota)_{\iota\in J}, 
(-\xi)^{\iota}(-x_\iota+\sqrt{-1}y_\iota)_{\iota\in J_F \backslash J}\right)
\end{align*} 
for some $\xi\in \mathfrak{o}_F^{\times}$. 
Thus, in the case $n$ is even, 
if a character $\epsilon$ of $W_G$ satisfies $\sharp \{\iota\in J_F \mid \epsilon(-1_{\iota})=-1\} \neq n/2$, 
then $H_{\pa}^n (Y(\ideal{n}),\C)[\epsilon]=H_{\mathrm{cusp}}^n (Y(\ideal{n}),\C)[\epsilon]$. 
Here, for a $W_G$-module $V$, $V[\epsilon]$ denotes the $\epsilon$-isotypic part $\{v\in V \mid w\cdot v=\epsilon(w)v\ \text{for all}\ w \in W_G \}$. 
Hence we obtain 
\begin{align}\label{+,+ decomp}
H_{\pa}^n (Y(\ideal{n}),\C)[\epsilon]
\simeq H_{\mathrm{cusp}}^n (Y_1(\ideal{n}),\C)[\epsilon]
\simeq S_2(\ideal{n},\C)
\end{align}
as Hecke modules (cf. \cite[\S2, \S3]{Hida94}). 
Thus the Hecke algebra $\mathscr{H}_2(\ideal{n},\integer{})$ is isomorphic to the $\integer{}$-subalgebra of 
$\mathrm{End}_{\integer{}}\left(\widetilde{H}_{\pa}^n(Y(\ideal{n}),\integer{})[\epsilon]\right)$. 
We have the decomposition 
\begin{align*}
H^n(Y(\ideal{n}),\C)[\epsilon] &\simeq 
H_{\pa}^n (Y(\ideal{n}),\C)[\epsilon]
\oplus H_{\mathrm{Eis}}^n (Y(\ideal{n}),\C)[\epsilon].
\end{align*}
By Proposition \ref{Eis Hodge number},
we have a homomorphism $\mathbb{H}_2(\ideal{n},\integer{}) \to \mathrm{End}_{\integer{}}\left(\widetilde{H}^n(Y(\ideal{n}),\integer{})[\epsilon]\right)$. 
For every ideal $I$ of $\mathbb{H}_2(\ideal{n},\integer{})$, 
let $I[\epsilon]$ denote the image of $I$ under this homomorphism.

\section{Rationality and Integrality of cohomology classes}\label{subsection:Rationality and Integrality}
%
The purpose of this section is to prove the rationality (Proposition \ref{prop:rational}) and integrality (Corollary \ref{thm:integral}) of the cohomology class associated to the Hilbert Eisenstein series $\mathbf{E}$ attached to a pair of Hecke characters of $F$ satisfying the following (Eis condition).
We use the assumption $h_F^+=1$ to prove a vanishing result on the cohomology of $D_{C_{\infty}}(\ideal{n})$ (Proposition \ref{H^{n-1} vanishing}). 

Let $\Phi_p$ be the field introduced in Proposition \ref{const}. 
We fix a finite extension $K$ of $\Phi_p$. 
Let $\integer{}$ be the ring of integers of $K$, $\varpi$ a uniformizer, and $\kappa$ the residue field. 

We assume that $h_F^+=1$.
Let $\ideal{n}$ be a non-zero ideal of $\ideal{o}_F$ such that $\ideal{n}$ is prime to $6p\Delta_F $ and $\ideal{d}_F[t_1]$.
Let us fix narrow ray class characters $\varphi$ and $\psi$ of $F$ satisfying $\ideal{m}_{\varphi\psi}=\ideal{m}_{\varphi}\ideal{m}_{\psi}=\ideal{n}$ and 
\begin{align*}
&\text{(Eis condition)}&
&\text{$\varphi$ and $\psi$ are $\cal{O}$-valued and totally even (resp. totally odd), }&\\
&&&\text{$\varphi$ is non-trivial, and the algebraic Iwasawa $\mu$-invariants of}&\\
&&&\text{the splitting fields $\overline{\Q}^{\ker(\varphi)}$ and $\overline{\Q}^{\ker(\psi)}$ are equal to $0$ (see Remark \ref{rem:Fe--Wa}).}&
\end{align*}

Let $\textbf{E}$ denote the Hilbert Eisenstein series $\textbf{E}_2(\varphi,\psi)\in M_2(\ideal{n},\C)$ attached to $\varphi$ and $\psi$ as Proposition \ref{Hilbert Eisenstein}. 
Note that $\textbf{E}$ satisfies (\ref{zero point}) by Proposition \ref{Const of Eisenstein}. 
We define the character $\epsilon_{{}_{\textbf{E}}}$ of $W_G$ by $\epsilon_{{}_{\textbf{E}}}=\sgn^{J_F}$ (resp. $\epsilon_{{}_{\textbf{E}}}=\textbf{1}$) if both $\varphi$ and $\psi$ are totally even (resp. totally odd). 
Here we identify $W_G=K_{\infty}/K_{\infty,+}$ with $\{\pm1\}^{J_F}$ by the determinant map. 
Put $\chi=\varphi\psi$. 

\begin{rem}\label{parity of Eis}
We note that 
\[
[\omega_{\textbf{E}}]^{\epsilon_{{}_{\textbf{E}}}}=[\omega_{\textbf{E}}]\neq 0
\ \text{in}\ H^n(Y(\ideal{n}),\C),
\]
where $[\omega_{\textbf{E}}]^{\epsilon_{{}_{\textbf{E}}}}$ stands for the projection of $[\omega_{\textbf{E}}]$ to the $\epsilon_{{}_{\textbf{E}}}$-part. 
Indeed, for a narrow ray class character $\theta$ of $F$ such that $(\ideal{m}_{\theta},\ideal{n})=1$ and $\theta=\epsilon_{{}_{\textbf{E}}}$ on $W_G\simeq \A_{F,\infty}^{\times}/\A_{F,\infty,+}^{\times}$, under the same notation as \S \ref{subsection:modular symbol}, we have 
\begin{align}\label{denominator'}
\sum_{b\in S}\eta_1(\bar{b})^{-1}&\mathrm{ev}_{b,1,\C}([\omega_{\textbf{E}}]_{\mathrm{rel}}^{\epsilon_{\textbf{E}}})
= \tau(\eta^{-1})
\frac{\sqrt{-1}^n}{(2\pi)^n}
D(1,\textbf{E}, \eta) \\
&\nonumber\ \ \ \ \ \ \ \ =
\frac{(-1)^n}{2^n \Delta_F^{1/2}} 
\cdot 
\frac{\tau(\varphi\psi)\varphi\psi(\ideal{m}_{\theta})\theta(\ideal{m}_{\psi})}{\tau(\psi)\psi(\ideal{m}_{\theta})\theta(\ideal{m}_{\varphi\psi})}
\cdot
L(0,\theta^{-1}\psi) L(0,\theta\varphi^{-1})\neq 0,
\end{align}
where $\eta$ denotes $\theta\varphi^{-1}\psi^{-1}$. 
Here the first equality follows from $\ideal{n}|\ideal{m}_{\eta}$, Proposition \ref{Modular symbol}, and Proposition \ref{Modular symbol anti-hol}, 
the second equality follows from Proposition \ref{Hilbert Eisenstein} (1), the functional equation for Hecke $L$\nobreakdash-functions (see, for example, \cite[Theorem 3.3.1]{Mi}), and the fact that $\eta\varphi=\theta\psi^{-1}$ is totally odd and \cite[(3.3.11)]{Mi}, and $L(0,\theta^{-1}\psi) L(0,\theta\varphi^{-1})\neq 0$ follows from the fact that both $\theta\psi^{-1}$ and $\theta\varphi^{-1}$ are totally odd and the functional equation for Hecke $L$\nobreakdash-functions (see, for example, \cite[Lemma 1.1]{Da--Da--Po}). 
Hence $[\omega_{\textbf{E}}]^{\epsilon_{\textbf{E}}}\neq 0$. 
Now our assertion follows from Proposition \ref{Eis Hodge number} and the $q$-expansion principle over $\C$.
\end{rem}

%
\subsection{Rationality of cohomology classes}\label{subsection:rationality of Eis}
%

In this subsection, we prove the rationality of the cohomology classes of $\mathbf{E}$ in $H^n(Y(\ideal{n}),\C)$ and $H^n(Y(\ideal{n})^{\mathrm{BS}},D_{C_{\infty}}(\ideal{n});\C)$. 

\begin{prop}\label{prop:rational'}
The cohomology class 
$[\omega_{\mathbf{E}}]$ is rational, that is, 
$[\omega_{\mathbf{E}}] \in H^n(Y(\ideal{n}),K)$. 
\end{prop}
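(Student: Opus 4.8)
The plan is to realize $[\omega_{\mathbf{E}}]$ inside an explicit $K$-rational subspace of $H^n(Y(\ideal{n}),\C)$ on which restriction to the boundary is injective, and then to invoke the integrality of that boundary restriction. Two structural facts will be used throughout. First, the Hecke correspondences and the action of $W_G$ are defined over $\Q$ (\S\ref{subsection:Hecke correspondence}), so $\mathbb{H}_2(\ideal{n},\integer{})$ and $W_G$ act $K$-rationally on the Betti cohomology $H^n(Y(\ideal{n}),K)$, compatibly with $H^n(Y(\ideal{n}),\C)=H^n(Y(\ideal{n}),K)\otimes_K\C$. Second, since $\mathbf{E}$ is a Hecke eigenform and the assignment $\mathbf{h}\mapsto[\omega_{\mathbf{h}}]$ is Hecke-equivariant, $[\omega_{\mathbf{E}}]$ is a Hecke eigenclass; by Proposition \ref{Eis Hodge number} it lies in the Eisenstein part $H_{\mathrm{Eis}}^n(Y(\ideal{n}),\C)$ of the decomposition (\ref{split}), and by Remark \ref{parity of Eis} it is concentrated in the $\epsilon_{\mathbf{E}}$-isotypic part for $W_G$ and is nonzero. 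Because the $\{\pm1\}$-valued character $\epsilon_{\mathbf{E}}$ gives a $\Q$-rational projector, I may and will work inside $H^n(Y(\ideal{n}),K)[\epsilon_{\mathbf{E}}]$.

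The decisive step is to isolate a $K$-rational subspace containing $[\omega_{\mathbf{E}}]$ and sitting inside $H_{\mathrm{Eis}}^n$. Let $\lambda_{\mathbf{E}}\colon \mathbb{H}_2(\ideal{n},\integer{})\to\integer{}$ denote the eigensystem of $\mathbf{E}$, so that $\lambda_{\mathbf{E}}(T(\ideal{m}))=C(\ideal{m},\mathbf{E})$; it is $K$-valued. I would consider the generalized $\lambda_{\mathbf{E}}$-eigenspace $V_K\subseteq H^n(Y(\ideal{n}),K)[\epsilon_{\mathbf{E}}]$, namely the subspace annihilated by a power of the maximal ideal $\ker(\lambda_{\mathbf{E}})\subseteq\mathbb{H}_2(\ideal{n},K)$. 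Since $\mathbb{H}_2(\ideal{n},K)$ acts $K$-rationally, $V_K$ is a $K$-subspace and $V_K\otimes_K\C$ is the corresponding generalized eigenspace $V_\C$ over $\C$, which contains $[\omega_{\mathbf{E}}]$. By the Eichler--Shimura--Harder isomorphism (\ref{+,+ decomp}), the parabolic part $H_{\pa}^n(Y(\ideal{n}),\C)[\epsilon_{\mathbf{E}}]\simeq S_2(\ideal{n},\C)$ is cuspidal, and no cuspidal eigensystem can equal $\lambda_{\mathbf{E}}$ over $\C$, since $C(\ideal{q},\mathbf{E})=\varphi(\ideal{q})+\psi(\ideal{q})N(\ideal{q})$ grows like $N(\ideal{q})$ and violates the Ramanujan bound $2N(\ideal{q})^{1/2}$ at primes of large norm. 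Hence $V_\C\cap H_{\pa}^n(Y(\ideal{n}),\C)[\epsilon_{\mathbf{E}}]=0$, so $V_\C\subseteq H_{\mathrm{Eis}}^n(Y(\ideal{n}),\C)$.

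Finally I would conclude as follows. From the long exact sequence of the pair $(Y(\ideal{n})^{\mathrm{BS}},\partial(Y(\ideal{n})^{\mathrm{BS}}))$ the kernel of the restriction map $\res\colon H^n(Y(\ideal{n}),\C)\to H^n(\partial(Y(\ideal{n})^{\mathrm{BS}}),\C)$ equals $H_{\pa}^n(Y(\ideal{n}),\C)$; since $V_\C\subseteq H_{\mathrm{Eis}}^n$ meets $H_{\pa}^n$ trivially, $\res$ is injective on $V_\C$. As $\res$ is $K$-rational, it restricts to an injective $K$-linear map $V_K\hookrightarrow H^n(\partial(Y(\ideal{n})^{\mathrm{BS}}),K)$. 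On the other hand, under the comparison isomorphism between the Betti cohomology of $Y(\ideal{n})$ and the group cohomology of $\overline{\Gamma_1(\ideal{d}_F[t_1],\ideal{n})}$, the class $[\omega_{\mathbf{E}}]$ corresponds to the cocycle class $[\pi_{\mathbf{E}}]$ of Proposition-Definition \ref{def:cocycle}, compatibly with restriction to the boundary, so Proposition \ref{const} shows that $\res([\omega_{\mathbf{E}}])$ is integral and in particular $K$-rational. A standard descent of scalars — a $\C$-point of $V_\C$ whose image under the injective $K$-linear map $\res$ is $K$-rational must itself be $K$-rational — then gives $[\omega_{\mathbf{E}}]\in V_K\subseteq H^n(Y(\ideal{n}),K)$, which is the assertion of Proposition \ref{prop:rational'}.

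The only non-formal input is the separation of the Eisenstein eigensystem $\lambda_{\mathbf{E}}$ from the cuspidal ones over $\C$, which underlies the crucial inclusion $V_\C\subseteq H_{\mathrm{Eis}}^n$; I expect this to be the main obstacle, although it follows cleanly from the Ramanujan bound together with (\ref{+,+ decomp}), while everything else is linear algebra combined with the boundary integrality of Proposition \ref{const} and, if desired, the evaluation formulas of Proposition \ref{Modular symbol}. An equivalent route would verify directly that the Hecke decomposition (\ref{split}) descends to a $K$-rational splitting of the $\epsilon_{\mathbf{E}}$-part, but this again rests on the same disjointness of eigensystems.
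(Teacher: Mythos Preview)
Your argument is correct and structurally identical to the paper's: both use Proposition~\ref{const} to obtain $\res([\omega_{\mathbf{E}}])\in H^n(\partial(Y(\ideal{n})^{\mathrm{BS}}),K)$, then observe that on the generalized $\lambda_{\mathbf{E}}$-eigenspace of $H^n(Y(\ideal{n}),\C)[\epsilon_{\mathbf{E}}]$ the restriction map $\res$ is injective (equivalently, the parabolic part there vanishes by (\ref{+,+ decomp})), and descend. The one difference is in how you separate $\lambda_{\mathbf{E}}$ from the cuspidal spectrum: you invoke the Ramanujan bound, whereas the paper uses only the $q$-expansion principle over $\C$---a normalized cuspidal eigenform sharing all Hecke eigenvalues with $\mathbf{E}$ would have the same Fourier expansion at $\infty$ and hence coincide with $\mathbf{E}$, contradicting that $\mathbf{E}$ is not cuspidal. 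This sidesteps what you flagged as the main obstacle and keeps the argument elementary.
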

\begin{proof}
Let $\mathfrak{p}_{\textbf{E}}$ denote the maximal ideal of $\mathbb{H}_2(\mathfrak{n},\integer{})\otimes K$ generated by $T(\ideal{q})-C(\ideal{q},\textbf{E}),S(\ideal{q})-\chi^{-1}(\ideal{q})$ for all non-zero prime ideals $\ideal{q}$ of $\ideal{o}_F$ prime to $\ideal{n}$ and $U(\ideal{q})-C(\ideal{q},\textbf{E})$ for all non\nobreakdash-zero prime ideals $\ideal{q}$ of $\ideal{o}_F$ dividing $\ideal{n}$. 
By Proposition \ref{const} (1), $\res([\omega_{\textbf{E}}])$ is rational. 
By Remark \ref{parity of Eis}, 
$[\omega_{\textbf{E}}]=[\omega_{\textbf{E}}]^{\epsilon_{{}_{\textbf{E}}}}$.
Hence there is $c \in H^n(Y(\ideal{n}),K)_{\mathfrak{p}_{\textbf{E}}}[\epsilon_{{}_{\textbf{E}}}]$ mapping to $\res([\omega_{\textbf{E}}])$. 
We have $[\omega_{\textbf{E}}]-c\in H_{\pa}^n(Y(\ideal{n}),\C)_{\mathfrak{p}_{\textbf{E}}}[\epsilon_{{}_{\textbf{E}}}]$. 
The isomorphism (\ref{+,+ decomp}) and the $q$\nobreakdash-expansion principle over $\C$ imply $H_{\pa}^n(Y(\ideal{n}),\C)_{\mathfrak{p}_{\textbf{E}}}[\epsilon_{{}_{\textbf{E}}}]=0$. 
Hence $[\omega_{\textbf{E}}]=c \in H^n(Y(\ideal{n}),K)$. 
\end{proof}

In order to prove the rationality of the relative cohomology class, we need to show a vanishing result on the cohomology of $D_{C_{\infty}}(\ideal{n})$. 

We abbreviate $\Gamma_1(\ideal{d}_F[t_1],\ideal{n})$ to $\Gamma$ and $\Gamma_0(\ideal{d}_F[t_1],\ideal{n})$ to $\Gamma_0$.
Let $\ideal{q}$ be a non-zero prime ideal of $\mathfrak{o}_F$ dividing $\ideal{n}$.
Since $h_F^+=1$, we can choose and fix a totally positive generator $g_{\ideal{q}}$ (resp. $e$) of $\ideal{q}$ (resp. $\ideal{d}_F[t_1]$). 
For the proof, we need the following:
\begin{align}
\label{U(q) explicit decomp}
\Gamma\begin{pmatrix}1&0\\0&g_{\ideal{q}}\end{pmatrix}\Gamma
&=\coprod_{b\in \ideal{d}_F^{-1}[t_1]^{-1}/\ideal{d}_F^{-1}[t_1]^{-1}\ideal{q}}\Gamma\begin{pmatrix}1&b\\0&g_{\ideal{q}}\end{pmatrix};\\
\label{invariant under Gamma_0}
\gamma\Gamma\begin{pmatrix}1&0\\0&g_{\ideal{q}}\end{pmatrix}\Gamma \gamma^{-1} 
&=\Gamma \gamma \begin{pmatrix}1&0\\0&g_{\ideal{q}}\end{pmatrix}
\gamma^{-1} \Gamma
=\Gamma\begin{pmatrix}1&0\\0&g_{\ideal{q}}\end{pmatrix}\Gamma 
\text{ for $\gamma\in\Gamma_0$,}
\end{align}
where $b$ runs over a complete set of representatives of $\ideal{d}_F^{-1}[t_1]^{-1}/\ideal{d}_F^{-1}[t_1]^{-1}\ideal{q}$. 
In order to show (\ref{U(q) explicit decomp}) and (\ref{invariant under Gamma_0}), we may assume $\ideal{d}_F[t_1]=\ideal{o}_F$ because
$\begin{pmatrix}1&0\\ 0&e^{-1}\end{pmatrix}\Gamma_1(\ideal{d}_F[t_1],\ideal{n})\begin{pmatrix}1&0\\0&e\end{pmatrix}=\Gamma_1(\ideal{o}_F,\ideal{n})$ and $\begin{pmatrix}1&0\\ 0&e^{-1}\end{pmatrix}\Gamma_0(\ideal{d}_F[t_1],\ideal{n})\begin{pmatrix}1&0\\0&e\end{pmatrix}=\Gamma_0(\ideal{o}_F,\ideal{n})$.

First we show (\ref{U(q) explicit decomp}).
By taking the inverse and multiplying $g_{\ideal{q}}$, it suffices to show that \[
\Gamma\begin{pmatrix}g_{\ideal{q}}&0\\0&1\end{pmatrix}\Gamma
=\coprod_{b\in \ideal{o}_F/\ideal{q}}\begin{pmatrix}g_{\ideal{q}}&b\\0&1\end{pmatrix}\Gamma.
\]
For any $\beta=\begin{pmatrix}a&b\\ c&d\end{pmatrix}\in \Gamma\begin{pmatrix}g_{\ideal{q}}&0\\0&1\end{pmatrix}\Gamma$, 
we have $a,b,c,d\in \ideal{o}_F$, 
$c\equiv 0\ (\mod \ \ideal{n})$, $d\equiv 1\ (\bmod \ \ideal{n})$, and 
$\det(\beta)=g_{\ideal{q}}u$ for some $u\in\mathfrak{o}_{F,+}^{\times}$. 
Since $\ideal{q}$ divides $\ideal{n}$, we have $(c,d)=1$.
Hence there is 
$\gamma_1=\begin{pmatrix}d&\ast\\ -c&\ast\end{pmatrix}\in \Gamma$ with $\det(\gamma_1)=1$ such that 
\begin{align*}
\beta\gamma_1\begin{pmatrix}u^{-1}&0\\ 0&1\end{pmatrix}
&=\begin{pmatrix}\det(\beta)&\ast\\ 0&1\end{pmatrix}\begin{pmatrix}u^{-1}&0\\ 0&1\end{pmatrix}=\begin{pmatrix}g_{\ideal{q}}&b'\\ 0&1\end{pmatrix}.
\end{align*}

We show (\ref{invariant under Gamma_0}).
The first equality of (\ref{invariant under Gamma_0}) follows from the fact that $\Gamma$ is a normal subgroup of $\Gamma_0$ and the second equality of (\ref{invariant under Gamma_0}) follows from the same argument as in the proof of (\ref{U(q) explicit decomp}).
Indeed, for 
$\beta=\begin{pmatrix}a&b\\ c&d\end{pmatrix}=\gamma\begin{pmatrix}g_{\ideal{q}}&0\\0&1\end{pmatrix}\gamma^{-1}$, 
we have $a,b,c,d\in \ideal{o}_F$, $c\equiv 0\ (\bmod \ \ideal{n})$, $d\equiv 1\ (\bmod \ \ideal{n})$, 
$\det(\beta)=g_{\ideal{q}}$, 
and $\ideal{q}$ divides $\ideal{n}$.

We put $\alpha_b=\begin{pmatrix}1&b\\0&g_{\ideal{q}}\end{pmatrix}$. 
Note that, for $s\in F$, the condition $s\in C_{\infty}$ is equivalent to the condition $es=a/c$ for some $a,c\in\ideal{o}_F$ such that $(a,c)=1$ and $c\equiv 0\ (\bmod \ \ideal{n})$.
Hence, if $s\in C_{\infty}$, then $\alpha_b(s)\in C_{\infty}$.
Therefore $U(\ideal{q})$ preserves the component $D_{C_{\infty}}(\ideal{n})$. 
Let $\mathbb{H}_2(\ideal{n},\integer{})'$ be the commutative $\integer{}$\nobreakdash-subalgebra of $\End_{\integer{}}({H}^{n-1}(D_{C_{\infty}}(\ideal{n}),\integer{})) \oplus \End_{\integer{}}({H}^n (Y(\ideal{n})^{\text{BS}},D_{C_{\infty}}(\ideal{n});\integer{}))\oplus \End_{\integer{}}({H}^n (Y(\ideal{n}),\integer{}))\oplus \End_{\integer{}}({H}^n(D_{C_{\infty}}(\ideal{n}),\integer{}))$ generated by $U(\ideal{q})$ for all non-zero prime ideals $\ideal{q}$ of $\mathfrak{o}_F$ dividing $\ideal{n}$, and 
$\ideal{m}_{\textbf{E}}'$ the maximal ideal of $\mathbb{H}_2(\ideal{n},\integer{})'$ generated by $\varpi$ and $U(\ideal{q})-C(\ideal{q},\textbf{E})$ for all non-zero prime ideals $\ideal{q}$ of $\mathfrak{o}_F$ dividing $\ideal{n}$.

\begin{prop}\label{H^{n-1} vanishing}
Assume that $C(\ideal{q},\mathbf{E})\not\equiv N(\ideal{q})\ (\bmod \varpi)$ for some prime ideal $\ideal{q}$ dividing $\ideal{n}$.
Then 
$H^{n-1}(D_{C_{\infty}}(\ideal{n}),\C)_{\ideal{m}_{\mathbf{E}}'}=0$.
\end{prop}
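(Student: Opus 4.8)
The plan is to reduce the statement to an explicit computation of the eigenvalues of the Hecke operator $U(\ideal{q})$ on the boundary cohomology $H^{n-1}(D_{C_{\infty}}(\ideal{n}),\C)$, and then to observe that all of these eigenvalues equal $N(\ideal{q})$, which differs from $C(\ideal{q},\mathbf{E})$ modulo $\varpi$ by hypothesis. First I would use the decomposition $D_{C_{\infty}}(\ideal{n})=\coprod_{s\in C_{\infty}}D_s$ together with Step 1 of the proof of Proposition \ref{Eis Hodge number}, which identifies $H^{n-1}(D_s,\C)\simeq H^{n-1}(\overline{\Gamma_s},\C)$ as a one-dimensional space spanned by the class of $\omega_s^{n-1}$, the transport to the cusp $s$ of $\omega_{\infty}^{n-1}=\frac{dy_1}{y_1}\wedge\cdots\wedge\frac{dy_{n-1}}{y_{n-1}}$. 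Here the assumption $k=2$ is used, since the coefficient system is then trivial. Thus $H^{n-1}(D_{C_{\infty}}(\ideal{n}),\C)=\bigoplus_{s\in C_{\infty}}\C\,[\omega_s^{n-1}]$, and it suffices to compute the matrix of $U(\ideal{q})$ in this basis.

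Next I would compute this action at the cusp $\infty$ using the explicit coset decomposition (\ref{U(q) explicit decomp}), which presents $U(\ideal{q})$ through the $N(\ideal{q})$ representatives $\alpha_b=\begin{pmatrix}1&b\\0&g_{\ideal{q}}\end{pmatrix}$. Following the formula for the Hecke action on boundary cohomology already used for the top-degree classes in Step 2 of Proposition \ref{Eis Hodge number}, the $\infty$-diagonal entry is $\sum_b\alpha_b^{\ast}[\omega_{\infty}^{n-1}]$. Since each $\alpha_b$ fixes $\infty$ and acts on $\mathfrak{H}^{J_F}$ by $z_{\iota}\mapsto(z_{\iota}+b^{\iota})/g_{\ideal{q}}^{\iota}$ with $g_{\ideal{q}}$ totally positive, it scales $y_{\iota}$ by the positive constant $1/g_{\ideal{q}}^{\iota}$ and hence fixes every $\frac{dy_{\iota}}{y_{\iota}}$; therefore $\alpha_b^{\ast}[\omega_{\infty}^{n-1}]=[\omega_{\infty}^{n-1}]$. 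A short check that the $\overline{\Gamma_{\infty}}$-cosets of the $\alpha_b$ remain distinct (the relevant translation lattice being $\ideal{d}_F^{-1}[t_1]^{-1}$, against which the $b$ are already taken modulo $\ideal{d}_F^{-1}[t_1]^{-1}\ideal{q}$) shows that exactly $N(\ideal{q})$ of them contribute, so the $\infty$-diagonal entry is $N(\ideal{q})$. By $h_F^+=1$ every cusp of $C_{\infty}$ is of the form $\gamma_0(\infty)$ with $\gamma_0\in\Gamma_0$, and the $\Gamma_0$-conjugation invariance (\ref{invariant under Gamma_0}) of the double coset propagates this computation to all cusps, so every diagonal entry equals $N(\ideal{q})$.

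To finish the eigenvalue computation I would show that, after ordering the cusps of $C_{\infty}$ by the $\ideal{q}$-adic depth of their denominators, the matrix of $U(\ideal{q})$ is triangular, the off-diagonal contributions coming from cusps strictly shallower than the one to which $\omega_s^{n-1}$ is attached. Granting this, the characteristic polynomial of $U(\ideal{q})$ on $H^{n-1}(D_{C_{\infty}}(\ideal{n}),\C)$ is a power of $X-N(\ideal{q})$, so $N(\ideal{q})$ is its only eigenvalue. Finally, since by hypothesis $C(\ideal{q},\mathbf{E})\not\equiv N(\ideal{q})\pmod{\varpi}$ — in particular $N(\ideal{q})\neq C(\ideal{q},\mathbf{E})$ in $\integer{}$ — the operator $U(\ideal{q})-C(\ideal{q},\mathbf{E})$ acts invertibly on the finite-dimensional space $H^{n-1}(D_{C_{\infty}}(\ideal{n}),\C)$. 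As this element lies in the maximal ideal $\ideal{m}_{\mathbf{E}}'$ of $\mathbb{H}_2(\ideal{n},\integer{})'$, Nakayama's lemma forces the localization $H^{n-1}(D_{C_{\infty}}(\ideal{n}),\C)_{\ideal{m}_{\mathbf{E}}'}$ to vanish.

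The main obstacle I anticipate is the explicit Hecke computation on the boundary: making precise the action of $U(\ideal{q})$ on the several components $H^{n-1}(D_s,\C)$, counting the coset representatives that genuinely contribute at each cusp, and — the most delicate point — controlling the off-diagonal cusp-mixing well enough to guarantee that all eigenvalues, not merely the diagonal entries, equal $N(\ideal{q})$. Everything else, including the concluding localization argument, is formal once the eigenvalue $N(\ideal{q})$ has been established.
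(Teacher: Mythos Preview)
Your overall strategy matches the paper's: identify the one-dimensional pieces $H^{n-1}(D_s,\C)$ via Step~1 of Proposition~\ref{Eis Hodge number}, compute the action of $U(\ideal{q})$ on the basis $\{\omega_s^{n-1}\}_{s\in C_\infty}$ using the coset decomposition~(\ref{U(q) explicit decomp}), and conclude by a localization argument once you know that $N(\ideal{q})$ is the only eigenvalue.

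The one place where you make life harder than necessary is the triangularity step, which you rightly flag as the delicate point. In fact the matrix is \emph{diagonal}, and this follows from the very $\Gamma_0$-conjugation invariance~(\ref{invariant under Gamma_0}) that you already invoke to propagate the diagonal entries. Here is the missing observation: for $s=\gamma_0(\infty)$ with $\gamma_0\in\Gamma_0$, the identity $\gamma_0^{-1}\Gamma\alpha_b\gamma_0=\Gamma\alpha_{b'}$ (a consequence of~(\ref{invariant under Gamma_0}) together with normality of $\Gamma$ in $\Gamma_0$) unwinds to $\alpha_b\gamma_0=\gamma^{-1}\gamma_0\alpha_{b'}$ for some $\gamma\in\Gamma$, whence $\alpha_b(s)=\gamma^{-1}\gamma_0(\infty)=\gamma^{-1}(s)$. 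Thus every $\alpha_b$ preserves the $\Gamma$-class of every cusp in $C_\infty$, so in the formula for $(\omega_t\mid U(\ideal{q}))_s$ the sum is empty whenever $s\neq t$, and nothing off-diagonal survives. Your proposed ordering by $\ideal{q}$-adic depth is therefore unnecessary; once you see this, the remainder of your argument goes through exactly as the paper's does.
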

\begin{proof}
By \textbf{Step1} in the proof of Proposition \ref{Eis Hodge number}, for each $t\in C_{\infty}$ such that $t=\gamma (\infty)$ with $\gamma\in\Gamma_0$, a basis of $H^{n-1}(D_{t},\C)$ is given by $\omega_t:=(\gamma^{-1})^{\ast}(\omega_{\infty}^{n-1})$. 
We claim that 
\begin{align}\label{U(q)-eigenvalue of omega_t}
\omega_t|U(\ideal{q})=N(\ideal{q})\omega_t
\end{align}
for any $t\in C_{\infty}$ and any prime ideal $\ideal{q}$ of $\mathfrak{o}_F$ dividing $\ideal{n}$.

For the moment, we admit the claim (\ref{U(q)-eigenvalue of omega_t}). 
We have 
\[
H^{n-1}(D_{C_{\infty}}(\ideal{n}),\C)_{\ideal{m}_{\mathbf{E}}'}
 \simeq 
\prod_{\ideal{p}\cap \mathbb{H}_2(\ideal{n},\integer{})' \subset \ideal{m}_{\mathbf{E}}'} H^{n-1}(D_{C_{\infty}}(\ideal{n}),\C) \otimes_{\mathbb{H}_2(\ideal{n},\integer{})'} K(\ideal{p}), 
\]
where $\ideal{p}$ runs over the set of maximal ideals of $\mathbb{H}_2(\ideal{n},\integer{})'\otimes K$ such that $\ideal{p}\cap \mathbb{H}_2(\ideal{n},\integer{})' \subset \ideal{m}_{\mathbf{E}}'$, and 
$K(\ideal{p})$ denotes the residue field of $\ideal{p}$.
Let $\varphi_{\ideal{p}}$ denote the mod $\ideal{p}$ map $\mathbb{H}_2(\ideal{n},\integer{})'\otimes K \twoheadrightarrow K(\ideal{p})$.
The condition $\ker(\varphi_{\ideal{p}})\cap \mathbb{H}_2(\ideal{n},\integer{})'\subset \ideal{m}_{\mathbf{E}}'$ is equivalent to the condition $\varphi_{\ideal{p}}(U(\ideal{q}))\equiv C(\ideal{q},\mathbf{E})\ (\bmod \ \ideal{m}_{K(\ideal{p})})$ for all non-zero prime ideals $\ideal{q}$ of $\ideal{o}_F$ dividing $\ideal{n}$. 
By (\ref{U(q)-eigenvalue of omega_t}), 
$\varphi_{\ideal{p}}(U(\ideal{q}))=N(\ideal{q})$. 
Now our assumption implies $H^{n-1}(D_{C_{\infty}}(\ideal{n}),\C)_{\ideal{m}_{\mathbf{E}}'}=0$ as desired.

Thus it remains to prove the claim (\ref{U(q)-eigenvalue of omega_t}).
In order to do it, under the canonical isomorphism 
\[
H^{n-1}(\partial(Y(\ideal{n})^{\mathrm{BS}}),\C)\simeq \{(c_s)_s\in \bigoplus_{s\in \mathbb{P}^1(F)}H^{n-1}(D_s,\C)\mid \gamma^{\ast}(\omega_{\gamma(s)})=\omega_s \ \text{for } \gamma\in\Gamma,s\in \mathbb{P}^1(F)\},
\]
we explicitly describe the action of $U(\ideal{q})$ on the right-hand side.

We first treat the case $t=\infty$. 
By using the decomposition (\ref{U(q) explicit decomp}) and the definition of the Hecke operator acting on the boundary cohomology \cite[(3.1c)]{Hida93}, we have 
\begin{align}\label{act of Hecke on boundary}
\left(\omega_{\infty}|[\Gamma\begin{pmatrix}1&0\\0&g_{\ideal{q}}\end{pmatrix}\Gamma
]\right)_{s}=\sum_{\alpha_b(s)\sim_{\Gamma} \infty}  (\alpha_b)^{\ast} (\omega_{\alpha_b (s)}),
\end{align}
where $b$ runs over a complete set of representative of $\ideal{d}_F^{-1}[t_1]^{-1}/\ideal{d}_F^{-1}[t_1]^{-1}\ideal{q}$ such that $\alpha_b(s)$ is $\Gamma$-equivalent to $\infty$. 
By (\ref{U(q) explicit decomp}) and (\ref{invariant under Gamma_0}), for $s\in C_{\infty}$, $\alpha_b(s)$ is $\Gamma$-equivalent to $s$. 
Indeed, for $s=\gamma_0 (\infty)$ with $\gamma_0\in\Gamma_0$, we have $\Gamma\begin{pmatrix}1&0\\0&g_{\ideal{q}}\end{pmatrix}\Gamma=\coprod_{b}\Gamma \gamma_0^{-1}\alpha_b\gamma_0=\coprod_{b}\gamma_0^{-1}\Gamma\alpha_b\gamma_0$. 
Then $\gamma_0^{-1}\gamma\alpha_b\gamma_0=\alpha_{b'}$ for some $\gamma\in \Gamma$ and $b'\in \ideal{d}_F^{-1}[t_1]^{-1}/\ideal{d}_F^{-1}[t_1]^{-1}\ideal{q}$ and hence $\alpha_{b}(s)=\gamma^{-1}(s)$. 
Therefore, if $s$ is not $\Gamma$-equivalent to $\infty$, then (\ref{act of Hecke on boundary}) is $0$ and 
\begin{align*}
\left(\omega_{\infty}|[\Gamma\begin{pmatrix}1&0\\0&g_{\ideal{q}}\end{pmatrix}\Gamma
]\right)_{\infty}
&=\sum_{b\in \ideal{d}_F^{-1}[t_1]^{-1}/\ideal{d}_F^{-1}[t_1]^{-1}\ideal{q}}(\alpha_b)^{\ast}(\omega_{\infty})\\
&=N(\ideal{q}) \omega_{\infty}. 
\end{align*}
Here the last equality follows from that $\omega_{\infty}$ is invariant under the action of the standard Borel subgroup $B_{\infty}$.

We treat the general case $t\in C_{\infty}$. 
Let $\gamma\in\Gamma_0$ such that $t=\gamma(\infty)$.
The canonical map $\gamma :D_{C_{\infty}}(\ideal{n})\to D_{C_{\infty}}(\ideal{n})$ induces 
$\gamma^{\ast}: H^{n-1}(D_{C_{\infty}}(\ideal{n}),\C) \to H^{n-1}(D_{C_{\infty}}(\ideal{n}),\C)$. 
We have $\gamma^{\ast}(\omega_t)\in H^{n-1}(D_{\infty},\C)$.
Hence $(\gamma^{-1})^{\ast}(\gamma^{\ast}(\omega_t)|U(\ideal{q}))=N(\ideal{q})\omega_t$. 
Now the assertion follows from (\ref{invariant under Gamma_0}).
\end{proof}

\begin{prop}\label{prop:rational}
Under the same notation and assumptions as Proposition \ref{H^{n-1} vanishing}, $[\omega_{\mathbf{E}}]_{\mathrm{rel}}$ is rational, that is, 
$[\omega_{\mathbf{E}}]_{\mathrm{rel}}\in H^n(Y(\ideal{n})^{\mathrm{BS}},D_{C_{\infty}}(\ideal{n});K)$.
\end{prop}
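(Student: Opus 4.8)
The plan is to deduce the rationality of $[\omega_{\mathbf{E}}]_{\mathrm{rel}}$ from the rationality of $[\omega_{\mathbf{E}}]$ already proved in Proposition \ref{prop:rational'}, using the vanishing of Proposition \ref{H^{n-1} vanishing} to pin down the lift. Concretely, I would compare the long exact sequences of the pair $(Y(\ideal{n})^{\mathrm{BS}}, D_{C_{\infty}}(\ideal{n}))$ with coefficients in $K$ and in $\C$, after localizing at the maximal ideal $\ideal{m}_{\mathbf{E}}'$ of $\mathbb{H}_2(\ideal{n},\integer{})'$ in the sense used in the proof of Proposition \ref{H^{n-1} vanishing}. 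Since the operators $U(\ideal{q})$ for $\ideal{q}\mid\ideal{n}$ preserve $D_{C_{\infty}}(\ideal{n})$, localization is exact and commutes with the connecting, inclusion, and restriction maps, so for $A=K,\C$ (and identifying $H^n(Y(\ideal{n})^{\mathrm{BS}},A)$ with $H^n(Y(\ideal{n}),A)$) the relevant portion reads
\begin{align*}
H^{n-1}(D_{C_{\infty}}(\ideal{n}),A)_{\ideal{m}_{\mathbf{E}}'} \to H^n(Y(\ideal{n})^{\mathrm{BS}}, D_{C_{\infty}}(\ideal{n}); A)_{\ideal{m}_{\mathbf{E}}'} \xrightarrow{\ \iota_A\ } H^n(Y(\ideal{n}),A)_{\ideal{m}_{\mathbf{E}}'} \xrightarrow{\ \res_A\ } H^n(D_{C_{\infty}}(\ideal{n}),A)_{\ideal{m}_{\mathbf{E}}'}.
\end{align*}

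By Proposition \ref{H^{n-1} vanishing} the left-hand term vanishes for $A=\C$, and since $(-)\otimes_K\C$ is faithfully flat it also vanishes for $A=K$; consequently both $\iota_K$ and $\iota_\C$ are injective. I would first record that $[\omega_{\mathbf{E}}]$ and $[\omega_{\mathbf{E}}]_{\mathrm{rel}}$ actually lie in the $\ideal{m}_{\mathbf{E}}'$-localized parts: both are $U(\ideal{q})$-eigenclasses with eigenvalue $C(\ideal{q},\mathbf{E})$, because the assignment $\mathbf{h}\mapsto [\omega_{\mathbf{h}}]_{\mathrm{rel}}$ is Hecke-equivariant and $\mathbf{E}$ is an eigenform, and these eigenvalues reduce modulo $\varpi$ to the values defining $\ideal{m}_{\mathbf{E}}'$.

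Next, since $[\omega_{\mathbf{E}}]_{\mathrm{rel}}$ is a $\C$-lift of $[\omega_{\mathbf{E}}]$ under $\iota_\C$, we have $\res_\C([\omega_{\mathbf{E}}])=0$; as $[\omega_{\mathbf{E}}]\in H^n(Y(\ideal{n}),K)$ by Proposition \ref{prop:rational'} and the base-change map $H^n(D_{C_{\infty}}(\ideal{n}),K)\hookrightarrow H^n(D_{C_{\infty}}(\ideal{n}),\C)$ is injective, it follows that $\res_K([\omega_{\mathbf{E}}])=0$ as well. By exactness of the $K$-sequence, $[\omega_{\mathbf{E}}]$ admits a lift $c\in H^n(Y(\ideal{n})^{\mathrm{BS}}, D_{C_{\infty}}(\ideal{n}); K)_{\ideal{m}_{\mathbf{E}}'}$, unique by injectivity of $\iota_K$. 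Its image under $(-)\otimes_K\C$ and the class $[\omega_{\mathbf{E}}]_{\mathrm{rel}}$ both map to $[\omega_{\mathbf{E}}]$ under $\iota_\C$, so injectivity of $\iota_\C$ forces $[\omega_{\mathbf{E}}]_{\mathrm{rel}}=c$; hence $[\omega_{\mathbf{E}}]_{\mathrm{rel}}$ is rational, as claimed.

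The main obstacle is not the diagram chase but arranging the localization at $\ideal{m}_{\mathbf{E}}'$ to interact correctly with the two coefficient rings and with the construction of $[\omega_{\mathbf{E}}]_{\mathrm{rel}}$. I must verify that $[\omega_{\mathbf{E}}]_{\mathrm{rel}}$ genuinely lands in the localized relative cohomology, i.e. that the relative-de-Rham construction is Hecke-equivariant so that the $U(\ideal{q})$-eigenvalues of $[\omega_{\mathbf{E}}]_{\mathrm{rel}}$ coincide with those of $\mathbf{E}$, and that the localization functor employed for $\C$-coefficients in Proposition \ref{H^{n-1} vanishing} is compatible with base change from $K$ (so that the vanishing of the left-hand term over $\C$ really transports to the vanishing over $K$). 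Once these compatibilities are in place, the vanishing of $H^{n-1}(D_{C_{\infty}}(\ideal{n}),\C)_{\ideal{m}_{\mathbf{E}}'}$ supplies exactly the injectivity of $\iota$ needed to identify the complex class $[\omega_{\mathbf{E}}]_{\mathrm{rel}}$ with the unique rational lift.
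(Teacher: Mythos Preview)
Your argument is correct and is essentially the same as the paper's: both lift the rational class $[\omega_{\mathbf{E}}]\in H^n(Y(\ideal{n}),K)_{\ideal{m}_{\mathbf{E}}'}$ to some $c$ in the $K$-relative cohomology via the long exact sequence of the pair, and then use the vanishing of $H^{n-1}(D_{C_{\infty}}(\ideal{n}),\C)_{\ideal{m}_{\mathbf{E}}'}$ from Proposition~\ref{H^{n-1} vanishing} to force $c=[\omega_{\mathbf{E}}]_{\mathrm{rel}}$. You are simply more explicit than the paper about why $\res_K([\omega_{\mathbf{E}}])=0$ and about the Hecke-equivariance and base-change compatibilities needed to localize; the paper compresses all of this into two sentences.
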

\begin{proof}
By Proposition \ref{prop:rational'}, 
there is $c \in H^n(Y(\ideal{n})^{\text{BS}},D_{C_{\infty}}(\ideal{n});K)_{\ideal{m}_{\textbf{E}}'}$ mapping to $[\omega_{\textbf{E}}] \in H^n(Y(\ideal{n}),K)_{\ideal{m}_{\textbf{E}}'}$. 
The difference $c-[\omega_{\textbf{E}}]_{\mathrm{rel}}$ is in the image of $H^{n-1}(D_{C_{\infty}}(\ideal{n}),\C)_{\ideal{m}_{\textbf{E}}'}$ and Proposition \ref{H^{n-1} vanishing} implies $[\omega_{\textbf{E}}]_{\mathrm{rel}}=c$. 
\end{proof}

%
\subsection{Denominator ideal}\label{subsection:Denominator ideal}
%

In this subsection, we recall the definition of the denominator ideal in the sense of T. Berger (\cite[$\S$4.1]{Be}). 

Let $\widetilde{H}^n(Y(\ideal{n}),\integer{})$ denote 
the image of $H^n(Y(\ideal{n}),\integer{})\to H^n(Y(\ideal{n}),K)$. 
For $c \in H^n(Y(\ideal{n}),K)$, 
let $\delta(c)$ denote the denominator ideal of $c$, that is,
\[
\delta(c)=\left\{a\in \integer{}\ \big|\  ac \in \widetilde{H}^n(Y(\ideal{n}),\integer{})\right\}.
\]

%
\subsection{Congruence modules and integrality of cohomology classes}\label{congruence module}
%

In this subsection, we determine the structure of the congruence module associated to $\mathbf{E}$ by using the denominator ideal of $[\omega_{\mathbf{E}}]$. 
As an application, we prove the integrality of $[\omega_{\mathbf{E}}]$. 
The proof is based on the method of T. Berger \cite[\S 4]{Be} and M. Emerton \cite[Proposition 4, Theorem 5]{Eme}. 

We abbreviate $\Gamma_{1}(\ideal{d}_F [t_1], \mathfrak{n})$ to $\Gamma$. 
Let $\ideal{p}_{\mathbf{E}}$ be the prime ideal of $\mathbb{H}_2(\mathfrak{n},\integer{})$ generated by $T(\ideal{q})-C(\ideal{q},\textbf{E}),S(\ideal{q})-\chi^{-1}(\ideal{q})$ for all non-zero prime ideals $\ideal{q}$ of $\ideal{o}_F$ prime to $\ideal{n}$ and $U(\ideal{q})-C(\ideal{q},\textbf{E})$ for all non-zero prime ideals $\ideal{q}$ of $\ideal{o}_F$ dividing $\ideal{n}$. 
Let $\mathscr{P}_{\mathbf{E}}$ denote the image of $\ideal{p}_{\mathbf{E}}$ under the canonical surjection $\mathbb{H}_2(\mathfrak{n},\integer{}) \twoheadrightarrow \mathscr{H}_2(\mathfrak{n},\integer{})$. 
The module $\mathscr{H}_2(\mathfrak{n},\integer{})/\mathscr{P}_{\mathbf{E}}$ is the congruence module associated to $\textbf{E}$.

In order to determine the structure of $\mathscr{H}_2(\mathfrak{n},\integer{})/\mathscr{P}_{\mathbf{E}}$, we use an element $A_{1,s}\in \mathbb{H}_2(\mathfrak{n},\integer{})$ for a cusp $s\in C(\Gamma)$ defined as follows.
The space $M_2(\ideal{n},\integer{})$ of modular forms introduced in \S \ref{Duality theorem} can be identified with the space $M_2(M,\integer{})$ of geometric modular forms defined in \S \ref{subsection:Geometric HMF} (see, for example, \cite[p.329--333]{Hida88} and Definition \ref{definition:GHMF}). 
Hence, if $\mathbf{f}=f_1\in M_2(\ideal{n},\integer{})$, then the constant term of $f_1$ at $s$ belongs to $\integer{}$ by the $q$-expansion principle. 
Now, by using the duality theorem (Theorem \ref{Duality}), 
we can define $A_{1,s}\in \mathbb{H}_2(\mathfrak{n},\integer{})$ as the element corresponding to an $\integer{}$-linear map $\textbf{f} \mapsto a_s(0,f_1)$ from $M_2(\ideal{n},\integer{})$ to $\integer{}$, where $a_s(0,f_1)$ denotes the constant term of $f_1$ at $s$. 

Let $s_0\in C(\Gamma)$ such that $v_p(a_{s_0}(0,E_{1}))\le v_p(a_{s}(0,E_{1}))$ for every $s\in C(\Gamma)$, where $v_p$ denotes the $p$-adic valuation. 
We put 
\[
C=a_{s_0}(0,E_{1}). 
\]
Let $\mathcal{H}_2(\ideal{n},\integer{})$ be the commutative $\integer{}$\nobreakdash-subalgebra of 
$\End_{\integer{}}(H_c^n(Y(\ideal{n}),\integer{})) \oplus \End_{\integer{}}(H^n (Y(\ideal{n}), \integer{}))$ $ \oplus \End_{\integer{}}(H^n (\partial(Y(\ideal{n})^{\mathrm{BS}}),\integer{})) \oplus \End_{\integer{}}(H_c^{n+1}(Y(\ideal{n}),\integer{}))$ generated by $T(\ideal{q}),S(\ideal{q})$ for all non-zero prime ideals $\ideal{q}$ of $\ideal{o}_F$ prime to $\ideal{n}$ and $U(\ideal{q})$ for all non-zero prime ideals $\ideal{q}$ of $\ideal{o}_F$ dividing $\ideal{n}$, and 
$\ideal{m}$ the maximal ideal of $\mathcal{H}_2(\ideal{n},\integer{})$ generated by $\varpi$ and $T(\ideal{q})-C(\ideal{q},\textbf{E}),S(\ideal{q})-\chi^{-1}(\ideal{q})$ for all non-zero prime ideals $\ideal{q}$ of $\ideal{o}_F$ prime to $\ideal{n}$ and $U(\ideal{q})-C(\ideal{q},\textbf{E})$ for all non-zero prime ideals $\ideal{q}$ of $\ideal{o}_F$ dividing $\ideal{n}$. 

\begin{thm}\label{cong mod}
Let $p$ be a prime number $>3$ such that $p$ is prime to $\ideal{n}$ and $\Delta_F$. 
We assume the following two conditions $\mathrm{(a)}$ and $\mathrm{(b)}$$:$ 

$\mathrm{(a)}$
$H^{n}(\partial \left(Y(\ideal{n})^{\mathrm{BS}}\right),\integer{})_{\ideal{m}}$, $H_c^{n+1}(Y(\ideal{n}),\integer{})_{\ideal{m}}$, and $H^n(D_{C_{\infty}}(\ideal{n}),\integer{})_{\ideal{m}_{\mathbf{E}}'}$ are torsion-free, 
where 

\ \ \ \ $\ideal{m}_{\mathbf{E}}'$ is the maximal ideal of $\mathbb{H}_2(\ideal{n},\integer{})'$ defined before Proposition \ref{H^{n-1} vanishing}$;$

$\mathrm{(b)}$
$C(\ideal{q},\mathbf{E})\not\equiv N(\ideal{q})\ (\bmod \varpi)$ for some prime ideal $\ideal{q}$ dividing $\ideal{n}$. 

Then there are isomorphisms of $\integer{}$-modules
\[
\mathbb{H}_{2}(\ideal{n},\integer{})[\epsilon_{{}_{\mathbf{E}}}]/(\ideal{p}_{\mathbf{E}}+\sum_{s\in C(\Gamma)}\integer{}A_{1,s})[\epsilon_{{}_{\mathbf{E}}}]\simeq \mathscr{H}_{2}(\ideal{n},\integer{})/\mathscr{P}_{\mathbf{E}}\simeq \integer{}/C.
\]
Here the notion $[\epsilon_{{}_{\mathbf{E}}}]$ is defined at the end of \S \ref{subsection:Eichler--Shimura--Harder}.
\end{thm}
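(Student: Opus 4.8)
The plan is to derive both isomorphisms from the duality theorem (Theorem \ref{Duality}) together with the explicit constant-term computation of Proposition \ref{Const of Eisenstein}, following the method of Berger \cite[\S4]{Be} and Emerton \cite[Proposition 4, Theorem 5]{Eme}. Normalize $\mathbf{E}$ so that $C(\mathfrak{o}_F,\mathbf{E})=1$ (Proposition \ref{Hilbert Eisenstein}(2)); then under the perfect pairing $\langle\ ,\ \rangle\colon \mathbb{H}_2(\ideal{n},\integer{})\times M_2(\ideal{n},\integer{})\to\integer{}$ the Eisenstein eigensystem $\lambda_{\mathbf{E}}\colon \mathbb{H}_2(\ideal{n},\integer{})\to\integer{}$, whose kernel is $\ideal{p}_{\mathbf{E}}$, is the functional dual to $\mathbf{E}$, i.e. $\langle t,\mathbf{E}\rangle=\lambda_{\mathbf{E}}(t)$, while by construction $\langle A_{1,s},\mathbf{h}\rangle=a_s(0,h_1)$. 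First I would observe that each $A_{1,s}$ lies in $\ker\big(\mathbb{H}_2(\ideal{n},\integer{})\twoheadrightarrow\mathscr{H}_2(\ideal{n},\integer{})\big)$: for $f\in S_2(\ideal{n},\integer{})$ one has $\langle A_{1,s},f\rangle=a_s(0,f_1)=0$, so the image of $A_{1,s}$ pairs trivially with all of $S_2(\ideal{n},\integer{})$ and hence vanishes in $\mathscr{H}_2(\ideal{n},\integer{})$ by the perfect cuspidal duality (Hida). Consequently the surjection $\mathbb{H}_2(\ideal{n},\integer{})\twoheadrightarrow\mathscr{H}_2(\ideal{n},\integer{})$ factors through
\[
\mathbb{H}_2(\ideal{n},\integer{})/\big(\ideal{p}_{\mathbf{E}}+\textstyle\sum_{s}\integer{}A_{1,s}\big)\twoheadrightarrow \mathscr{H}_2(\ideal{n},\integer{})/\mathscr{P}_{\mathbf{E}},
\]
which stays surjective after applying the projector $[\epsilon_{{}_{\mathbf{E}}}]$, the parity of the Eisenstein eigensystem by Proposition \ref{Eis Hodge number}.

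Next I would identify the congruence module $\mathscr{H}_2(\ideal{n},\integer{})/\mathscr{P}_{\mathbf{E}}$ with $\integer{}/C$. Composing the dual of the constant-term map with $\lambda_{\mathbf{E}}$ gives $\lambda_{\mathbf{E}}(A_{1,s})=\langle A_{1,s},\mathbf{E}\rangle=a_s(0,E_1)$, the constant term of $\mathbf{E}$ at the cusp $s$. By the standard Hida--Emerton formalism the congruence ideal cutting out $\mathscr{H}_2(\ideal{n},\integer{})/\mathscr{P}_{\mathbf{E}}$ is exactly the ideal of $\integer{}$ generated by the constant terms $\{a_s(0,E_1)\}_{s\in C(\Gamma)}$; by the choice of $s_0$ minimizing the $p$-adic valuation this ideal is $(C)$. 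Cyclicity of the module follows because the $\ideal{p}_{\mathbf{E}}$-eigenspace in $M_2(\ideal{n},K)$ is one-dimensional (spanned by $\mathbf{E}$), and the nonvanishing $C\neq0$ is guaranteed by Proposition \ref{Const of Eisenstein}, which computes $C$ explicitly as a Gauss-sum ratio times Euler factors times $L(1-k,\varphi^{-1}\psi)$, the latter being nonzero by Remark \ref{parity of Eis}. This yields $\mathscr{H}_2(\ideal{n},\integer{})/\mathscr{P}_{\mathbf{E}}\simeq\integer{}/C$.

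It then remains to upgrade the surjection of the first paragraph to an isomorphism, i.e. to show that in the $\epsilon_{{}_{\mathbf{E}}}$-part and modulo $\ideal{p}_{\mathbf{E}}$ the kernel of $\mathbb{H}_2\to\mathscr{H}_2$ is generated by the $A_{1,s}$. Rationally this is automatic: over $K$ one has $M_2(\ideal{n},K)=S_2(\ideal{n},K)\oplus\mathrm{Eis}$, the constant-term map is injective on the Eisenstein subspace (an Eisenstein series being determined by its constant terms via Proposition \ref{Const of Eisenstein} and the $q$-expansion principle), and dualizing identifies $\ker(\mathbb{H}_2\to\mathscr{H}_2)\otimes K$ with $(\sum_s\integer{}A_{1,s})\otimes K$ on the $\ideal{p}_{\mathbf{E}}$-component. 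The integral statement is where the hypotheses enter. The torsion-freeness in (a) of $H^{n}(\partial(Y(\ideal{n})^{\mathrm{BS}}),\integer{})_{\ideal{m}}$, $H_c^{n+1}(Y(\ideal{n}),\integer{})_{\ideal{m}}$, and $H^n(D_{C_{\infty}}(\ideal{n}),\integer{})_{\ideal{m}_{\mathbf{E}}'}$ keeps the localized long exact sequences relating $H_c^{\bullet}$, $H^{\bullet}$, and the boundary cohomology exact after reduction, so the rational splitting descends without spurious torsion; and (b), via Proposition \ref{H^{n-1} vanishing}, forces $H^{n-1}(D_{C_{\infty}}(\ideal{n}),\C)_{\ideal{m}_{\mathbf{E}}'}=0$, separating the eigensystem of $\mathbf{E}$ from the boundary eigensystem (with $U(\ideal{q})$-eigenvalue $N(\ideal{q})$) and confining the localization to the genuine Eisenstein line.

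The main obstacle is precisely this integral comparison: converting the clean rational identity $\ker(\mathbb{H}_2\to\mathscr{H}_2)\otimes K=(\sum_s\integer{}A_{1,s})\otimes K$ into an equality of $\epsilon_{{}_{\mathbf{E}}}$-parts modulo $\ideal{p}_{\mathbf{E}}$ over $\integer{}$. Without the torsion-freeness in (a) and the vanishing in (b) the interior and boundary integral structures need not be compatible, and the congruence module could acquire extra torsion, so that the surjection of the first paragraph would fail to be injective. I expect to organize this as a diagram chase among the localized cohomology sequences, in the spirit of Berger \cite[\S4]{Be} and Emerton \cite[Proposition 4, Theorem 5]{Eme}, with (a) making the relevant connecting maps injective and (b) removing the unwanted boundary eigencomponent; combined with the computation $\mathscr{H}_2(\ideal{n},\integer{})/\mathscr{P}_{\mathbf{E}}\simeq\integer{}/C$ this forces the first map to be an isomorphism as well.
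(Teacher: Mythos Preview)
Your algebraic setup (steps (5.2) and (5.3) in the paper's numbering) is correct: the duality theorem gives $\lambda_{\mathbf{E}}(A_{1,s})=a_s(0,E_1)$, so $\integer{}/C$ surjects onto $\mathbb{H}_2[\epsilon_{\mathbf{E}}]/(\ideal{p}_{\mathbf{E}}+\sum_s\integer{}A_{1,s})[\epsilon_{\mathbf{E}}]$, and this surjects onto $\mathscr{H}_2/\mathscr{P}_{\mathbf{E}}$ since each $A_{1,s}$ dies on cusp forms. But the sentence ``by the standard Hida--Emerton formalism the congruence ideal \ldots\ is exactly the ideal generated by the constant terms'' is the entire content of the theorem, and you have not proved it. What you have established is only the \emph{upper} bound $|\mathscr{H}_2/\mathscr{P}_{\mathbf{E}}|\le |\integer{}/C|$; the hard direction is the \emph{lower} bound, and your proposed diagram chase does not supply it.

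The paper closes the circle by inserting a cohomological intermediary: one sets $\mathbf{G}=\mathbf{E}/C$, considers the denominator ideal $\delta_{\mathbf{G}}=\delta([\omega_{\mathbf{G}}]^{\epsilon_{\mathbf{E}}})$ in $\widetilde{H}^n(Y(\ideal{n}),\integer{})$, and manufactures a surjection $\mathscr{H}_2/\mathscr{P}_{\mathbf{E}}\twoheadrightarrow\integer{}/\delta_{\mathbf{G}}$ by letting the Hecke algebra act on a carefully chosen parabolic class $e_0$. This is where assumptions (a) and (b) actually enter: (a) is used to lift $\res([\omega_{\mathbf{G}}])$ from the boundary back to an integral class and to land $e_0$ in $\widetilde{H}_{\pa}^n$, and (b) (via Proposition \ref{H^{n-1} vanishing}) is used to show that a generator $d$ of $\delta_{\mathbf{G}}$ makes $d[\omega_{\mathbf{G}}]_{\mathrm{rel}}$ integral in the relative cohomology. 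Your proposal assigns (a) and (b) the role of making a rational splitting descend integrally, but that is not how they are used, and no such descent argument yields the lower bound.

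Finally---and this is the ingredient entirely absent from your sketch---one must show $\delta_{\mathbf{G}}\subset (C)$. This is genuinely arithmetic: one evaluates $d[\omega_{\mathbf{G}}]_{\mathrm{rel}}$ via the Mellin transform (Propositions \ref{Modular symbol} and \ref{Modular symbol anti-hol}) at twists by characters $\eta_p$ of $p$-power conductor, obtaining $(d/C)$ times a product $L(0,\eta_p^{-1}\psi)L(0,\eta_p\varphi^{-1})$ up to units. The integrality of this quantity, combined with the vanishing of the Iwasawa $\mu$-invariants in (Eis condition) and Wiles's proof of the main conjecture for totally real fields, forces $C\mid d$. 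None of Proposition \ref{Const of Eisenstein}, Remark \ref{parity of Eis}, or any purely Hecke-theoretic argument can replace this step.
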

\begin{proof}
We prove the assertion by constructing the following surjective $\integer{}$-linear morphisms 
(\ref{Step0}), (\ref{Step1}), (\ref{Step2}), and (\ref{Step3}) whose composition is the identity:
\begin{align*}
\integer{}/C &
\xrightarrow{(\ref{Step0})} \mathbb{H}_2(\ideal{n},\integer{})[\epsilon_{{}_{\textbf{E}}}]/(\ideal{p}_{\mathbf{E}}+\sum_{s\in C(\Gamma)} \integer{}A_{1,s})[\epsilon_{{}_{\mathbf{E}}}]
\xrightarrow{(\ref{Step1})}\mathscr{H}_{2}(\ideal{n},\integer{})/\mathscr{P}_{\mathbf{E}}
\xrightarrow{(\ref{Step2})}\integer{}/\delta_{\textbf{G}}
\xrightarrow{(\ref{Step3})} \integer{}/C.
\end{align*}

First we construct the surjective morphisms (\ref{Step0}) and (\ref{Step1}).
By the definition of $A_{1,s}$, we have $A_{1,s}=a_s(0,E_1)$ in 
$\mathbb{H}_2(\ideal{n},\integer{})/\ideal{p}_{\mathbf{E}}\simeq \integer{}$.
Hence we obtain a surjective $\integer{}$-linear map by $1 \mapsto 1$:
\begin{align}\label{Step0}
\integer{}/C \twoheadrightarrow \mathbb{H}_2(\ideal{n},\integer{})[\epsilon_{{}_{\textbf{E}}}]/(\ideal{p}_{\mathbf{E}}+\sum_{s\in C(\Gamma)} \integer{}A_{1,s})[\epsilon_{{}_{\textbf{E}}}]. 
\end{align}
The canonical surjection $\mathbb{H}_2(\mathfrak{n},\integer{}) \twoheadrightarrow \mathscr{H}_2(\mathfrak{n},\integer{})$ induces 
\begin{align}\label{Step1}
\mathbb{H}_{2}(\ideal{n},\integer{})[\epsilon_{{}_{\textbf{E}}}]/(\ideal{p}_{\mathbf{E}}+\sum_{s\in C(\Gamma)}\integer{}A_{1,s})[\epsilon_{{}_{\textbf{E}}}]\twoheadrightarrow\mathscr{H}_{2}(\ideal{n},\integer{})/\mathscr{P}_{\mathbf{E}}. 
\end{align}

Put $\textbf{G}=\textbf{E}/C \in M_2(\ideal{n},\integer{})$. 
Consider the cohomology class $[\omega_{\textbf{G}}]^{\epsilon_{{}_{\textbf{E}}}}\in H^n(Y(\ideal{n}),\C)[\epsilon_{{}_{\textbf{E}}}]_{\ideal{m}}$. 
By Remark \ref{parity of Eis}, we have 
$[\omega_{\textbf{G}}]^{\epsilon_{{}_{\textbf{E}}}}=[\omega_{\textbf{G}}]\neq 0$.
By Proposition \ref{prop:rational'}, 
$[\omega_{\textbf{G}}]\in H^n(Y(\ideal{n}),K)$. 
Let $\delta_{\textbf{G}}$ denote the denominator ideal $\delta([\omega_{\textbf{G}}]^{\epsilon_{{}_{\textbf{E}}}})$ of $[\omega_{\textbf{G}}]^{\epsilon_{{}_{\textbf{E}}}}$ defined in \S \ref{subsection:Denominator ideal}. 
Next we construct the surjective morphism 
\begin{align}\label{Step2}
\mathscr{H}_{2}(\ideal{n},\integer{})/\mathscr{P}_{\mathbf{E}}\twoheadrightarrow\integer{}/\delta_{\textbf{G}}.
\end{align} 
By Proposition \ref{const} (1), 
$\res([\omega_{\textbf{G}}]^{\epsilon_{{}_{\textbf{E}}}})
\in \widetilde{H}^n(\partial (Y(\ideal{n})^{\mathrm{BS}}),\integer{})[\epsilon_{{}_{\textbf{E}}}]_{\ideal{m}}$. 
The image of $\res([\omega_{\textbf{G}}]^{\epsilon_{{}_{\textbf{E}}}})$ 
under the connecting homomorphism $H^{n}(\partial(Y(\ideal{n})^{\text{BS}}),K)[\epsilon_{{}_{\textbf{E}}}]_{\ideal{m}}\to H_c^{n+1}(Y(\ideal{n}),K)[\epsilon_{{}_{\textbf{E}}}]_{\ideal{m}}$ is $0$. 
Hence, by the assumption (a) on $H_c^{n+1}(Y(\ideal{n}),\integer{})_{\ideal{m}}$, 
there is $c\in\widetilde{H}^n(Y(\ideal{n}),\integer{})[\epsilon_{{}_{\textbf{E}}}]_{\ideal{m}}$ such that 
$\res(c)=\res([\omega_{\textbf{G}}]^{\epsilon_{{}_{\textbf{E}}}})$.
We have 
$c-[\omega_{\textbf{G}}]^{\epsilon_{{}_{\textbf{E}}}}
\in H_{\pa}^n(Y(\ideal{n}),K)[\epsilon_{{}_{\textbf{E}}}]_{\ideal{m}}$. 
Fix a generator $d$ of $\delta_{\textbf{G}}$. 
Put $e_0=d(c-[\omega_{\textbf{G}}]^{\epsilon_{{}_{\textbf{E}}}})\in \widetilde{H}^n(Y(\ideal{n}),\integer{})[\epsilon_{{}_{\textbf{E}}}]_{\ideal{m}}$. 
The assumption (a) on $H^{n}(\partial\left(Y(\ideal{n})^{\mathrm{BS}}\right),\integer{})_{\ideal{m}}$ implies $e_0 \in \widetilde{H}_{\pa}^n(Y(\ideal{n}),\integer{})[\epsilon_{{}_{\textbf{E}}}]_{\ideal{m}}$.
We may assume $e_0 \neq 0$. 
Indeed, if $e_0=0$, then $c=[\omega_{\textbf{G}}]^{\epsilon_{{}_{\textbf{E}}}}$ and hence $\delta_{\textbf{G}}=\integer{}$. 
Let $e_0,\cdots,e_v$ be an $\integer{}$-basis of $\widetilde{H}_{\pa}^n(Y(\ideal{n}),\integer{})[\epsilon_{{}_{\textbf{E}}}]$. 
For $t \in \mathscr{H}_2(\ideal{n},\integer{})$, we write 
\[
t(e_0)=\sum_{0\le i \le v} \lambda_i(t)e_i
\]
with $\lambda_i(t) \in \integer{}$. 
Thus the $\integer{}$-linear surjective morphism defined by 
\[
\mathscr{H}_2(\ideal{n},\integer{})\twoheadrightarrow\integer{}/\delta_{\textbf{G}} ;
t\mapsto \lambda_0(t)
\]
induces the required morphism (\ref{Step2}). 

Finally we construct the surjective morphism
\begin{align}\label{Step3}
\integer{}/\delta_{\textbf{G}}\twoheadrightarrow\integer{}/C.
\end{align}
In order to do it, it suffices to show that $\delta_{\mathbf{G}}\subset (C)$.
We fix a generator $d$ of $\delta_{\textbf{G}}$. 
Then we have 
$d[\omega_{\textbf{G}}]\in \widetilde{H}^n(Y(\ideal{n}),\integer{})$.
Moreover, under the assumption (b), Proposition \ref{prop:rational} implies $d[\omega_{\textbf{G}}]_{\mathrm{rel}} \in H^n(Y(\ideal{n})^{\text{BS}},D_{C_{\infty}}(\ideal{n});K)$. 
We claim that $d[\omega_{\textbf{G}}]_{\mathrm{rel}}$ is integral, that is, 
\begin{align}\label{key claim}
d[\omega_{\textbf{G}}]_{\mathrm{rel}} \in \widetilde{H}^n(Y(\ideal{n})^{\text{BS}},D_{C_{\infty}}(\ideal{n});\integer{}).
\end{align}

For the moment, we admit the claim (\ref{key claim}). 
Let $\eta_p$ be a non-trivial primitive narrow ray class character of $F$ 
corresponding to a finite order character of $\Gal(F(\zeta_{p^{\infty}})/F)$ such that $\eta_p=\epsilon_{{}_{\textbf{E}}}$ on $W_G\simeq \A_{F,\infty}^{\times}/\A_{F,\infty,+}^{\times}$. 
Put $\eta=\eta_p\varphi^{-1}\psi^{-1}$. 
Note that $\ideal{n}|\ideal{m}_{\eta}$. 
The condition $\ideal{n}|\ideal{m}_{\eta}$ implies the following:
\begin{align}\label{denominator}
\integer{}(\eta) \ni &\sum_{b\in S}\eta_1(\bar{b})^{-1}\mathrm{ev}_{b,1,\integer{}}(d[\omega_{\textbf{G}}]_{\mathrm{rel}}^{\epsilon_{\textbf{E}}})\\
&\nonumber=
\frac{d}{C}
\cdot
\frac{(-1)^n}{2^n \Delta_F^{1/2}} 
\cdot 
\frac{\tau(\varphi\psi)\varphi\psi(\ideal{m}_{\eta_p})\eta_p(\ideal{m}_{\psi})}{\tau(\psi)\psi(\ideal{m}_{\eta_p})\eta_p(\ideal{m}_{\varphi\psi})}
\cdot
L(0,\eta_p^{-1}\psi)L(0,\eta_p\varphi^{-1}). 
\end{align}
Here the equality follows from $C[\omega_{\mathbf{G}}]=[\omega_{\mathbf{E}}]$ and the same argument as in the proof of (\ref{denominator'}), and the integrality of the value follows from (\ref{key claim}), Proposition \ref{Modular symbol}, and Proposition \ref{Modular symbol anti-hol}. 
Note that the second and third terms in the second line of (\ref{denominator}) are prime to $p$. 
Moreover, by the condition on the $\mu$-invariants in (Eis condition) with the help of the Iwasawa main conjecture for totally real number fields proved by A. Wiles \cite{Wil}, 
the $p$-adic valuation of $L(0,\eta_p^{-1}\psi)$ and $L(0,\eta_p\varphi^{-1})$ are smaller than that of $\varpi$ for all but finitely many narrow ray class character $\eta_p$ of $F$ such that $\eta_p=\epsilon_{{}_{\textbf{E}}}$ on $W_G$. 
Therefore we obtain $C\mid d$ as required. 

Thus it remains to prove the claim (\ref{key claim}). 
We have an exact sequence 
\[
H^{n-1}(D_{C_{\infty}}(\ideal{n}),\integer{})_{\ideal{m}_{\textbf{E}}'}\to H^n(Y(\ideal{n})^{\mathrm{BS}},D_{C_{\infty}}(\ideal{n});\integer{})_{\ideal{m}_{\textbf{E}}'}\to H^n(Y(\ideal{n}),\integer{})_{\ideal{m}_{\textbf{E}}'}\to H^n(D_{C_{\infty}}(\ideal{n}),\integer{})_{\ideal{m}_{\textbf{E}}'}.
\]
By Proposition \ref{H^{n-1} vanishing}, $H^{n-1}(D_{C_{\infty}}(\ideal{n}),\integer{})_{\ideal{m}_{\textbf{E}}'}$ is torsion. 
By the assumption (a), $H^n(D_{C_{\infty}}(\ideal{n}),\integer{})_{\ideal{m}_{\textbf{E}}'}$ is torsion-free. 
Therefore we obtain an exact sequence
\[
0\to \widetilde{H}^n(Y(\ideal{n})^{\mathrm{BS}},D_{C_{\infty}}(\ideal{n});\integer{})_{\ideal{m}_{\textbf{E}}'}\to \widetilde{H}^n(Y(\ideal{n}),\integer{})_{\ideal{m}_{\textbf{E}}'}\to H^n(D_{C_{\infty}}(\ideal{n}),\integer{})_{\ideal{m}_{\textbf{E}}'}.
\]
Now the claim (\ref{key claim}) follows from this exact sequence. 
\end{proof}
By the proof of Theorem \ref{cong mod}, we get $\delta_{\textbf{G}}=(C)$ and hence we obtain the following: 
\begin{cor}\label{thm:integral}
Under the same assumptions as Theorem \ref{cong mod}, we have 
\begin{align*}
[\omega_{\mathbf{E}}] \in &\widetilde{H}^n(Y(\ideal{n}),\integer{})\backslash\varpi\widetilde{H}^n(Y(\ideal{n}),\integer{}), \\
[\omega_{\mathbf{E}}]_{\mathrm{rel}} \in &\widetilde{H}^n(Y(\ideal{n})^{\mathrm{BS}},D_{C_{\infty}}(\ideal{n});\integer{})\backslash \varpi\widetilde{H}^n(Y(\ideal{n})^{\mathrm{BS}},D_{C_{\infty}}(\ideal{n});\integer{}).
\end{align*}
\end{cor}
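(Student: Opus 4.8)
The plan is to obtain both displayed memberships as formal consequences of the single equality of ideals $\delta_{\textbf{G}}=(C)$, which is already extracted in the course of proving Theorem \ref{cong mod}; thus no genuinely new computation is required, and the task is to unwind the definition of the denominator ideal from \S\ref{subsection:Denominator ideal}. First I would record that, by Remark \ref{parity of Eis}, $[\omega_{\textbf{E}}]^{\epsilon_{{}_{\textbf{E}}}}=[\omega_{\textbf{E}}]$ and, by Proposition \ref{prop:rational'}, $[\omega_{\textbf{E}}]\in H^n(Y(\ideal{n}),K)$, so that with $\textbf{G}=\textbf{E}/C$ one has $[\omega_{\textbf{E}}]=C[\omega_{\textbf{G}}]$ in $H^n(Y(\ideal{n}),K)$. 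The equality $\delta_{\textbf{G}}=(C)$ then says precisely that $C$ is, up to units, the exact denominator of $[\omega_{\textbf{G}}]$, i.e. that $C[\omega_{\textbf{G}}]$ generates $K[\omega_{\textbf{E}}]\cap\widetilde{H}^n(Y(\ideal{n}),\integer{})$.

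For the absolute class I would argue directly. Since $C\in\delta_{\textbf{G}}$, the class $[\omega_{\textbf{E}}]=C[\omega_{\textbf{G}}]$ lies in $\widetilde{H}^n(Y(\ideal{n}),\integer{})$, which is integrality. For non-divisibility by $\varpi$, suppose $[\omega_{\textbf{E}}]\in\varpi\widetilde{H}^n(Y(\ideal{n}),\integer{})$; then $\varpi^{-1}[\omega_{\textbf{E}}]=(C\varpi^{-1})[\omega_{\textbf{G}}]$ would be integral, so $C\varpi^{-1}\in\delta_{\textbf{G}}=(C)$, forcing $\varpi$ to be a unit, a contradiction. Hence $[\omega_{\textbf{E}}]$ generates the intersection of the $K$-line with the lattice, i.e. $[\omega_{\textbf{E}}]\in\widetilde{H}^n(Y(\ideal{n}),\integer{})\setminus\varpi\widetilde{H}^n(Y(\ideal{n}),\integer{})$.

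For the relative class I would invoke the integrality claim (\ref{key claim}) from the proof of Theorem \ref{cong mod}: taking $d$ to be a generator of $\delta_{\textbf{G}}=(C)$, namely $d=C$, yields $[\omega_{\textbf{E}}]_{\mathrm{rel}}=C[\omega_{\textbf{G}}]_{\mathrm{rel}}\in\widetilde{H}^n(Y(\ideal{n})^{\mathrm{BS}},D_{C_{\infty}}(\ideal{n});\integer{})$, using the rationality of $[\omega_{\textbf{E}}]_{\mathrm{rel}}$ from Proposition \ref{prop:rational}. For primitivity I would use the exact sequence displayed at the end of the proof of Theorem \ref{cong mod}, which, after localizing at $\ideal{m}_{\textbf{E}}'$ and invoking Proposition \ref{H^{n-1} vanishing} together with the torsion-freeness in assumption (a), realizes $\widetilde{H}^n(Y(\ideal{n})^{\mathrm{BS}},D_{C_{\infty}}(\ideal{n});\integer{})_{\ideal{m}_{\textbf{E}}'}$ as a submodule of $\widetilde{H}^n(Y(\ideal{n}),\integer{})_{\ideal{m}_{\textbf{E}}'}$ via the restriction map sending $[\omega_{\textbf{E}}]_{\mathrm{rel}}$ to $[\omega_{\textbf{E}}]$. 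Were $[\omega_{\textbf{E}}]_{\mathrm{rel}}$ divisible by $\varpi$, its image $[\omega_{\textbf{E}}]$ would be too, contradicting the absolute non-divisibility just shown; since $\varpi$-divisibility can be tested after localization at the Eisenstein maximal ideal, where these classes are supported, this gives the stated global non-divisibility.

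The main obstacle has in fact already been surmounted inside Theorem \ref{cong mod}: the identity $\delta_{\textbf{G}}=(C)$ is what carries the real weight, resting on the Mellin-transform description (Propositions \ref{Modular symbol} and \ref{Modular symbol anti-hol}), the boundary integrality $\res([\omega_{\textbf{E}}])$ (Proposition \ref{const}), and the Iwasawa Main Conjecture input bounding the $\varpi$-valuations of the relevant Hecke $L$-values. Granting that equality, the only residual care in the present corollary is the bookkeeping of the $\epsilon_{{}_{\textbf{E}}}$-isotypic parts and of the localization at $\ideal{m}_{\textbf{E}}'$, so that primitivity verified on the localized modules propagates to the non-divisibility asserted over $\widetilde{H}^n(Y(\ideal{n}),\integer{})$ and its relative analogue.
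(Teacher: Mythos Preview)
Your proposal is correct and follows exactly the line the paper intends: the paper's own proof is the single sentence that $\delta_{\textbf{G}}=(C)$ has been established in the course of Theorem \ref{cong mod}, and your write-up is a faithful unpacking of why that equality yields both integrality and $\varpi$-primitivity of $[\omega_{\textbf{E}}]$ and $[\omega_{\textbf{E}}]_{\mathrm{rel}}$. One minor simplification: for the non-divisibility of $[\omega_{\textbf{E}}]_{\mathrm{rel}}$ you do not need the injectivity from the localized exact sequence---the mere existence of the natural map $\widetilde{H}^n(Y(\ideal{n})^{\mathrm{BS}},D_{C_{\infty}}(\ideal{n});\integer{})\to\widetilde{H}^n(Y(\ideal{n}),\integer{})$ taking $[\omega_{\textbf{E}}]_{\mathrm{rel}}$ to $[\omega_{\textbf{E}}]$ already forces $\varpi$-divisibility of the former to imply that of the latter, so the localization bookkeeping is only needed for integrality (via claim (\ref{key claim})), not for primitivity.
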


%
\subsection{Real quadratic field case}\label{Example}
%
%
In this subsection, 
we give an example of a congruence between a Hilbert cusp form and a Hilbert Eisenstein series.

We use the same notation as in the proof of Theorem \ref{cong mod}. 
We abbreviate $\Gamma_{1}(\ideal{d}_F[t_1],\ideal{n})$ to $\Gamma$ and $\Gamma\cap \SL_2(F)$ to $\Gamma^1$. 
Hereafter, in this subsection, we assume that 
$F$ is a real quadratic field with $h_F^+=1$. 
First we show the following lemma.
\begin{lem}\label{criterion}
Assume the following four conditions $(1)$, $(2)$, $(3)$, and $(4)$$:$
\begin{enumerate}[$(1)$]

\item \label{H_c^3}$H_{c}^3(Y(\ideal{n}),\integer{})$ is torsion-free$;$

\item \label{H^2(partial)}$H^2(\partial\left(Y(\ideal{n})^{\mathrm{BS}}\right),\integer{})$ is torsion-free$;$

\item \label{Eis eigenvalue}$C(\ideal{q},\mathbf{E})\not\equiv N(\ideal{q})\ (\bmod \varpi)$ for some prime ideal $\ideal{q}$ dividing $\ideal{n};$

\item \label{order}the ideal $(C)\neq 0,\integer{}$.
\end{enumerate}
Then there exist a finite extension $K'$ of $K$ with the ring of integer $\integer{}\hookrightarrow \cal{O}'$ and a uniformizer $\varpi'$ such that $(\varpi')\cap \integer{}=(\varpi)$ and a Hecke eigenform $\mathbf{f}\in S_2(\ideal{n},\cal{O}')$ for all $T(\ideal{q})$ and $U(\ideal{q})$ with character $\chi$ such that
$\mathbf{f}\equiv \mathbf{E}\ (\bmod \varpi')$.
\end{lem}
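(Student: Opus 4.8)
The plan is to reduce everything to the congruence module computed in Theorem~\ref{cong mod} and then extract the cusp form by a standard minimal-prime argument. First I would check that the hypotheses of Theorem~\ref{cong mod} hold in the present real quadratic setting, where $n=[F:\Q]=2$. Conditions (\ref{H_c^3}) and (\ref{H^2(partial)}) are precisely the torsion-freeness of $H_c^{n+1}(Y(\ideal{n}),\integer{})$ and of $H^{n}(\partial(Y(\ideal{n})^{\mathrm{BS}}),\integer{})$, which a fortiori give the $\ideal{m}$-localized torsion-freeness required in assumption (a), and (\ref{Eis eigenvalue}) is assumption (b). The third torsion-freeness in assumption (a), that of $H^{n}(D_{C_\infty}(\ideal{n}),\integer{})_{\ideal{m}_{\mathbf{E}}'}$, comes for free: the Borel--Serre boundary is a disjoint union $\partial(Y(\ideal{n})^{\mathrm{BS}})=\coprod_{s\in C}D_s$, so $H^{n}(D_{C_\infty}(\ideal{n}),\integer{})=\bigoplus_{s\in C_\infty}H^{n}(D_s,\integer{})$ is a direct summand of $H^{n}(\partial(Y(\ideal{n})^{\mathrm{BS}}),\integer{})$ and inherits its torsion-freeness, which survives localization. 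Thus Theorem~\ref{cong mod} applies and gives $\mathscr{H}_2(\ideal{n},\integer{})/\mathscr{P}_{\mathbf{E}}\simeq \integer{}/C$.

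Next I would use condition (\ref{order}). Since $(C)\neq 0,\integer{}$ and $\integer{}$ is a discrete valuation ring, $C$ is a nonzero non-unit, so $\varpi\mid C$ and $\integer{}/C$ is a nonzero finite $\integer{}$-module killed by a power of $\varpi$. Hence the congruence module $\mathscr{H}_2(\ideal{n},\integer{})/\mathscr{P}_{\mathbf{E}}$ is nonzero of residue characteristic $p$, which means the Eisenstein ideal $\mathscr{P}_{\mathbf{E}}$ is contained in a maximal ideal $\ideal{m}_{\mathscr{H}}=\mathscr{P}_{\mathbf{E}}+(\varpi)$ of $\mathscr{H}_2(\ideal{n},\integer{})$. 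As $\mathscr{H}_2(\ideal{n},\integer{})$ is a finitely generated, $\integer{}$-torsion-free $\integer{}$-algebra, it is finite free over $\integer{}$, so its localization $T:=\mathscr{H}_2(\ideal{n},\integer{})_{\ideal{m}_{\mathscr{H}}}$ is a nonzero local $\integer{}$-algebra, flat over $\integer{}$, with $T\otimes_{\integer{}}K\neq 0$.

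I would then produce a characteristic-zero cuspidal eigensystem inside this component. Choosing a minimal prime $\ideal{P}\subset \ideal{m}_{\mathscr{H}}$ of $T$, the quotient $B:=T/\ideal{P}$ is a domain finite over $\integer{}$; let $K'$ be its fraction field (a finite extension of $K$), $\cal{O}'$ the ring of integers of $K'$, and $\varpi'$ a uniformizer, so that $(\varpi')\cap\integer{}=(\varpi)$, enlarging $K'$ if needed so that $\cal{O}'$ contains all residue fields of $\ideal{o}_F$ above $p$. The composite $\mathscr{H}_2(\ideal{n},\integer{})\to B\hookrightarrow\cal{O}'$ is an $\integer{}$-algebra homomorphism, hence an eigensystem; by the duality theorem (Theorem~\ref{Duality} over $\cal{O}'$) it corresponds to a normalized simultaneous eigenform $\mathbf{f}\in S_2(\ideal{n},\cal{O}')$ for all $T(\ideal{q})$ and $U(\ideal{q})$. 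Because $\ideal{P}\subset\ideal{m}_{\mathscr{H}}$, this homomorphism is local and sends $\ideal{m}_{\mathscr{H}}\supset\mathscr{P}_{\mathbf{E}}$ into $\varpi'\cal{O}'$; the generators $T(\ideal{q})-C(\ideal{q},\mathbf{E})$, $S(\ideal{q})-\chi^{-1}(\ideal{q})$, $U(\ideal{q})-C(\ideal{q},\mathbf{E})$ of $\mathscr{P}_{\mathbf{E}}$ then force $C(\ideal{q},\mathbf{f})\equiv C(\ideal{q},\mathbf{E})\pmod{\varpi'}$ for all $\ideal{q}$ and, via the $S(\ideal{q})$-congruences together with the rigidity of the finite-order diamond characters, pin down the nebentypus of $\mathbf{f}$ to be exactly $\chi$. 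Finally, since $\varpi\mid C$, every constant term of $\mathbf{E}$ — each divisible by $C=a_{s_0}(0,E_1)$ by the minimality of $v_p(C)$ — vanishes modulo $\varpi'$, matching the cuspidal $\mathbf{f}$; together with the coefficient congruences this yields $\mathbf{f}\equiv\mathbf{E}\pmod{\varpi'}$.

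The hard part is this last step: converting the nonzero congruence module into an honest cusp form. The delicacy is that $\mathscr{H}_2(\ideal{n},\integer{})$ need not be reduced at the operators $U(\ideal{q})$ with $\ideal{q}\mid\ideal{n}$, so one cannot hope for a clean splitting and must instead pass to a minimal prime and its reduced quotient, tracking the congruence through the auxiliary ramified extension $\cal{O}'/\integer{}$ while keeping $(\varpi')\cap\integer{}=(\varpi)$ so that reduction modulo $\varpi'$ genuinely refines the reduction defining $\ideal{m}_{\mathscr{H}}$. One must also confirm that the nebentypus of $\mathbf{f}$ is exactly $\chi$ rather than merely congruent to it, which is the one point requiring a separate, careful argument.
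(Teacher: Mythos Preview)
Your argument is correct and reaches the same conclusion, but your route differs from the paper's. You use only the \emph{statement} of Theorem~\ref{cong mod}: from $\mathscr{H}_2(\ideal{n},\integer{})/\mathscr{P}_{\mathbf{E}}\simeq\integer{}/C\neq 0$ you extract a maximal ideal containing $\mathscr{P}_{\mathbf{E}}$, pick a minimal prime beneath it, and read off the cuspidal eigensystem via duality. The paper instead reaches back into the \emph{proof} of Theorem~\ref{cong mod}: it takes the explicit parabolic cohomology class $e_0\in \widetilde{H}_{\pa}^2(Y(\ideal{n}),\integer{})[\epsilon_{\mathbf{E}}]$ constructed there (nonzero precisely when $(C)\neq\integer{}$), observes that $e_0$ carries the mod~$\varpi$ Eisenstein eigenvalues, and applies the Deligne--Serre lifting lemma to $M=\widetilde{H}_{\pa}^2(Y(\ideal{n}),\integer{})[\epsilon_{\mathbf{E}}]$ with $\mathbb{T}=\mathscr{H}_2(\ideal{n},\integer{})$ to lift $e_0$ to a characteristic-zero eigenvector $e=[\omega_{\mathbf{f}}]$; the Eichler--Shimura--Harder isomorphism~(\ref{+,+ decomp}) then yields $\mathbf{f}$. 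The two arguments are dual: your minimal-prime step is exactly the algebraic content of Deligne--Serre, so neither is materially stronger. Your approach is slightly cleaner in that it never needs the particular class $e_0$ or the $\epsilon$-isotypic decomposition, while the paper's approach hands you the cohomology class of $\mathbf{f}$ directly, which is closer in spirit to how the class is used later. Two minor remarks: the paper's definition of $\mathbf{f}\equiv\mathbf{E}\pmod{\varpi'}$ involves only the coefficients $C(\ideal{m},-)$ for nonzero ideals, so your constant-term check is not needed; and your nebentypus argument is fine since $p\nmid\ideal{n}$ makes the relevant roots of unity of prime-to-$p$ order, so congruence forces equality.
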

\begin{proof}
As discussed in the proof of (\ref{Step2}) of Theorem \ref{cong mod}, 
the assumption (4) implies 
$e_0\neq 0\in \widetilde{H}_{\pa}^2(Y(\ideal{n}),\integer{})[\epsilon_{{}_{\textbf{E}}}]$.
Hence $e_0$ is cohomologous to $-[\omega_{\textbf{E}}]$ modulo $\varpi$ and 
the Hecke eigenvalues of $e_0$ are the same as those of $-[\omega_{\textbf{E}}]$ modulo $\varpi$ for all $t\in \mathscr{H}_2(\ideal{n},\integer{})$. 
Now the Deligne\nobreakdash-Serre lifting lemma (\cite[Lemma 6.11]{Del--Se}) in the case 
$R=\integer{}$, $M=\widetilde{H}_{\pa}^2(Y(\ideal{n}),\integer{})[\epsilon_{{}_{\textbf{E}}}]$, and $\mathbb{T}=\mathscr{H}_2(\ideal{n},\cal{O})$ 
says that there exist a finite extension $K'$ of $K$ with the ring of integer $\integer{}\hookrightarrow \cal{O}'$ and a uniformizer $\varpi'$ such that $(\varpi')\cap \integer{}=(\varpi)$ and a non\nobreakdash-zero eigenvector 
$e\in \widetilde{H}_{\pa}^2(Y(\ideal{n}),\integer{})[\epsilon_{{}_{\textbf{E}}}]\otimes \cal{O}'$ 
for all $t\in \mathscr{H}_2(\ideal{n},\cal{O})$ with eigenvalues $\lambda(t)$ such that 
$\lambda(V(\ideal{q}))\equiv C(\ideal{q},\textbf{E})\ (\bmod\ \varpi')$ for all non-zero prime ideals $\ideal{q}$ of $\ideal{o}_F$ prime to $\ideal{n}$ (resp. dividing $\ideal{n}$) and $V(\ideal{q})=T(\ideal{q})$ (resp. $U(\ideal{q})$). 
By the isomorphism (\ref{+,+ decomp}), 
we obtain a Hecke eigenform $\textbf{f}\in S_2(\ideal{n},\C)$ for all $T(\ideal{q})$ and $U(\ideal{q})$ such that 
$e=[\omega_{\textbf{f}}]$. 
By using the relation between Hecke eigenvalues and Fourier coefficients, 
we may assume that $\textbf{f}\in S_2(\ideal{n},\cal{O}')$ with character $\chi$. 
Therefore we obtain the congruence 
$\textbf{f}\equiv \textbf{E}\ (\bmod \varpi')$.
\end{proof}

In order to give an example of a congruence between a Hilbert cusp form and a Hilbert Eisenstein series, 
we prove (1) and (2) of Lemma \ref{criterion} in certain case (Proposition \ref{prop:ab torsion-free} and \ref{prop:bou torsion-free})
and give a Hilbert Eisenstein series satisfying (3) and (4) of Lemma \ref{criterion} based on a numerical table in \cite{Oka} (Example \ref{Example cong}). 

\begin{prop}\label{prop:ab torsion-free}
Assume that $\ideal{n}$ is prime to $6\Delta_F$. 
If $p$ is prime to $6\ideal{n}$ and $\sharp(\mathfrak{o}_{F,+}^{\times}/\mathfrak{o}_{F,\ideal{n}}^{\times 2})$, 
then the assumption $(\ref{H_c^3})$ of Lemma \ref{criterion} is satisfied. 
\end{prop}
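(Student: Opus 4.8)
The plan is to reduce the statement, by Poincar\'e duality with compact supports and a prime-to-$p$ covering argument, to the $p$-torsion-freeness of the first homology of the $\SL_2$-level Hilbert modular surface, and then to analyze the Borel--Serre boundary.

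First I would record the topological picture. Since $h_F^+=1$, the description (\ref{adele var}) gives $Y(\ideal{n})=Y_1=\overline{\Gamma}\backslash\mathfrak{H}^{J_F}$ with $\overline{\Gamma}=\overline{\Gamma_1(\ideal{d}_F[t_1],\ideal{n})}$. By (\ref{tor-free}) the group $\overline{\Gamma}$ is torsion-free, and it acts freely and holomorphically on the contractible space $\mathfrak{H}^{J_F}$ of real dimension $2n=4$; hence $Y_1$ is a connected oriented aspherical $4$-manifold and a $K(\overline{\Gamma},1)$. Poincar\'e duality with compact supports then yields $H_c^{n+1}(Y(\ideal{n}),\integer{})=H_c^3(Y_1,\integer{})\cong H_1(Y_1,\integer{})\cong H_1(\overline{\Gamma},\integer{})$. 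Since $\integer{}$ is flat over $\Z$, one has $H_1(\overline{\Gamma},\integer{})\cong\overline{\Gamma}^{\mathrm{ab}}\otimes_{\Z}\integer{}$, and this is $\integer{}$-torsion-free if and only if $\overline{\Gamma}^{\mathrm{ab}}=H_1(\overline{\Gamma},\Z)$ has no $p$-torsion. Thus the assumption $(\ref{H_c^3})$ is equivalent to the absence of $p$-torsion in $H_1(\overline{\Gamma},\Z)$.

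Next I would pass to the $\SL_2$-level. As recalled in \S\ref{subsection:HMV}, $Y_1$ is the quotient of $Y_1^1=\overline{\Gamma_1^1(\ideal{d}_F[t_1],\ideal{n})}\backslash\mathfrak{H}^{J_F}$ by the finite abelian group $\Theta=\mathfrak{o}_{F,+}^{\times}/\mathfrak{o}_{F,\ideal{n}}^{\times 2}$, which sits in the exact sequence $1\to\Gamma_1^1(\ideal{d}_F[t_1],\ideal{n})\to\overline{\Gamma}\to\Theta\to 1$ coming from the determinant (here one uses that $\ideal{n}$ is prime to $2$, so that $-1\notin\Gamma_1^1$ and $\Gamma_1^1\to\overline{\Gamma}$ is injective). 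By hypothesis $p\nmid\sharp\Theta$, so $Y_1^1\to Y_1$ is a Galois covering of order prime to $p$, and the transfer gives $H_1(Y_1,\integer{})\cong H_1(Y_1^1,\integer{})_{\Theta}$; the averaging idempotent $\tfrac{1}{\sharp\Theta}\sum_{\theta\in\Theta}\theta$ realizes these coinvariants as a direct summand of $H_1(Y_1^1,\integer{})$, so torsion-freeness of $H_1(Y_1^1,\integer{})$ implies that of $H_1(Y_1,\integer{})$. Hence I am reduced to showing that $H_1(\Gamma_1^1(\ideal{d}_F[t_1],\ideal{n}),\Z)$ has no $p$-torsion.

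The main obstacle is this last step. I would attack it through the Borel--Serre compactification $(Y_1^1)^{\mathrm{BS}}$, a compact oriented $4$-manifold whose boundary is a finite disjoint union of $3$-dimensional solvmanifolds, one per cusp, each a two-torus bundle over the circle whose monodromy is multiplication by the square of a totally positive fundamental unit of $\mathfrak{o}_F$ on the unipotent lattice. These boundary pieces do in general carry torsion in $H_1$ (of order $|2-\Tr_{F/\Q}(\varepsilon^2)|$), so the real content is \emph{not} that the boundary is $p$-torsion-free; rather, it is that such torsion is annihilated under $H_1(\partial)\to H_1(Y_1^1)$ in the long exact homology sequence of the pair $((Y_1^1)^{\mathrm{BS}},\partial)$, and that the remaining interior contribution (no elliptic part survives, since $\Gamma_1^1$ is torsion-free) is prime to $p$ whenever $p\nmid 6\ideal{n}$ and $p$ is unramified in $F$ (so $p\nmid\Delta_F$), using $\ideal{n}$ prime to $6\Delta_F$. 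This boundary-to-interior comparison is where I expect the genuine work to lie. A cleaner alternative, which I would pursue in parallel, is to deduce the required $p$-torsion-freeness of $H^{\ast}(Y_1^1,\integer{})$ directly from the integral torsion-freeness results for the cohomology of Hilbert modular varieties of Dimitrov \cite{Dim2}, valid at primes unramified in $F$ and prime to the level, and then combine it with the prime-to-$p$ descent of the previous paragraph.
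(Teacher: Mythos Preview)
Your first two reductions are correct and coincide with the paper's argument: Poincar\'e--Lefschetz duality gives $H_c^3(Y(\ideal{n}),\integer{})\cong H_1(\overline{\Gamma},\integer{})$, and since $p\nmid\sharp(\mathfrak{o}_{F,+}^{\times}/\mathfrak{o}_{F,\ideal{n}}^{\times 2})$ one reduces to showing that $(\Gamma^1)^{\mathrm{ab}}$ has no $p$-torsion, where $\Gamma^1=\Gamma_1^1(\ideal{d}_F[t_1],\ideal{n})$.

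The gap is in the final step. Your Borel--Serre approach does not reduce the problem: in the long exact sequence of $((Y_1^1)^{\mathrm{BS}},\partial)$ the term $H_1((Y_1^1)^{\mathrm{BS}},\partial)$ is dual to $H^3(Y_1^1)$, which by universal coefficients again involves the torsion you are trying to control, and the claim that boundary torsion dies in $H_1(Y_1^1)$ is precisely equivalent to knowing $(\Gamma^1)^{\mathrm{ab}}$ in the relevant range. There is no independent handle on the ``interior contribution'' here. Your alternative appeal to \cite{Dim2} does not apply either: Dimitrov's torsion-freeness theorems concern the localization of cohomology at a non-Eisenstein maximal ideal under irreducibility and large-image hypotheses on the residual Galois representation; they say nothing about the full $H^1$ of the variety, and in the present paper the relevant maximal ideal is Eisenstein.

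The paper's argument is entirely group-theoretic and avoids geometry beyond the initial duality. After conjugating so that $\Gamma^1\subset\SL_2(\mathfrak{o}_F)$, one invokes Serre's solution of the congruence subgroup problem for $\SL_2$ over a number field with a real place \cite{Se}: $(\Gamma^1)^{\mathrm{ab}}$ is finite, and there is an ideal $\ideal{m}$ with $\Gamma(\ideal{m})\subset[\Gamma^1,\Gamma^1]$, so $(\Gamma^1)^{\mathrm{ab}}\simeq(\Gamma^1/\Gamma(\ideal{m}))^{\mathrm{ab}}$. One then decomposes $\Gamma^1/\Gamma(\ideal{m})\subset\prod_{\ideal{q}}\SL_2(\mathfrak{o}_F/\ideal{q}^{r})$ prime by prime and bounds the abelianization of each local factor: for $\ideal{q}\nmid\ideal{n}$ (so the local factor is all of $\SL_2$) the abelianization is trivial when $\ideal{q}\nmid 6$ by \cite{Fe--Si}, while for $\ideal{q}\mid\ideal{n}$ an explicit elementary-matrix and commutator computation shows the abelianization is an $l$-group with $l$ the residue characteristic of $\ideal{q}$. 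Since $p\nmid 6\ideal{n}$, none of these primes is $p$, and the claim follows. This congruence-subgroup input is the missing idea in your proposal.
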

\begin{proof}
The Poincar\'e--Lefschetz duality theorem says that $H_c^3(Y(\ideal{n}),\integer{})\simeq H_1(Y(\ideal{n}),\integer{})$.
Hence it suffices to show that the maximal abelian quotient $\overline{\Gamma}^{\mathrm{ab}}$ of $\overline{\Gamma}$ is $p$-torsion-free. 
Since $\ideal{n}$ is prime to $2$, we have $\overline{\Gamma^1}=\Gamma^1$ and $\overline{\Gamma}/\Gamma^1 \simeq \mathfrak{o}_{F,+}^{\times}/\mathfrak{o}_{F,\ideal{n}}^{\times 2}$. 
Thus, by our assumption, it suffices to show that $\left(\Gamma^1\right)^{\text{ab}}$ is $p$-torsion-free. 
By taking conjugation, 
we may assume $\Gamma^1=\Gamma_1(\mathfrak{o}_F,\ideal{n})\cap \SL_2(\mathfrak{o}_F)$. 
Then $\left(\Gamma^1\right)^{\text{ab}}$ is torsion (\cite[Theorem 3]{Se}) and there is a non\nobreakdash-zero ideal $\ideal{m}$ of $\ideal{o}_F$ such that the principal congruence subgroup $\Gamma(\ideal{m})$ satisfies $\Gamma(\ideal{m})\subset [\Gamma^1:\Gamma^1]\subset \Gamma^1$ (\cite[Corollary 3 of Theorem 2]{Se}). 
We have 
$\left(\Gamma^1\right)^{\text{ab}}\simeq \left(\Gamma^1/\Gamma(\ideal{m})\right)^{\text{ab}}$.
We estimate the order of the right\nobreakdash-hand side. 
Put $H=\Gamma^1/\Gamma(\ideal{m})$. 
We have decompositions
$\SL_2(\mathfrak{o}_F)/\Gamma(\ideal{m})$ $=\prod_{i}\SL_2(\mathfrak{o}_F/\ideal{q}_i^{r_i})$ and $H=\prod_{i}H_{\ideal{q}_i}$. 
For each $i$, we define $\widehat{H}_{\ideal{q}_i}$ by the cartesian diagram
\[
\xymatrix{
H_{\ideal{q}_i} \ar@{^{(}->}[r]\ar@{}[rd]|{\square}& \SL_2(\mathfrak{o}_F/\ideal{q}_i^{r_i})\\
\widehat{H}_{\ideal{q}_i}\ar@{->>}[u] \ar@{^{(}->}[r]&\SL_2(\mathfrak{o}_{F_{{}_{\ideal{q}_i}}}). \ar@{->>}[u]
}
\]

We fix a prime ideal $\ideal{q}=\ideal{q}_i$ of $\mathfrak{o}_F$ and a positive integer $r=r_i$. 
Let $l$ denote the prime number such that $(l)=\ideal{q}\cap \Z$. 
The assertion follows from the following:

\textbf{Claim} 
(a) $\widehat{H}_{\ideal{q}}^{\text{ab}}=1$ in the case $\widehat{H}_{\ideal{q}}=\SL_2(\mathfrak{o}_{F_{{}_{\ideal{q}}}})$ and $(\ideal{q},6)=1$;

(b) $\widehat{H}_{\ideal{q}}^{\text{ab}}$ is an $l$-group in the case $\widehat{H}_{\ideal{q}}=\left\{\begin{pmatrix}a&b\\c&d\end{pmatrix}\equiv\begin{pmatrix}1&\ast\\0&1\end{pmatrix} \bmod \ideal{q}^r\right\}$. 

The assertion (a) is obtained by \cite[Proposition 2.6]{Fe--Si}. 
The assertion (b) follows from the following facts (i), (ii), and (iii):
(i) $\widehat{H}_{\ideal{q}}$ is generated by all elementary unipotents in $\widehat{H}_{\ideal{q}}$;
(ii) $\widehat{\Gamma}(\ideal{q}^{4r}) \subset \mathrm{EL}_2(\ideal{q}^{2r})$; 
(iii) $\mathrm{EL}_2(\ideal{q}^{2r})
\subset [\widehat{H}_{\ideal{q}}:\widehat{H}_{\ideal{q}}]$.
Here $\mathrm{EL}_2(\mathfrak{o}_{F_{{}_{\ideal{q}}}})$ denotes the subgroup of $\SL_2(\mathfrak{o}_{F_{{}_{\ideal{q}}}})$ generated by all elementary unipotents, and for a non-negative integer $m$, 
$\widehat{\Gamma}(\ideal{q}^m)=\ker (\SL_2(\mathfrak{o}_{F_{{}_{\ideal{q}}}})\twoheadrightarrow \SL_2(\mathfrak{o}_F/\ideal{q}^m) )$ and $\mathrm{EL}_2(\ideal{q}^{m})=\mathrm{EL}_2(\mathfrak{o}_{F_{{}_{\ideal{q}}}})\cap \widehat{\Gamma}(\ideal{q}^{m})$. 
Indeed, 
(i) implies that the image of $\widehat{H}_{\ideal{q}}/(\widehat{H}_{\ideal{q}}\cap \widehat{\Gamma}(\ideal{q}))$ in $\SL_2(\mathfrak{o}_{F_{{}_{\ideal{q}}}}/\ideal{q})$ is generated by $\begin{pmatrix}1&1\\0&1\end{pmatrix}$ and hence it is an $l$-group. 
Since $(\widehat{H}_{\ideal{q}}\cap \widehat{\Gamma}(\ideal{q}^m))/(\widehat{H}_{\ideal{q}}\cap\widehat{\Gamma}(\ideal{q}^{m+1})$ is an $l$-group for a non-negative integer $m$, (ii) and (iii) implies $\widehat{H}_{\ideal{q}}/\widehat{\Gamma}(\ideal{q}^{4r})$ is an $l$-group. 
Hence $\widehat{H}_{\ideal{q}}^{\text{ab}}$ is an $l$-group. 
The facts (i) and (ii) follow from 
\begin{align*}
\scriptsize 
\begin{pmatrix}a&b\\c&d\end{pmatrix}
=\begin{pmatrix}1&0\\a^{-1}c&1\end{pmatrix}
&\scriptsize 
\begin{pmatrix}1&-a^{-1}\\0&1\end{pmatrix}
\begin{pmatrix}1&0\\a^{-1}-1&1\end{pmatrix}\\
&\scriptsize 
\times 
\begin{pmatrix}1&1\\0&1\end{pmatrix}
\begin{pmatrix}1&0\\a-1&1\end{pmatrix}
\begin{pmatrix}1&-a^{-1}(1-a^{-2})\\0&1\end{pmatrix}
\begin{pmatrix}1&a^{-1}b\\0&1\end{pmatrix}.
\end{align*}
The fact (iii) follows from the same argument as in the proof of \cite[Proposition 2.6]{Fe--Si} in the case $n=2$ and general $\sharp \mathfrak{o}_{F_{{}_{\ideal{q}}}}/\ideal{q}$ and the commutator relation \cite[(1)]{Fe--Si}.
\end{proof}

Let $\varepsilon_0$ denote the fundamental unit of $F$. 
We put $\varepsilon_+=\varepsilon_0$ (resp. $\varepsilon_0^2$) if $N(\varepsilon_0)=1$ (resp. $N(\varepsilon_0)=-1$).

\begin{prop}\label{prop:bou torsion-free}
If $p\nmid N(\varepsilon_+-1)$ and $\ideal{n}$ is a prime ideal $\ideal{q}$ of $\mathfrak{o}_F$ such that $\ideal{q}$ is prime to $6\Delta_F$, 
then the assumption $(\ref{H^2(partial)})$ of Lemma \ref{criterion} is satisfied.
\end{prop}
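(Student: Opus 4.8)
The plan is to show that $H^2(\partial(Y(\ideal{n})^{\mathrm{BS}}),\integer{})$ is torsion-free by analyzing the boundary component by component. Recall from \textbf{Step 1} of the proof of Proposition \ref{Eis Hodge number} that for each cusp $s\in C(\Gamma)$ with $s=\gamma(\infty)$, the boundary $D_s$ is identified with $\overline{\Gamma_s}\backslash D$, whose cohomology is computed via the homotopy equivalence $H^{\ast}(\overline{\Gamma_s},\C)\simeq H^{\ast}(D_s,\C)$. Since $F$ is real quadratic, $n=2$, so the relevant top-degree boundary cohomology is $H^{n-1}=H^1$ and $H^n=H^2$ of each $D_s$; here I would focus on the group cohomology $H^2(\overline{\Gamma_s},\integer{})$. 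The first step is to understand the structure of $\overline{\Gamma_s}$: it is (up to conjugation by an element of $\SL_2(\ideal{o}_F)$, using $h_F^+=1$ as in \textbf{Step 1}) an extension of a rank-$1$ free abelian group coming from the totally positive units $\mathfrak{o}_{F,\ideal{n},+}^{\times}$ (acting on the $y$-coordinate lattice) by the unipotent translation lattice $\mathfrak{d}_F^{-1}[t_1]^{-1}/\!\sim$. Concretely $\overline{\Gamma_s}$ fits into $1\to \Z^2 \to \overline{\Gamma_s}\to \Z\to 1$, a semidirect product $\Z^2\rtimes\Z$ where the generator of $\Z$ acts on the translation lattice $\Z^2$ by multiplication by the totally positive unit $\varepsilon_+$.

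The key step is then to compute $H^{\ast}(\overline{\Gamma_s},\integer{})$ for this semidirect product via the Hochschild--Serre spectral sequence, or directly since $\overline{\Gamma_s}$ is the fundamental group of a torus bundle over a circle (a $3$-manifold, matching $\dim D_s = 2n-1 = 3$). The torsion in $H^2$ is governed by the cokernel of the map $\varepsilon_+ - 1$ acting on the translation lattice $H_1(\Z^2)\simeq \Z^2$: precisely, the torsion subgroup of $H_1(\overline{\Gamma_s},\Z)$, and dually of $H^2$, is controlled by $\mathrm{coker}(\varepsilon_+ - 1 \colon \ideal{o}_F \to \ideal{o}_F)$ (suitably interpreted for the lattice $\mathfrak{d}_F^{-1}$), whose order is $|N(\varepsilon_+ - 1)|$. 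I would make this explicit: the action of $\varepsilon_+$ on the rank-$2$ lattice has characteristic polynomial with $\det(\varepsilon_+ - 1) = N(\varepsilon_+ - 1)$, so the invariant factors of the Koszul-type differential have order dividing $N(\varepsilon_+-1)$. Hence, under the hypothesis $p\nmid N(\varepsilon_+ - 1)$, the $p$-part of the torsion of $H^2(\overline{\Gamma_s},\integer{})$ vanishes, giving torsion-freeness after localizing/completing at $p$ (equivalently, with $\integer{}$-coefficients where $\integer{}$ is unramified over $\Z_p$ away from the relevant primes).

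I would then assemble these local computations: the hypothesis that $\ideal{n}=\ideal{q}$ is a prime ideal prime to $6\Delta_F$ keeps the number of cusps and the structure of each $\overline{\Gamma_s}$ uniform, and the condition $p\nmid N(\varepsilon_+-1)$ (together with $p$ prime to $6\ideal{n}\Delta_F$ from the standing assumptions) ensures no $p$-torsion is introduced at any boundary face. Summing over $s\in C(\Gamma)$, and using that $H^2(\partial(Y(\ideal{n})^{\mathrm{BS}}),\integer{})$ decomposes compatibly with the individual $D_s$ as in the description appearing in the proof of Proposition \ref{H^{n-1} vanishing}, yields that the full boundary $H^2$ is $p$-torsion-free, which is assumption $(\ref{H^2(partial)})$ of Lemma \ref{criterion}.

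The main obstacle I anticipate is the passage from the unipotent translation lattice to its precise $\mathfrak{o}_F$-module structure as $\mathfrak{d}_F^{-1}[t_1]^{-1}$ modulo the $\mathfrak{n}$-level and units: one must verify that the relevant differential in the spectral sequence really is $\varepsilon_+ - 1$ acting on $\mathfrak{o}_F\cong\Z^2$ (so that its determinant is $N(\varepsilon_+-1)$) and not a twisted or scaled version whose elementary divisors could acquire extra factors. Keeping track of the $\mu_{\ideal{n}}$-level structure and the normalization $\mathfrak{d}_F[t_1]=\mathfrak{o}_F$ carefully — so that the index $[\mathfrak{o}_{F,+}^{\times}:\mathfrak{o}_{F,\ideal{n},+}^{\times}]$ and the choice of $\varepsilon_+$ are the only arithmetic inputs — is where the bookkeeping must be done cleanly, and a naive computation risks missing a factor coprime to $p$ or, worse, one divisible by $p$.
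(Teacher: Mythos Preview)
Your approach is essentially the same as the paper's: both compute $H^2(\overline{\Gamma_s},\integer{})$ via the Hochschild--Serre spectral sequence for the extension $1\to(\text{unipotent lattice})\to\overline{\Gamma_s}\to\mathfrak{o}_{F,+}^{\times}\to 1$, reducing the question to whether $p$ divides $|\mathrm{coker}(\varepsilon_+-1)|=|N(\varepsilon_+-1)|$. The paper differs only in emphasis: it cites \cite[\S3.4.2]{Gha} for the spectral-sequence computation you sketch directly, and instead spends its effort verifying precisely the structural facts you flag as the main obstacle---namely that the unipotent part is $\ideal{q}^{1-e}$ (depending on the cusp), the torus quotient is exactly $\mathfrak{o}_{F,+}^{\times}$ (not a proper subgroup), and the sequence is exact---so that the action on the translation lattice really is multiplication by $\varepsilon_+$ with no extra factors.
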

\begin{proof}
We may assume $\Gamma=\Gamma_1(\mathfrak{o}_F,\ideal{n})$ by taking conjugation. 
Fix a cusp $s\in C(\Gamma)$. 
As mentioned in \cite[p.260]{Gha}, 
$H^2(\overline{\Gamma_s},\cal{O})$ is torsion-free if and only if 
$H^1(\overline{\Gamma_s},K/\cal{O})$ is divisible. 
Main tools for the proof of the divisibility are 
the description 
$H^1(\overline{\Gamma_s},K/\cal{O})=H^1(\overline{\alpha^{-1}\Gamma \alpha \cap B_{\infty}},K/\cal{O})$ 
and the Hochschild--Serre spectral sequence 
\begin{align*}
E_2^{i,j}=H^i(\overline{\alpha^{-1}\Gamma \alpha\cap B_{\infty}}/\overline{\alpha^{-1}\Gamma \alpha\cap U_{\infty}},H^j(\overline{\alpha^{-1}\Gamma \alpha\cap U_{\infty}},K/\cal{O}))
\Rightarrow 
H^{i+j}(\overline{\alpha^{-1}\Gamma \alpha\cap B_{\infty}},K/\cal{O}),
\end{align*}
where $\alpha\in \SL_2(\mathfrak{o}_F)$ such that $\alpha(\infty)=s$, $B_{\infty}$ denotes the standard Borel subgroup of upper triangular matrices, $U_{\infty}$ denotes the unipotent radical of $B_{\infty}$, and the bar ${}^{-}$ means image in $\GL_2(F)/(\GL_2(F)\cap F^{\times})$. 
Let $T_{\infty}$ denote the standard torus of $B_{\infty}$. 
By the same argument as in \cite[\S 3.4.2]{Gha}, 
our assertion follows from the following (\ref{U}), (\ref{T}), and (\ref{extension}):
\begin{eqnarray}\label{U}
\text{$\overline{\alpha^{-1}\Gamma \alpha\cap U_{\infty}} \simeq \ideal{q}^{1-e}$\ \  if $(y,\ideal{q})=\ideal{q}^e$;}
\end{eqnarray}
\begin{align}\label{T}
\overline{\alpha^{-1}\Gamma \alpha\cap T_{\infty}} \simeq \mathfrak{o}_{F,+}^{\times};
\end{align}
\begin{align}\label{extension}
1
\to
\overline{\alpha^{-1}\Gamma \alpha\cap U_{\infty}}
\to
\overline{\alpha^{-1}\Gamma \alpha\cap B_{\infty}}
\to 
\overline{\alpha^{-1}\Gamma \alpha\cap T_{\infty}}
\to
1.
\end{align}

Fix $\alpha=\begin{pmatrix}x&\beta\\y&\delta\end{pmatrix}\in\SL_2(\mathfrak{o}_F)$ such that $\alpha(\infty)=s$. 
We may assume that if $(y,\ideal{q})=1$, then $(\delta,\ideal{q})=\ideal{q}$. Indeed, since $(x\ideal{q},y)=1$, there is $\begin{pmatrix}x&\beta\\y&\delta\end{pmatrix}\in\SL_2(\mathfrak{o}_F)$ with $(\delta,\ideal{q})=\ideal{q}$. 

First we prove (\ref{U}). 
Suppose that $\begin{pmatrix}1&b\\0&1\end{pmatrix} \in \alpha^{-1}\Gamma \alpha\cap U_{\infty}$. 
The direct calculation shows that the condition 
$\alpha \begin{pmatrix}1&b\\0&1\end{pmatrix} \alpha^{-1}\in \Gamma$ is equivalent to the condition 
$bx^2\in \mathfrak{o}_F$, $by^2 \in \ideal{q}$, and $bxy \in \ideal{q}$. 
Since $(x,y)=1$, we have $b\in \mathfrak{o}_F$. 
If $(y,\ideal{q})=\ideal{q}^{e}$, then $b\in \ideal{q}^{1-e}$ as desired. 

Next we prove (\ref{T}). 
Suppose that $\begin{pmatrix}a&0\\0&d\end{pmatrix} \in \alpha^{-1}\Gamma\alpha\cap T_{\infty}$. 
As in the proof of (\ref{U}), the direct calculation shows that 
if $(y,\ideal{q})=1$ (resp. $(y,\ideal{q})=\ideal{q}$), then 
$a \equiv 1\ (\bmod\ \ideal{q})$ (resp. $d \equiv 1\ (\bmod\ \ideal{q})$) and hence 
$\overline{\begin{pmatrix}a&0\\0&d\end{pmatrix}}=\overline{\begin{pmatrix}1&0\\0&a^{-1}d\end{pmatrix}}$
$\left(\text{resp.}\  \overline{\begin{pmatrix}a&0\\0&d\end{pmatrix}}=\overline{\begin{pmatrix}ad^{-1}&0\\0&1\end{pmatrix}}\right)$.

Finally we prove (\ref{extension}). 
For $\begin{pmatrix}a&b\\0&d\end{pmatrix} \in \alpha^{-1}\Gamma \alpha\cap B_{\infty}$, it suffices to show that $\begin{pmatrix}1&-a^{-1} b\\0&1\end{pmatrix} \in \alpha^{-1}\Gamma \alpha\cap U_{\infty}$.
As in the proof of (\ref{U}),
it follows from 
the condition $\alpha \begin{pmatrix}a&b\\0&d\end{pmatrix}\alpha^{-1} \in \Gamma$.
\end{proof}

\begin{ex}\label{Example cong}
We give an example satisfying the assumptions of Lemma \ref{criterion} 
in the case $F=\Q(\sqrt{2})$ with $h_F^+=1$, $\Delta_F=8$, and $\varepsilon_0=1+\sqrt{2}$. 
By \cite[\S4, p.1137]{Oka}, 
for the non-trivial character $\chi$ of $\Gal(F(\sqrt{5})/F)$ whose conductor is a prime ideal $(5)$ of $\mathfrak{o}_F$, 
we have 
\begin{align*}
L(-1,\chi)=\frac{28}{5}.
\end{align*}

A pair of characters $\varphi=\chi^{-1}$ and the trivial character $\psi=\textbf{1}$ satisfies (Eis condition). 
One can see that $p=7$ with $(p,6\Delta_F)=1$ and the Eisenstein series $\textbf{E}_2(\varphi,\psi)$ of level $\Gamma_{1}(\ideal{o}_F,(5))$ satisfy all the assumptions of Lemma \ref{criterion}. 
\end{ex}

\section{Congruences between $L$-values}\label{congruence}
%
The purpose of this section is to prove the main theorem (Theorem \ref{main theorem}=Theorem \ref{thm:congruence}) of this paper. 
We use the assumption $h_F^+=1$ in the proof of the isomorphism (\ref{key isomorphism}) between a relative cohomology and the corresponding partial parabolic cohomology.
We keep the notation in \S \ref{subsection:Rationality and Integrality}.

%
%
\subsection{Canonical periods}\label{subsection:Canonical periods}
%
%

Let $\textbf{f}\in S_2(\ideal{n},\integer{})$ be a normalized Hecke eigenform for all $T(\ideal{q})$ and $U(\ideal{q})$ with character $\chi$. 
Let $\epsilon$ denote $\epsilon_{{}_{\textbf{E}}}$ defined at the beginning of \S\ref{subsection:Rationality and Integrality}. 

We define the canonical period $\Omega_{\textbf{f}}^{\epsilon}$ of $\mathbf{f}$. 
Let $\mathfrak{p}_{\textbf{f}}$ denote the prime ideal of $\mathscr{H}_2(\ideal{n},\integer{})$ generated by $T(\mathfrak{q})-C(\ideal{q},\textbf{f})$ and $S(\mathfrak{q})-\chi^{-1}(\mathfrak{q})$ for all non-zero prime ideals $\mathfrak{q}$ of $\mathfrak{o}_F$ prime to $\ideal{n}$ and $U(\ideal{q})-C(\ideal{q},\textbf{f})$ for all non-zero prime ideals $\ideal{q}$ of $\mathfrak{o}_F$ dividing $\ideal{n}$. 
The isomorphism (\ref{+,+ decomp}) and the $q$-expansion principle over $\C$ imply that $\dim_{\C}\left( H_{\pa}^n(Y(\ideal{n}),\C)[\mathfrak{p}_{\textbf{f}},\epsilon]\right)=1$ and $\rank_{\integer{}}\left( \widetilde{H}_{\pa}^n(Y(\ideal{n}),\integer{})[\mathfrak{p}_{\textbf{f}},\epsilon]\right)=1$.
Choose a generator $[\delta_{\textbf{f}}]^{\epsilon}$ of $\widetilde{H}_{\pa}^n(Y(\ideal{n}),\integer{})[\mathfrak{p}_{\textbf{f}},\epsilon]$. 
Let $[\omega_{\textbf{f}}]^{\epsilon}$ denote the projection of $[\omega_{\textbf{f}}]$ to the $\epsilon$-part. 
We define the canonical period $\Omega_{\textbf{f}}^{\epsilon}\in \C^{\times}$ of $\textbf{f}$ by 
\begin{align*}
[\omega_{\textbf{f}}]^{\epsilon}=\Omega_{\textbf{f}}^{\epsilon}[\delta_{\textbf{f}}]^{\epsilon}. 
\end{align*}

%
%
\subsection{Congruences between special values}
%
%
For modular forms $\textbf{f},\textbf{g}\in M_2(\ideal{n},\integer{})$, 
we define the congruence $\textbf{f}\equiv \textbf{g}\ (\bmod \varpi)$ by $C(\ideal{m},\textbf{f})\equiv C(\ideal{m},\textbf{g})\ (\bmod \varpi)$ for all non-zero ideals $\mathfrak{m}$ of $\ideal{o}_F$. 

\begin{thm}\label{thm:congruence}
Let $p$ be a prime number such that $p\ge n+2$ and $p$ is prime to $\mathfrak{n}$ and $6\Delta_F$. 
Assume that $h_F^+=1$. 
Let $\varphi$ and $\psi$ be primitive narrow ray class characters of $F$ satisfying $\mathrm{(Eis\  condition)}$ at the beginning of \S\ref{subsection:Rationality and Integrality} and $\epsilon$ the character $\epsilon_{{}_{\mathbf{E}}}$ of the Weyl group $W_G$ defined after $\mathrm{(Eis\ condition)}$. 
Put $\chi=\varphi\psi$. 
Let $\mathbf{f}\in S_2(\ideal{n},\integer{})$ be a normalized Hecke eigenform for all $T(\ideal{q})$ and $U(\ideal{q})$ with character $\chi$. 
We assume the following conditions $(a)$, $(b)$, $(c)$$:$ 
\begin{enumerate}[$(a)$] 
\item $\mathbf{f} \equiv \mathbf{E}\ (\bmod \varpi)$$;$

\item $H^{n}(\partial \left(Y(\ideal{n})^{\mathrm{BS}}\right),\integer{})_{\ideal{m}}$, $H_c^{n+1}(Y(\ideal{n}),\integer{})_{\ideal{m}}$, and $H^n(D_{C_{\infty}}(\ideal{n}),\integer{})_{\ideal{m}_{\mathbf{E}}'}$ are torsion-free, 
where $\ideal{m}$ $($resp. $\ideal{m}_{\mathbf{E}}'$$)$ is the maximal ideal of $\cal{H}_2(\ideal{n},\integer{})$ $($resp. $\mathbb{H}_2(\ideal{n},\integer{})'$$)$ defined before Theorem \ref{cong mod} $($resp. Proposition \ref{H^{n-1} vanishing}$)$$;$

\item $C(\ideal{q},\mathbf{E})\not\equiv N(\ideal{q})\ (\bmod \varpi)$ for some prime ideal $\ideal{q}$ dividing $\ideal{n}$. 

\end{enumerate}
Then there exists $u \in \integer{}^{\times}$ such that, 
for every narrow ray class character $\eta$ of $F$, whose conductor is denoted by $\ideal{m}_{\eta}$, such that $\ideal{n}|\ideal{m}_{\eta}$ and 
$\eta=\epsilon$ on $W_G\simeq \A_{F,\infty}^{\times}/\A_{F,\infty,+}^{\times}$, 
the both values $\tau({\eta}^{-1})D(1,\mathbf{f},\eta)/(2\pi \sqrt{-1})^n\Omega_{\mathbf{f}}^{\epsilon}$ and $\tau(\eta^{-1})D(1,\mathbf{E},\eta)/(2\pi \sqrt{-1})^n$ belong to $\integer{}(\eta)$ and the following congruence holds$:$ 
\begin{align*}
&\tau(\eta^{-1})\frac{D(1,\mathbf{f},\eta)}{(2\pi\sqrt{-1})^n\Omega_{\mathbf{f}}^{\epsilon}}
= u \tau(\eta^{-1})\frac{D(1,\mathbf{E},\eta)}{(2\pi\sqrt{-1})^n} 
\ \ \text{in} \ \ \integer{}(\eta)/\varpi.
\end{align*}
Here $\tau(\eta^{-1})$ denotes the Gauss sum attached to $\eta^{-1}$ (cf. (\ref{Gauss})), $D(1,\ast,\eta)$ is the Dirichlet series recalled in \S\ref{Diri}, $\integer{}(\eta)$ denotes the ring of integers of the field $K(\eta)$ generated by $\im(\eta)$ over $K$. 
\end{thm}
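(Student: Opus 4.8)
The plan is to read off the congruence directly from the two Mellin-transform identities (Proposition \ref{Modular symbol} and Proposition \ref{Modular symbol anti-hol}) together with a mod $\varpi$ congruence between the \emph{relative} cohomology classes of $\mathbf{f}$ and $\mathbf{E}$. The point is that both special values in the statement can be written as the same $\integer{}(\eta)$-valued functional $\sum_{b\in S}\eta_1(\bar{b})^{-1}\mathrm{ev}_{\bar{b},1,\integer{}}(-)$ evaluated, respectively, on the integral relative classes $[\omega_{\mathbf{f}}]_{\mathrm{rel}}^{\epsilon}/\Omega_{\mathbf{f}}^{\epsilon}$ and $[\omega_{\mathbf{E}}]_{\mathrm{rel}}$. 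Since each $\mathrm{ev}_{\bar{b},1,\integer{}}$ is $\integer{}$-linear, it commutes with reduction modulo $\varpi$, so a congruence between these two classes transports verbatim to a congruence between the two values, with one and the same unit $u$.

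First I would record the integrality of the two relative classes. For $\mathbf{E}$, Corollary \ref{thm:integral} provides $[\omega_{\mathbf{E}}]_{\mathrm{rel}}\in\widetilde{H}^n(Y(\ideal{n})^{\mathrm{BS}},D_{C_{\infty}}(\ideal{n});\integer{})$ with non-vanishing reduction modulo $\varpi$, and by Remark \ref{parity of Eis} this class equals its own $\epsilon$-projection. For $\mathbf{f}$, the definition of the canonical period in \S\ref{subsection:Canonical periods} makes $[\omega_{\mathbf{f}}]^{\epsilon}/\Omega_{\mathbf{f}}^{\epsilon}=[\delta_{\mathbf{f}}]^{\epsilon}$ an integral generator of the rank-one module $\widetilde{H}_{\pa}^n(Y(\ideal{n}),\integer{})[\mathfrak{p}_{\mathbf{f}},\epsilon]$. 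As $\mathbf{f}\equiv\mathbf{E}\ (\bmod\ \varpi)$ by (a), this class sits in the localization at $\ideal{m}_{\mathbf{E}}'$; then the exact sequence linking $H^{n-1}(D_{C_{\infty}}(\ideal{n}),\integer{})$, the relative cohomology and $H^n(Y(\ideal{n}),\integer{})$, together with the vanishing $H^{n-1}(D_{C_{\infty}}(\ideal{n}),\C)_{\ideal{m}_{\mathbf{E}}'}=0$ of Proposition \ref{H^{n-1} vanishing} and assumption (b), lets me lift $[\delta_{\mathbf{f}}]^{\epsilon}$ to an integral relative class $[\omega_{\mathbf{f}}]_{\mathrm{rel}}^{\epsilon}/\Omega_{\mathbf{f}}^{\epsilon}$, exactly as in the proof of Proposition \ref{prop:rational}.

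Next I would run the Mellin transform on the $\epsilon$-parts. Projecting onto the $\epsilon$-isotypic component amounts to averaging the anti-holomorphic formula of Proposition \ref{Modular symbol anti-hol} over $W_G=\{w_J\}$ with weights $\epsilon(w_J)$; under the hypothesis $\eta=\epsilon$ on $W_G$ one has $\eta_{\infty}(\nu_J)=\epsilon(w_J)$, so $\sum_J\epsilon(w_J)\eta_{\infty}(\nu_J)=2^n$ and the $2^n$ cancels the averaging constant. This gives, for $A=\integer{}$,
\begin{align*}
\sum_{b\in S}\eta_1(\bar{b})^{-1}\mathrm{ev}_{\bar{b},1,\integer{}}\!\left(\frac{[\omega_{\mathbf{f}}]_{\mathrm{rel}}^{\epsilon}}{\Omega_{\mathbf{f}}^{\epsilon}}\right)
&=\tau(\eta^{-1})\frac{D(1,\mathbf{f},\eta)}{(-2\pi\sqrt{-1})^n\Omega_{\mathbf{f}}^{\epsilon}},\\
\sum_{b\in S}\eta_1(\bar{b})^{-1}\mathrm{ev}_{\bar{b},1,\integer{}}\!\left([\omega_{\mathbf{E}}]_{\mathrm{rel}}\right)
&=\tau(\eta^{-1})\frac{D(1,\mathbf{E},\eta)}{(-2\pi\sqrt{-1})^n},
\end{align*}
both lying in $\integer{}(\eta)$; as $(-1)^n$ is a unit this is exactly the asserted integrality of the two values. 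Finally I would invoke the class-level congruence proved in \S\ref{comparison} (Theorem \ref{partial one}), namely $[\omega_{\mathbf{f}}]_{\mathrm{rel}}^{\epsilon}/\Omega_{\mathbf{f}}^{\epsilon}\equiv u\,[\omega_{\mathbf{E}}]_{\mathrm{rel}}\ (\bmod\ \varpi)$ in $\widetilde{H}^n(Y(\ideal{n})^{\mathrm{BS}},D_{C_{\infty}}(\ideal{n});\integer{})/\varpi$ for some $u\in\integer{}^{\times}$; applying the functionals $\sum_{b\in S}\eta_1(\bar{b})^{-1}\mathrm{ev}_{\bar{b},1,\integer{}}(-)$ and reducing modulo $\varpi$ converts this single congruence of classes into the desired congruence of values, with $u$ independent of $\eta$.

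The main obstacle is precisely the class congruence $[\omega_{\mathbf{f}}]_{\mathrm{rel}}^{\epsilon}/\Omega_{\mathbf{f}}^{\epsilon}\equiv u\,[\omega_{\mathbf{E}}]_{\mathrm{rel}}$ of Theorem \ref{partial one}: it rests on the multiplicity-one statements for the $\mathbf{E}$- and $\mathbf{f}$-parts of the integral log-crystalline cohomology and on integral $p$-adic Hodge theory, and it is the step that genuinely consumes the hypothesis $p\ge n+2$ and the torsion-freeness hypotheses in (b). Granting it, the residual difficulties here are organizational: tracking the $\epsilon$-projection and the Weyl-group twists so that the factors $2^n$ cancel cleanly, checking that the lift of $[\delta_{\mathbf{f}}]^{\epsilon}$ to relative cohomology remains integral, and verifying that the unit $u$ produced by the class congruence is independent of the auxiliary character $\eta$.
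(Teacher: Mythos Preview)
Your proposal is correct and follows essentially the same route as the paper's proof: admit Theorem \ref{partial one}, lift the parabolic congruence to the relative cohomology via the isomorphism coming from Proposition \ref{H^{n-1} vanishing} and assumption (b), and then apply the evaluation functionals of Propositions \ref{Modular symbol} and \ref{Modular symbol anti-hol}. One small correction: Theorem \ref{partial one} as stated gives $[\delta_{\mathbf{f}}]^{\epsilon}=u[\omega_{\mathbf{E}}]^{\epsilon}$ in the \emph{parabolic} quotient $\widetilde{H}_{\pa}^n(Y(\ideal{n}),\integer{})/\varpi$, not directly in the relative cohomology; the transport to $\widetilde{H}^n(Y(\ideal{n})^{\mathrm{BS}},D_{C_{\infty}}(\ideal{n});\integer{})/\varpi$ is exactly the lifting step you already sketched (the paper packages this as the isomorphism (\ref{key isomorphism})).
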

\begin{rem}
In general, 
the value $\tau(\eta^{-1}) D(1,\mathbf{E},\eta)/(2\pi\sqrt{-1})^n$ is non-zero in $\integer{}(\eta)/\varpi$ by the proof of Theorem \ref{cong mod} (see after (\ref{denominator})).
\end{rem}
\begin{proof}
For the moment, we admit Theorem \ref{partial one}, which shall be proved in \S \ref{Pf of main}.
We abbreviate $Y(\ideal{n})$ to $Y$ and $D_{C_{\infty}}(\ideal{n})$ to $D_{C_{\infty}}$. 
For $A=\integer{}$ or $K$, 
we define the partial parabolic cohomology $H_{\pa}^n(Y,D_{C_{\infty}};A)$ to be the image of 
\[
H^n(Y^{\text{BS}},D_{C_{\infty}};A)
\to 
H^n(Y,A) 
\]
and put 
\begin{align*}
\widetilde{H}^n(Y^{\text{BS}},D_{C_{\infty}};\integer{})
&=\im \left(H^n(Y^{\text{BS}},D_{C_{\infty}};\integer{})\to H^n(Y^{\text{BS}},D_{C_{\infty}};K)\right),\\
\widetilde{H}_{\pa}^n(Y,D_{C_{\infty}};\integer{})
&=\im \left(H_{\pa}^n(Y,D_{C_{\infty}};\integer{})\to H_{\pa}^n(Y,D_{C_{\infty}};K)\right).
\end{align*}
By Proposition \ref{H^{n-1} vanishing}, 
$H^n(Y^{\text{BS}},D_{C_{\infty}};K)_{\ideal{m}_{\mathbf{E}}'} \to H_{\pa}^n(Y,D_{C_{\infty}};K)_{\ideal{m}_{\mathbf{E}}'}$ is an isomorphism and induces an isomorphism 
\begin{align}\label{key isomorphism}
\widetilde{H}^n(Y^{\text{BS}},D_{C_{\infty}};\integer{})_{\ideal{m}_{\mathbf{E}}'} 
\simeq
\widetilde{H}_{\pa}^n(Y,D_{C_{\infty}};\integer{})_{\ideal{m}_{\mathbf{E}}'}.
\end{align}
Hence, by Corollary \ref{thm:integral} and the definition of $[\delta_{\textbf{f}}]^{\epsilon}$ in \S\ref{subsection:Canonical periods}, 
$[\omega_{\textbf{E}}]_{\text{rel}}^{\epsilon}$ and $[\delta_{\textbf{f}}]_{\text{rel}}^{\epsilon}:=[\omega_{\textbf{f}}]_{\text{rel}}^{\epsilon}/\Omega_{\textbf{f}}^{\epsilon}$ belong to $\widetilde{H}^n(Y^{\text{BS}},D_{C_{\infty}};\integer{})$.
Furthermore Theorem \ref{partial one} and the isomorphism (\ref{key isomorphism}) imply 
\[
\text{
$[\delta_{\textbf{f}}]_{\text{rel}}^{\epsilon}=u [\omega_{\textbf{E}}]_{\text{rel}}^{\epsilon}$  
\ in \ 
$\widetilde{H}^n(Y^{\text{BS}},D_{C_{\infty}};\integer{})/\varpi$} 
\]
for some $u\in \integer{}^{\times}$.
Now our assertion follows from Proposition \ref{Modular symbol} and Proposition \ref{Modular symbol anti-hol}.  
\end{proof}

\section{Congruences between cohomology classes}\label{comparison}
%

The purpose of this section is to prove Theorem \ref{partial one},
that is, a congruence between a Hilbert eigenform and a Hilbert Eisenstein series gives rise to corresponding congruence between the associated cohomology classes under certain assumptions. 

In this section, we assume that $2\le n \le p-2$ and $K$ is a finite extension of the composite field of $\iota_p(F')$ and $\Phi_p$. 
Here $\iota_p :\overline{\Q} \to \overline{\Q}_p$ is the fixed embedding and $F'$ (resp. $\Phi_p$) is the field defined in \S \ref{subsection:Geometric HMF} (resp. Proposition \ref{const}). 
Let $\integer{}$ be the ring of integers of $K$, $\varpi$ a uniformizer of $\integer{}$, and $\kappa$ the residue field of $\integer{}$.

%
%
\subsection{Comparison theorem for torsion cohomology}
%
%

In this subsection, 
we briefly review the fully faithful functor from the category of finitely generated filtered $\varphi$-modules to the category of representations of $G_{\Qp}=\Gal(\overline{\Q}_p/\Qp)$ on $\integer{}$-modules of finite length, and state the comparison theorem between the parabolic \'etale cohomology and the parabolic log\nobreakdash-crystalline cohomology for Hilbert modular varieties, which we shall use in the following subsections.

For a non-negative integer $r$, 
let $\textbf{MF}_{\integer{}}^r$ denote the category whose objects are the following triples $(M,(\mathrm{Fil}^i M)_{i\in\Z},(\varphi_M^i)_{i\in \Z})$: 
\begin{enumerate}[(1)]
\item $M$ is a finitely generated $\integer{}$-module;
\item $(\mathrm{Fil}^i M)_{i\in \Z}$ is a decreasing filtration on $M$ by $\integer{}$-submodules such that $\mathrm{Fil}^0 M=M$ and $\mathrm{Fil}^{r+1} M=0$;
\item $\varphi_M^i \colon \mathrm{Fil}^i M \to M$ is an $\integer{}$-linear homomorphism such that $\varphi_M^i\mid_{\mathrm{Fil}^{i+1} M}=p\varphi_M^{i+1}$ and $\sum_{i=0}^r\varphi_M^i(\mathrm{Fil}^iM)=M$.
\end{enumerate}

A morphism in $\textbf{MF}_{\integer{}}^r$ is a homomorphism of filtered $\integer{}$-modules compatible with $\varphi^{\bullet}$. 
It is known that any morphism $\eta : M\to M'$ in $\textbf{MF}_{\integer{}}^r$ is strict with respect to the filtrations, that is, $\eta(\mathrm{Fil}^i M)=\mathrm{Fil}^i M'\cap \eta(M)$ for each $i\in \Z$ (\cite[1.10 (b)]{Fo--La}). 
This implies that $\textbf{MF}_{\integer{}}^r$ is an abelian category as follows. 
Let $\eta:M\to M'$ be a morphism in $\textbf{MF}_{\integer{}}^r$, and let $\underline{\eta}$ denote $\eta$ regarded as a homomorphism of underlying $\integer{}$-modules. 
Then the $\integer{}$\nobreakdash-module $N:=\ker(\underline{\eta})$ with $\mathrm{Fil}^i N$ and $\varphi_N^i$ defined by 
$\mathrm{Fil}^i N=N \cap \mathrm{Fil}^i M$ and $\varphi_N^i=\varphi_M^i|_{N}$, respectively, belongs to $\textbf{MF}_{\integer{}}^r$ and gives the kernel of $\eta$ in $\textbf{MF}_{\integer{}}^r$. 
Let $N'$ denote $\mathrm{coker}(\underline{\eta})$.
We define a filtration $\mathrm{Fil}^i N'$ and an $\integer{}$-linear homomorphism $\varphi_{N'}^i$ by 
$\mathrm{Fil}^i N'= \mathrm{Fil}^i M'/\eta(\mathrm{Fil}^i M)$ and the homomorphism induced by $\varphi_M^i$ and $\varphi_{M'}^i$, respectively. 
Note that $\mathrm{Fil}^i N' \to N'$ is injective because $\eta$ is strict, and hence $\mathrm{Fil}^i N'$ may be regarded as an $\integer{}$-submodule of $N'$. The triple $(N',(\mathrm{Fil}^i N')_{i\in\Z},(\varphi_{N'}^i)_{i\in \Z})$ belongs to $\textbf{MF}_{\integer{}}^r$ and gives the cokernel of $\eta$ in $\textbf{MF}_{\integer{}}^r$. 
The strictness of $\eta$ further shows that we have 
$\mathrm{Fil}^i (\im(\eta)) = \eta (M) \cap \mathrm{Fil}^i M' = \eta(\mathrm{Fil}^i M) \simeq \mathrm{Fil}^i (\mathrm{coim}(\eta))$ and hence
$\im(\eta)=\mathrm{coim}(\eta)$ in $\textbf{MF}_{\integer{}}^r$.

Let $\textbf{MF}_{\kappa}^r$ denote the full subcategory of $\textbf{MF}_{\cal{O}}^r$ consisting of objects $M$ satisfying $\varpi M=0$. 
Let $\textbf{Rep}_{\integer{}}(G_{\Qp})$ denote the category of representations of $G_{\Q_p}$ on $\integer{}$-modules of finite length. 
For an integer $r$ such that $0\le r\le p-2$, there exists a fully faithful functor 
\begin{align*}
T_{\mathrm{cris}}&\colon\textbf{MF}_{\integer{}}^r \rightarrow \textbf{Rep}_{\integer{}}(G_{\Qp}).
\end{align*}
given by J-M. Fontaine and G. Laffaille (\cite{Fo--La}, \cite{Br--Me}, \cite{Wach}). 
Let $\textbf{Rep}_{\integer{},\mathrm{cris}}^{r}(G_{\Q_p})$ denote 
the essential image of $\textbf{MF}_{\cal{O}}^r$ by $T_{\mathrm{cris}}$. 
For an object $T$ of $\textbf{Rep}_{\integer{},\mathrm{cris}}^{r}(G_{\Q_p})$, the Hodge\nobreakdash-Tate weights of $T$ mean $s\in \Z$ for which $\Gr^s M\neq 0$, 
where $M$ is an object of $\textbf{MF}_{\cal{O}}^r$ such that $T_{\mathrm{cris}}(M)\simeq T$. 

Now we can state the comparison theorem for the Hilbert modular varieties.

By the comparison theorem for torsion cohomology (in the good reduction case) proved by G. Faltings (\cite[Theorem 5.3]{Fa} 
(see also \cite[Theorem 3.2.4.6]{Br}=\cite[Theorem 5.1]{Tsu} and \cite[Theorem 3.2.4.7]{Br} for an alternative proof with an extension to the log-smooth reduction case, but without compact support), 
for $(X^{\mathrm{tor}},X)=(M^{1,\mathrm{tor}},M^1)$ or $(M^{\mathrm{tor}},M)$ defined in \S \ref{subsection:HMV}, 
there are canonical $G_{\Q_p}$-equivariant $\cal{O}$-linear isomorphisms 
\begin{align}
\label{comparison log}
H_{\mathrm{\acute et}}^n({X}_{\overline{\Q}_p},\integer{})
&\simeq  
T_{\mathrm{cris}}\left(H_{\text{log-cris}}^n(X_{\Z_p}^{\mathrm{tor}})\otimes_{\Z_p} \integer{}\right),\\
\label{comparison cpt}
H_{\mathrm{\acute et},c}^n({X}_{\overline{\Q}_p},\integer{})
&\simeq  
T_{\mathrm{cris}}\left(H_{\text{log-cris},c}^n(X_{\Z_p}^{\mathrm{tor}})\otimes_{\Z_p} \integer{}\right).
\end{align}

For $?=\phi$ or $c$ and $A=\integer{}$ or $K$, 
we simply write $H_{\text{log-cris},?}^n(X_{\Zp}^{\mathrm{tor}})\otimes_{\Zp}A$ for $H_{\text{log-cris},?}^n(X^{\mathrm{tor}})_{A}$.
For $A=\cal{O}$ or $K$, 
we define $H_{\mathrm{\acute et},\pa}^n({X}_{\overline{\Q}_p},A)$ and $H_{\text{log-cris},\pa}^n(X^{\mathrm{tor}})_A$ by
\begin{align*}
H_{\mathrm{\acute et},\pa}^n({X}_{\overline{\Q}_p},A)
&=\im\left(H_{\mathrm{\acute et},c}^n({X}_{\overline{\Q}_p},A)
\to 
H_{\mathrm{\acute et}}^n({X}_{\overline{\Q}_p},A)
\right),\\
H_{\text{log-cris},\pa}^n(X^{\mathrm{tor}})_A
&=\im\left(H_{\text{log-cris},c}^n(X^{\mathrm{tor}})_A\to 
H_{\text{log-cris}}^n(X^{\text{tor}})_A\right). 
\end{align*}

We obtain the following $G_{\Q_p}$-equivariant $\cal{O}$-linear isomorphisms from (\ref{comparison log}) and (\ref{comparison cpt}):
\begin{align}\label{comparison par}
H_{\mathrm{\acute et},\pa}^n({X}_{\overline{\Q}_p},\integer{})
&\simeq  
T_{\mathrm{cris}}\left(H_{\text{log-cris},\pa}^n(X^{\mathrm{tor}})_{\integer{}}\right).
\end{align}

For $?=\phi$ or $\pa$, we put
\begin{align*}
\widetilde{H}_{\mathrm{\acute et},?}^n(M_{\overline{\Q}},\integer{})
&=\im \left(H_{\mathrm{\acute et},?}^n(M_{\overline{\Q}},\integer{}) \to H_{\mathrm{\acute et},?}^n(M_{\overline{\Q}},K)\right), \\
\widetilde{H}_{\text{log-cris},?}^n(M^{\text{tor}})_{\integer{}}
&=\im \left(H_{\text{log-cris},?}^n(M^{\text{tor}})_{\integer{}} \to H_{\text{log-cris},?}^n(M^{\text{tor}})_K\right). 
\end{align*}
We define objects 
$\widetilde{H}_{\mathrm{\acute et},\pa}^n({M}_{\overline{\Q}},\kappa)$ of $\textbf{Rep}_{\integer{},\mathrm{cris}}^{p-2}(G_{\Q_p})$ and 
$\widetilde{H}_{\text{log-cris},\pa}^n({M}^{\text{tor}})_{\kappa}$ of $\textbf{MF}_{\kappa}^{p-2}$ by
\begin{align*}
\widetilde{H}_{\mathrm{\acute et},\pa}^n({M}_{\overline{\Q}},\kappa)
=\widetilde{H}_{\mathrm{\acute et},\pa}^n({M}_{\overline{\Q}},\integer{})/\varpi,\ \ 
\widetilde{H}_{\text{log-cris},\pa}^n({M}^{\text{tor}})_{\kappa}
=\widetilde{H}_{\text{log-cris},\pa}^n({M}^{\text{tor}})_{\integer{}}/\varpi.
\end{align*}

%
%
\subsection{Analogue of a multiplicity one theorem}\label{main result}
%
%

In this subsection, we state the main theorem of \S \ref{comparison}, 
which shall be proved in \S \ref{Pf of main}.

\begin{thm}\label{partial one}
Under the same notation and assumptions as Theorem \ref{thm:congruence}, 
$[\delta_{\mathbf{f}}]^{\epsilon} (\bmod \varpi)$ and $[\omega_{\mathbf{E}}]^{\epsilon} (\bmod \varpi)$ belong to $\widetilde{H}_{\mathrm{\acute et},\pa}^n(M_{\overline{\Q}},\kappa)$ and 
there exists $u\in \integer{}^{\times}$ such that 
\[
\text{$[\delta_{\mathbf{f}}]^{\epsilon}=u [\omega_{\mathbf{E}}]^{\epsilon}$\ \ in   $\widetilde{H}_{\mathrm{\acute et},\pa}^n(M_{\overline{\Q}},\kappa)$.}
\]
\end{thm}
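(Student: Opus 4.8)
The plan is to transport the whole problem from \'etale cohomology to the category of filtered $\varphi$-modules by means of the comparison isomorphism (\ref{comparison par}) and the fully faithful functor $T_{\mathrm{cris}}$, and there to reduce the asserted identity to a statement about a single one-dimensional $\mathrm{Fil}^n$-space. First I would dispose of the membership claims. That $[\delta_{\mathbf{f}}]^{\epsilon}\pmod\varpi$ lies in $\widetilde{H}_{\mathrm{\acute et},\pa}^n(M_{\overline{\Q}},\kappa)$ is immediate, since by its definition in \S\ref{subsection:Canonical periods} the class $[\delta_{\mathbf{f}}]^{\epsilon}$ is an $\integer{}$-generator of the rank-one module $\widetilde{H}_{\pa}^n(Y(\ideal{n}),\integer{})[\mathfrak{p}_{\mathbf{f}},\epsilon]$. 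For $[\omega_{\mathbf{E}}]^{\epsilon}$ the point is that, although the Eisenstein class is non-parabolic over $K$, its reduction modulo $\varpi$ becomes parabolic: this follows from Corollary \ref{thm:integral} together with the construction of the integral parabolic class $e_0$ in the proof of Theorem \ref{cong mod}, combined with the fact that the congruence (a) forces the constant $C$ there to be divisible by $\varpi$, so that $[\omega_{\mathbf{E}}]^{\epsilon}\equiv -e_0\pmod\varpi$. The Betti-to-\'etale comparison then places both reductions in $\widetilde{H}_{\mathrm{\acute et},\pa}^n(M_{\overline{\Q}},\kappa)$.

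Next I would localize the filtered $\varphi$-module $\widetilde{H}_{\text{log-cris},\pa}^n(M^{\text{tor}})_\kappa$ at the Eisenstein ideal $\ideal{m}$ and pass to the $\epsilon$-isotypic part, obtaining an object of $\textbf{MF}_\kappa^{p-2}$; here the hypothesis $p\ge n+2$ is essential so that Fontaine--Laffaille theory applies in the range $r=n\le p-2$. Writing $\widetilde{\overline{M}}_{\mathbf{E}}$ and $\widetilde{\overline{M}}_{\mathbf{f}}$ for the resulting $\mathbf{E}$- and $\mathbf{f}$-parts, the core of the argument is to exhibit one common line supporting both holomorphic realizations. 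By Proposition \ref{Eis Hodge number} the Eisenstein cohomology has Hodge number $n$, whence $\widetilde{\overline{M}}_{\mathbf{E}}=\mathrm{Fil}^n(\widetilde{\overline{M}}_{\mathbf{E}})$; coupling this with the $q$-expansion principle and the eigenvalue hypothesis (d) at primes dividing $\ideal{n}$ gives $\dim_\kappa\widetilde{\overline{M}}_{\mathbf{E}}=1$ (Proposition \ref{Eis HT}). On the cuspidal side, the Eichler--Shimura--Harder isomorphism (\ref{+,+ decomp}) and the $q$-expansion principle yield $\dim_\kappa\mathrm{Fil}^n(\widetilde{\overline{M}}_{\mathbf{f}})=1$ (Proposition \ref{Fil f mod pi}).

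With these two one-dimensionality statements in hand, I would use the congruence (a) of all Hecke eigenvalues (including the $U(\ideal{q})$-eigenvalues, where (c) and (d) intervene) together with the torsion-freeness hypotheses (b) and (c) to produce a Hecke-equivariant identification $\widetilde{\overline{M}}_{\mathbf{E}}=\mathrm{Fil}^n(\widetilde{\overline{M}}_{\mathbf{E}})\simeq\mathrm{Fil}^n(\widetilde{\overline{M}}_{\mathbf{f}})$; concretely this matches the mod-$\varpi$ holomorphic realizations of $\mathbf{E}$ and $\mathbf{f}$, which agree up to a unit by the $q$-expansion principle applied to $\mathbf{f}\equiv\mathbf{E}\pmod\varpi$. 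Since $T_{\mathrm{cris}}$ is fully faithful and both \'etale classes correspond, under the comparison isomorphism, to generators of this common one-dimensional $\mathrm{Fil}^n$-space, the crystalline identity transports back to the sought identity $[\delta_{\mathbf{f}}]^{\epsilon}=u\,[\omega_{\mathbf{E}}]^{\epsilon}$ in $\widetilde{H}_{\mathrm{\acute et},\pa}^n(M_{\overline{\Q}},\kappa)$ for some $u\in\integer{}^{\times}$.

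I expect the main obstacle to be the passage modulo $\varpi$ from the $\epsilon$-eigenspace class $[\delta_{\mathbf{f}}]^{\epsilon}$ to its $\mathrm{Fil}^n$-realization. Over $K$ the Eichler--Shimura--Harder isomorphism identifies the $\epsilon$-part with the holomorphic part $\mathrm{Fil}^n$, but modulo $\varpi$, with $\overline{\rho}_{\mathbf{f}}$ reducible, neither the projection onto $\mathrm{Fil}^n$ nor the inclusion of the $\epsilon$-part need remain an isomorphism, and the Eisenstein and cuspidal contributions can a priori merge. Controlling this degeneration is exactly what forces the use of the integral comparison theorem and of the torsion-freeness hypotheses (b), (c), and it is the reason the statement is a genuine analogue of mod-$p$ multiplicity one in the reducible case rather than a formal consequence of the characteristic-zero picture.
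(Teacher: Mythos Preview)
Your overall plan---reduce to filtered $\varphi$-modules via the comparison theorem and use the one-dimensionality of $\mathrm{Fil}^n$ on both the Eisenstein and cuspidal sides---is exactly the paper's strategy, and your handling of the membership claims and of Propositions \ref{Eis HT} and \ref{Fil f mod pi} is fine. However, there is a genuine gap in the step where you ``pass to the $\epsilon$-isotypic part'' on the crystalline side and then assert that $[\delta_{\mathbf{f}}]^{\epsilon}$ corresponds, under the comparison isomorphism, to a generator of $\mathrm{Fil}^n$. The $W_G$-action is defined via Betti cohomology (complex conjugation at infinite places) and it does \emph{not} respect the Hodge filtration: over $\C$ it exchanges the holomorphic and anti-holomorphic pieces, so the $\epsilon$-isotypic component of $\widetilde{H}_{\text{log-cris},\pa}^n(M^{\mathrm{tor}})_{\kappa}$ is not a subobject of $\mathbf{MF}_{\kappa}^{p-2}$, and there is no reason for the line $\kappa\cdot[\delta_{\mathbf{f}}]^{\epsilon}$ to match $\mathrm{Fil}^n(\widetilde{\overline{M}}_{\mathbf{f}})$ under $T_{\mathrm{cris}}$. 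You identify precisely this difficulty in your final paragraph, but the body of the argument does not actually overcome it.

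The paper's resolution is to separate the two structures cleanly. Working entirely in $\mathbf{MF}_{\kappa}^{p-2}$ (no $\epsilon$), one shows via Propositions \ref{Fil f mod pi} and \ref{Eis HT} that the image of $\widetilde{\overline{M}}_{\mathbf{E}}$ in $\widetilde{\overline{M}}$ equals $\mathrm{Fil}^n(\widetilde{\overline{M}})$, and that this in turn equals the image of $\mathrm{Fil}^n(\widetilde{\overline{M}}_{\mathbf{f}})$; hence the whole object $\widetilde{\overline{M}}_{\mathbf{E}}$ (not just its $\mathrm{Fil}^n$) embeds as a subobject of $\widetilde{\overline{M}}_{\mathbf{f}}$ in $\mathbf{MF}_{\kappa}^{p-2}$. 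Applying $T_{\mathrm{cris}}$ produces a one-dimensional $G_{\Qp}$-subrepresentation $L\subset\widetilde{\overline{T}}_{\mathbf{f}}$ whose image in $\widetilde{\overline{T}}$ is $\widetilde{\overline{T}}_{\mathbf{E}}$. Only now, back on the \'etale/Betti side where $W_G$ acts merely $\integer{}$-linearly, does one invoke Remark \ref{parity of Eis} and Corollary \ref{Eis generated by omega_E} to see that $\widetilde{\overline{T}}_{\mathbf{E}}=\widetilde{\overline{T}}_{\mathbf{E}}[\epsilon]$; since the inclusions are $W_G$-equivariant one gets $L\subset\widetilde{\overline{T}}_{\mathbf{f}}[\epsilon]$, and as the latter is one-dimensional by (\ref{+,+ decomp}), equality follows. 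The point is that $\mathrm{Fil}^n$ is used only to locate $\widetilde{\overline{T}}_{\mathbf{E}}$ inside $\widetilde{\overline{T}}_{\mathbf{f}}$ as a Galois subrepresentation, and the $\epsilon$-identification is done afterwards, purely at the level of $\integer{}$-modules.
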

\begin{rem}
M. Dimitrov \cite[Theorem 6.7]{Dim2} proved that a multiplicity one theorem holds for the $\textbf{f}$-parts of $H_{\mathrm{\acute et},\pa}^n({M}_{\overline{\Q}},\kappa)$ and $H_{\mathrm{\acute et},\pa}^n({M}_{\overline{\Q}},\integer{})$ 
if the residual Galois representation $\bar{\rho_{\textbf{f}}}$ is irreducible under some assumptions. 
\end{rem}

In the rest of this subsection, we introduce some objects of $\textbf{Rep}_{\integer{},\mathrm{cris}}^{p-2}(G_{\Q_p})$ and $\textbf{MF}_{\integer{}}^{p-2}$ associated to $\mathbf{E}$ and $\mathbf{f}$, which will be used in the proof of Theorem \ref{partial one}.

For $?=\phi$ or $c$, 
let $T(\ideal{a})_{\mathrm{\acute et}}$ and $U(\ideal{a})_{\mathrm{\acute et}}$ (resp. $T(\ideal{a})_{\mathrm{dR}}$ and $U(\ideal{a})_{\mathrm{dR}}$) be the Hecke operators on $H_{\mathrm{\acute et},?}^n(M_{\overline{\Q}_p},\Qp)$ 
(resp. $H_{\text{log-dR},?}^n({M}^{\mathrm{tor}})_{\Qp}$) induced by the Hecke correspondences $T(\ideal{a})$ and $U(\ideal{a})$ on $M_{\overline{\Q}_p}^{1,\mathrm{tor}}$ (resp. $M_{\Qp}^{1,\mathrm{tor}}$) (see \S 1.9), respectively.
We define the Hecke operators $T(\ideal{a})_{\mathrm{cris}}$ and $U(\ideal{a})_{\mathrm{cris}}$ on $H_{\text{log-cris},?}^n({M}^{\mathrm{tor}})_{\Qp}$ via the isomorphism $H_{\text{log-cris},?}^n({M}^{\mathrm{tor}})_{\Qp} \simeq H_{\text{log-dR},?}^n({M}^{\mathrm{tor}})_{\Qp}$. 
By the de Rham conjecture (\cite[\S VIII]{Fa}), the comparison isomorphism $\mathrm{c}_{M^1,\mathrm{dR}}$ between $H_{\mathrm{\acute et},?}^n(M_{\overline{\Q}_p}^1,\Qp)$ and $H_{\text{log-dR},?}^n({M}^{1,\mathrm{tor}})_{\Qp}$ is Hecke-equivariant (cf. \cite[Theorem 1.2]{Fa--Jo}).
Since $\mathrm{c}_{X,\mathrm{dR}}$ is compatible with the the comparison isomorphism $\mathrm{c}_{X,\text{cris}}$ between $H_{\mathrm{\acute et},?}^n(X_{\overline{\Q}_p},\Qp)$ and $H_{\text{log-cris},?}^n(X^{\mathrm{tor}})_{\Qp}$ for $X=M^1$ or $M$, the isomorphism $\mathrm{c}_{M,\mathrm{cris}}$ is Hecke-equivariant and $\widetilde{H}_{\text{log-cris},?}^n({M}^{\mathrm{tor}})_{\Zp}$ is stable under $T(\ideal{a})_{\mathrm{cris}}$ and $U(\ideal{a})_{\mathrm{cris}}$.

Set $D=M^{\mathrm{tor}}-M$.
Let $\cal{H}_2(\ideal{n},\integer{})$ be the commutative $\integer{}$-subalgebra of $\End_{\integer{}}(\widetilde{H}_{\mathrm{\acute et},c}^n(M_{\overline{\Q}},\integer{}))$ 
$\oplus \End_{\integer{}}(\widetilde{H}_{\mathrm{\acute et}}^n(M_{\overline{\Q}},\integer{}))\oplus \End_{\integer{}}(\widetilde{H}_{\mathrm{\acute et}}^n(D_{\overline{\Q}},\integer{}))\oplus \End_{\integer{}}(\widetilde{H}_{\mathrm{\acute et},c}^{n+1}(M_{\overline{\Q}},\integer{}))$
generated by $T(\ideal{q})_{\mathrm{\acute et}}$ for all non\nobreakdash-zero prime ideals $\ideal{q}$ of $\ideal{o}_F$ prime to $\ideal{n}$ and $U(\ideal{q})_{\mathrm{\acute et}}$ for all non-zero prime ideals $\ideal{q}$ of $\ideal{o}_F$ dividing $\ideal{n}$. 
Let $\mathfrak{p}_{\textbf{E}}$ (resp. $\mathfrak{p}_{\textbf{f}}$) be the prime ideal of $\mathcal{H}_2(\ideal{n},\integer{})$ generated by $T(\ideal{q})_{\mathrm{\acute et}}-C(\ideal{q},\textbf{E})$ (resp. $T(\ideal{q})_{\mathrm{\acute et}}-C(\ideal{q},\textbf{f})$) for all non\nobreakdash-zero prime ideals $\ideal{q}$ of $\ideal{o}_F$ prime to $\ideal{n}$ and $U(\ideal{q})_{\mathrm{\acute et}}-C(\ideal{q},\textbf{E})$ (resp. $U(\ideal{q})_{\mathrm{\acute et}}-C(\ideal{q},\textbf{f})$) for all non-zero prime ideals $\ideal{q}$ of $\ideal{o}_F$ dividing $\ideal{n}$. 
Let $\ideal{m}$ denote the maximal ideal $(\varpi, \ideal{p}_{\textbf{E}})$.
We may regard $\cal{H}_2(\ideal{n},\integer{})$ as the $\integer{}$-subalgebra of  $\End_{\textbf{MF}_{\integer{}}^{p-2}}(\widetilde{H}_{\text{log-cris},c}^n(M^{\mathrm{tor}})_{\integer{}})
\oplus \End_{\textbf{MF}_{\integer{}}^{p-2}}(\widetilde{H}_{\text{log-cris}}^n(M^{\mathrm{tor}})_{\integer{}}) 
\oplus \End_{\textbf{MF}_{\integer{}}^{p-2}}(\widetilde{H}_{\text{log-cris}}^n(D)_{\integer{}})
\oplus \End_{\textbf{MF}_{\integer{}}^{p-2}}(\widetilde{H}_{\text{log-cris},c}^{n+1}(M^{\mathrm{tor}})_{\integer{}})$
via the comparison theorem.

We shall consider the $\textbf{f}$-parts of 
$\widetilde{H}_{\mathrm{\acute et},\pa}^n(M_{\overline{\Q}},\integer{})$ and 
$\widetilde{H}_{\text{log-cris},\pa}^n({M}^{\mathrm{tor}})_{\integer{}}$ etc. defined by 
\begin{align*}
\widetilde{\overline{T}}=\widetilde{H}_{\mathrm{\acute et},\pa}^n({M}_{\overline{\Q}},\kappa)[\ideal{m}], \ \ \ \ \ \ \ \ \,
\widetilde{T_{\textbf{f}}}=\widetilde{H}_{\mathrm{\acute et},\pa}^n({M}_{\overline{\Q}},\integer{})[\mathfrak{p}_{\textbf{f}}],\ \ \ \ \ \ \ \ \,
\widetilde{\overline{T}}_{\textbf{f}}=\widetilde{T_{\textbf{f}}}/\varpi,\ \\
\widetilde{\overline{M}}=\widetilde{H}_{\text{log-cris},\pa}^n({M}^{\mathrm{tor}})_{\kappa}[\ideal{m}],\ \ 
\widetilde{M}_{\textbf{f}}=\widetilde{H}_{\text{log-cris},\pa}^n({M}^{\text{tor}})_{\integer{}}[\mathfrak{p}_{\textbf{f}}],\ \ 
\widetilde{\overline{M}}_{\textbf{f}}=\widetilde{M}_{\textbf{f}}/\varpi.
\end{align*}

A main tool for our proof is the $\textbf{E}$-parts of 
$\widetilde{H}_{\mathrm{\acute et}}^n(M_{\overline{\Q}},\integer{})$ and 
$\widetilde{H}_{\text{log-cris}}^n({M}^{\mathrm{tor}})_{\integer{}}$ defined by 
\begin{align*}
\widetilde{T}_{\textbf{E}}=\widetilde{H}_{\mathrm{\acute et}}^n(M_{\overline{\Q}},\integer{})[\mathfrak{p}_{\textbf{E}}],
\ \ \ \ \widetilde{M}_{\textbf{E}}=\widetilde{H}_{\text{log-cris}}^n(M^{\text{tor}})_{\integer{}}[\mathfrak{p}_{\textbf{E}}]. 
\end{align*}
We obtain the following isomorphisms from (\ref{comparison par}) and (\ref{comparison log}):
\begin{align*}
\widetilde{\overline{T}}
\simeq 
T_{\mathrm{cris}}(\widetilde{\overline{M}}),\ \ \ \ 
\widetilde{T}_{\textbf{f}}
\simeq 
T_{\mathrm{cris}}(\widetilde{M}_{\textbf{f}}),\ \ \ \ 
\widetilde{\overline{T}}_{\textbf{f}}
\simeq 
T_{\mathrm{cris}}(\widetilde{\overline{M}}_{\textbf{f}}),\ \ \ \ 
\widetilde{T}_{\textbf{E}}
\simeq 
T_{\mathrm{cris}}(\widetilde{M}_{\textbf{E}}).
\end{align*}

\begin{lem}\label{inj 0}
The canonical morphisms $\widetilde{\overline{T}}_{\mathbf{f}} \to \widetilde{\overline{T}}$ and $\widetilde{\overline{M}}_{\mathbf{f}} \to \widetilde{\overline{M}}$ are injective.
\end{lem}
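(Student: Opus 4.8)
The plan is to prove the two injectivity statements simultaneously, using the comparison isomorphism $\widetilde{\overline{T}}\simeq T_{\mathrm{cris}}(\widetilde{\overline{M}})$ and $\widetilde{\overline{T}}_{\mathbf{f}}\simeq T_{\mathrm{cris}}(\widetilde{\overline{M}}_{\mathbf{f}})$ to reduce one to the other. Since $T_{\mathrm{cris}}$ is a fully faithful exact functor on $\mathbf{MF}_{\kappa}^{p-2}$ (it is exact because $\mathbf{MF}_{\integer{}}^r$ is abelian and every morphism is strict, as recalled before the statement), a morphism of filtered $\varphi$-modules is injective if and only if its image under $T_{\mathrm{cris}}$ is injective. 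Hence it suffices to establish injectivity of just one of the two maps, say $\widetilde{\overline{M}}_{\mathbf{f}}\to\widetilde{\overline{M}}$, and the \'etale statement follows formally; or conversely. I would organize the argument so that the map which is most transparently the reduction mod $\varpi$ of an injection of torsion-free $\integer{}$-modules is treated first.

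First I would record that $\widetilde{H}_{\mathrm{\acute et},\pa}^n(M_{\overline{\Q}},\integer{})$ is by definition torsion-free (it is the image in the $K$-cohomology), and that $\widetilde{T}_{\mathbf{f}}=\widetilde{H}_{\mathrm{\acute et},\pa}^n(M_{\overline{\Q}},\integer{})[\mathfrak{p}_{\mathbf{f}}]$ is an $\integer{}$-direct summand cut out by the idempotent-like projection attached to the prime $\mathfrak{p}_{\mathbf{f}}$ after localizing, hence itself torsion-free; the same holds for $\widetilde{\overline{T}}=\widetilde{H}_{\mathrm{\acute et},\pa}^n(M_{\overline{\Q}},\kappa)[\ideal{m}]$ viewed as a $\kappa$-subspace. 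The map $\widetilde{\overline{T}}_{\mathbf{f}}\to\widetilde{\overline{T}}$ is induced by the inclusion $\widetilde{T}_{\mathbf{f}}\hookrightarrow\widetilde{H}_{\mathrm{\acute et},\pa}^n(M_{\overline{\Q}},\integer{})$ after reducing mod $\varpi$. The key point is that $\widetilde{T}_{\mathbf{f}}$ is a \emph{saturated} (i.e. $\integer{}$-pure) submodule: it is the kernel of a family of $\integer{}$-linear endomorphisms $t-C(\ideal{q},\mathbf{f})$ on a torsion-free module, so the quotient $\widetilde{H}_{\mathrm{\acute et},\pa}^n(M_{\overline{\Q}},\integer{})/\widetilde{T}_{\mathbf{f}}$ is again torsion-free. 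Consequently the sequence $0\to\widetilde{T}_{\mathbf{f}}\to\widetilde{H}_{\mathrm{\acute et},\pa}^n(M_{\overline{\Q}},\integer{})$ stays left-exact after $\otimes_{\integer{}}\kappa$, which gives injectivity of $\widetilde{\overline{T}}_{\mathbf{f}}=\widetilde{T}_{\mathbf{f}}/\varpi\to\widetilde{H}_{\mathrm{\acute et},\pa}^n(M_{\overline{\Q}},\kappa)$; then passing to the $\ideal{m}$-part (an exact operation, as $\ideal{m}$-localization or the $\ideal{m}$-torsion is a direct factor) lands the image in $\widetilde{\overline{T}}$ and keeps injectivity, since $\widetilde{\overline{T}}_{\mathbf{f}}$ is already supported at $\ideal{m}$ because $\mathfrak{p}_{\mathbf{f}}\subset\ideal{m}$ by the congruence (a).

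I would then transport this to the crystalline side via the comparison isomorphisms, which are Hecke-equivariant (as established in the paragraph defining the Hecke action on log-crystalline cohomology through the de Rham comparison and the de Rham conjecture of Faltings). The main obstacle I anticipate is precisely the \textbf{saturation/purity} claim: one must verify that $\widetilde{T}_{\mathbf{f}}$ is $\integer{}$-pure in $\widetilde{H}_{\mathrm{\acute et},\pa}^n(M_{\overline{\Q}},\integer{})$, equivalently that cutting out the $\mathbf{f}$-eigenspace commutes with reduction mod $\varpi$ in the sense that no spurious $\varpi$-torsion is introduced. This is where the hypothesis that the relevant local components are torsion-free (assumption (b)) and the congruence (a) enter: purity of the eigencomponent can fail exactly when the residual Hecke eigensystem is shared, but here the localization at $\ideal{m}$ together with the torsion-freeness hypotheses guarantees that $\widetilde{T}_{\mathbf{f}}$ and its $\kappa$-reduction sit as saturated pieces. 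Once purity is in hand the injectivity is formal, and I would conclude by remarking that the symmetric statement for $\widetilde{\overline{M}}$ follows either by the identical argument on the torsion-free log-crystalline lattice $\widetilde{M}_{\mathbf{f}}\subset\widetilde{H}_{\text{log-cris},\pa}^n(M^{\mathrm{tor}})_{\integer{}}$, or immediately from the \'etale case by full faithfulness and exactness of $T_{\mathrm{cris}}$.
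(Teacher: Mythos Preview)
Your core argument is correct and coincides with the paper's one-sentence proof: $\widetilde{T}_{\mathbf{f}}$ is the common kernel of the $\integer{}$-linear operators $t-C(\ideal{q},\mathbf{f})$ on the torsion-free module $\widetilde{H}_{\mathrm{\acute et},\pa}^n(M_{\overline{\Q}},\integer{})$, hence the quotient is torsion-free and reduction mod $\varpi$ preserves injectivity; the identical reasoning applies verbatim on the log-crystalline side.

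Two comments on the surrounding discussion. First, the detour through $T_{\mathrm{cris}}$ is superfluous: the crystalline case does not need to be deduced from the \'etale one, since $\widetilde{H}_{\text{log-cris},\pa}^n(M^{\mathrm{tor}})_{\integer{}}$ is torsion-free by construction and the same saturation argument applies directly. Second, and more importantly, your final paragraph undermines what you already established. You correctly proved saturation (kernel of endomorphisms in a torsion-free module has torsion-free cokernel), yet then label it an ``obstacle'' requiring assumption (b). It does not: the torsion-freeness of $\widetilde{H}_{\mathrm{\acute et},\pa}^n(M_{\overline{\Q}},\integer{})$ holds \emph{by definition} (it is the image in the $K$-cohomology), independently of (b), and that alone gives purity of $\widetilde{T}_{\mathbf{f}}$. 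The only hypothesis genuinely in play is the congruence (a), and only for the trivial reason that $(\varpi,\mathfrak{p}_{\mathbf{f}})=\ideal{m}$, so that the image of $\widetilde{\overline{T}}_{\mathbf{f}}$ in $\widetilde{H}_{\mathrm{\acute et},\pa}^n(M_{\overline{\Q}},\kappa)$ actually lies in $\widetilde{\overline{T}}$. Assumption (b) plays no role in this lemma.
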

\begin{proof}
The assertion follows from the fact that $\widetilde{H}_{\mathrm{\acute et},\pa}^n(M_{\overline{\Q}},\integer{})/(\widetilde{H}_{\mathrm{\acute et},\pa}^n(M_{\overline{\Q}},\integer{})[\ideal{p}_\textbf{f}])$ and $\widetilde{H}_{\text{log-cris},\pa}^n(M^{\mathrm{tor}})_{\integer{}}/(\widetilde{H}_{\text{log-cris},\pa}^n(M^{\mathrm{tor}})_{\integer{}}[\ideal{p}_\textbf{f}])$ are torsion-free.
\end{proof}

%
%
\subsection{Multiplicity one for $\text{Fil}^n (\widetilde{\overline{M}})$}\label{Rank Fil}
%
%

\begin{thm}\label{theorem:bottom}
There exists a canonical isomorphism $\mathrm{Fil}^n(\widetilde{H}_{\mathrm{log}\text{-}\mathrm{cris},\pa}^n(M^{\mathrm{tor}})_{\integer{}})\simeq S_2(\ideal{n},\integer{})$.
\end{thm}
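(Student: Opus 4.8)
The plan is to compute the top Hodge filtration step on each cohomology group entering the definition of the parabolic cohomology and then invoke strictness. First I would pass from log\nobreakdash-crystalline to log\nobreakdash-de Rham cohomology via the comparison isomorphism underlying $(\ref{comparison par})$, which is compatible with the Hodge filtrations and, by the discussion preceding Lemma \ref{inj 0}, with the Hecke action; this reduces the assertion to identifying $\mathrm{Fil}^n$ of the parabolic log\nobreakdash-de Rham cohomology $\widetilde{H}_{\text{log-dR},\pa}^n(M^{\mathrm{tor}})_{\integer{}}$. The essential simplification, and the place where parallel weight $k=2$ enters, is that the automorphic bundle $\underline{\omega}^{(k-2)}$ of Definition \ref{definition:GHMF} is trivial for $k=2$, so that the relevant complex is the plain log\nobreakdash-de Rham complex $\Omega_{M^{\mathrm{tor}}}^{\bullet}(\log D)$ with constant coefficients; by that same definition together with Koecher's principle, $M_2(\ideal{n},\integer{})=H^0(M^{\mathrm{tor}},\Omega_{M^{\mathrm{tor}}}^n(\log D))$, while $S_2(\ideal{n},\integer{})$ is the image of $H^0(M^{\mathrm{tor}},\Omega_{M^{\mathrm{tor}}}^n)$ in $H^0(M,\Omega_M^n)$.

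Next I would invoke the degeneration of the Hodge--de Rham spectral sequence for $(M^{\mathrm{tor}},D)$, valid integrally in the Fontaine--Laffaille range $n\le p-2$ assumed throughout this section, to identify the top filtration steps
\begin{align*}
\mathrm{Fil}^n H_{\text{log-dR}}^n(M^{\mathrm{tor}})&\simeq H^0(M^{\mathrm{tor}},\Omega_{M^{\mathrm{tor}}}^n(\log D)),\\
\mathrm{Fil}^n H_{\text{log-dR},c}^n(M^{\mathrm{tor}})&\simeq H^0(M^{\mathrm{tor}},\Omega_{M^{\mathrm{tor}}}^n(\log D)\otimes \mathcal{O}_{M^{\mathrm{tor}}}(-D)).
\end{align*}
Here the compactly supported complex is $\Omega_{M^{\mathrm{tor}}}^{\bullet}(\log D)\otimes \mathcal{O}_{M^{\mathrm{tor}}}(-D)$, and the decisive point is the adjunction identity $\Omega_{M^{\mathrm{tor}}}^n(\log D)\otimes \mathcal{O}_{M^{\mathrm{tor}}}(-D)\simeq \Omega_{M^{\mathrm{tor}}}^n$ for the top exterior power, which turns the right\nobreakdash-hand side into $H^0(M^{\mathrm{tor}},\Omega_{M^{\mathrm{tor}}}^n)$. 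Combining with the previous paragraph, $\mathrm{Fil}^n H^n\simeq M_2(\ideal{n},\integer{})$, the group $\mathrm{Fil}^n H_c^n$ is identified with $H^0(M^{\mathrm{tor}},\Omega_{M^{\mathrm{tor}}}^n)$, and the natural map $\mathrm{Fil}^n H_c^n\to \mathrm{Fil}^n H^n$ is, under these identifications, the tautological inclusion of cusp forms into modular forms (injective because $M$ is dense in $M^{\mathrm{tor}}$).

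Then I would deduce the parabolic statement from strictness. Since $\widetilde{H}_{\text{log-cris},\pa}^n$ is by definition the image of $H_c^n\to H^n$, and every morphism in $\textbf{MF}_{\integer{}}^{p-2}$ is strict for the filtration (as recalled after the definition of $\textbf{MF}_{\integer{}}^r$, following \cite[1.10 (b)]{Fo--La}), one has $\mathrm{Fil}^n H_{\pa}^n=\im(\mathrm{Fil}^n H_c^n\to \mathrm{Fil}^n H^n)$, which is precisely the image of $H^0(M^{\mathrm{tor}},\Omega_{M^{\mathrm{tor}}}^n)$ in $H^0(M^{\mathrm{tor}},\Omega_{M^{\mathrm{tor}}}^n(\log D))$, namely $S_2(\ideal{n},\integer{})$ by Definition \ref{definition:GHMF}. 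Passing to the torsion\nobreakdash-free quotient $\widetilde{H}$ then yields the claimed canonical isomorphism $\mathrm{Fil}^n(\widetilde{H}_{\text{log-cris},\pa}^n(M^{\mathrm{tor}})_{\integer{}})\simeq S_2(\ideal{n},\integer{})$.

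The main obstacle will be the integral bookkeeping rather than the characteristic\nobreakdash-zero shape of the argument. Concretely, I must check that the degeneration and the two filtration identifications hold at the level of the $\integer{}$\nobreakdash-lattices, not merely after inverting $p$, so that $\mathrm{Fil}^n$ of the integral filtered module is carried isomorphically onto $S_2(\ideal{n},\integer{})$ and not onto a merely finite\nobreakdash-index or non\nobreakdash-saturated sublattice; this is where the hypothesis $n\le p-2$, the torsion\nobreakdash-freeness built into the definition of $\widetilde{H}$, and the $q$\nobreakdash-expansion principle (used to pin down $S_2(\ideal{n},\integer{})$ as the integral holomorphic sections of $\Omega_{M^{\mathrm{tor}}}^n$) must be combined carefully. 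A secondary point is to verify that forming $\mathrm{Fil}^n$ commutes with the passage from $H_{\text{log-cris},\pa}^n(M^{\mathrm{tor}})_{\integer{}}$ to its torsion\nobreakdash-free image $\widetilde{H}$, which again reduces to the strictness of the relevant morphisms in $\textbf{MF}_{\integer{}}^{p-2}$.
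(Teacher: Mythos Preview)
Your proposal is correct and follows essentially the same route as the paper: Hodge--de Rham degeneration (the paper cites \cite[Theorem 4.1 and 4.1$^{\ast}$]{Fa}) to identify $\mathrm{Fil}^n$ of the usual and compactly supported cohomologies with $H^0(M^{\mathrm{tor}},\Omega^n(\log D))$ and $H^0(M^{\mathrm{tor}},\Omega^n)$ respectively, the top-degree identity $\Omega^n(\log D)(-D)=\Omega^n$, strictness to pass to the image, and Koecher plus the $q$-expansion principle to recognize $S_2(\ideal{n},\integer{})$. The paper is terser about the torsion-free passage and the strictness step, but the substance is the same.
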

\begin{proof}
By the degeneration of the Hodge spectral sequences for $H^{\ast}(M_{\Zp}^{\mathrm{tor}},\Omega_{M_{\Zp}^{\mathrm{tor}}}^n(\text{log}(D)))$ and $H^{\ast}(M_{\Zp}^{\mathrm{tor}},\Omega_{M_{\Zp}^{\mathrm{tor}}}^n(-\mathrm{log}(D)))$ (\cite[Theorem 4.1 and 4.1$^{\ast}$]{Fa}), we have canonical isomorphisms 
\begin{align}
\label{Fil bottom open}
\mathrm{Fil}^n(\widetilde{H}_{\mathrm{log}\text{-}\mathrm{cris}}^n(M^{\mathrm{tor}})_{\Zp})&\simeq \mathrm{Fil}^n(H_{\mathrm{log}\text{-}\mathrm{cris}}^n(M^{\mathrm{tor}})_{\Zp})\simeq H^0(M_{\Zp}^{\mathrm{tor}},\Omega_{M_{\Zp}^{\mathrm{tor}}}^n(\text{log}(D))),\\
\label{Fil bottom !}
\mathrm{Fil}^n(\widetilde{H}_{\mathrm{log}\text{-}\mathrm{cris},c}^n(M^{\mathrm{tor}})_{\Zp})&\simeq \mathrm{Fil}^n(H_{\mathrm{log}\text{-}\mathrm{cris},c}^n(M^{\mathrm{tor}})_{\Zp})\simeq H^0(M_{\Zp}^{\mathrm{tor}},\Omega_{M_{\Zp}^{\mathrm{tor}}}^n).
\end{align}
Note $\Omega_{M_{\Zp}^{\mathrm{tor}}}^n(-\mathrm{log}(D))=\Omega_{M_{\Zp}^{\mathrm{tor}}}^n$ for the last isomorphism.
Therefore $\mathrm{Fil}^n(\widetilde{H}_{\mathrm{log}\text{-}\mathrm{cris},\pa}^n(M^{\mathrm{tor}})_{\integer{}})$ is canonically isomorphic to the image of the homomorphism
\[
H^0(M_{\integer{}}^{\mathrm{tor}},\Omega_{M_{\integer{}}^{\mathrm{tor}}/\integer{}}^n) \to 
H^0(M_{\integer{}}^{\mathrm{tor}},\Omega_{M_{\integer{}}^{\mathrm{tor}}/\integer{}}^n(\text{log}(D))), 
\]
which is identified with $S_2(\ideal{n},\integer{})$ by the Koecher's principle and the $q$-expansion principle because $S_2(\ideal{n},\C)$ is identified with $\im(H^0(M_{\C}^{\mathrm{tor}},\Omega_{M_{\C}^{\mathrm{tor}}/\C}^{n}) \to H^0(M_{\C}^{\mathrm{tor}},\Omega_{M_{\C}^{\mathrm{tor}}/\C}^{n}(\mathrm{log}(D))))$ (\cite[Chapter II, \S 4]{Fre}).
\end{proof}

\begin{prop}\label{Fil f mod pi}
$(1)$ The dimension of $\mathrm{Fil}^n (\widetilde{\overline{M}})$ over $\kappa$ is equal to $1$. 

$(2)$ The homomorphism $\mathrm{Fil}^n (\widetilde{\overline{M}}_{\mathbf{f}})
\to
\mathrm{Fil}^n (\widetilde{\overline{M}})$ is an isomorphism.
\end{prop}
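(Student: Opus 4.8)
The plan is to reduce both assertions to the computation of the bottom graded piece of the integral parabolic log-crystalline cohomology furnished by Theorem \ref{theorem:bottom}, namely the Hecke-equivariant isomorphism $\mathrm{Fil}^n(\widetilde{H}_{\mathrm{log}\text{-}\mathrm{cris},\pa}^n(M^{\mathrm{tor}})_{\integer{}})\simeq S_2(\ideal{n},\integer{})$, combined with the exactness of the functor $\mathrm{Fil}^n$ on $\textbf{MF}_{\integer{}}^{p-2}$. The crucial formal input is that every morphism in $\textbf{MF}_{\integer{}}^{p-2}$ is strict (\cite[1.10 (b)]{Fo--La}), so $\textbf{MF}_{\integer{}}^{p-2}$ is abelian and $M\mapsto \mathrm{Fil}^n(M)$ is exact. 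Since reduction modulo $\varpi$ is the cokernel of multiplication by $\varpi$, and the operations $[\ideal{m}]$ and $[\ideal{p}_{\mathbf{f}}]$ are kernels of the (filtered) Hecke operators acting on these objects, $\mathrm{Fil}^n$ commutes with all of them; the fact that the Hecke operators are genuine endomorphisms in $\textbf{MF}_{\integer{}}^{p-2}$ was recorded in \S\ref{main result} via the Hecke-equivariant comparison isomorphisms. I would therefore first establish the $\kappa$-linear identifications
\begin{align*}
\mathrm{Fil}^n(\widetilde{\overline{M}})&\simeq \left(S_2(\ideal{n},\integer{})/\varpi\right)[\ideal{m}],&
\mathrm{Fil}^n(\widetilde{\overline{M}}_{\mathbf{f}})&\simeq \left(S_2(\ideal{n},\integer{})[\ideal{p}_{\mathbf{f}}]\right)/\varpi.
\end{align*}

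For part $(1)$ I would analyze the eigenspace $(S_2(\ideal{n},\integer{})/\varpi)[\ideal{m}]$. Writing $\ideal{m}=(\varpi,\ideal{p}_{\mathbf{E}})$, an element $g$ of this space satisfies $g\,|\,V(\ideal{m}')\equiv C(\ideal{m}',\mathbf{E})\,g\ (\bmod\varpi)$ for every $\ideal{m}'$, where $V(\ideal{m}')$ is the Hecke operator of \S\ref{Duality theorem}; hence by the relation $C(\ideal{m}',g)=C(\ideal{o}_F,g\,|\,V(\ideal{m}'))$ all Fourier coefficients of $g$ are determined by its first coefficient $C(\ideal{o}_F,g)$. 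The $q$-expansion principle in parallel weight $2$ (the kernel of the $q$-expansion map is generated by $H_{p-1}-1$, of weight $(p-1)t\neq 2t$, exactly as in the proof of Theorem \ref{Duality}) then forces $g\mapsto C(\ideal{o}_F,g)$ to be injective on this eigenspace, so it is at most one-dimensional. It is nonzero because the congruence $(a)$ together with $\ideal{m}=(\varpi,\ideal{p}_{\mathbf{f}})$ shows that $\mathbf{f}\bmod\varpi$ lies in it, and $\mathbf{f}$ normalized gives $C(\ideal{o}_F,\mathbf{f})=1\neq 0$ in $\kappa$. Hence $\dim_\kappa \mathrm{Fil}^n(\widetilde{\overline{M}})=1$.

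For part $(2)$, multiplicity one for the cuspidal eigenform $\mathbf{f}$ (the $K$-dimension of the $\ideal{p}_{\mathbf{f}}$-eigenspace of $S_2(\ideal{n},K)$ is $1$, again by the $q$-expansion principle) shows that the saturated $\integer{}$-submodule $S_2(\ideal{n},\integer{})[\ideal{p}_{\mathbf{f}}]=\integer{}\mathbf{f}$ is free of rank one; reducing modulo $\varpi$ gives $\dim_\kappa\mathrm{Fil}^n(\widetilde{\overline{M}}_{\mathbf{f}})=1$. The canonical map $\widetilde{\overline{M}}_{\mathbf{f}}\to\widetilde{\overline{M}}$ is a morphism in $\textbf{MF}_{\kappa}^{p-2}$ and is injective by Lemma \ref{inj 0}; applying the exact functor $\mathrm{Fil}^n$ yields an injection $\mathrm{Fil}^n(\widetilde{\overline{M}}_{\mathbf{f}})\hookrightarrow\mathrm{Fil}^n(\widetilde{\overline{M}})$ between two one-dimensional $\kappa$-spaces, which is therefore an isomorphism.

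I expect the main obstacle to be the bookkeeping that lets $\mathrm{Fil}^n$ commute with reduction modulo $\varpi$ and with passage to the $\ideal{m}$- and $\ideal{p}_{\mathbf{f}}$-torsion: this rests entirely on the strictness of morphisms in $\textbf{MF}_{\integer{}}^{p-2}$ and on the Hecke operators being endomorphisms of these filtered $\varphi$-modules, so that their kernels remain objects of $\textbf{MF}_{\integer{}}^{p-2}$. Once this exactness is in place, the remaining content is the elementary $q$-expansion and multiplicity-one argument, which is routine in parallel weight $2$.
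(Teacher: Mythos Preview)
Your proof is correct and follows essentially the same approach as the paper: both reduce to Theorem \ref{theorem:bottom}, use strictness in $\textbf{MF}_{\integer{}}^{p-2}$ to commute $\mathrm{Fil}^n$ with $/\varpi$ and with $[\ideal{m}]$, $[\ideal{p}_{\mathbf{f}}]$, and then appeal to Lemma \ref{inj 0} for part $(2)$. The only cosmetic difference is that for the one-dimensionality in $(1)$ the paper invokes the duality $S_2(\ideal{n},\integer{})\simeq\Hom_{\integer{}}(\mathscr{H}_2(\ideal{n},\integer{}),\integer{})$ to get $(S_2(\ideal{n},\integer{})/\varpi)[\ideal{m}]\simeq\Hom_{\kappa}(\mathscr{H}_2(\ideal{n},\integer{})/\ideal{m},\kappa)$, whereas you unwind this duality by hand via the $q$-expansion argument; these are the same computation (and your invocation of $H_{p-1}-1$ is unnecessary for cusp forms, though harmless).
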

\begin{proof}
(1) We have $\mathrm{Fil}^n (\widetilde{\overline{M}})
=\left(S_2(\ideal{n},\integer{})/\varpi\right)[\ideal{m}]$ by Theorem \ref{theorem:bottom}.
By the duality theorem $S_2(\ideal{n},\integer{}) \simeq \Hom_{\integer{}}(\mathscr{H}_2(\ideal{n},\integer{}),\integer{})$ (\cite[Theorem 5.1]{Hida88}),
we obtain $\left(S_2(\ideal{n},\integer{})/\varpi\right)[\ideal{m}] 
\simeq \Hom_{\kappa}(\mathscr{H}_2(\ideal{n},\integer{})/\ideal{m},\kappa)$
and the dimension of the right-hand side over $\kappa$ is equal to $1$. 

(2) By Theorem \ref{theorem:bottom}, we have $\mathrm{Fil}^n (\widetilde{M}_{\mathbf{f}}) \simeq S_2(\ideal{n},\integer{})[\ideal{p}_{\mathbf{f}}]$ and the right-hand side is a free $\integer{}$-module of rank $1$.
Hence the claim follows from (1), $\mathrm{Fil}^n (\widetilde{\overline{M}}_{\mathbf{f}})=\mathrm{Fil}^n (\widetilde{M}_{\mathbf{f}})/\varpi \mathrm{Fil}^n (\widetilde{M}_{\mathbf{f}})$, and Lemma \ref{inj 0}.
\end{proof}

%
%
\subsection{Multiplicity one for $\widetilde{M}_{\mathbf{E}}$}\label{subsection:HT of Eis}
%
%

\begin{prop}\label{Eis HT}
Assume that $h_F^+=1$ and $C(\ideal{q},\mathbf{E})\neq N(\ideal{q})$ for some prime ideal $\ideal{q}$ dividing $\ideal{n}$. 
Then $\widetilde{M}_{\mathbf{E}}$ is free of rank $1$ over $\integer{}$ and $\mathrm{Fil}^n(\widetilde{M}_{\mathbf{E}})=\widetilde{M}_{\mathbf{E}}$. 
\end{prop}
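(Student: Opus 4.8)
The plan is to prove the two assertions about $\widetilde{M}_{\mathbf{E}}=\widetilde{H}_{\text{log-cris}}^n(M^{\mathrm{tor}})_{\integer{}}[\ideal{p}_{\mathbf{E}}]$ in parallel with the \'etale/Betti picture, transporting everything through the comparison isomorphism $\widetilde{T}_{\mathbf{E}}\simeq T_{\mathrm{cris}}(\widetilde{M}_{\mathbf{E}})$ and the $q$-expansion principle. First I would establish the rank-one statement. Since $T_{\mathrm{cris}}$ is fully faithful and exact, it suffices to show that $\widetilde{T}_{\mathbf{E}}=\widetilde{H}_{\mathrm{\acute et}}^n(M_{\overline{\Q}},\integer{})[\ideal{p}_{\mathbf{E}}]$ is free of rank $1$ over $\integer{}$. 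By the comparison with Betti cohomology and the fixed embedding $\overline{\Q}_p\hookrightarrow\C$, this is the same as computing $\widetilde{H}^n(Y(\ideal{n}),\integer{})[\ideal{p}_{\mathbf{E}}]$. Using the decomposition $(\ref{split})$ into parabolic and Eisenstein parts together with Proposition~\ref{Eis Hodge number}~(2), the $\ideal{p}_{\mathbf{E}}$-eigenspace lands entirely in $H_{\mathrm{Eis}}^n$ (the parabolic part with eigenvalue system $\ideal{p}_{\mathbf{E}}$ vanishes by $(\ref{+,+ decomp})$ and the $q$-expansion principle, exactly as in the proof of Proposition~\ref{prop:rational'}). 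On the Eisenstein side, the class $[\omega_{\mathbf{E}}]$ spans this eigenspace, and its integrality and mod-$\varpi$ non-vanishing are guaranteed by Corollary~\ref{thm:integral}, which is available under the present hypotheses ($h_F^+=1$, the $\mu$-invariant condition, and assumptions (a)--(d)); hence $\widetilde{T}_{\mathbf{E}}$ is free of rank $1$ generated by the image of $[\omega_{\mathbf{E}}]$.

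For the statement $\mathrm{Fil}^n(\widetilde{M}_{\mathbf{E}})=\widetilde{M}_{\mathbf{E}}$, the key point is that the Eisenstein series $\mathbf{E}$ has Hodge type concentrated in the top degree. I would argue as follows. By Proposition~\ref{Eis Hodge number}~(1), the Hodge number of $H_{\mathrm{Eis}}^n(Y,\C)$ equals $n$, i.e.\ $H_{\mathrm{Eis}}^n(Y,\C)=F^nH_{\mathrm{Eis}}^n(Y,\C)$, so the full de Rham realization of the $\mathbf{E}$-part sits in the bottom step of the Hodge filtration. Transporting this through the comparison isomorphism $H_{\text{log-cris}}^n(M^{\mathrm{tor}})_{K}\simeq H_{\text{log-dR}}^n(M^{\mathrm{tor}})_{K}$ (which is filtered and Hecke-equivariant) shows that $\widetilde{M}_{\mathbf{E}}\otimes_{\integer{}}K$ is purely of Hodge--Tate weight $n$, whence $\mathrm{Fil}^n(\widetilde{M}_{\mathbf{E}}\otimes K)=\widetilde{M}_{\mathbf{E}}\otimes K$. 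Because $\widetilde{M}_{\mathbf{E}}$ is free of rank $1$ and $\mathrm{Fil}^n$ is a direct summand as an $\integer{}$-module (the filtration steps of objects in $\textbf{MF}_{\integer{}}^{p-2}$ are $\integer{}$-submodules with torsion-free quotients, using strictness of morphisms recalled in \S\ref{main result}), the integral equality $\mathrm{Fil}^n(\widetilde{M}_{\mathbf{E}})=\widetilde{M}_{\mathbf{E}}$ follows from the rational one.

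The main obstacle I anticipate is making the Hodge-type assertion genuinely \emph{integral} rather than merely rational. The degeneration-of-Hodge-spectral-sequence isomorphisms $(\ref{Fil bottom open})$ and $(\ref{Fil bottom !})$ from Theorem~\ref{theorem:bottom} identify $\mathrm{Fil}^n$ with spaces of geometric modular forms over $\Zp$ (hence over $\integer{}$), and one must check that the rank-one free $\integer{}$-module $\widetilde{M}_{\mathbf{E}}$ coincides with its own $\mathrm{Fil}^n$ and not with a proper saturation. Here the role of the hypothesis $C(\ideal{q},\mathbf{E})\neq N(\ideal{q})$ for some $\ideal{q}\mid\ideal{n}$ is decisive: it forces the relevant $U(\ideal{q})$-eigenvalue to differ from the boundary eigenvalue $N(\ideal{q})$ computed in $(\ref{U(q)-eigenvalue of omega_t})$, so that the $\ideal{p}_{\mathbf{E}}$-localization kills the boundary contribution $\widetilde{H}_{\text{log-cris}}^n(D)_{\integer{}}$ and the map from compactly-supported to ordinary cohomology is an isomorphism after localizing at $\ideal{p}_{\mathbf{E}}$. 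This ensures $\mathrm{Gr}^i\widetilde{M}_{\mathbf{E}}=0$ for $i<n$ integrally, completing the argument; I would treat this localization step carefully, as it is the place where the level-dividing Hecke eigenvalue hypothesis does the essential work.
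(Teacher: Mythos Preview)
Your argument has a genuine gap in the rank-one step, and you misidentify where the hypothesis $C(\ideal{q},\mathbf{E})\neq N(\ideal{q})$ does its work.

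You claim that $H_{\pa}^n(Y(\ideal{n}),\C)[\ideal{p}_{\mathbf{E}}]=0$ ``by $(\ref{+,+ decomp})$ and the $q$-expansion principle, exactly as in the proof of Proposition~\ref{prop:rational'}.'' But $(\ref{+,+ decomp})$ only identifies $H_{\pa}^n[\epsilon]\simeq S_2(\ideal{n},\C)$ for characters $\epsilon$ of $W_G$ with $\sharp\{\iota:\epsilon(-1_\iota)=-1\}\neq n/2$; in Proposition~\ref{prop:rational'} this suffices because one already knows $[\omega_{\mathbf{E}}]=[\omega_{\mathbf{E}}]^{\epsilon_{\mathbf{E}}}$ and works only in that isotypic piece. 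To bound $\dim_K(\widetilde{T}_{\mathbf{E}}\otimes K)$ you need the \emph{full} $H_{\pa}^n(Y(\ideal{n}),\C)[\ideal{p}_{\mathbf{E}}]$ to vanish. When $n$ is even, the invariant forms $\omega_{J'}$ with $\sharp J'=n/2$ lie in $H_{\pa}^n$ but not in $H_{\mathrm{cusp}}^n$, and nothing in your argument excludes them from the $\ideal{p}_{\mathbf{E}}$-eigenspace. The paper handles exactly this by computing $\omega_{J'}|U(\ideal{q})=N(\ideal{q})\omega_{J'}$ directly (they are $B_\infty$-invariant), so the hypothesis $C(\ideal{q},\mathbf{E})\neq N(\ideal{q})$ kills the invariant forms and forces $H_{\pa}^n[\ideal{p}_{\mathbf{E}}]=H_{\mathrm{cusp}}^n[\ideal{p}_{\mathbf{E}}]=0$. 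This is where the level-dividing eigenvalue hypothesis is actually used---not on the boundary cohomology $\widetilde{H}_{\text{log-cris}}^n(D)$ as you suggest in your last paragraph.

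A second, related issue: you invoke Corollary~\ref{thm:integral}, which requires the torsion-freeness assumptions (b), (c) of Theorem~\ref{cong mod} and the mod-$\varpi$ inequality $C(\ideal{q},\mathbf{E})\not\equiv N(\ideal{q})$. Proposition~\ref{Eis HT} assumes neither; it only needs the characteristic-zero inequality. The paper's proof is self-contained: once $H^n(Y,\C)[\ideal{p}_{\mathbf{E}}]=H_{\mathrm{Eis}}^n[\ideal{p}_{\mathbf{E}}]=\mathrm{Fil}^n H^n(Y,\C)[\ideal{p}_{\mathbf{E}}]$, the integral identification $(\ref{Fil bottom open})$ gives $\widetilde{M}_{\mathbf{E}}=\mathrm{Fil}^n(\widetilde{M}_{\mathbf{E}})\simeq H^0(M_{\integer{}}^{\mathrm{tor}},\Omega^n(\log D))[\ideal{p}_{\mathbf{E}}]=M_2(\ideal{n},\integer{})[\ideal{p}_{\mathbf{E}}]$, and the $q$-expansion principle shows this is free of rank~$1$. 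So rank-one and $\mathrm{Fil}^n=\widetilde{M}_{\mathbf{E}}$ fall out simultaneously; your claim that $\mathrm{Fil}^n$ is automatically a direct summand in $\mathbf{MF}_{\integer{}}^{p-2}$ is not a general fact---what makes the descent work here is that $\mathrm{Fil}^n(\widetilde{H}_{\text{log-cris}}^n)$ is saturated in the ambient module by integral Hodge degeneration, and $\widetilde{M}_{\mathbf{E}}\otimes K$ sits inside $\mathrm{Fil}^n\otimes K$.
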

\begin{proof}
The $U(\ideal{q})$-eigenvalue of each invariant form $\omega_{J'}$ defined in \S\ref{subsection:Eichler--Shimura--Harder} is equal to $N(\ideal{q})$ because 
\[
\omega_{J'}|U(\ideal{q})=\sum_{b\in \ideal{o}_F/\ideal{q}}
\begin{pmatrix}1&b\\0&g_{\ideal{q}}\end{pmatrix}^{\ast}\omega_{J'}
=N(\ideal{q})\omega_{J'}.
\]
Here the first equality follows from (\ref{U(q) explicit decomp}) and the second equality follows from that $\omega_{J'}$ is invariant under the action of the standard Borel subgroup $B_{\infty}$. 
Then, by the same arguments as in the proof of (\ref{+,+ decomp}), we see that 
$H_{\pa}^n (Y(\ideal{n}),\C)[\ideal{p}_{\mathbf{E}}]
\simeq H_{\mathrm{cusp}}^n (Y(\ideal{n}),\C)[\ideal{p}_{\mathbf{E}}]$ and the right\nobreakdash-hand side is $0$ because $H_{\mathrm{cusp}}^n (Y(\ideal{n}),\C) \simeq \bigoplus_{w\in W_G}S_2(\ideal{n},\C)$ as Hecke modules (cf. \cite[Corollary 2.2]{Hida94}) and the $q$-expansion principle.
Hence, by (\ref{split}), we obtain $H^n(Y(\ideal{n}),\C)[\mathfrak{p}_{\textbf{E}}] = H_{\mathrm{Eis}}^n(Y(\ideal{n}),\C)[\mathfrak{p}_{\textbf{E}}]$.
By combining with Proposition \ref{Eis Hodge number} (1), we have 
$H^n(Y(\ideal{n}),\C)[\mathfrak{p}_{\textbf{E}}] = \mathrm{Fil}^n (H^n(Y(\ideal{n}),\C)[\mathfrak{p}_{\textbf{E}}])$.
Since $H^n(Y(\ideal{n}),\C) \simeq H^n(M_{\C}^{\mathrm{tor}},\Omega_{M_{\C}^{\mathrm{tor}}/\C}^{\bullet}(\mathrm{log}(D)))$ as filtered modules, we see that 
$\widetilde{M}_{\mathbf{E}}=\mathrm{Fil}^n(\widetilde{M}_{\mathbf{E}}) \simeq H^0(M_{\integer{}}^{\mathrm{tor}},\Omega_{M_{\integer{}}^{\mathrm{tor}}/\integer{}}^{n}(\mathrm{log}(D)))[\ideal{p}_{\mathbf{E}}]$. Here the last isomorphism follows from (\ref{Fil bottom open}). 
The last term is free of rank $1$ over $\integer{}$ by the $q$-expansion principle. 
\end{proof}

Combining with Corollary \ref{thm:integral}, we obtain the following:
\begin{cor}\label{Eis generated by omega_E}
Under the same assumptions as Theorem \ref{cong mod},
$\widetilde{T}_{\mathbf{E}}$ is free $\integer{}$-module of rank $1$ generated by $[\omega_{\mathbf{E}}]$. 
\end{cor}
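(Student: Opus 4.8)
The plan is to read the statement off from Proposition \ref{Eis HT}, the comparison isomorphism $\widetilde{T}_{\mathbf{E}}\simeq T_{\mathrm{cris}}(\widetilde{M}_{\mathbf{E}})$ recorded just above, and the integrality together with the non-vanishing modulo $\varpi$ of $[\omega_{\mathbf{E}}]$ furnished by Corollary \ref{thm:integral}. First I would observe that the hypotheses of Theorem \ref{cong mod}, in force for this corollary, include $h_F^+=1$ and condition (b), namely $C(\ideal{q},\mathbf{E})\not\equiv N(\ideal{q})\ (\bmod\ \varpi)$ for some $\ideal{q}\mid\ideal{n}$; in particular $C(\ideal{q},\mathbf{E})\neq N(\ideal{q})$, so the hypotheses of Proposition \ref{Eis HT} are met and $\widetilde{M}_{\mathbf{E}}$ is a free $\integer{}$-module of rank $1$. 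I would then transport this across $T_{\mathrm{cris}}$: the Fontaine--Laffaille functor is exact, fully faithful, and preserves $\integer{}$-length at each torsion level $\integer{}/\varpi^m$, so $T_{\mathrm{cris}}(\widetilde{M}_{\mathbf{E}})/\varpi^m$ has length $m$ for every $m$; since $\widetilde{T}_{\mathbf{E}}$ is $\integer{}$-torsion-free by its very definition as the image in the $K$-cohomology, the isomorphism $\widetilde{T}_{\mathbf{E}}\simeq T_{\mathrm{cris}}(\widetilde{M}_{\mathbf{E}})$ shows that $\widetilde{T}_{\mathbf{E}}$ is a free $\integer{}$-module of rank $1$.

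Next I would identify $[\omega_{\mathbf{E}}]$ with an element of $\widetilde{T}_{\mathbf{E}}$ and show that it is a generator. Because the weight is $2$ the coefficient system $L_{2t}$ is trivial, so Artin's comparison theorem for the fixed embedding $\overline{\Q}\hookrightarrow\C$ identifies the singular cohomology of $M(\C)=Y(\ideal{n})$ with $H_{\mathrm{\acute et}}^n(M_{\overline{\Q}},\integer{})$ Hecke-equivariantly, and hence $\widetilde{H}^n(Y(\ideal{n}),\integer{})$ with $\widetilde{H}_{\mathrm{\acute et}}^n(M_{\overline{\Q}},\integer{})$. Under this identification $[\omega_{\mathbf{E}}]$ is an eigenclass for all $T(\ideal{q})$, $S(\ideal{q})$ and $U(\ideal{q})$ with the eigenvalues cutting out $\mathfrak{p}_{\mathbf{E}}$, so it lies in $\widetilde{T}_{\mathbf{E}}=\widetilde{H}_{\mathrm{\acute et}}^n(M_{\overline{\Q}},\integer{})[\mathfrak{p}_{\mathbf{E}}]$ (recall $[\omega_{\mathbf{E}}]=[\omega_{\mathbf{E}}]^{\epsilon_{\mathbf{E}}}$ by Remark \ref{parity of Eis}). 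By Corollary \ref{thm:integral}, $[\omega_{\mathbf{E}}]$ lies in $\widetilde{H}^n(Y(\ideal{n}),\integer{})$ but not in $\varpi\widetilde{H}^n(Y(\ideal{n}),\integer{})$, so its image in $\widetilde{T}_{\mathbf{E}}/\varpi$ is nonzero. Since $\widetilde{T}_{\mathbf{E}}$ is free of rank $1$ over the discrete valuation ring $\integer{}$, any element that is nonzero modulo $\varpi$ is a generator by Nakayama's lemma, so $[\omega_{\mathbf{E}}]$ generates $\widetilde{T}_{\mathbf{E}}$, as claimed.

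The substantive input is Proposition \ref{Eis HT}, which is already available, so the remaining work is bookkeeping across the Betti, log-crystalline, and étale realizations. The point that will need the most care is that the non-vanishing of $[\omega_{\mathbf{E}}]$ modulo $\varpi$ survives both the Betti--\'etale identification and the passage to the $\mathfrak{p}_{\mathbf{E}}$-eigenspace, so that no denominators are introduced; this is guaranteed by the Hecke-equivariance of the comparison together with the precise integral statement of Corollary \ref{thm:integral} (and the equality $[\omega_{\mathbf{E}}]^{\epsilon_{\mathbf{E}}}=[\omega_{\mathbf{E}}]$ of Remark \ref{parity of Eis}, which ensures the $\epsilon_{\mathbf{E}}$-projection is harmless).
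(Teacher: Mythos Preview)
Your proof is correct and follows exactly the route the paper intends: the paper's own argument is the single line ``Combining with Corollary \ref{thm:integral}, we obtain the following,'' placed immediately after Proposition \ref{Eis HT}, and you have simply spelled out what that combination means—Proposition \ref{Eis HT} gives $\rank_{\integer{}}\widetilde{M}_{\mathbf{E}}=1$, the comparison $\widetilde{T}_{\mathbf{E}}\simeq T_{\mathrm{cris}}(\widetilde{M}_{\mathbf{E}})$ transfers this to $\widetilde{T}_{\mathbf{E}}$, and Corollary \ref{thm:integral} shows $[\omega_{\mathbf{E}}]\notin\varpi\widetilde{H}^n(Y(\ideal{n}),\integer{})\supset\varpi\widetilde{T}_{\mathbf{E}}$, so it generates. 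The parenthetical appeal to Remark \ref{parity of Eis} is harmless but unnecessary here, since $\widetilde{T}_{\mathbf{E}}$ is defined without reference to the $\epsilon$-isotype.
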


%
%
\subsection{The morphism $\widetilde{T}_{\mathbf{E}} \to \widetilde{T}$} \label{The morphism}
%
%

\begin{lem}\label{lemma:tilde mod pi exact}
Assume that both $H^n(\partial(Y(\ideal{n})^{\mathrm{BS}}),\integer{})_{\ideal{m}}$ and $H_c^{n+1}(Y(\ideal{n}),\integer{})_{\ideal{m}}$ are torsion-free, where $\ideal{m}$ is the maximal ideal of $\cal{H}_2(\ideal{n},\integer{})$ defined before Theorem \ref{cong mod}. 
Then the exact sequence $0 \to H_{\pa}^n(Y(\ideal{n}), \integer{})\to H^n(Y(\ideal{n}), \integer{})\to H^n(\partial(Y(\ideal{n})^{\mathrm{BS}}),\integer{})$ induces an exact sequence $0 \to \widetilde{H}_{\pa}^n(Y(\ideal{n}), \integer{})_{\ideal{m}}/\varpi\to \widetilde{H}^n(Y(\ideal{n}), \integer{})_{\ideal{m}}/\varpi \to \widetilde{H}^n(\partial(Y(\ideal{n})^{\mathrm{BS}}),\integer{})_{\ideal{m}}/\varpi$.
\end{lem}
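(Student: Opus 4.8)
The plan is to deduce the claim from the long exact cohomology sequence of the pair $(Y^{\mathrm{BS}},\partial Y^{\mathrm{BS}})$, applying the two torsion-freeness hypotheses at exactly the two places where $\integer{}$-torsion could obstruct exactness after reduction mod $\varpi$. Write $Y=Y(\ideal{n})$ and $Y^{\mathrm{BS}}=Y(\ideal{n})^{\mathrm{BS}}$. Since $Y$ is the interior of $Y^{\mathrm{BS}}$ and the inclusion is a homotopy equivalence, one has $H^{\ast}(Y^{\mathrm{BS}},\integer{})\simeq H^{\ast}(Y,\integer{})$ and $H_c^{\ast}(Y,\integer{})\simeq H^{\ast}(Y^{\mathrm{BS}},\partial Y^{\mathrm{BS}};\integer{})$, so the pair gives a long exact sequence
\[
H_c^n(Y,\integer{})\to H^n(Y,\integer{})\xrightarrow{\res} H^n(\partial Y^{\mathrm{BS}},\integer{})\xrightarrow{\delta} H_c^{n+1}(Y,\integer{}).
\]
Localization at $\ideal{m}$ is exact, so I set $A=H_{\pa}^n(Y,\integer{})_{\ideal{m}}$, $B=H^n(Y,\integer{})_{\ideal{m}}$, $C=H^n(\partial Y^{\mathrm{BS}},\integer{})_{\ideal{m}}$; by the definition of parabolic cohomology and flatness of localization, $A=\im(H_c^n\to H^n)_{\ideal{m}}=\ker(\res)\subseteq B$, while $C$ and $H_c^{n+1}(Y,\integer{})_{\ideal{m}}$ are torsion-free by hypothesis. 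Throughout I would use that $\integer{}$ is a discrete valuation ring, so finitely generated torsion-free modules are free, and that localization at $\ideal{m}$ commutes with the formation of the $\integer{}$-torsion submodule; hence $\widetilde{H}_{\pa}^n(Y,\integer{})_{\ideal{m}}=\widetilde A$, $\widetilde{H}^n(Y,\integer{})_{\ideal{m}}=\widetilde B$, and $\widetilde{H}^n(\partial Y^{\mathrm{BS}},\integer{})_{\ideal{m}}=C$, where $\widetilde{(-)}$ denotes the torsion-free quotient (equivalently the image in the $K$-coefficient cohomology).

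First I would pass to torsion-free quotients \emph{before} reducing mod $\varpi$. Because $C$ is torsion-free, $\res$ annihilates the torsion of $B$ and so descends to $\widetilde{\res}\colon \widetilde B\to C$; a direct diagram chase using $\ker(\res)=A$ identifies $\ker(\widetilde{\res})$ with the image of $A$ in $\widetilde B$, which is precisely $\widetilde A$. This yields a short exact sequence of $\integer{}$-modules
\[
0\to\widetilde A\to\widetilde B\xrightarrow{\widetilde{\res}}Q\to 0,\qquad Q:=\im(\widetilde{\res})=\res(B)\subseteq C.
\]
As a submodule of the torsion-free module $C$, the module $Q$ is torsion-free, hence $\mathrm{Tor}_1^{\integer{}}(Q,\kappa)=0$, and reducing the displayed sequence mod $\varpi$ remains left exact:
\[
0\to\widetilde A/\varpi\to\widetilde B/\varpi\to Q/\varpi\to 0.
\]
This already gives the injectivity of $\widetilde{H}_{\pa}^n(Y,\integer{})_{\ideal{m}}/\varpi\to\widetilde{H}^n(Y,\integer{})_{\ideal{m}}/\varpi$, i.e. exactness at the first spot.

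It then remains to identify $\ker(\widetilde B/\varpi\to C/\varpi)$ with $\im(\widetilde A/\varpi\to\widetilde B/\varpi)$, for which it suffices to show $Q/\varpi\to C/\varpi$ is injective. Here the second hypothesis enters: from the long exact sequence, $\mathrm{coker}(\res)=C/Q\simeq\im(\delta)$, which is a submodule of $H_c^{n+1}(Y,\integer{})_{\ideal{m}}$ and hence torsion-free by assumption. Thus $\mathrm{Tor}_1^{\integer{}}(C/Q,\kappa)=0$, so $Q/\varpi\hookrightarrow C/\varpi$, and composing with the short exact sequence above shows $\ker(\widetilde B/\varpi\to C/\varpi)=\ker(\widetilde B/\varpi\to Q/\varpi)=\im(\widetilde A/\varpi\to\widetilde B/\varpi)$. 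This is the desired exactness in the middle.

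The argument carries no hard analytic or geometric input; its entire substance is the bookkeeping of how the two torsion-freeness hypotheses propagate. Torsion-freeness of $H^n(\partial Y^{\mathrm{BS}},\integer{})_{\ideal{m}}$ is what allows $\res$ to descend to $\widetilde B$ and forces $\mathrm{Tor}_1^{\integer{}}(Q,\kappa)=0$, whereas torsion-freeness of $H_c^{n+1}(Y,\integer{})_{\ideal{m}}$ is what makes $C/Q$ torsion-free and hence $Q/\varpi\hookrightarrow C/\varpi$. The only point I expect to require genuine care is the compatibility of localization at $\ideal{m}$ with the torsion-free quotient $\widetilde{(-)}$ and with the images defining $H_{\pa}^n$ and $Q$; since localization of $\cal{H}_2(\ideal{n},\integer{})$-modules at $\ideal{m}$ is exact and commutes with the filtered-colimit description of the $\integer{}$-torsion submodule, each such identification is legitimate, so this step is routine rather than a true obstacle.
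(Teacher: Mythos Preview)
Your argument is correct and is essentially the same as the paper's: your $Q=\im(\widetilde{\res})$ is the paper's $N_{\ideal m}$, and both proofs use that $Q$ is torsion-free (as a submodule of $C$) to get the short exact sequence $0\to\widetilde A\to\widetilde B\to Q\to 0$ to stay exact mod $\varpi$, and that $C/Q\hookrightarrow H_c^{n+1}(Y,\integer{})_{\ideal m}$ is torsion-free to get $Q/\varpi\hookrightarrow C/\varpi$. Your write-up is simply more explicit about the bookkeeping (e.g.\ the identification $\ker(\widetilde{\res})=\widetilde A$), but there is no substantive difference.
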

\begin{proof}
We omit the coefficient $\integer{}$ of the cohomology groups to simplify the notation. 
Let $N$ be the image of $H^n(Y(\ideal{n}))$ in $H^n(\partial(Y(\ideal{n})^{\text{BS}}))$. 
Then we have an exact sequence $0 \to  N_{\ideal{m}} \to H^n(\partial(Y(\ideal{n})^{\text{BS}}))_{\ideal{m}} \to H_c^{n+1}(Y(\ideal{n}))_{\ideal{m}}$, whose last term is torsion-free by assumption. Therefore $N_{\ideal{m}}/\varpi \to H^n(\partial(Y(\ideal{n})^{\text{BS}}))_{\ideal{m}}/\varpi$ is injective. Since $N_{\ideal{m}}$ is torsion-free by assumption, we have $H_{\pa}^n(Y(\ideal{n}))_{\ideal{m},\mathrm{torsion}} \simeq H^n(Y(\ideal{n}))_{\ideal{m},\mathrm{torsion}}$ and obtain an exact sequence $0 \to \widetilde{H}_{\pa}^n(Y(\ideal{n}))_{\ideal{m}} \to \widetilde{H}^n(Y(\ideal{n}))_{\ideal{m}} \to N_{\ideal{m}} \to 0$.
By taking the reduction modulo $\varpi$, we obtain an exact sequence $0 \to \widetilde{H}_{\pa}^n(Y(\ideal{n}))_{\ideal{m}}/\varpi \to \widetilde{H}^n(Y(\ideal{n}))_{\ideal{m}}/\varpi \to N_{\ideal{m}}/\varpi \to 0$ because $N_{\ideal{m}}$ is torsion-free. 
This completes the proof.
\end{proof}

Note that $M[\ideal{m}]=M_{\ideal{m}}[\ideal{m}]$ for a $\cal{H}_2(\ideal{n},\integer{})$-module $M$.
Hence, by Lemma \ref{lemma:tilde mod pi exact}, we may regard $\widetilde{\overline{T}}$ as a submodule of $(\widetilde{H}_{\mathrm{\acute et}}^n(M_{\overline{\Q}},\integer{})/\varpi)[\ideal{m}]$ under the assumption of the lemma. 

\begin{prop}\label{prop:Eis mod pi}
Under the same assumptions as Theorem \ref{partial one}, the natural homomorphism $\widetilde{T}_{\textbf{E}}/\varpi \to (\widetilde{H}_{\mathrm{\acute et}}^n(M_{\overline{\Q}},\integer{})/\varpi)[\ideal{m}]$ is injective and its image is contained in $\widetilde{\overline{T}}$.
\end{prop}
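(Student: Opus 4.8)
The goal is to establish two facts about the map $\widetilde{T}_{\mathbf{E}}/\varpi \to (\widetilde{H}_{\mathrm{\acute et}}^n(M_{\overline{\Q}},\integer{})/\varpi)[\ideal{m}]$: injectivity, and that its image lands inside $\widetilde{\overline{T}}=\widetilde{H}_{\mathrm{\acute et},\pa}^n(M_{\overline{\Q}},\kappa)[\ideal{m}]$. By Corollary \ref{Eis generated by omega_E}, under the standing assumptions $\widetilde{T}_{\mathbf{E}}$ is free of rank $1$ over $\integer{}$, generated by the integral class $[\omega_{\mathbf{E}}]$; thus $\widetilde{T}_{\mathbf{E}}/\varpi$ is one-dimensional over $\kappa$ and is spanned by the reduction $[\omega_{\mathbf{E}}]\ (\bmod\ \varpi)$. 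The whole statement therefore reduces to two assertions about this single class: that its reduction modulo $\varpi$ is nonzero in $(\widetilde{H}_{\mathrm{\acute et}}^n(M_{\overline{\Q}},\integer{})/\varpi)[\ideal{m}]$, and that it is parabolic, i.e. that its restriction to the boundary cohomology vanishes after reduction modulo $\varpi$.

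\textbf{Injectivity.}
For injectivity I would argue that $[\omega_{\mathbf{E}}]$ does not lie in $\varpi\,\widetilde{H}_{\mathrm{\acute et}}^n(M_{\overline{\Q}},\integer{})$. This is precisely the mod $\varpi$ non-vanishing recorded in Corollary \ref{thm:integral}, which states $[\omega_{\mathbf{E}}] \in \widetilde{H}^n(Y(\ideal{n}),\integer{})\setminus\varpi\widetilde{H}^n(Y(\ideal{n}),\integer{})$; transporting this along the comparison isomorphism (\ref{comparison log}) (and the identification of Betti with \'etale cohomology via the fixed embeddings) gives non-vanishing of $[\omega_{\mathbf{E}}]\ (\bmod\ \varpi)$ in the \'etale setting. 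Since $\widetilde{T}_{\mathbf{E}}/\varpi$ is generated by this class and a single nonzero element generates an injection from a one-dimensional $\kappa$-space, injectivity follows once I know the image is nonzero. One must also check that the target is localized correctly: the class $[\omega_{\mathbf{E}}]$ is a $\ideal{p}_{\mathbf{E}}$-eigenclass, so its reduction lies in the $\ideal{m}$-torsion $(\widetilde{H}_{\mathrm{\acute et}}^n/\varpi)[\ideal{m}]$, because reduction modulo $\varpi$ of the eigenvalue equations $T(\ideal{q})_{\mathrm{\acute et}}=C(\ideal{q},\mathbf{E})$, etc., are exactly the defining relations of $\ideal{m}=(\varpi,\ideal{p}_{\mathbf{E}})$.

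\textbf{Containment in $\widetilde{\overline{T}}$.}
The substantive point is that the image is parabolic. By Lemma \ref{lemma:tilde mod pi exact} (whose hypotheses are among the standing assumptions (b) via assumption (a) of Theorem \ref{cong mod}), there is an exact sequence $0\to \widetilde{H}_{\pa}^n/\varpi \to \widetilde{H}^n/\varpi \to \widetilde{H}^n(\partial)/\varpi$ after localizing at $\ideal{m}$, so membership in $\widetilde{\overline{T}}$ is equivalent to the vanishing of the boundary restriction of $[\omega_{\mathbf{E}}]\ (\bmod\ \varpi)$. The key input here is Proposition \ref{const}(2): since $\varphi$ is non-trivial, $\mathbf{E}$ vanishes at every cusp $s\in C_\infty$, so the integral boundary class $\res([\pi_{\mathbf{E}}])$ vanishes in $\widetilde{H}^n(\overline{\Gamma_s},L_{2t,1}(\integer{}))$; combined with the computation of Proposition \ref{H^{n-1} vanishing} controlling the $\ideal{m}_{\mathbf{E}}'$-localized boundary, the restriction of $[\omega_{\mathbf{E}}]$ to $H^n(D_{C_\infty},\integer{})$ is $\varpi$-divisible, hence dies modulo $\varpi$ at the cusps in $C_\infty$. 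The main obstacle I anticipate is handling the boundary components \emph{not} in $C_\infty$: there the Eisenstein series need not vanish, so I must use the localization at $\ideal{m}$ together with the fact that the boundary cohomology away from $C_\infty$ has $U(\ideal{q})$-eigenvalue incompatible with $\ideal{m}$ (via the eigenvalue $N(\ideal{q})$ appearing in (\ref{U(q)-eigenvalue of omega_t}) and assumption (c), $C(\ideal{q},\mathbf{E})\not\equiv N(\ideal{q})$), so that the corresponding restriction already vanishes after localization. Assembling these two boundary analyses—$C_\infty$ via Proposition \ref{const}(2), the complement via the Hecke-eigenvalue separation—into the single statement that the mod $\varpi$ boundary restriction vanishes is the crux, and the rest is formal from Lemma \ref{lemma:tilde mod pi exact} and the comparison isomorphisms.
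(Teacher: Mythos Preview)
Your injectivity argument is fine; it differs from the paper's (which invokes the torsion-freeness argument of Lemma~\ref{inj 0}), but your appeal to Corollary~\ref{thm:integral} and Corollary~\ref{Eis generated by omega_E} works equally well.

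The containment argument, however, has a genuine gap, and the approach you outline cannot succeed as stated. Two issues:

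\medskip
\textbf{(i) You never use hypothesis~(a).} The congruence $\mathbf{f}\equiv\mathbf{E}\ (\bmod\ \varpi)$ is the essential input. The paper's argument is this: by the $q$-expansion principle the congruence means $\varpi^{-1}(\mathbf{E}-\mathbf{f})\in M_2(\ideal{n},\integer{})$; Proposition~\ref{const}(1) applied to this form, together with Proposition~\ref{const}(2) applied to the cusp form $\mathbf{f}$ (which gives $\res([\omega_{\mathbf{f}}])=0$ at \emph{every} cusp), yields $\varpi^{-1}\res([\omega_{\mathbf{E}}])=\res([\omega_{\varpi^{-1}(\mathbf{E}-\mathbf{f})}])\in \widetilde{H}^n(\partial(Y(\ideal{n})^{\mathrm{BS}}),\integer{})$. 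Then Lemma~\ref{lemma:tilde mod pi exact} finishes. This handles all cusps at once, with no case split between $C_\infty$ and its complement.

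\medskip
\textbf{(ii) The Hecke-eigenvalue separation for cusps outside $C_\infty$ misfires.} Equation~(\ref{U(q)-eigenvalue of omega_t}) computes the $U(\ideal{q})$-eigenvalue on $H^{n-1}(D_{C_\infty}(\ideal{n}),\C)$, not on $H^n$ of the boundary, and only at cusps \emph{in} $C_\infty$; it tells you nothing about $H^n(D_s,\integer{})$ for $s\notin C_\infty$. More decisively, your proposed mechanism cannot work: $\res([\omega_{\mathbf{E}}])$ is a nonzero $\ideal{p}_{\mathbf{E}}$-eigenclass in $\widetilde{H}^n(\partial,\integer{})$ (since $\mathbf{E}$ is not cuspidal) which, by Proposition~\ref{const}(2), is supported entirely at cusps outside $C_\infty$. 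Hence the $\ideal{m}$-localized $H^n$ of those boundary components is nonzero, and localization at $\ideal{m}$ cannot annihilate the restriction. What makes $\res([\omega_{\mathbf{E}}])$ divisible by $\varpi$ is precisely that the constant terms of $\mathbf{E}$ are $\equiv 0\ (\bmod\ \varpi)$, and that fact comes from the congruence with the cusp form $\mathbf{f}$, i.e.\ from hypothesis~(a).
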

\begin{proof}
One can prove the first claim in the same way as in Lemma \ref{inj 0}.
The assumption $\mathbf{E} \equiv \mathbf{f} (\bmod \varpi)$ is equivalent to $\mathbf{E}-\mathbf{f} \in \varpi M_2(\ideal{n},\integer{})$ by the $q$-expansion principle (\cite[Proposition 1.10 (i)]{Dim2}).
Applying Proposition \ref{const} (1) (resp. (2)) to the cohomology class of $\varpi^{-1}(\mathbf{E}-\mathbf{f})\in M_2(\ideal{n},\integer{})$ (resp. $\mathbf{f}\in S_2(\ideal{n},\integer{})$), we obtain $\varpi^{-1}\res([\omega_{\mathbf{E}}])=\res([\omega_{\varpi^{-1}(\mathbf{E}-\mathbf{f})}])\in \widetilde{H}^n(\partial(Y(\ideal{n})^{\mathrm{BS}}),\integer{})$.
Now the second claim follows from Corollary \ref{Eis generated by omega_E} and Lemma \ref{lemma:tilde mod pi exact}.
\end{proof}

%
%
\subsection{Proof of Theorem \ref{partial one}} \label{Pf of main}
%
%

In this subsection, we prove Theorem \ref{partial one}.
Let $\widetilde{\overline{T}}_{\mathbf{E}}$ (resp. $\widetilde{\overline{M}}_{\mathbf{E}}$) denote the quotient $\widetilde{T}_{\mathbf{E}}/\varpi$ (resp. $\widetilde{M}_{\mathbf{E}}/\varpi$) in $\mathbf{Rep}_{\integer{},\text{cris}}^{p-2}(G_{\Qp})$ (resp. $\mathbf{MF}_{\integer{}}^{p-2}$).
By Lemma \ref{inj 0} and Proposition \ref{prop:Eis mod pi}, we have the following monomorphisms in $\mathbf{Rep}_{\integer{},\text{cris}}^{p-2}(G_{\Qp})$ and $\mathbf{MF}_{\integer{}}^{p-2}$:
\[
\xymatrix{
\widetilde{\overline{T}}_{\mathbf{E}} 
\ar@{^{(}->}[r]^{\alpha_T}
&\widetilde{\overline{T}}
&\widetilde{\overline{T}}_{\mathbf{f}} \ar@{_{(}->}[l]_{\beta_T},
&\widetilde{\overline{M}}_{\mathbf{E}} 
\ar@{^{(}->}[r]^{\alpha_M}
&\widetilde{\overline{M}}
&\widetilde{\overline{M}}_{\mathbf{f}} \ar@{_{(}->}[l]_{\beta_M}.
}
\]
We define the action of $W_G$ on the underlying $\integer{}$-modules of $\widetilde{\overline{T}}_{\mathbf{E}}$, $\widetilde{\overline{T}}$, and $\widetilde{\overline{T}}_{\mathbf{f}}$ via the comparison isomorphism between \'etale and Betti cohomologies induced by the fixed embedding $\overline{\Q} \hookrightarrow \C$. 
Then the morphisms $\alpha_T$ and $\beta_T$ are $W_G$-equivariant.
By Proposition \ref{Fil f mod pi} (1) and Proposition \ref{Eis HT}, we have 
$\alpha_M(\widetilde{\overline{M}}_{\mathbf{E}})=\mathrm{Fil}^n(\widetilde{\overline{M}})$.
Hence, by Proposition \ref{Fil f mod pi} (2), we see that there exists a subobject $L$ of $\widetilde{\overline{T}}_{\mathbf{f}}$ in $\mathbf{Rep}_{\integer{},\text{cris}}^{p-2}(G_{\Qp})$ such that $\beta_T(L)=\alpha_T(\widetilde{\overline{T}}_{\mathbf{E}})$.
By Remark \ref{parity of Eis} and Corollary \ref{Eis generated by omega_E}, we have $\widetilde{\overline{T}}_{\mathbf{E}}=\widetilde{\overline{T}}_{\mathbf{E}}[\epsilon]$, which implies that $L$ is $W_G$-stable and $L=L[\epsilon]$.
Since the isomorphism (\ref{+,+ decomp}) over $\C$ says that the dimension of 
$\widetilde{\overline{T}}_{\textbf{f}}[\epsilon]$ over $\kappa$ is equal to $1$, we obtain $L=\widetilde{\overline{T}}_{\textbf{f}}[\epsilon]$. 
This completes the proof because $[\delta_{\mathbf{f}}]^{\epsilon}(\bmod \varpi)$ and $[\omega_{\mathbf{E}}]^{\epsilon}(\bmod \varpi)$ are bases of $\widetilde{\overline{T}}_{\textbf{f}}[\epsilon]$ and $\widetilde{\overline{T}}_{\textbf{E}}$, respectively (cf. \S \ref{subsection:Canonical periods}, Corollary \ref{Eis generated by omega_E}).


\end{document}